\numberwithin{equation}{section}
\newtheorem{thm}{Theorem}[section]
\newtheorem{coro}{Corollary}[section]
\newtheorem{lem}{Lemma}[section]
\newtheorem{rem}{Remark}[section]
\newtheorem{prop}{Proposition}[section]
\newtheorem{defn}{Definition}[section]
\newcommand{\beq}{\begin{eqnarray}}
\newcommand{\eeq}{\end{eqnarray}}
\newcommand{\beqno}{\begin{eqnarray*}}
\newcommand{\eeqno}{\end{eqnarray*}}
\newcommand{\be}{\begin{equation}}
\newcommand{\ee}{\end{equation}}
\newcommand{\beno}{\begin{equation*}}
\newcommand{\eeno}{\end{equation*}}
\newcommand\tl{\tilde}
\newcommand\Dl{\Delta}
\newcommand\N{\mathbb{N}}
\newcommand\nb{\nabla}
\newcommand\nn{\nonumber}
\newcommand{\R}{\mathbb{R}}
\newcommand\Z{\mathbb{Z}}
\newcommand\eps{\epsilon}
\newcommand\ga{\gamma}
\newcommand\fr{\frac}
\newcommand\al{\alpha}
\newcommand{\dv}{\mathrm{div}}
\newcommand\lm{\lambda}
\newcommand\Lm{\Lambda}
\newcommand\Pe{\mathbb{P}}
\newcommand\pr{\partial}
\newcommand{\ddl}{\dot{\Dl}_q}
\begin{document}

\title[Compressible Navier-Stokes equations with a class of large data]{Global solutions to the isentropic compressible Navier-Stokes equations with a class of large initial data}

\author{Daoyuan Fang, Ting Zhang,  Ruizhao Zi*}

\address{Department of Mathematics, Zhejiang University, Hangzhou 310027, China}
\email{dyf@zju.edu.cn}

\address{Department of Mathematics, Zhejiang University, Hangzhou 310027, China}
\email{zhangting79@zju.edu.cn}

\address{*Corresponding author. School of Mathematics and Statistics \& Hubei Key Laboratory of Mathematical Sciences, Central China Normal University, Wuhan 430079, China}
\email{rzz@mail.ccnu.edu.cn}

\subjclass[2010]{35Q35, 76N10}

\keywords{Compressible Navier-Stokes equations, global well-posedness, large data}

\begin{abstract}
In this paper, we consider the global well-posedness problem of the isentropic compressible Navier-Stokes equations  in the whole space $\R^N$ with $N\ge2$. In order to better reflect the characteristics of the dispersion equation, we make full use of the role of the frequency on the integrability and regularity of the solution,  and prove that the isentropic compressible Navier-Stokes equations admit global solutions when the initial data are close to a stable equilibrium in the sense of suitable hybrid Besov norm.   As a consequence, the initial velocity with arbitrary $\dot{B}^{\fr{N}{2}-1}_{2,1}$ norm of potential part $\Pe^\bot u_0$  and large highly oscillating  are allowed in our results. The proof relies heavily on the dispersive estimates for  the system of acoustics, and a careful
study of the nonlinear terms.
\end{abstract}
\maketitle

\section{Introduction}
The isentropic compressible Navier-Stokes equations are governed by conservation of mass and conservation of momentum:
\beq\label{CNS}
\begin{cases}
\pr_t\rho+\dv(\rho u)=0,\ \ \ t>0,\ x\in\mathbb{R}^N,\ N\geq2,\\
\rho(\pr_tu+u\cdot\nb u)+\nb P(\rho)-\mu\Dl u-(\lm+\mu)\nb\dv u=0,\\
(\rho,u)\rightarrow (\bar{\rho},0),\ \textrm{ as }|x|\rightarrow\infty,\\
(\rho, u)|_{t=0}=(\rho_0, u_0).
\end{cases}
\eeq
where the unknowns $\rho$ and $u$ are the density and velocity of the fluid, respectively. $P=P(\rho)$ is the pressure, which is a smooth function of $\rho$. The viscous  coefficients $\mu$ and $\lambda$ are  assumed to be constants, satisfying the following physical restrictions:
\be\label{vis-coefficients}
\mu>0, \quad 2\mu+N\lambda\ge0,
\ee
with $N\ge2$ the spacial  dimension. Clearly, \eqref{vis-coefficients} implies $\nu:=\lm+2\mu>0$, which, together with \eqref{vis-coefficients} ensures the ellipticity for the Lam\'e operator $\mu\Dl+(\lm+\mu)\nb\dv$. Moreover, without loss of generality,  we assume that $\bar{\rho}=1$ and
\be\label{a-P}
P'(1)=1.
\ee

There are huge literatures on the well-posedness results of the compressible Navier-Stokes equations.
To the best of our knowledge, the local existence and uniqueness of classical solutions are first established in \cite{Nash62, Serrin59} with $\rho_0$ bounded away from zero.
For the case that the initial density  may vanish in open sets, see \cite{CCK04,SS93}. The global classical solutions were first
obtained by Matsumura and Nishida \cite{MN80} for initial data $(\rho_0, u_0)$ close to a  equilibrium $(\bar{\rho}, 0)$ in $H^3\times H^3$, $\bar{\rho}>0$.
Later,  by exploiting some smoothing effects of the so-called {\em effective viscous flux} $F := (2\mu +\lambda ��)\dv u - P(\rho) + P(\bar{\rho})$, Hoff \cite{Hoff951, Hoff952} constructed the global weak solutions with discontinuous initial data. For
arbitrary initial data and $\bar{\rho}=0$, the breakthrough was made by Lions \cite{Lions98}, where he proved the global existence of weak solutions
provided the specific heat ratio $\gamma$ is appropriately large, for
example, $\gamma\geq3N/(N+2), N=2, 3$. Later, Feireisl,  Novotn\'{y}
and Petzeltov\'{y} \cite{FNP01} improved  Lions's results to the
case $\gamma>\frac{N}{2}$. If the initial data possess
some symmetric properties, Jiang and  Zhang \cite{JZ01, JZ03}
obtained the global weak solutions for any $\gamma>1$. Even in the two dimensional case, the uniqueness of weak
solutions is still an open problem up to now. For the case of {\em small energy}, Huang, Li and Xin \cite{HLX12} recently established the global existence and uniqueness of classical solutions,  which can be regarded as a uniqueness and regularity theory of Lions-Feireisl's weak solutions.

The common point among all these papers above is that they did not use scaling considerations, which can help us to find solution spaces as large as possible.  This approach goes back to the pioneering work by Fujita and Kato \cite{FK64} for the classical incompressible Navier-Stokes equations:
\beq\label{INS}
\begin{cases}
\pr_tv+v\cdot\nb v-\mu\Dl v+\nb\Pi=0, \ \ \ t>0,\ x\in\mathbb{R}^N,\ N\geq2,\\
\dv v=0,\\
v|_{t=0}=v_0.
\end{cases}
\eeq
The classical incompressible Navier-Stokes equations, the system
\eqref{INS}, possesses a structure of scaling invariance. Indeed, if $v$ is a
solution of \eqref{INS} on a time interval $[0,T]$ with initial
data $v_0$, then the vector field $v_\lambda$ defined by
    $$
    v_\lambda(t,x)=\lambda v(\lambda^2t,\lambda x)
    $$
is also a solution of \eqref{INS} on the time interval
$[0,\lambda^{-2}T]$ with the initial data $\lambda v_0(\lambda
x)$. There are many works considering the global well-posedness for the classical incompressible Navier-Stokes equations
\eqref{INS} in the scaling invariant spaces, like \cite{Cannone93,Ca97,FK64,Kato,Koch01} etc.  The importance of these results can be illustrated by the following example \cite{Ca97} in three dimensional case: if $\phi$
is a function in the Schwartz space $\mathcal{S}
(\mathbb{R}^3)$, let us introduce the family of divergence free vector fields
 \be
\label{1.4-in} {\phi_\varepsilon}:=\varepsilon^{\al-1}\sin\left(\fr{x_3}{\varepsilon}\right)(-\pr_2 {\phi}, \pr_1 {\phi}, 0).
\ee
Then, for small  $\varepsilon$, the size of $\|\phi_\varepsilon\|_{BMO^{-1}}$ is  $\varepsilon^{\al}$. The result in \cite{Koch01} implies that the classical incompressible Navier-Stokes system \eqref{INS} is global well-posed with the initial data $v_0=\phi_\varepsilon$ for sufficient small  $\varepsilon$. If Supp$\widehat{\phi}\subset B(0,R)=\{\xi\in \mathbb{R}^3, |\xi|\leq R\}$, then Supp$\widehat{\phi_\varepsilon}\subset B((0,0,\frac{1}{\varepsilon}),R)=\{\xi\in \mathbb{R}^3, |\xi-(0,0,\frac{1}{\varepsilon})|\leq R\}$. Thus, such class of the initial data $v_0=\phi_\varepsilon$ has a interesting property that in the frequency space, it almost concentrates on the high frequency part. We would like to remark that due to the parabolic property of the system \eqref{INS}, the high frequency part of the solution can decay very fast.   A natural question which arises is: what will happen when the initial data almost concentrate on the low frequency part?

 Inspired by this  question, let us come back to the  isentropic compressible Navier-Stokes equations \eqref{CNS}. In this case, the first work following the scaling invariant  approach was given by Danchin, see \cite{Danchin00}, who proved  the global well-posedness   of  strong solutions to \eqref{CNS} with  initial data $(\rho_0, u_0)$ close to a stable equilibrium in
\be\label{Ddata}
\left(\dot{B}^{\fr{N}{2}-1}_{2,1}\cap\dot{B}^{\fr{N}{2}}_{2,1}\right)\times\dot{B}^{\fr{N}{2}-1}_{2,1}.
\ee
 In fact, \eqref{CNS} is not really invariant under the transformation
\begin{gather}\label{scaling}
\begin{cases}
(\rho_0, u_0)\rightarrow (\rho_0(l x), l u_0(l x)),\\
(\rho(t,x), u(t,x))\rightarrow (\rho(l^2t,l x), l u(l^2t,l x)), \quad l>0,
\end{cases}
\end{gather}
unless  we neglect the pressure term $P=P(\rho)$. That's why Danchin introduced the hybrid Besov spaces in \cite{Danchin00}. Roughly speaking, by careful analysis of behaviors of the following hyperbolic-parabolic system
\beq\label{linear2}
\begin{cases}
\pr_tb+\Lm d=f,\\
\pr_t d-\Dl d-\Lm b=g, \quad\mathrm{with}\quad \Lm=\sqrt{-\Dl},
\end{cases}
\eeq
 both in  low frequency and high frequency parts, Danchin obtained the $L^2$-decay in time for $\rho-\bar{\rho}$ in a $L^2$ type Besov space, which is the key point to construct global solutions to \eqref{CNS}. There is an interesting question how to obtain the global well-posedness result with the large initial data in the space (\ref{Ddata}).
   Inspired by works about the classical incompressible Navier-Stokes system \cite{Cannone93,Ca97},   with the aid of Green matrix of \eqref{linear2},
 Charve and Danchin \cite{CD10},   Chen, Miao and Zhang \cite{CMZ10} obtained the global well-posedness result in the critical $L^p$
framework respectively, i.e, the high frequency part of the initial data are small in the following Besov space,
    $$
    b_{0H}\in \dot{B}^\frac{N}{p}_{p,1}, \ u_{0H}\in \dot{B}^{\frac{N}{p}-1}_{p,1},\ b_0=\rho_0-1.
    $$
In this paper,
\beno
f_L:=\sum_{q< 1} f_q,\quad\mathrm{and}\quad f_H:=\sum_{q\ge1} f_q,
\eeno
with $f\in \mathcal{S}'$ and $f_q:=\ddl f$.
 Later, Haspot \cite{Ha11} gave a new proof via the so called {\em effective velocity}.
 Similar to the incompressible Navier-Stokes system, the results in \cite{CD10,CMZ10,Ha11} imply that the  isentropic compressible Navier-Stokes system \eqref{CNS}  is global well-posed with the  highly
oscillating  initial velocity $u_0=\phi_\varepsilon$ in (\ref{1.4-in}) for $N=3$, small  $\varepsilon$ and some $\al$. However, in \cite{CD10,CMZ10,Ha11}, the low frequency part of the initial data are small in the following Besov space,
    \begin{equation}\label{L}
    b_{0L},u_{0L}\in \dot{B}^{\frac{N}{2}-1}_{2,1}.
    \end{equation}
    A natural question which arises is: what will happen when the low frequency part of the initial data are large in (\ref{L})?
   Recently, for the large volume viscosity $\lambda$,  Danchin and Mucha \cite{DM16} established the global solutions to the two dimensional compressible Navier-Stokes equations \eqref{CNS} with large initial velocity and almost constant density.

The aim of this paper is to construct global solutions to the isentropic  compressible Navier-Stokes equations \eqref{CNS} when the low frequency part of the initial velocity field is large. For example,  if $N=3$,
for any fixed $\phi\in\mathcal{S}$ with $\hat{\phi}$ supported in a compact set, say, $\mathrm{Supp}\,\hat{\phi}\subset B(0,1)$, the initial data can be chosen as
\be
(\rho_0, u_0):=(1, l^{-\beta}\nb \phi_l+\tl{\phi_\varepsilon}),
\ee
in our result, where
\beno
\phi_l(x):=\phi(lx),
\eeno
and
\be
\tl{\phi_\varepsilon}:=\varepsilon^{\fr{3}{p}-1}\sin\left(\fr{x_3}{\varepsilon}\right)(-\pr_2\tl{\phi}, \pr_1\tl{\phi}, 0), \quad\mathrm{for\ \ some}\quad \tl{\phi}\in\mathcal{S},
\ee
with some $0<l\ll1,  \beta\ge0$, $\varepsilon>0$, and $p>3$. Please refer to Remark
\ref{rem1.2} for more details.

Since \eqref{CNS} is not really invariant under the transformation (\ref{scaling}), one may guess that the Besov space $\dot{B}^{\frac{N}{2}-1}_{2,1}$ is not a good functional space for the low frequency part of the initial data. By virtue of the low frequency embedding
\be
\|\phi_L\|_{\dot{B}^{s_1}_{2,1}}\le C\| \phi_L\|_{\dot{B}^{s_2}_{2,1}}, \quad\mathrm{for\ \ all}\quad \phi\in\dot{B}^{s_2}_{2,1}, \textrm{ and } s_1>s_2,
\ee
we should consider a class of the initial data that the low frequency part of the initial data $(b_{0L},\Pe^\bot u_{0L})$ are small in the Besov space $\dot{B}^{\frac{N}{2}-1+\al}_{2,1}$ but large in $\dot{B}^{\frac{N}{2}-1}_{2,1}$.
More precisely, we will prove the global existence and uniqueness of solutions to the isentropic  compressible Navier-Stokes system \eqref{CNS} with initial data $(\rho_0, u_0)$ close to a stable equilibrium (1,0), satisfying $(\rho_0-1,u_0)\in\mathcal{E}_0$ defined by
\be\label{E0}
\mathcal{E}_0:=\left\{(\phi,\varphi)\in\mathcal{S}'_h\times\mathcal{S}'_h: (\phi_L, \Pe^\bot \varphi_L)\in \dot{B}^{\fr{N}{2}-1+\al}_{2,1}, \ \phi_H\in \dot{B}^{\fr{N}{2}}_{2,1},\ \Pe^\bot \varphi_H\in\dot{B}^{\fr{N}{2}-1}_{2,1},\  \Pe \varphi\in\dot{B}^{\fr{N}{p}-1}_{p,1} \right\},
 \ee
with some $\al>0$ and $p>2$.
To simplify the presentation, in the following we denote
\be\label{bu1}
\|(b_0, u_0)\|_{\mathcal{E}_0}:=\|(b_{0L}, \Pe^\bot u_{0L})\|_{\dot{B}^{\fr{N}{2}-1+\al}_{2,1}}+\|b_{0H}\|_{\dot{B}^{\fr{N}{2}}_{2,1}}+\|\Pe^\bot u_{0H}\|_{\dot{B}^{\fr{N}{2}-1}_{2,1}}+\|\Pe u_0\|_{\dot{B}^{\fr{N}{p}-1}_{p,1}}.
\ee

We shall construct solutions $(\rho,u)$ to system \eqref{CNS} with $(\rho-1, u)$ lying in the  spaces below.
\begin{defn}\label{space}
Let $T>0, $ and $N\ge2$.
\begin{itemize}
\item For $p>2$, $\al>0$, denote by $\mathcal{E}^{\fr{N}{2},\al}_{p}(T)$ the space of functions $(b, u)$ such that
\begin{eqnarray*}
&&(b_L,\Pe^\bot u_L)\in \widetilde{C}_T(\dot{B}^{\fr{N}{2}-1+\al}_{2,1})\cap L^1_T(\dot{B}^{\fr{N}{2}+1+\al}_{2,1})\cap \widetilde{L}^{\fr{1}{\al}}_T(\dot{B}^{\fr{N}{p}+2\al-1}_{p,1});\\
&&b_H\in\widetilde{C}_T(\dot{B}^{\fr{N}{2}}_{2,1})\cap L^1_T(\dot{B}^{\fr{N}{2}}_{2,1}),\ \ \Pe^\bot u_H\in\widetilde{C}_T(\dot{B}^{\fr{N}{2}-1}_{2,1})\cap L^1_T(\dot{B}^{\fr{N}{2}+1}_{2,1});\\
&&\Pe u\in \widetilde{C}_T(\dot{B}^{\fr{N}{p}-1}_{p,1})\cap L^1_T(\dot{B}^{\fr{N}{p}+1}_{p,1}).
\end{eqnarray*}
We shall endow the space with the norm:
\beqno
\|(b,u)\|_{\mathcal{E}^{\fr{N}{2}, \al}_{p}(T)}&:=&\|(b_L,\Pe^\bot u_L)\|_{\widetilde{L}^\infty_T(\dot{B}^{\fr{N}{2}-1+\al}_{2,1})\cap{L}^1_T(\dot{B}^{\fr{N}{2}+1+\al}_{2,1})\cap \widetilde{L}^{\fr{1}{\al}}_T(\dot{B}^{\fr{N}{p}+2\al-1}_{p,1})}\\
&&+\|b_H\|_{\widetilde{L}^\infty_T(\dot{B}^{\fr{N}{2}}_{2,1})\cap{L}^1_T(\dot{B}^{\fr{N}{2}}_{2,1})}+\|\Pe^\bot u_H\|_{\widetilde{L}^\infty_T(\dot{B}^{\fr{N}{2}-1}_{2,1})\cap{L}^1_T(\dot{B}^{\fr{N}{2}+1}_{2,1})}
 +\|\Pe u\|_{\widetilde{L}^\infty_T(\dot{B}^{\fr{N}{p}-1}_{p,1})\cap{L}^1_T(\dot{B}^{\fr{N}{p}+1}_{p,1})}.
\eeqno
\item For $p=2$, denote by $\mathcal{E}^{\fr{N}{2}}(T)$ the space of functions $(b, u)$ such that
\beno
(b_L, u)\in \widetilde{C}_T(\dot{B}^{\fr{N}{2}-1}_{2,1})\cap L^1_T(\dot{B}^{\fr{N}{2}+1}_{2,1}),\ \ b_H\in \widetilde{C}_T(\dot{B}^{\fr{N}{2}}_{2,1})\cap L^1_T(\dot{B}^{\fr{N}{2}}_{2,1}).
\eeno
with
\beno
\|(b,u)\|_{\mathcal{E}^{\fr{N}{2}}(T)}:=\|(b_L, u)\|_{\widetilde{L}^\infty_T(\dot{B}^{\fr{N}{2}-1}_{2,1})\cap{L}^1_T(\dot{B}^{\fr{N}{2}+1}_{2,1})}+\|b_H\|_{\widetilde{L}^\infty_T(\dot{B}^{\fr{N}{2}}_{2,1})\cap{L}^1_T(\dot{B}^{\fr{N}{2}}_{2,1})}.
\eeno
Indeed, $\mathcal{E}^{\fr{N}{2}}(T)$ is nothing but the space introduced by Danchin in \cite{Danchin00}.
\end{itemize}
We use the notation $\mathcal{E}^{\fr{N}{2},\al}_p$ ($\mathcal{E}^\fr{N}{2}$) if $T=\infty$, changing $[0, T]$ into $[0,\infty)$ in the definition above.
\end{defn}

The main results are stated as follows.
\begin{thm}\label{thm-p>2}
Let
\beq\label{p1}
\begin{cases}
2< p<4,\quad\quad{if}\quad N=2,\\
2< p\le4,\quad\quad{if}\quad N=3,\\
2< p\le\fr{2N}{N-2},\quad{if}\quad N\ge4,
\end{cases}
\eeq
and
\be\label{al1}
0<\al\le\fr{N-1}{2}\left(\fr12-\fr{1}{p}\right).
\ee
Assume that $(\rho_0, u_0)$ satisfies $(\rho_0-1, u_0)\in \mathcal{E}_0$. Then there exist two constants $c_0$ and $C_0>0$ depending on $N,  \mu$ and $\lambda$,  such that  if
\be\label{initial1}
\|(\rho_0-1, u_0)\|_{\mathcal{E}_0}\le c_0,
\ee
then system \eqref{CNS} admits a global solution $(\rho, u)$ with $(\rho-1, u)\in \mathcal{E}^{\fr{N}{2},\al}_p$, satisfying
\be\label{uniform1}
\|(\rho-1,u)\|_{\mathcal{E}^{\frac{N}{2},\al}_p}\le C_0\|(\rho_0-1, u_0)\|_{\mathcal{E}_0}.
\ee
Furthermore, if
    \begin{equation*}
      N\geq 3,\ \
   \textrm{ or }\ \ \Pe u_0\in \dot{B}^0_{2,1} \textrm{ when }  N=2,
    \end{equation*}
    then the solution is unique.
\end{thm}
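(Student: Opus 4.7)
Set $b := \rho - 1$ and perform the Helmholtz decomposition $u = \Pe u + \Pe^\bot u$, writing $\Pe^\bot u = -\nb \Lm^{-1} d$ with $d := \Lm^{-1}\dv u$. The system \eqref{CNS} then splits into the linearized acoustic part \eqref{linear2} for the couple $(b,d)$ perturbed by quadratic nonlinearities coming from $u\cdot\nb$, from $(1-1/\rho)\nb P$, and from the variable-coefficient part of $\mu\Dl u + (\lm+\mu)\nb\dv u$, together with a Stokes-type heat equation for $\Pe u$ whose right-hand side collects $\Pe(u\cdot\nb u)$ and $\Pe(I(b)\mathcal{A}u)$ with $I(b) := 1/(1+b) - 1$ and $\mathcal{A} := \mu\Dl + (\lm+\mu)\nb\dv$. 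I would then construct the global solution by running a Friedrichs approximation scheme, deriving a uniform a priori bound of the form \eqref{uniform1} in $\mathcal{E}^{\fr{N}{2},\al}_p(T)$, and extending in time by a standard continuation argument using the smallness \eqref{initial1}.

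\textbf{Linear ingredients.} For the high-frequency block $(b_H, \Pe^\bot u_H)$, the parabolic smoothing of the Lam\'e operator combined with the damping effect of the pressure---exploited either via an effective velocity in the spirit of \cite{Ha11} or via Danchin's hybrid-norm analysis of \eqref{linear2}---gives the $L^2$-based bounds demanded by $\mathcal{E}^{\fr{N}{2},\al}_p(T)$. For the low-frequency block, the $L^2$ energy estimate at $\al$ derivatives above scaling delivers the $\widetilde{C}_T(\dot{B}^{\fr{N}{2}-1+\al}_{2,1})\cap L^1_T(\dot{B}^{\fr{N}{2}+1+\al}_{2,1})$ part, and the crucial additional input is a dispersive bound for the Green matrix of \eqref{linear2} in the low-frequency regime (where it behaves like a damped wave operator of speed $1$), yielding the Strichartz-type norm in $\widetilde{L}^{1/\al}_T(\dot{B}^{\fr{N}{p}+2\al-1}_{p,1})$. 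The restrictions \eqref{p1}--\eqref{al1} are precisely the admissibility conditions of a Strichartz pair for the wave equation in dimension $N$. For $\Pe u$, the heat semigroup on the solenoidal component yields the critical $L^p$-based estimates $\widetilde{C}_T(\dot{B}^{\fr{N}{p}-1}_{p,1})\cap L^1_T(\dot{B}^{\fr{N}{p}+1}_{p,1})$ as in \cite{CMZ10,CD10,Ha11}.

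\textbf{Nonlinear estimates and uniqueness.} The core of the work is to show that every nonlinear source term lies in the source space of the linear estimates and is controlled by a quadratic expression in the norms entering \eqref{bu1}. I would use Bony's paraproduct decomposition, split each factor into low- and high-frequency pieces, and apply the appropriate tame estimate in either the $L^2$- or $L^p$-based hybrid Besov scale; composition estimates handle $I(b)$ since $\dot{B}^{\fr{N}{2}}_{2,1}$ is an algebra and $b_H$ is controlled there. The main obstacle, and the reason for the particular shape of $\mathcal{E}_0$ and $\mathcal{E}^{\fr{N}{2},\al}_p(T)$, is the low-frequency estimate for products coupling $\Pe u$ (only in the $L^p$-based scale) with $\Pe^\bot u$ (in the $L^2$-based scale with extra $\al$ regularity): the Strichartz norm in $\widetilde{L}^{1/\al}_T$ is then used with H\"older in time to absorb the time integration in exactly the exponent $1/\al$ that is allotted, and this is what forces \eqref{p1}--\eqref{al1} to be saturated. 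Once \eqref{uniform1} is obtained for the approximate sequence, compactness lets me pass to the limit. For uniqueness, I would write the equations satisfied by the difference $(\delta\!b, \delta u)$ of two solutions and estimate them in hybrid norms one derivative weaker; in dimensions $N\ge 3$ the critical product laws close the Gronwall loop directly, whereas for $N=2$ the borderline product inequality loses a logarithm, and the extra assumption $\Pe u_0 \in \dot{B}^{0}_{2,1}$ provides just enough integrability at low frequency to run an Osgood-type argument and recover uniqueness.
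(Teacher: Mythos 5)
Your plan reproduces the paper's overall architecture faithfully: the $(b,d)$ acoustics system plus a heat equation for $\Pe u$, wave Strichartz at low frequency producing $\widetilde{L}^{1/\alpha}_T(\dot{B}^{\fr{N}{p}+2\alpha-1}_{p,1})$, hybrid $L^2$ energy estimates at high frequency, $L^p$-critical parabolic estimates for $\Pe u$, Friedrichs approximation with compactness for existence, and Gronwall ($N\ge 3$) or a log-interpolation/Osgood argument ($N=2$) for uniqueness. However, you have correctly \emph{named} the main obstacle -- the low-frequency product coupling $\Pe u$ with $\Pe^\bot u$ -- without actually resolving it, and a direct Bony paraproduct bound on $\Pe(\dot{T}_{\Pe u}\cdot\nb\Pe^\bot u)$ in $L^1_T(\dot{B}^{\fr{N}{p}-1}_{p,1})$ does not close even with $\Pe u$ placed in the $L^p$-based critical scale: matching the $\widetilde{L}^{1/\alpha}_T$ Strichartz norm on $\nb\Pe^\bot u$ forces the low-frequency factor $\Pe u$ into a Besov norm of index $1-2\alpha$, which is strictly positive because \eqref{al1}--\eqref{p1} imply $\alpha<\fr{1}{2}$, and a positive index on the low-frequency side is not admissible in the paraproduct continuity estimate. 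Trading the Strichartz norm for the $L^2$-based interpolation scale on $\Pe^\bot u_L$ leaves the same deficit, since the available time exponents sum to the full amount and cannot absorb the extra $\alpha$.

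The step your proposal is missing is the algebraic identity
\[
\Pe\bigl(\dot{T}_{\Pe u}\cdot\nb\Pe^\bot u\bigr)=[\Pe,\dot{T}_{(\Pe u)^k}]\pr_k\Pe^\bot u,
\]
valid because $\Pe\Pe^\bot=0$ and $\dv\Pe u=0$, whose commutator estimate (Lemma~2.99 of \cite{Bahouri-Chemin-Danchin11}) moves one derivative from the high-frequency factor onto the low-frequency one, turning the problematic index $1-2\alpha$ into $-2\alpha<0$ and allowing the bound to close for every $\alpha>0$. The paper singles this out as the key estimate \eqref{key} and explains in the introduction that it is precisely what forces $\Pe u_0$ into $\dot{B}^{\fr{N}{p}-1}_{p,1}$ rather than $\dot{B}^{\fr{N}{2}-1}_{2,1}$; without it, the plan stalls at exactly the obstacle you identify. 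Once it is inserted, the rest of your outline -- the tame and composition estimates for the remaining nonlinear terms, the paralinearized energy estimates in the low/high split, the continuation criterion, and the two-tier uniqueness argument -- matches the paper's implementation.
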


\begin{rem}\label{rem1.1}
For initial data $(\rho_0, u_0)$ with $(\rho_0-1, u_0)\in \left(\dot{B}^{\fr{N}{2}-1}_{2,1}\cap\dot{B}^{\fr{N}{2}}_{2,1}\right)\times\dot{B}^{\fr{N}{2}-1}_{2,1}$ with
$$\|(\rho_0-1, u_0)\|_{\left(\dot{B}^{\fr{N}{2}-1}_{2,1}\cap\dot{B}^{\fr{N}{2}}_{2,1}\right)\times\dot{B}^{\fr{N}{2}-1}_{2,1}}:=R,$$
 one easily deduces that
    \begin{eqnarray}\label{1.37}
     \nn\|(b_0, u_0)\|_{\mathcal{E}_0}
        &\leq& C 2^{-Q\al}\left( \|P_{<-Q}b_0\|_{\dot{B}^{\frac{N}{2}-1 }_{2,1}}+\|P_{<-Q}\Pe^\bot u_0\|_{\dot{B}^{\frac{N}{2}-1 }_{2,1}}
        \right)+C\|P_{\geq-Q}b_0\|_{\dot{B}^{\frac{N}{2}-1}_{2,1}\cap{\dot{B}^{\frac{N}{2}}_{2,1}}}\\
       \nn &&+C\|P_{\geq-Q}\Pe^\bot u_0\|_{\dot{B}^{\frac{N}{2}-1}_{2,1}}+\|\Pe u_0\|_{\dot{B}^{\frac{N}{p}-1}_{p,1}}\\
        &\leq&C 2^{-Q\al}R+C\|P_{\geq-Q}b_0\|_{\dot{B}^{\frac{N}{2}-1}_{2,1}\cap{\dot{B}^{\frac{N}{2}}_{2,1}}}
        +C\|P_{\geq-Q}\Pe^\bot u_0\|_{\dot{B}^{\frac{N}{2}-1}_{2,1}}+C\|\Pe u_0\|_{\dot{B}^{\frac{N}{2}-1}_{2,1}},
    \end{eqnarray}
 where $P_{<-Q}$ and $P_{\geq-Q}$ are defined in (\ref{PQ}).
Therefore, if
    \be\label{1.38}
    C\|P_{\geq-Q}b_0\|_{\dot{B}^{\frac{N}{2}-1}_{2,1}\cap{\dot{B}^{\frac{N}{2}}_{2,1}}}+C\|P_{\geq-Q}\Pe^\bot u_0\|_{\dot{B}^{\frac{N}{2}-1}_{2,1}}+C\|\Pe u_0\|_{\dot{B}^{\frac{N}{2}-1}_{2,1}}\leq \frac{c_0}{2},
    \ee
with
\be\label{1.39}
Q:=\left[\fr{1}{\al}\log_2\fr{2CR}{c_0}\right]+1,
\ee
then the initial data $(\rho_0, u_0)$ satisfy the condition \eqref{initial1}.
\end{rem}
From Theorem \ref{thm-p>2} and Remark \ref{rem1.1}, we easily obtain the following theorem.
\begin{thm}\label{thm-p=2}
Let
\be\label{al2}
\begin{cases}
0<\al <\frac{1}{8},\quad\quad{if}\quad N=2,\\
0< \al \leq\frac14,\quad\quad{if}\quad N=3,\\
0< \al\le\fr{ N-1}{2N},\quad{if}\quad N\ge4.
\end{cases}
\ee
There exists a constant $c_1$ depending on $N, \mu$ and $\lambda$,  such that for all $(\rho_0, u_0)$ with $(\rho_0-1, u_0)\in\left(\dot{B}^{\fr{N}{2}-1}_{2,1}\cap\dot{B}^{\fr{N}{2}}_{2,1}\right)\times\dot{B}^{\fr{N}{2}-1}_{2,1}$, and $Q\in \N$,  if
\beq\label{initial3}
\nn&&2^{-Q\al}\left( \|P_{<-Q}(\rho_0-1)\|_{\dot{B}^{\frac{N}{2}-1 }_{2,1}}
+\|P_{<-Q}\Pe^\bot u_0\|_{\dot{B}^{\frac{N}{2}-1 }_{2,1}}
        \right)\\
        &&+\|P_{\ge-Q}(\rho_0-1)\|_{\dot{B}^{\fr{N}{2}-1}_{2,1}\cap\dot{B}^{\fr{N}{2}}_{2,1}}+\|P_{\ge-Q}\Pe^\bot u_0\|_{\dot{B}^{\fr{N}{2}-1}_{2,1}}+\|\Pe u_0\|_{\dot{B}^{\fr{N}{2}-1}_{2,1}}\le c_1,
\eeq
then system \eqref{CNS} admits a unique solution $(\rho, u)$ with $(\rho-1, u)\in \mathcal{E}^{\fr{N}{2}}$.
\end{thm}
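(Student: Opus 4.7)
My plan is to read Theorem~\ref{thm-p=2} as a corollary of Theorem~\ref{thm-p>2}: first use Remark~\ref{rem1.1} to convert the hypothesis \eqref{initial3} into smallness in the $\mathcal{E}_0$-norm, then invoke Theorem~\ref{thm-p>2} to get a solution in $\mathcal{E}^{\fr{N}{2},\al}_p$, and finally upgrade this solution to $\mathcal{E}^{\fr{N}{2}}$ using the $L^2$-based regularity of the initial data.

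For the reduction step I would pick admissible exponents as follows: for $N\ge4$ take $p=\fr{2N}{N-2}$, for $N=3$ take $p=4$, and for $N=2$ choose any $p\in(2,4)$ with $\al\le\fr12(\fr12-\fr1p)$, which is possible precisely because $\al<\fr18$. In each case $(p,\al)$ satisfies \eqref{p1}--\eqref{al1}. The Bernstein-type embedding $\dot{B}^{\fr{N}{2}-1}_{2,1}\hookrightarrow\dot{B}^{\fr{N}{p}-1}_{p,1}$, valid for $p\ge2$, controls the $\Pe u_0$ slot of $\mathcal{E}_0$; feeding this into estimate \eqref{1.37} of Remark~\ref{rem1.1} and invoking \eqref{initial3} yields $\|(b_0,u_0)\|_{\mathcal{E}_0}\le Cc_1$. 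Taking $c_1\le c_0/C$ places us within the scope of Theorem~\ref{thm-p>2}, which provides a global solution $(\rho,u)$ with $(\rho-1,u)\in\mathcal{E}^{\fr{N}{2},\al}_p$ and the bound \eqref{uniform1}.

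The main remaining task is to verify $(\rho-1,u)\in\mathcal{E}^{\fr{N}{2}}$. The high-frequency slot of $b$ is already shared between the two spaces, but the low-frequency parts of $b$ and $\Pe^\bot u$ are only controlled in the stronger $\dot{B}^{\fr{N}{2}-1+\al}_{2,1}$ (and the low-frequency inclusion $\dot{B}^{\fr{N}{2}-1+\al}_{2,1}\hookrightarrow\dot{B}^{\fr{N}{2}-1}_{2,1}$ fails), while $\Pe u$ is only controlled in the $L^p$-based Besov class. My strategy is to freeze the variable coefficients using the solution already constructed and revisit Danchin's linear estimates \cite{Danchin00} for the hyperbolic-parabolic system \eqref{linear2} acting on $(b,\Pe^\bot u)$, and for the heat equation
\beno
\pr_t \Pe u-\mu\Dl \Pe u=\Pe F(b,u,\nb u)
\eeno
on $\Pe u$, where $F$ gathers the convective term $-u\cdot\nb u$ and the density-dependent coefficients $I_1(b)=\fr{1}{1+b}-1$ multiplying the Lam\'e operator and the pressure gradient. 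Now everything is carried out in the pure-$L^2$ Besov framework: the data regularity supplied by the hypothesis on $(\rho_0-1,u_0)$ is the correct $L^2$-based one, and paraproduct/composition estimates, combined with the $\mathcal{E}^{\fr{N}{2},\al}_p$-bounds already in hand, should control the source in the required $L^1_T(\dot{B}^{\fr{N}{2}-1}_{2,1})$-type norms.

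The hard part of this upgrade will be handling products in $F$ where one factor comes only from the $L^p$-based $\Pe u$ slot: that factor must be absorbed through the high-frequency $L^2$-control of the companion factor or through the parabolic gain of two derivatives. Once the upgrade is in place, uniqueness follows from the uniqueness clause of Theorem~\ref{thm-p>2}: the side condition $\Pe u_0\in\dot{B}^0_{2,1}$ needed for $N=2$ is automatic from $u_0\in\dot{B}^{\fr{N}{2}-1}_{2,1}=\dot{B}^0_{2,1}$, so uniqueness in $\mathcal{E}^{\fr{N}{2},\al}_p$ holds in every dimension. Since any $\mathcal{E}^{\fr{N}{2}}$-solution automatically lies in $\mathcal{E}^{\fr{N}{2},\al}_p$ (the low-frequency inclusion $\dot{B}^{\fr{N}{2}-1}_{2,1}\hookrightarrow\dot{B}^{\fr{N}{2}-1+\al}_{2,1}$ \emph{does} hold because $\al>0$, and the global Besov embedding above covers the $\Pe u$ slot), uniqueness transfers to $\mathcal{E}^{\fr{N}{2}}$.
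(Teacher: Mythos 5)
Your reduction step (choosing $p$ as a function of $N$, then invoking Remark~\ref{rem1.1} and the embedding $\dot{B}^{\fr{N}{2}-1}_{2,1}\hookrightarrow\dot{B}^{\fr{N}{p}-1}_{p,1}$ to get $\|(b_0,u_0)\|_{\mathcal{E}_0}\le Cc_1$) and your observation that $\mathcal{E}^{\fr{N}{2}}\subset\mathcal{E}^{\fr{N}{2},\al}_p$, so uniqueness transfers, are both correct and match the paper. The gap is in the ``upgrade'' step, and it is substantive.

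You write that ``paraproduct/composition estimates, combined with the $\mathcal{E}^{\fr{N}{2},\al}_p$-bounds already in hand, should control the source in the required $L^1_T(\dot{B}^{\fr{N}{2}-1}_{2,1})$-type norms.'' This cannot work as stated. The $\mathcal{E}^{\fr{N}{2},\al}_p$-norm is \emph{weaker} on low frequencies than the $\mathcal{E}^{\fr{N}{2}}$-norm: as you note yourself, the low-frequency inclusion $\dot{B}^{\fr{N}{2}-1+\al}_{2,1}\hookrightarrow\dot{B}^{\fr{N}{2}-1}_{2,1}$ fails. But exactly this control is what you would need on, say, the $b_L\dv\Pe^\bot u_L$ piece of $b\,\dv u$ if you wanted to bound it in $L^1_T(\dot{B}^{\fr{N}{2}-1}_{2,1})$ by $X(T)$-quantities alone. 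The correct structure, which is what Lemmas~\ref{lem-a4.1}--\ref{lem-a4.4} of the paper furnish, is the \emph{mixed} bilinear bound $\|\text{source}\|_{L^1_T(\dot{B}^{\fr{N}{2}-1}_{2,1})}\le C\,X(T)\,Z(T)$: one factor is measured in the small $\mathcal{E}^{\fr{N}{2},\al}_p$-norm $X(T)$ and the other in the unknown $\mathcal{E}^{\fr{N}{2}}$-norm $Z(T)$. Feeding this into the linear estimates gives $Z(T)\le Z^0+CX(T)Z(T)$, and the argument closes not because the right side is small but because the coefficient $X(T)\le CC_0c_1$ of $Z(T)$ is small enough to be absorbed. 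Your proposal never identifies this absorption mechanism; if the source were controllable by $X(T)$-quantities alone, the whole hybrid framework of the paper would be unnecessary.

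A second, related omission: before you can run an a~priori estimate on $Z(T)$, you need to know the solution is in $\mathcal{E}^{\fr{N}{2}}(T)$ for some $T>0$ to begin with — you cannot estimate a norm you do not yet know to be finite. The paper handles this by invoking the local existence theorem (Theorem~\ref{local}, Danchin 2007) for data in $(\dot{B}^{\fr{N}{2}-1}_{2,1}\cap\dot{B}^{\fr{N}{2}}_{2,1})\times\dot{B}^{\fr{N}{2}-1}_{2,1}$, obtaining a unique local $\mathcal{E}^{\fr{N}{2}}(T^*)$-solution, identifying it with the global $\mathcal{E}^{\fr{N}{2},\al}_p$-solution by uniqueness, and then using the bootstrap $Z(T)\le Z^0+CX(T)Z(T)$ together with the continuation criterion to push $T^*=+\infty$. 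You should make this step explicit; ``revisiting Danchin's linear estimates'' on the already-constructed global solution is not, by itself, a rigorous way to conclude that $(\rho-1,u)\in\mathcal{E}^{\fr{N}{2}}$.
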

\begin{rem}\label{rem1.2}
We give some examples of {\em large} initial data $(\rho_0, u_0)$ with $(\rho_0-1, u_0)$ satisfying \eqref{initial1} and \eqref{initial3}. For the sake of simplicity, we take $\rho_0=1$.  In doing so, we just need to focus  on the initial velocity $u_0$. More precisely, for any fixed $\phi\in\mathcal{S}$ with $\|\nb\phi\|_{\dot{B}^{\fr{N}{2}-1}_{2,1}}=R$ and $\hat{\phi}$ supported in a compact set, say, $\mathrm{Supp}\,\hat{\phi}\subset B(0,1)$, let us denote
\beno
\phi_l(x):=\phi(lx).
\eeno
Then
\begin{itemize}
\item $\|\nb \phi_l\|_{\dot{B}^{\fr{N}{2}-1}_{2,1}}=\|\nb\phi\|_{\dot{B}^{\fr{N}{2}-1}_{2,1}}=R$.
\item  $\widehat{\nb \phi_l(\xi)}=l^{1-N}\widehat{\nb\phi}\left(\fr{\xi}{l}\right)$, and $\mathrm{Supp}\, \widehat{\nb \phi_l}\subset B(0,l)$.
\end{itemize}
Consequently,  for all $\beta\in[0,\al)$, taking $l>0$ and $Q\in\N$ satisfying
\be\label{l}
\begin{cases}
l<\fr{3}{4}2^{-Q},\\[3mm]
2^{-\al Q}l^{-\beta}R\le\fr{c_0}{2},
\end{cases}
\ee
we find that
\beno
\dot{\Dl}_q\nb \phi_l=0, \quad q\ge-Q,
\eeno
and
\be\label{1.32}
\|l^{-\beta}P_{<-Q}(\nb \phi_l)\|_{\dot{B}^{\fr{N}{2}-1+\al}_{2,1}}\le\fr{c_0}{2}.
\ee
{\em Example 1.} Let
\be\label{ex1}
(\rho_0, u_0):=(1, l^{-\beta}\nb \phi_l),
\ee
with $l$ satisfying \eqref{l}. Then from Remark \ref{rem1.1} and \eqref{1.32}, it is not difficult to verify that the data given in \eqref{ex1} apply to Theorems \ref{thm-p>2} and \ref{thm-p=2}. This indicates that our results allow for initial data with large potential part of the initial velocity.

In addition, noticing the smallness restriction in \eqref{1.38}, initial velocity with highly oscillating is also permitted in Theorem \ref{thm-p>2} as a by-product. For instance,  if $N=3$, the incompressible part of the initial velocity can be given as in \cite{Ca97},
\be\label{iu01}
\tl{\phi_\varepsilon}:=\varepsilon^{\fr{3}{p}-1}\sin\left(\fr{x_3}{\varepsilon}\right)(-\pr_2\tl{\phi}, \pr_1\tl{\phi}, 0), \quad{for\ \ some}\quad \tl{\phi}\in\mathcal{S},
\ee
with $\varepsilon>0$, and $p>3$. Combining \eqref{ex1} with \eqref{iu01}, we can give another example. \par
\noindent{\em Example 2.} Let
\be\label{ex2}
(\rho_0, u_0):=(1, l^{-\beta}\nb \phi_l+\tl{\phi_\varepsilon}).
\ee
Then for $N=3$, the data in \eqref{ex2} are applicable to Theorem \ref{thm-p>2}.
\end{rem}

\begin{rem}
Our results can be extended  to the case with the high frequency  part $(b_{0H}, u_{0H})$ of the initial data lying in some $L^p$- type  Besov spaces. We omit the details in this paper to avoid tedious computations.
\end{rem}

\begin{rem}
Taking the anisotropy into consideration as in \cite{CZ07} and \cite{Zhang09},  it is possible to relax the smallness  restriction on the divergence free part $\Pe u_0$ of the initial velocity $u_0$. Please refer to \cite{CG10, CGP11} for a recent panorama.
\end{rem}

\begin{rem}
Our methods can be used to other related models. Similar results for the incompressible viscoelastic fluids will be given in a forthcoming paper.
\end{rem}

It is worth pointing out that we  impose neither any symmetrical structure on the initial data,  nor largeness assumptions on the viscosity coefficients $\mu$ or $\lambda$.  What's more, our results hold for all dimensional $N\ge2$. Different from \cite{Danchin00}, our proof relies not only  on the energy estimates for the hyperbolic-parabolic system \eqref{linear2}, but also on the dispersive properties for the following acoustics system:
\be\label{acoustics}
\begin{cases}
\pr_tb+\fr{\Lm d}{\eps}={\bf f},\\
\pr_td-\fr{\Lm b}{\eps}={\bf g}.
\end{cases}
\ee
This method was used before to study the zero Mach number limit problem of the compressible Navier-Stokes equations \cite{DG99, Danchin02, DH14}. It seems that the combination of the energy estimates and Strichartz estimates has never been used to study the global well-posedness problem of the viscous compressible fluids.   In this paper, we try to apply this idea to the isentropic compressible Navier-Stokes equations.

Let us now explain how to construct our solution  spaces and show the ingredients of the proof. First of all, just as in \cite{Danchin00},  writing $\rho=1+b$, and decomposing $u=\Pe^\bot u+\Pe u$,
 where
 \beno
 \Pe^\bot:=-\nb(-\Dl)^{-1}\dv, \quad\mathrm{and}\quad \Pe:=\mathbb{I}-\Pe^\bot,
 \eeno
we reformulate \eqref{CNS} as follows:
\beq\label{bcu}
\begin{cases}
\pr_t b+{\dv \Pe^\bot u}=-\dv(bu),\\
\pr_t \Pe^\bot u-\nu \Dl\Pe^\bot u+{\nb b}=-\Pe^\bot\left(u\cdot\nb u+K( b){\nb b}+I( b)\mathcal{A}u\right),\\
(b,\Pe^\bot u)|_{t=0}=(b_0,\Pe^\bot u_0),
\end{cases}
\eeq
and
\beq\label{biu}
\begin{cases}
\pr_t \Pe u-\mu \Dl\Pe u=-\Pe\left(u\cdot\nb u+I( b)\mathcal{A}u\right),\\
\Pe u|_{t=0}=\Pe u_0,
\end{cases}
\eeq
where $b_0:=\rho_0-1$, $I(a):=\fr{a}{1+a}$, and $K(a):=\fr{P'(1+ a)}{1+ a}-1$. For the sake of simplicity, $\nu$ is assumed to be 1 throughout this paper. Moreover,
the condition (\ref{a-P})
  ensures that $K(0)=0$. Let us denote $d:=\Lm^{-1}\dv u$. From \eqref{bcu}, one easily deduces that $(b,d)$ solves
\beq\label{bd}
\begin{cases}
\pr_t b+{\Lm d}=-\dv(bu),\\
\pr_t d- \Dl d-{\Lm b}=-\Lm^{-1}\dv\left(u\cdot\nb u+K( b){\nb b}+I( b)\mathcal{A}u\right),\\
(b,d)|_{t=0}=(b_0,d_0).
\end{cases}
\eeq
Since $\mathrm{curl}\Pe^\bot=0$, it is easy to verity that \eqref{bd} is equivalent to \eqref{bcu}. In the following,  we shall use \eqref{bd} to replace \eqref{bcu} and do not
 make a distinction between $\Pe^\bot u$ and $d$ in the absence of confusion.

Next, in order to show our ideas more clearly, we divide Danchin's arguments in \cite{Danchin00} into the following three parts:
 \begin{description}
  \item[(i)]\ \ Global estimates for the  linearized system of \eqref{bd}.
  \item[(ii)]\ Commutator estimates for the convection terms.
  \item[(iii)]Product estimates for  other nonlinear terms.
\end{description}
Combining {\bf(i)} with {\bf(ii)}, Danchin  established the global estimates for the paralinearized system of \eqref{bd} with $(b_0, d_0)\in\left(\dot{B}^{\fr{N}{2}-1}_{2,1}\cap\dot{B}^{\fr{N}{2}}_{2,1}\right)\times\dot{B}^{\fr{N}{2}-1}_{2,1}$, see Proposition 10.23 in \cite{Bahouri-Chemin-Danchin11}, for example. Then substituting the results in {\bf(iii)} into the estimates obtained in {\bf(i)} and {\bf(ii)} yields the global estimates of  $(b, d)$. Our proof follows this line, but aside from part {\bf(i)}, we develop different approaches to deal with parts {\bf(ii)} and {\bf(iii)}. In particular, the dispersive properties of the system of acoustics \eqref{acoustics} is taken into consideration. Indeed, for $(b_0,d_0)$ satisfying
\be\label{extra}
(b_0,d_0)\in\left(\dot{B}^{\fr{N}{2}-1}_{2,1}\cap\dot{B}^{\fr{N}{2}+\al}_{2,1}\right)\times\left(\dot{B}^{\fr{N}{2}-1}_{2,1}\cap\dot{B}^{\fr{N}{2}-1+\al}_{2,1}\right),
\ee
it has been shown in \cite{Danchin02} that some Strichartz norms of $(b,d)$ decay algebraically with respect to the Mach number $\eps$. In our case, $\eps=1$, we can not expect any   decay with respect to the Mach number. Nevertheless, in the low frequency part, we still gain some decay by means of the low frequency embedding:
\beq\label{decay}
\nn\|P_{<-Q}(b, d)\|_{\widetilde{L}^{r}_T(\dot{B}^{\fr{N}{p}-1+\al+\fr{1}{r}}_{p,1})}&\lesssim& \left(\|P_{<-Q}(b_0, d_0)\|_{\dot{B}^{\fr{N}{2}-1+\al}_{2,1}}+\|P_{<-Q}({\bf f, g})\|_{L^1_T(\dot{B}^{\fr{N}{2}-1+\al}_{2,1})}\right)\\
&\lesssim&2^{-\al Q}\left(\|(b_0, d_0)\|_{\dot{B}^{\fr{N}{2}-1}_{2,1}}+\|P_{<-Q}({\bf f, g})\|_{L^1_T(\dot{B}^{\fr{N}{2}-1}_{2,1})}\right),
\eeq
with
\be\label{pr}
\al>0,\quad p\ge2, \quad  \fr{2}{r}\le \min\left\{1, (N-1)\left(\fr12-\fr{1}{p}\right)\right\}, \quad (r, p, N)\neq (2, \infty, 3).
\ee
This is the basic idea underneath our approach, which leads us to believe that it is possible to  construct global solutions to \eqref{CNS} with large potential part $\Pe^\bot u_0$   of initial velocity  in    $\dot{B}^{\fr{N}{2}-1}_{2,1}$.

Motivated by \eqref{decay}, we just impose the extra regularity on the low frequency part of $(b_0, d_0)$. More precisely,
\be\label{bd0}
(b_{0L}, d_{0L})\in \dot{B}^{\fr{N}{2}-1+\al}_{2,1}\times\dot{B}^{\fr{N}{2}-1+\al}_{2,1}, \quad (b_{0H}, d_{0H})\in \dot{B}^{\fr{N}{2}}_{2,1}\times\dot{B}^{\fr{N}{2}-1}_{2,1}.
\ee
 In order to handle parts {\bf(ii)} and {\bf(iii)} under the condition \eqref{bd0}, we need to  compensate the loss of    critical norms of $(b, d)$ in the low frequency part. To this end,  we set
 $$
 r=\fr{1}{\al}
 $$
in \eqref{decay}. In this way, $p=2$ is not permitted in \eqref{pr} any more, otherwise $\al=0$. This explains the condition \eqref{al1} in Theorem \ref{thm-p>2}.

On the other hand, the divergence free part $\Pe u$ of the velocity $u$  satisfies the parabolic system \eqref{biu}, and hence possesses no dispersive property at all. Accordingly,  it seems that it is reasonable to assume
\be\label{hiu0}
\Pe u_0\in\dot{B}^{\fr{N}{2}-1}_{2,1}.
\ee
As a result, by the property of heat equation, the space for $\Pe u$ should be
$$
\widetilde{L}^\infty_T(\dot{B}^{\fr{N}{2}-1}_{2,1})\cap{L}^1_T(\dot{B}^{\fr{N}{2}+1}_{2,1}),
$$
and we have to bound the right hand side of \eqref{biu} in $L^1_T(\dot{B}^{\fr{N}{2}-1}_{2,1})$. Unfortunately,  we do not know how to  control $\|\Pe(\dot{T}_{\Pe u}\nb\Pe^\bot u)\|_{L^1_T(\dot{B}^{\fr{N}{2}-1}_{2,1})}$ since from \eqref{decay} and the property of heat equation, we just have
\be\label{s-cu}
\Pe^\bot u\in\widetilde{L}^\infty_T(\dot{B}^{\fr{N}{2}-1+\al}_{2,1})\cap{L}^1_T(\dot{B}^{\fr{N}{2}+1+\al}_{2,1})\cap{\widetilde{L}^{\fr{1}{\al}}_T(\dot{B}^{\fr{N}{p}-1+2\al}_{p,1})}, \quad\mathrm{with}\quad p>2.
\ee
To overcome this problem, owing to the fact that $\Pe\Pe^\bot=0$, we find that
\be\label{com-Tiucu}
\Pe(\dot{T}_{\Pe u}\cdot\nb\Pe^\bot u)=[\Pe,\dot{T}_{(\Pe u)^k}]\pr_k\Pe^\bot u.
\ee
Then the commutator estimate (Lemma 2.99 in \cite{Bahouri-Chemin-Danchin11}) enables us to bound
\beno
\|[\Pe,\dot{T}_{(\Pe u)^k}]\pr_k\Pe^\bot u\|_{L^1_T(\dot{B}^{\fr{N}{2}-1}_{2,1})}\\
\lesssim\|\nb \Pe u\|_{\widetilde{L}^{\fr{1}{1-\al}}_T(\dot{B}^{\fr{N}{p^*}-2\al}_{p^*,1})}\|\nb\Pe^\bot u\|_{{\widetilde{L}^{\fr{1}{\al}}_T(\dot{B}^{\fr{N}{p}-2+2\al}_{p,1})}},
\eeno
with $p*:=\fr{2p}{p-2}$,
provided
\be
\fr{N}{p^*}-2\al+1\le1, \quad\mathrm{i. e.}\quad \al\ge\fr{N}{2}\left(\fr12-\fr{1}{p}\right),
\ee
which contradicts to \eqref{pr}. The above analysis has proved a blind alley if the assumption on $\Pe u_0$ is given by \eqref{hiu0}. However, if
\be\label{iu0}
\Pe u_0\in\dot{B}^{\fr{N}{p}-1}_{p,1}
\ee
with $p$ the same as in \eqref{decay}, the above method to deal with $\Pe(\dot{T}_{\Pe u}\cdot\nb\Pe^\bot u)$ works since
\be\label{key}
\|[\Pe,\dot{T}_{(\Pe u)^k}]\pr_k\Pe^\bot u\|_{L^1_T(\dot{B}^{\fr{N}{p}-1}_{p,1})}\\
\lesssim\|\nb \Pe u\|_{\widetilde{L}^{\fr{1}{1-\al}}_T(\dot{B}^{-2\al}_{\infty,1})}\|\nb\Pe^\bot u\|_{{\widetilde{L}^{\fr{1}{\al}}_T(\dot{B}^{\fr{N}{p}-2+2\al}_{p,1})}},
\ee
holds for all  $\al>0$. Combining \eqref{bd0} with \eqref{iu0}, the condition on  $(b_0, u_0)$ becomes
\be\label{bu0}
\begin{cases}
(b_{0L}, \Pe^\bot u_{0L})\in\dot{B}^{\fr{N}{2}-1+\al}_{2,1},\\[3mm]
b_{0H}\in\dot{B}^{\fr{N}{2}}_{2,1}, \ \ \Pe^\bot u_{0H}\in\dot{B}^{\fr{N}{2}-1}_{2,1},\\[3mm]
\Pe u_0\in\dot{B}^{\fr{N}{p}-1}_{p,1}, \quad\mathrm{with}\quad p>2.
\end{cases}
\ee
This explains the construction of $\mathcal{E}_0$ in \eqref{E0}.

The rest part of this paper is organized as follows. In Section 2, we introduce the tools ( the Littlewood-Paley decomposition
and paradifferential calculus) and give some product estimates in Besov spaces. In Section 3, we recall some properties of the system of acoustics, transport and heat equations.   Section 4 is devoted to the global a priori estimates of system \eqref{bcu}--\eqref{biu}. The proof of Theorem \ref{thm-p>2} is given in Section 5. In Section 6, we prove Theorem \ref{thm-p=2}. Some nonlinear estimates needed in the proof of Theorems \ref{thm-p>2} and \ref{thm-p=2} are put in the Appendix in Section 7.

\bigbreak\noindent{\bf Notation.}\par
\begin{enumerate}
\item For $f\in \mathcal{S}'$, $Q\in \N$,  denote $f_q:=\ddl f$, and
	\be\label{PQ}
	P_{<-Q}f:=\sum_{q<-Q} f_q,\quad\ P_{\geq-Q}f:=f-P_{<-Q}f=\sum_{q\geq-Q}f_q.
\ee
In particular,
\beno
f_L:=\sum_{q< 1} f_q,\quad\mathrm{and}\quad f_H:=\sum_{q\ge1} f_q.
\eeno

\item Denote $p*:=\fr{2p}{p-2}$, i. e. $\fr{1}{p*}=\fr{1}{2}-\fr{1}{p}$, for $p\ge2$.
\item Throughout the paper, $C$ denotes various ``harmless'' positive constants, and
we sometimes use the notation $A \lesssim B$ as an equivalent to $A \le CB$. The
notation $A \approx B$ means that $A \lesssim B$ and $B \lesssim A$.
\end{enumerate}
\section{The Functional Tool Box}
\noindent The results of the present paper rely on the use of  a
dyadic partition of unity with respect to the Fourier variables, the so-called the
\textit{Littlewood-Paley  decomposition}. Let us briefly explain how
it may be built on $\R^N$, and the readers may see more details
in \cite{Bahouri-Chemin-Danchin11,Ch1}. Let $(\chi, \varphi)$ be a couple of $C^\infty$ functions satisfying
$$\hbox{Supp}\,\chi\subset\left\{|\xi|\leq\frac{4}{3}\right\},
\ \ \ \
\hbox{Supp}\,\varphi\subset\left\{\frac{3}{4}\leq|\xi|\leq\frac{8}{3}\right\},
$$
and
$$\chi(\xi)+\sum_{q\geq0}\varphi(2^{-q}\xi)=1,$$

$$\sum_{q\in \mathbb{Z}}\varphi(2^{-q}\xi)=1, \quad \textrm{for} \quad \xi\neq0.$$
Set $\varphi_q(\xi)=\varphi(2^{-q}\xi),$
$h_q=\mathcal{F}^{-1}(\varphi_q),$ and
$\tilde{h}=\mathcal{F}^{-1}(\chi)$. The dyadic blocks and the low-frequency cutoff operators are defined for all $q\in\mathbb{Z}$ by
$$\dot{\Delta}_{q}u=\varphi(2^{-q}\mathrm{D})u=\int_{\R^N}h_q(y)u(x-y)dy,$$
$$\dot{S}_qu=\chi(2^{-q}\mathrm{D})u=\int_{\R^N}\tl{h}_q(y)u(x-y)dy.$$
Then
\begin{equation}\label{e2.1}
u=\sum_{q\in \mathbb{Z}}\ddl u,
\end{equation}
holds for tempered distributions {\em modulo polynomials}. As working modulo polynomials is not appropriate for nonlinear problems, we
shall restrict our attention to the set $\mathcal {S}'_h$ of tempered distributions $u$ such that
$$
\lim_{q\rightarrow-\infty}\|\dot{S}_qu\|_{L^\infty}=0.
$$
Note that \eqref{e2.1} holds true whenever $u$ is in $\mathcal{S}'_h$ and that one may write
$$
\dot{S}_qu=\sum_{p\leq q-1}\dot{\Dl}_{p}u.
$$
Besides, we would like to mention that the Littlewood-Paley decomposition
has a nice property of quasi-orthogonality:
\begin{equation}\label{e2.2}
\dot{\Delta}_p\dot{\Delta}_qu\equiv 0\ \ \hbox{if}\ \ \ |p-q|\geq 2\ \
\hbox{and}\ \ \dot{\Delta}_p(\dot{S}_{q-1}u\dot{\Delta}_qu)\equiv 0\ \ \hbox{if}\ \ \
|p-q|\geq 5.
\end{equation}
One can now  give the definition of
homogeneous Besov spaces.
\begin{defn}\label{D2.1}
For $s\in\R$, $(p,r)\in[1,\infty]^2$, and
$u\in\mathcal{S}'(\R^N),$ we set
$$\|u\|_{\dot{B}_{p,r}^s}=\left\|2^{ sq}\|\dot{\Delta}_qu\|_{L^p} \right\|_{\ell^r}.$$
We then define the spaces
$\dot{B}_{p,r}^s:=\{u\in\mathcal{S}'_h(\R^N),\
\|u\|_{\dot{B}_{p,r}^s}<\infty\}$.
\end{defn}

The following lemma describes the way derivatives act on spectrally localized functions.
\begin{lem}[Bernstein's inequalities]\label{Bernstein}
Let $k\in\N$ and $0<r<R$. There exists a constant $C$ depending on $r, R$ and $d$ such that for all $(a,b)\in[1,\infty]^2$, we have for all $\lm>0$ and multi-index $\al$
\begin{itemize}
\item If $\mathrm{Supp} \hat{f}\subset B(0,\lm R)$, then $\sup_{\al=k}\|\pr^\al f\|_{L^b}\le C^{k+1}\lm^{k+d(\fr1a-\fr1b)}\|f\|_{L^a}$.
\item If $\mathrm{Supp} \hat{f}\subset \mathcal{C}(0,\lm r, \lm R)$, then $C^{-k-1}\lm^k\|f\|_{L^a}\le\sup_{|\al|=k}\|\pr^\al f\|_{L^a}\le C^{k+1}\lm^k\|f\|_{L^a}$
\end{itemize}
\end{lem}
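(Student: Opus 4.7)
The plan is to realize each spectrally localized function as the convolution of $f$ with an appropriately rescaled Schwartz kernel, and then apply Young's convolution inequality, the scaling factor $\lm$ being absorbed via a change of variables.

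For the ball case, pick a bump $\phi\in C_c^\infty(\R^N)$ that equals $1$ on $\overline{B(0,R)}$ and is supported in $B(0,2R)$. Since $\mathrm{Supp}\,\hat f\subset B(0,\lm R)$, we have $\hat f(\xi)=\phi(\xi/\lm)\hat f(\xi)$, so for any multi-index $\al$ with $|\al|=k$,
\beno
\widehat{\pr^\al f}(\xi)=\lm^{k}\psi_\al(\xi/\lm)\hat f(\xi),\qquad \psi_\al(\eta):=(i\eta)^\al\phi(\eta),
\eeno
whence $\pr^\al f=\lm^{k+N}(\mathcal F^{-1}\psi_\al)(\lm\,\cdot)\ast f$. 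Choosing $c\in[1,\infty]$ with $1+\fr1b=\fr1c+\fr1a$ and combining Young's inequality with the scaling identity $\|\lm^{N}g(\lm\,\cdot)\|_{L^c}=\lm^{N(1/a-1/b)}\|g\|_{L^c}$ yields the first inequality, modulo the bound $\|\mathcal F^{-1}\psi_\al\|_{L^c}\le C^{k+1}$ addressed below.

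For the annulus case, pick $\tl\phi\in C_c^\infty(\R^N\setminus\{0\})$ equal to $1$ on $\{r\le|\xi|\le R\}$ and supported in $\{r/2\le|\xi|\le 2R\}$. The upper bound is obtained exactly as above with $\tl\phi$ in place of $\phi$. For the lower bound I exploit the multinomial identity $|\xi|^{2k}=\sum_{|\al|=k}\fr{k!}{\al!}\xi^{2\al}$; on the support of $\tl\phi(\cdot/\lm)$ the function $|\xi/\lm|^{-2k}$ is smooth, so
\beno
\hat f(\xi)=\tl\phi(\xi/\lm)\hat f(\xi)=\lm^{-k}\sum_{|\al|=k}\eta_\al(\xi/\lm)\,\widehat{\pr^\al f}(\xi),\qquad \eta_\al(\eta):=\fr{k!}{\al!}\,\fr{(-i)^{|\al|}\tl\phi(\eta)\,\eta^\al}{|\eta|^{2k}}.
\eeno
Each $\eta_\al$ is Schwartz and compactly supported, and inverting the Fourier transform followed by Young's $L^1\ast L^a\hookrightarrow L^a$ embedding gives $\|f\|_{L^a}\le C^{k+1}\lm^{-k}\sup_{|\al|=k}\|\pr^\al f\|_{L^a}$.

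The only delicate point is the geometric growth of the constants in $k$. For both $\psi_\al$ and $\eta_\al$, the dependence on $k$ enters only through multiplication of a fixed smooth compactly supported function by the monomial $\eta^\al$ (and, in the annulus case, a nonvanishing smooth factor coming from $|\eta|^{-2k}$ on the relevant annulus). Differentiating $m$ times produces combinatorial weights bounded by $C^{k+m+1}$; fixing any $m>N$ and integrating by parts against $(1+|x|)^{-m}$ yields the pointwise bound $|\mathcal F^{-1}\psi_\al(x)|+|\mathcal F^{-1}\eta_\al(x)|\le C^{k+1}(1+|x|)^{-m}$, which controls the required $L^c$ and $L^1$ norms uniformly in $\al$. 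Summing over the $O(k^N)$ multi-indices with $|\al|=k$ is absorbed into a slightly larger $C^{k+1}$. This bookkeeping is the only technical nuisance; all other steps are direct Fourier manipulations.
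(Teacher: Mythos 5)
Your proof is correct and is the standard Littlewood--Paley proof of Bernstein's inequalities (convolution with a dilated Schwartz kernel plus Young's inequality for the ball and upper-annulus estimates; the multinomial identity $|\xi|^{2k}=\sum_{|\al|=k}\fr{k!}{\al!}\xi^{2\al}$ to invert the differentiation for the lower bound). The paper itself does not prove this lemma---it is quoted as a known result from Bahouri--Chemin--Danchin---so there is no internal proof to compare against, but your argument is exactly the one given there.

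Two small remarks worth keeping in mind. First, the first inequality implicitly requires $a\le b$: Young's inequality needs the exponent $c$ defined by $1+\fr1b=\fr1c+\fr1a$ to satisfy $c\ge1$, i.e.\ $\fr1a\ge\fr1b$; the lemma as stated with $(a,b)\in[1,\infty]^2$ has this constraint missing, and your proof correctly uses it even if you do not spell it out. Second, the bookkeeping on the constant is the right idea but the phrase ``integrating by parts against $(1+|x|)^{-m}$'' is slightly loose; the clean formulation is $(1+|x|)^m\,|\mathcal F^{-1}\psi_\al(x)|\lesssim\sum_{|\beta|\le m}\|\pr^\beta\psi_\al\|_{L^1}$, and then the compact support of $\psi_\al$ (resp.\ $\eta_\al$) together with Leibniz's rule gives $\|\pr^\beta\psi_\al\|_{L^1}\le C^{k+1}$ uniformly in $|\al|=k$ and $|\beta|\le m$. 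Since there are $\binom{k+N-1}{N-1}=O(k^{N-1})$ multi-indices of length $k$, the sum in the annulus lower bound is absorbed into a marginally larger geometric constant, as you say. With those clarifications the argument is complete.
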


Let us now state some classical
properties for the Besov spaces.
\begin{prop}\label{prop-classical}
For all $s, s_1, s_2\in\R$, $1\le p, p_1, p_2, r, r_1, r_2\le\infty$, the following properties hold true:

\begin{itemize}
\item If $p_1\leq p_2 $  and $r_1\leq r_2,$ then
$\dot{B}_{p_1,r_1}^{s}\hookrightarrow
\dot{B}_{p_2,r_2}^{s-\frac{N}{p_1}+\frac{N}{p_2}}$.

\item If $s_1\neq s_2$ and $\theta\in(0,1)$,
$\left[\dot{B}_{p,r_1}^{s_1},\dot{B}_{p,r_2}^{s_2}\right]_{(\theta,r)}=\dot{B}_{p,r}^{\theta
s_1+(1-\theta)s_2}$.

\item For any smooth homogeneous of degree $m\in\Z$ function $F$ on $\R^N\backslash\{0\}$, the operator $F(D)$ maps $\dot{B}^{s}_{p,r}$ in $\dot{B}^{s-m}_{p,r}$.
\end{itemize}
\end{prop}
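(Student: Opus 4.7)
The plan is to establish the three items of Proposition~\ref{prop-classical} separately, all via the Littlewood--Paley machinery recalled earlier in this section.

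For item (i), the embedding, I would invoke Bernstein's inequality (Lemma~\ref{Bernstein}): since $\ddl u$ has Fourier support in an annulus of size comparable to $2^q$, one has $\|\ddl u\|_{L^{p_2}} \le C\, 2^{qN(1/p_1-1/p_2)}\|\ddl u\|_{L^{p_1}}$ whenever $p_1\le p_2$. Multiplying by $2^{q(s-N/p_1+N/p_2)}$ gives the block-level bound $2^{q(s-N/p_1+N/p_2)}\|\ddl u\|_{L^{p_2}} \le C\, 2^{qs}\|\ddl u\|_{L^{p_1}}$. Taking $\ell^{r_2}$ norms in $q$ and then using the elementary sequence embedding $\ell^{r_1}\hookrightarrow\ell^{r_2}$ (valid because $r_1\le r_2$) on the right-hand side yields the claim.

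For item (iii), the action of homogeneous Fourier multipliers, I would localize in frequency and exploit homogeneity. Fix $\tilde\varphi\in C_c^\infty(\R^N\setminus\{0\})$ equal to $1$ on the support of $\varphi$; then $F(D)\ddl u = \mathcal{F}^{-1}\bigl(F(\xi)\tilde\varphi(2^{-q}\xi)\bigr)\ast\ddl u$. Homogeneity of degree $m$ gives $F(\xi)\tilde\varphi(2^{-q}\xi) = 2^{qm}(F\tilde\varphi)(2^{-q}\xi)$, and since $F\tilde\varphi$ is smooth and compactly supported away from the origin, $G:=\mathcal{F}^{-1}(F\tilde\varphi)$ is Schwartz. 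A change of variables then yields $F(D)\ddl u = 2^{q(m+N)}G(2^q\cdot)\ast\ddl u$, and Young's convolution inequality furnishes $\|F(D)\ddl u\|_{L^p}\le 2^{qm}\|G\|_{L^1}\|\ddl u\|_{L^p}$. Multiplying by $2^{q(s-m)}$, taking $\ell^r$ norms, and checking that $F(D)u$ still lies in $\mathcal{S}'_h$ finishes the proof.

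For item (ii), the real interpolation identity, I would reduce to the classical interpolation of weighted sequence spaces. Introduce $\ell^r_s(L^p)$ consisting of sequences $(f_q)_{q\in\Z}$ with $\|(f_q)\| := \|(2^{qs}\|f_q\|_{L^p})\|_{\ell^r}<\infty$. Thanks to the quasi-orthogonality property \eqref{e2.2}, the maps $\mathcal{C}:u\mapsto(\ddl u)_q$ and $\mathcal{R}:(f_q)\mapsto\sum_q f_q$ form a retraction/coretraction pair between $\ell^r_s(L^p)$ and $\dot B^s_{p,r}$, with $\mathcal{R}\circ\mathcal{C}=\mathrm{id}$ on $\mathcal{S}'_h$. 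Since real interpolation commutes with retracts, it suffices to prove $\bigl(\ell^{r_1}_{s_1}(L^p),\,\ell^{r_2}_{s_2}(L^p)\bigr)_{\theta,r} = \ell^r_{\theta s_1+(1-\theta)s_2}(L^p)$, which is classical and follows from an explicit computation of the $K$-functional of a sequence, with the hypothesis $s_1\neq s_2$ ensuring the required discrete Hardy-type inequalities hold without logarithmic loss.

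The main obstacle I expect is item (ii): while parts (i) and (iii) are essentially one-line consequences of Bernstein and Young respectively, the retraction/coretraction setup must be handled with care because the summation map $\mathcal{R}$ is only well-defined modulo polynomials on general tempered distributions, so the restriction to $\mathcal{S}'_h$ is essential; and the $K$-functional computation genuinely requires $s_1\neq s_2$ to produce a weighted $\ell^r$ norm rather than a logarithmically corrected space.
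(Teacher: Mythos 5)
The paper does not actually prove this proposition; it is stated as a list of classical facts, with the preamble of Section~2 pointing the reader to \cite{Bahouri-Chemin-Danchin11,Ch1} for details. Your proposal is therefore a supplementary proof, and the relevant question is whether it is correct.

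Items (i) and (iii) are fine and are the standard one-line arguments via Bernstein and Young respectively; your scaling computation $F(D)\dot\Delta_q u = 2^{q(m+N)}G(2^q\cdot)\ast\dot\Delta_q u$ with $G=\mathcal{F}^{-1}(F\tilde\varphi)$ Schwartz is exactly right and gives a $q$-uniform constant. You are also right to flag the $\mathcal{S}'_h$ point for (iii): in general one needs a side condition such as $s-m<N/p$, or $s-m=N/p$ with $r=1$, for $\dot B^{s-m}_{p,r}$ to be a genuine space of tempered distributions rather than a space modulo polynomials; this caveat is already implicit in the statement and can be left as such.

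Item (ii), however, has a genuine gap as written. The map $\mathcal{R}:(f_q)_q\mapsto\sum_q f_q$ is \emph{not} bounded from $\ell^r_s(L^p)$ to $\dot B^s_{p,r}$: an arbitrary sequence $(f_q)$ in the weighted sequence space carries no frequency localization whatsoever, so $\dot\Delta_{q'}\bigl(\sum_q f_q\bigr)=\sum_q\dot\Delta_{q'}f_q$ is a genuinely infinite sum with no decay in $|q-q'|$, and one cannot control the Besov norm of $\mathcal{R}(f_q)$ by $\|(f_q)\|_{\ell^r_s(L^p)}$. The quasi-orthogonality property \eqref{e2.2} cannot rescue this step: it is a statement about the dyadic blocks $\dot\Delta_q u$ of a single distribution $u$, and it says nothing about a generic sequence $(f_q)$. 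The standard repair is to take $\mathcal{R}:(f_q)_q\mapsto\sum_q\tilde{\dot\Delta}_qf_q$ with $\tilde{\dot\Delta}_q:=\dot\Delta_{q-1}+\dot\Delta_q+\dot\Delta_{q+1}$. Then $\dot\Delta_{q'}\tilde{\dot\Delta}_q$ vanishes unless $|q-q'|\le 2$, so $\dot\Delta_{q'}\mathcal{R}(f_q)$ is a sum of $O(1)$ terms and boundedness of $\mathcal{R}$ is immediate, while $\mathcal{R}\circ\mathcal{C}=\mathrm{id}$ on $\mathcal{S}'_h$ follows from $\tilde{\dot\Delta}_q\dot\Delta_q=\dot\Delta_q$ and the summability of the blocks on $\mathcal{S}'_h$. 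With this corrected $\mathcal{R}$, the reduction to interpolation of weighted sequence spaces is sound, and the $K$-functional computation does indeed require $s_1\neq s_2$ exactly as you indicate.
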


Next we  recall a few nonlinear estimates in Besov spaces which may be
obtained by means of paradifferential calculus. Firstly introduced
 by J. M. Bony in \cite{Bony81}, the paraproduct between $f$
and $g$ is defined by
$$\dot{T}_fg=\sum_{q\in\mathbb{Z}}\dot{S}_{q-1}f\dot{\Delta}_qg,$$
and the remainder is given by
$$\dot{R}(f,g)=\sum_{q\in\Z}\tilde{\dot{\Delta}}_qf{\dot{\Delta}}_qg$$
with
$$\tilde{\dot{\Delta}}_qf=(\dot{\Delta}_{q-1}+\dot{\Delta}_{q}+\dot{\Delta}_{q+1})f.$$
We have the following so-called Bony's decomposition:
 \be\label{Bony-decom}
fg=\dot{T}_fg+\dot{T}_gf+\dot{R}(f,g)=\dot{T}_fg+\dot{T}'_gf,
 \ee
where $\dot{T}'_gf:=\dot{T}_gf+\dot{R}(f,g)$. The paraproduct $\dot{T}$ and the remainder $\dot{R}$ operators satisfy the following
continuous properties.

\begin{prop}[\cite{Bahouri-Chemin-Danchin11}]\label{p-TR}
For all $s\in\mathbb{R}$, $\sigma\ge0$, and $1\leq p, p_1, p_2\leq\infty,$ the
paraproduct $\dot T$ is a bilinear, continuous operator from $\dot{B}_{p_1,1}^{-\sigma}\times \dot{B}_{p_2,1}^s$ to
$\dot{B}_{p,1}^{s-\sigma}$ with $\frac{1}{p}=\frac{1}{p_1}+\frac{1}{p_2}$. The remainder $\dot R$ is bilinear continuous from
$\dot{B}_{p_1, 1}^{s_1}\times \dot{B}_{p_2,1}^{s_2}$ to $
\dot{B}_{p,1}^{s_1+s_2}$ with
$s_1+s_2>0$, and $\frac{1}{p}=\frac{1}{p_1}+\frac{1}{p_2}$.
\end{prop}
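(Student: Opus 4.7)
The plan is to prove both bounds via the standard paradifferential argument: dyadic decomposition, spectral localization, Hölder's inequality, and geometric summation in $\ell^1$. The two statements differ only in how the Fourier support of the pieces $\dot{S}_{q-1}f\,\dot{\Delta}_q g$ (for the paraproduct) versus $\tilde{\dot{\Delta}}_q f\,\dot{\Delta}_q g$ (for the remainder) behaves, and each hypothesis is tied to exactly one geometric summation.

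For the paraproduct, the second quasi-orthogonality relation in \eqref{e2.2} implies that $\dot{S}_{q-1}f\,\dot{\Dl}_q g$ has Fourier support in an annulus of size $\sim 2^q$, so only $j$ with $|j-q|\le N_0$ contribute to $\dot{\Dl}_j(\dot{T}_f g)$. The key auxiliary bound is
$$\|\dot{S}_{q-1}f\|_{L^{p_1}}\le\sum_{q'\le q-2}\|\dot{\Dl}_{q'}f\|_{L^{p_1}}\le 2^{q\sigma}\|f\|_{\dot{B}^{-\sigma}_{p_1,1}},$$
where the last inequality uses $\sigma\ge0$, since then $2^{-q\sigma}\le 2^{-q'\sigma}$ for $q'\le q-2$. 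Combining this with Hölder (which is clean because $\tfrac1p=\tfrac{1}{p_1}+\tfrac{1}{p_2}\le1$ under the standing hypothesis $p\ge 1$),
$$\|\dot{\Dl}_j\dot{T}_f g\|_{L^p}\lesssim\sum_{|j-q|\le N_0}2^{q\sigma}\|f\|_{\dot{B}^{-\sigma}_{p_1,1}}\|\dot{\Dl}_q g\|_{L^{p_2}}.$$
Multiplying by $2^{j(s-\sigma)}$ and summing over $j$ absorbs the $2^{q\sigma}$ against $2^{-j\sigma}$ up to a constant and yields $\|\dot{T}_f g\|_{\dot{B}^{s-\sigma}_{p,1}}\lesssim\|f\|_{\dot{B}^{-\sigma}_{p_1,1}}\|g\|_{\dot{B}^s_{p_2,1}}$, since only a fixed finite number of $q$'s contribute to each $j$.

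For the remainder, the crucial difference is that $\tilde{\dot{\Dl}}_q f\,\dot{\Dl}_q g$ has Fourier support only in a \emph{ball} of radius $\sim 2^q$, so the condition $\dot{\Dl}_j\bigl(\tilde{\dot{\Dl}}_qf\,\dot{\Dl}_qg\bigr)\not\equiv 0$ forces only the lower bound $q\ge j-N_0$, with no upper restriction on $q$. Hölder then gives
$$2^{j(s_1+s_2)}\|\dot{\Dl}_j\dot{R}(f,g)\|_{L^p}\lesssim\sum_{q\ge j-N_0}2^{(j-q)(s_1+s_2)}\,a_q\,b_q,\qquad a_q:=2^{qs_1}\|\tilde{\dot{\Dl}}_qf\|_{L^{p_1}},\ \ b_q:=2^{qs_2}\|\dot{\Dl}_qg\|_{L^{p_2}}.$$
Summing in $j$ and swapping the order of summation produces the inner factor $\sum_{j\le q+N_0}2^{(j-q)(s_1+s_2)}$, which is a convergent geometric series \emph{precisely because} $s_1+s_2>0$; this is where the hypothesis is used. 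The remaining factor $\sum_q a_q b_q\le\|a\|_{\ell^1}\|b\|_{\ell^1}$ then gives $\|\dot{R}(f,g)\|_{\dot{B}^{s_1+s_2}_{p,1}}\lesssim\|f\|_{\dot{B}^{s_1}_{p_1,1}}\|g\|_{\dot{B}^{s_2}_{p_2,1}}$.

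The only obstacle is a careful bookkeeping one: one must correctly distinguish the annulus-type support of the paraproduct building blocks from the ball-type support of the remainder building blocks, and identify the single geometric sum in each case whose convergence is ensured by exactly one of the two hypotheses ($\sigma\ge0$ in the paraproduct case, $s_1+s_2>0$ in the remainder case). Beyond these structural points, the proof is routine and requires neither Bernstein inequalities nor any endpoint refinement.
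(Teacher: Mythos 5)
Your argument is correct: this is exactly the standard proof of Proposition \ref{p-TR}. The paper itself does not prove the statement — it cites \cite{Bahouri-Chemin-Danchin11} directly — and the argument given there is the one you reproduce: spectral localization (annulus for $\dot T$, ball for $\dot R$), a single application of H\"older on each dyadic block using $\tfrac1p=\tfrac1{p_1}+\tfrac1{p_2}$, and an $\ell^1$-summation whose convergence is governed by $\sigma\ge 0$ for $\dot T$ and by $s_1+s_2>0$ for $\dot R$.
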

In view of \eqref{Bony-decom}, Proposition \ref{p-TR} and Bernstein's inequalities,  one easily deduces the following  product estimates. Please  find the proof in Appendix.
\begin{coro}\label{coro-product}
Let $\rho,p_1,p_2,q_1,q_2\in[1,\infty]$, $\frac{1}{\rho}\leq\frac{1}{p_1}+\frac{1}{p_2}$, $\frac{1}{\rho}\leq\frac{1}{q_1}+\frac{1}{q_2}$, $s_1-\frac{N}{p_1}\leq \min\{0,N(\frac{1}{p_2}-\frac{1}{\rho})\}$, $\sigma_1-\frac{N}{q_1}\leq \min\{0,N(\frac{1}{q_2}-\frac{1}{\rho})\}$,  $s_1+s_2>N\max \{0,\fr{1}{p_1}+\frac{1}{p_2}-1\}$,  $s=s_1+s_2+N(\frac{1}{\rho}-\frac{1}{p_1}-\frac{1}{p_2})=\sigma_1+\sigma_2+N(\frac{1}{\rho}-\frac{1}{q_1}-\frac{1}{q_2})$, then there holds
\be\label{product1}
\|uv\|_{\dot{B}^{s}_{\rho,1}}\leq C\|u\|_{\dot{B}^{s_1}_{p_1,1}}\|v\|_{\dot{B}^{s_2}_{p_2,1}}+C\|v\|_{\dot{B}^{\sigma_1}_{q_1,1}}\|u\|_{\dot{B}^{\sigma_2}_{q_2,1}}.
\ee
In particular,
\be\label{product1-s}
\|uv\|_{\dot{B}^{r_1+r_2-\frac{N}{p}}_{p,1}}\leq C\|u\|_{\dot{B}^{r_1}_{p,1}}\|v\|_{\dot{B}^{r_2}_{p,1}},
\ee
where  $p\in[1,\infty]$, $r_1, r_2\le \fr{N}{p}$ and $r_1+r_2>N\max \{0,\fr{2}{p}-1\}$.
\end{coro}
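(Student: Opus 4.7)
The natural approach is Bony's decomposition $uv=\dot{T}_u v+\dot{T}_v u+\dot{R}(u,v)$ together with Proposition \ref{p-TR} for paraproducts and remainders, followed by the Besov embedding of Proposition \ref{prop-classical} whenever necessary to reach the target space $\dot{B}^s_{\rho,1}$. The two terms on the right-hand side of \eqref{product1} will arise from the two paraproducts ($\dot{T}_u v$ producing the first, $\dot{T}_v u$ producing the second), while the remainder $\dot{R}(u,v)$ can be absorbed into either of them.

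For $\dot{T}_u v$ I plan to split into two sub-cases according to whether $\rho\le p_2$ or $\rho> p_2$. In the first case, set $1/p_1':=1/\rho-1/p_2\ge 0$, which satisfies $p_1'\ge p_1$ thanks to the hypothesis $1/\rho\le 1/p_1+1/p_2$; embedding $u\in\dot{B}^{s_1}_{p_1,1}\hookrightarrow\dot{B}^{s_1-N/p_1+N/p_1'}_{p_1',1}$ via Proposition \ref{prop-classical}, the hypothesis $s_1-N/p_1\le N(1/p_2-1/\rho)=-N/p_1'$ makes this target regularity non-positive, so Proposition \ref{p-TR} delivers $\dot{T}_u v\in\dot{B}^s_{\rho,1}$ with the desired product bound. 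In the second case, take the endpoint $p_1'=\infty$: the alternative part of the hypothesis $s_1-N/p_1\le 0$ gives $u\in\dot{B}^{s_1-N/p_1}_{\infty,1}$, Proposition \ref{p-TR} produces a bound in $\dot{B}^{s_1+s_2-N/p_1}_{p_2,1}$, and one final Besov embedding (permitted because $\rho>p_2$) lifts this into $\dot{B}^s_{\rho,1}$. An entirely analogous argument with the roles of $u$ and $v$ interchanged, using the $(\sigma_1,q_1,\sigma_2,q_2)$ data in place of $(s_1,p_1,s_2,p_2)$, yields the second product bound from $\dot{T}_v u$.

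For the remainder, Proposition \ref{p-TR} gives $\dot{R}:\dot{B}^{s_1}_{p_1,1}\times\dot{B}^{s_2}_{p_2,1}\to\dot{B}^{s_1+s_2}_{r,1}$ with $1/r=1/p_1+1/p_2$ as long as $r\ge 1$ and $s_1+s_2>0$; a final Besov embedding (legitimate because $1/\rho\le 1/r$) transports this into $\dot{B}^s_{\rho,1}$. When $1/p_1+1/p_2>1$, so that $r<1$ falls outside the Banach range, I would apply Bernstein's inequality (Lemma \ref{Bernstein}) to each block $\tilde{\dot{\Delta}}_q u\cdot\dot{\Delta}_q v$, whose Fourier support lies in $\{|\xi|\lesssim 2^q\}$, exchanging $N(1/p_1+1/p_2-1)$ derivatives for an upgrade of the Lebesgue exponent from $r$ up to $1$. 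This is precisely the content of the hypothesis $s_1+s_2>N\max\{0,1/p_1+1/p_2-1\}$, after which another Besov embedding brings us into $\dot{B}^s_{\rho,1}$.

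Finally, \eqref{product1-s} follows from \eqref{product1} by specializing $\rho=p_1=p_2=q_1=q_2=p$ and $(s_1,s_2)=(\sigma_2,\sigma_1)=(r_1,r_2)$, so that $s=r_1+r_2-N/p$ and both product terms collapse into the single estimate claimed. The main technical obstacle, in my view, is the case analysis on $\rho\lessgtr p_2$ and on $1/p_1+1/p_2\lessgtr 1$: the scaling-critical estimate is not covered by a single application of Proposition \ref{p-TR} and must be pieced together via auxiliary Besov embeddings and a Bernstein boost; one has to check carefully that the hypotheses on $s_1-N/p_1$, $\sigma_1-N/q_1$, and $s_1+s_2$ are exactly what is needed to license each of these intermediate moves.
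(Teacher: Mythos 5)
Your proposal follows essentially the same route as the paper: Bony decomposition, the paraproduct and remainder continuity of Proposition~\ref{p-TR}, and auxiliary Besov embeddings, with the same case analyses on $\rho\lessgtr p_2$ and on $\frac{1}{p_1}+\frac{1}{p_2}\lessgtr 1$. The one point to tighten is the remainder when $\frac{1}{p_1}+\frac{1}{p_2}>1$: you propose to estimate $\tilde{\dot{\Delta}}_q u\,\dot{\Delta}_q v$ first in $L^r$ with $r<1$ and then invoke Lemma~\ref{Bernstein} to upgrade to $L^1$, but that lemma is stated only for exponents in $[1,\infty]^2$, so a sub-Banach Bernstein inequality would have to be supplied separately. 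The paper avoids this altogether by embedding $\dot{B}^s_{\rho,1}\supset\dot{B}^{s+N(1-\frac{1}{\rho})}_{1,1}$, applying the remainder continuity with the conjugate exponent $p_4$ ($\frac{1}{p_4}=1-\frac{1}{p_2}\le\frac{1}{p_1}$) so that the pointwise product is estimated directly via $L^{p_4}\cdot L^{p_2}\to L^1$, and absorbing the derivative loss into the regularity index through the embedding $\dot{B}^{s_1}_{p_1,1}\hookrightarrow\dot{B}^{s_1-\frac{N}{p_1}+\frac{N}{p_4}}_{p_4,1}$; the hypothesis $s_1+s_2>N(\frac{1}{p_1}+\frac{1}{p_2}-1)$ is precisely what makes the resulting sum of regularities positive. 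So the two arguments are morally the same, but the paper's bookkeeping stays entirely inside the Banach range of Lebesgue exponents, which is cleaner and avoids appealing to a result not stated in the paper.
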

The following Proposition will be used to prove the uniqueness of solutions obtained in Theorem \ref{thm-p>2} for $N=2$.
\begin{prop}[\cite{Danchin05}]\label{prop-pro}
Let $p\ge2$, $s_1\le\fr{N}{p}, s_2<\fr{N}{p}$, and $s_1+s_2\ge0$, then
\beno
\|uv\|_{\dot{B}^{s_1+s_2-\fr{N}{p}}_{p,\infty}}\le C\|u\|_{\dot{B}^{s_1}_{p,1}}\|v\|_{\dot{B}^{s_2}_{p,\infty}}.
\eeno
\end{prop}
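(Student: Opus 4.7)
The plan is to invoke Bony's decomposition $uv=\dot T_u v+\dot T_v u+\dot R(u,v)$ from \eqref{Bony-decom} and to bound each of the three pieces in $\dot B^{s_1+s_2-N/p}_{p,\infty}$ by $\|u\|_{\dot B^{s_1}_{p,1}}\|v\|_{\dot B^{s_2}_{p,\infty}}$ separately. Each of the three hypotheses $s_1\le N/p$, $s_2<N/p$, and $s_1+s_2\ge 0$ is tailored to close exactly one of these three pieces.

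For $\dot T_u v$, the quasi-orthogonality \eqref{e2.2} restricts the sum defining $\dot\Delta_j(\dot T_u v)$ to indices $|q-j|\le 4$. I would use Bernstein's inequality together with the $\ell^1$ summability of $\{2^{p's_1}\|\dot\Delta_{p'}u\|_{L^p}\}_{p'}$ to obtain
\[
\|\dot S_{q-1}u\|_{L^\infty}\ \lesssim\ \sum_{p'\le q-2}2^{p'(N/p-s_1)}\bigl(2^{p's_1}\|\dot\Delta_{p'}u\|_{L^p}\bigr)\ \lesssim\ 2^{q(N/p-s_1)}\,\|u\|_{\dot B^{s_1}_{p,1}},
\]
which is valid whenever $s_1\le N/p$ (in the endpoint case $s_1=N/p$ the geometric factor is simply $1$). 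Multiplying by $\|\dot\Delta_q v\|_{L^p}$, inserting the weight $2^{j(s_1+s_2-N/p)}$, and taking the supremum over $j\in\mathbb Z$ yields the desired bound. The twin paraproduct $\dot T_v u$ is handled analogously, except that $v$ sits only in the weaker $\dot B^{s_2}_{p,\infty}$ norm, so the analogue control of $\|\dot S_{q-1}v\|_{L^\infty}$ relies on the geometric series $\sum_{p'\le q-2}2^{p'(N/p-s_2)}\lesssim 2^{q(N/p-s_2)}$, which converges precisely under the \emph{strict} inequality $s_2<N/p$.

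For the remainder the hypothesis $p\ge 2$ becomes crucial. Since $p/2\ge 1$, H\"older gives $\|\tilde{\dot\Delta}_q u\cdot\dot\Delta_q v\|_{L^{p/2}}\le\|\tilde{\dot\Delta}_q u\|_{L^p}\|\dot\Delta_q v\|_{L^p}$, and because the product is spectrally supported in a ball of radius $\sim 2^q$, Bernstein in the $L^{p/2}\hookrightarrow L^p$ direction contributes a factor $2^{jN/p}$ after applying $\dot\Delta_j$. Only terms with $q\ge j-N_0$ survive for some fixed $N_0$, and a direct rearrangement gives
\[
2^{j(s_1+s_2-N/p)}\|\dot\Delta_j\dot R(u,v)\|_{L^p}\ \lesssim\ \|v\|_{\dot B^{s_2}_{p,\infty}}\sum_{q\ge j-N_0}2^{(j-q)(s_1+s_2)}\bigl(2^{qs_1}\|\tilde{\dot\Delta}_q u\|_{L^p}\bigr).
\]
Thanks to $s_1+s_2\ge 0$ and $q\ge j-N_0$, the factor $2^{(j-q)(s_1+s_2)}$ is bounded by a universal constant, so the remaining sum is dominated by $\|u\|_{\dot B^{s_1}_{p,1}}$.

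The main conceptual obstacle is the asymmetry between $s_1$ and $s_2$: because $v$ is only in the weak ($\ell^\infty$-summable) Besov space, the low-frequency cutoff $\dot S_{q-1}v$ cannot be handled by an endpoint argument, and a genuine geometric gain $s_2<N/p$ is indispensable. Keeping track of this asymmetry through Bony's decomposition is essentially the only bookkeeping difficulty; the rest reduces to routine Littlewood--Paley computations of the same flavour as Corollary~\ref{coro-product}.
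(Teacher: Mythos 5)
Your proof is correct and is the standard Bony--decomposition argument for this product law, which is exactly the approach used by Danchin in the cited reference \cite{Danchin05}; the paper itself imports the proposition by citation and contains no proof of its own. Your bookkeeping is precise: the endpoint $s_1=N/p$ in $\dot T_uv$ is saved by the $\ell^1$ summability of the dyadic norms of $u$, the strict inequality $s_2<N/p$ is genuinely forced in $\dot T_vu$ because $v$ lives only in an $\ell^\infty$-type space, the hypothesis $p\ge2$ enters only through H\"older in $L^{p/2}$ on the remainder, and $s_1+s_2\ge 0$ (again with the endpoint rescued by the $\ell^1$ norm of $u$) controls the tail sum in $\dot R(u,v)$.
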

The study of non-stationary PDEs requires spaces of the type
$L^\rho_T(X)=L^\rho(0,T;X)$ for appropriate Banach spaces $X$. In
our case, we expect $X$ to be a  Besov space, so that it
is natural to localize the equations through Littlewood-Paley
decomposition. We then get estimates for each dyadic block and
perform integration in time. But, in doing so, we obtain the bounds
in spaces which are not of the type $L^\rho(0,T;\dot{B}^s_{p,r})$. That
 naturally leads to the following definition introduced by Chemin and Lerner in \cite{CL}.
\begin{defn}\label{defn-chemin-lerne}
For $\rho\in[1,+\infty]$, $s\in\R$, and $T\in(0,+\infty)$, we set
$$\|u\|_{\widetilde{L}^\rho_T(\dot{B}^s_{p,r})}=\left\|2^{qs}
\|\dot{\Delta}_qu(t)\|_{L^\rho_T(L^p)}\right\|_{\ell^r}
$$
and denote by
$\widetilde{L}^\rho_T(\dot{B}^s_{p,r})$ the subset of distributions
$u\in
  \mathcal{D}'((0,T), \mathcal{S}'_{h}(\mathbb{R}^{3}))$ with finite
$\|u\|_{\widetilde{L}^\rho_T(\dot{B}^s_{p,r})}$ norm. When $T=+\infty$, the index $T$ is
omitted. We
further denote $\widetilde{C}_T(\dot{B}^s_{p,r})=C([0,T];\dot{B}^s_{p,r})\cap
\widetilde{L}^\infty_{T}(\dot{B}^s_{p,r}) $.
 \end{defn}
\begin{rem}\label{rem-CM-holder}
All the properties of continuity for the paraproduct, remainder, and product remain true for the Chemin-Lerner spaces. The exponent $\rho$ just has to behave according to
H\"{o}lder's inequality for the time variable.
\end{rem}

\begin{rem}\label{rem-CM-minkowski}
The spaces $\widetilde{L}^\rho_T(\dot{B}^s_{p,r})$ can be linked with the classical space $L^\rho_T(\dot{B}^s_{p,r})$ via the Minkowski inequality:
\beno
\|u\|_{\widetilde{L}^\rho_T(\dot{B}^s_{p,r})}\le\|u\|_{L^\rho_T(\dot{B}^s_{p,r})}\quad \mathrm{if}\quad r\ge\rho,\qquad \|u\|_{\widetilde{L}^\rho_T(\dot{B}^s_{p,r})}\ge\|u\|_{L^\rho_T(\dot{B}^s_{p,r})}\quad \mathrm{if}\quad r\le\rho.
\eeno
\end{rem}

\section{Preliminaries}
In this section, we first recall the estimates for the acoustics system \eqref{acoustics}, which are very useful in the proof of Theorem \ref{thm-p>2}.
\begin{prop}[\cite{Danchin02}]\label{prop-wave}
Let $(b, v)$ be a solution of the following system of acoustics:
 \beq\label{eq_W}
\begin{cases}
\pr_tb+\eps^{-1}\Lm v=\mathbf{f},\\
\pr_tv-\eps^{-1}\Lm b=\mathbf{g},\\
(b,v)|_{t=0}=(b_0, v_0).
\end{cases}
 \eeq
Then, for any $s\in\R$ and $T\in(0,\infty]$, the following estimate holds:
 \be
\|(b,v)\|_{\widetilde{L}^r_T(\dot{B}^{s+N(\frac{1}{p}-\frac12)+\frac{1}{r}}_{p,1})}\leq C\eps^{\frac{1}{r}}\|(b_0, v_0)\|_{\dot{B}^s_{2,1}}+C\eps^{1+\frac{1}{r}-\frac{1}{\bar{r}'}}
\|(\mathbf{f}, \mathbf{g})\|_{\widetilde{L}^{\bar{r}'}_T(\dot{B}^{s+N(\frac{1}{\bar{p}'}-\frac12)+\frac{1}{\bar{r}'}-1}_{\bar{p}',1})},
 \ee
with
 \beqno
&p\geq2, \frac{2}{r}\leq\min(1,\ga(p)), (r, p, N)\neq(2,\infty, 3),&\\
&\bar{p}\geq2, \frac{2}{\bar{r}}\leq\min(1,\ga(\bar{p})), (\bar{r}, \bar{p}, N)\neq(2,\infty,3),
 \eeqno
where $\ga(q):=(N-1)(\frac{1}{2}-\frac{1}{q}), \frac{1}{\bar{p}}+\frac{1}{\bar{p}'}=1$, and $\frac{1}{\bar{r}}+\frac{1}{\bar{r}'}=1$.
\end{prop}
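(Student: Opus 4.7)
The plan is to reduce the system to two half-wave equations, then apply the standard dispersive and Strichartz machinery for $e^{\pm it\Lambda}$ with careful bookkeeping in $\epsilon$ and in the frequency-by-frequency Besov summation.

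First, I would diagonalize the homogeneous system. Setting $w_\pm := b \mp iv$, a direct computation gives
\begin{equation*}
(\partial_t \pm i\epsilon^{-1}\Lambda)w_\pm = \mathbf{f} \mp i\mathbf{g},\qquad w_\pm|_{t=0} = b_0 \mp iv_0,
\end{equation*}
so that Duhamel's formula yields
\begin{equation*}
w_\pm(t) = e^{\mp it\epsilon^{-1}\Lambda}(b_0 \mp iv_0) + \int_0^t e^{\mp i(t-s)\epsilon^{-1}\Lambda}(\mathbf{f} \mp i\mathbf{g})(s)\,ds.
\end{equation*}
Since $(b,v)$ is recovered from $(w_+,w_-)$ by an algebraic (hence $L^p$-bounded) combination, it suffices to estimate each $w_\pm$, and by symmetry it is enough to study $e^{it\Lambda}$.

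Next, I would invoke the classical dispersive estimate for the half-wave propagator: for $u$ with $\mathrm{Supp}\,\hat{u}\subset\mathcal{C}(0,r2^q,R2^q)$,
\begin{equation*}
\|e^{it\Lambda}u\|_{L^\infty(\R^N)} \lesssim 2^{qN}(1+2^q|t|)^{-(N-1)/2}\|u\|_{L^1(\R^N)},
\end{equation*}
proved by stationary phase on the oscillatory integral kernel. Interpolating with the trivial $L^2\to L^2$ isometry and running the $TT^\ast$ argument of Keel--Tao/Ginibre--Velo (in the non-endpoint regime, which is precisely why $(r,p,N)\neq(2,\infty,3)$ and $(\bar r,\bar p,N)\neq(2,\infty,3)$ are excluded) produces the Strichartz bounds
\begin{equation*}
\bigl\|e^{it\Lambda}\dot\Delta_q u\bigr\|_{L^r_t L^p_x} \lesssim 2^{q(N(\frac12-\frac1p)+\frac1r)}\|\dot\Delta_q u\|_{L^2},
\end{equation*}
and the corresponding inhomogeneous estimate with admissible pair $(\bar r,\bar p)$ on the source. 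Rescaling time via $\tau=t/\epsilon$ converts $e^{it\epsilon^{-1}\Lambda}$ to $e^{i\tau\Lambda}$ and produces a Jacobian factor $\epsilon$ in the $L^r_t$-integral, giving the homogeneous gain $\epsilon^{1/r}$ and the inhomogeneous gain $\epsilon^{1/r+1/\bar r}=\epsilon^{1+1/r-1/\bar r'}$ that appears in the statement.

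Finally, I would multiply the frequency-localized estimate by $2^{q(s+N(1/p-1/2)+1/r)}$ (respectively $2^{q(s+N(1/\bar p'-1/2)+1/\bar r'-1)}$ on the forcing side), sum in $\ell^1(\Z)$, and use Minkowski in the time variable to pass from the Chemin--Lerner quasi-norm $\widetilde L^r_T$ to the sum; the $\ell^1$ summation is legal because both sides are built on $\dot B^\cdot_{\cdot,1}$ spaces. Quasi-orthogonality of the Littlewood-Paley projections with the propagator (they commute, since the symbol is a radial Fourier multiplier) lets the dyadic Strichartz bounds assemble cleanly into the Besov version stated. The main obstacle is the usual one when quoting Strichartz for the half-wave: verifying the excluded endpoint carefully and tracking all powers of $2^q$ and $\epsilon$ through the rescaling; once the scaling exponents are matched term by term the rest is bookkeeping.
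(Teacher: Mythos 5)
The paper quotes this proposition from \cite{Danchin02} without giving a proof, so there is no in-paper argument to compare against; your outline --- diagonalize into the half-waves $w_\pm = b\mp iv$, apply the $L^1\to L^\infty$ dispersive decay and the $TT^\ast$ Strichartz machinery for $e^{\pm it\Lambda}$, rescale $t\mapsto\epsilon\tau$ to extract the $\epsilon$-powers, and sum $\ell^1$ over dyadic blocks --- is precisely the standard route used in the cited reference, and your $\epsilon$-bookkeeping ($\epsilon^{1/r}$ homogeneous, $\epsilon^{1+1/r-1/\bar r'}$ from Duhamel) is correct.

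One sign is off in the frequency-localized Strichartz step: the bound should read
\[
\bigl\|e^{it\Lambda}\dot\Delta_q u\bigr\|_{L^r_t L^p_x}\lesssim 2^{\,q\bigl(N(\frac12-\frac1p)-\frac1r\bigr)}\,\|\dot\Delta_q u\|_{L^2},
\]
with a \emph{minus} sign on $\tfrac1r$; the regularity loss forced by wave scaling is $\tfrac N2-\tfrac Np-\tfrac1r$ (one checks this against the isometry $r=\infty$, $p=2$, and against the $L^2_tL^\infty_x$ case which requires $\dot H^{(N-1)/2}$ data). As you wrote it, multiplying by $2^{q(s+N(1/p-1/2)+1/r)}$ before summing in $q$ leaves a spurious factor $2^{2q/r}$ and the dyadic pieces do not assemble to $\|(b_0,v_0)\|_{\dot B^s_{2,1}}$; with the corrected sign the exponents cancel and the argument closes as you intend.
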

Next, we recall the classical estimates in Besov space for the transport and heat equations (Theorem 3.37, \cite{Bahouri-Chemin-Danchin11}).

\begin{prop}\label{prop3.3}
  Let $\sigma\in (-N\min\{\frac{1}{p},\frac{1}{p'}\},1+\frac{N}{p})$ and  $1\leq p,r\leq +\infty$, or $\sigma=1+\frac{N}{p}$ if $r=1$. Let $v$ be a smooth vector field such that $\nabla v\in L^1_T(\dot{B}^{\frac{N}{p}}_{p,r}\cap L^\infty)$, $f_0\in \dot{B}^{\sigma}_{p,r}$ and $g\in L^1_T(\dot{B}^{\sigma}_{p,r})$. There exists a constant $C$, such that for all solution $f\in L^\infty([0,T];\dot{B}^{\sigma}_{p,r}) $ of the equation
    $$
    \partial_t f+v\cdot\nabla f=g,\ f|_{t=0}=f_0,
    $$
  we have the following a priori estimate
  \begin{equation}
    \|f\|_{\widetilde{L}^\infty_T(\dot{B}^{\sigma}_{p,r})}\leq e^{CV(T)}\left(
    \|f_0\|_{\dot{B}^{\sigma}_{p,r}}+\int^T_0 e^{-CV(t)}\|g(t)\|_{\dot{B}^{\sigma}_{p,r}}dt
    \right),
  \end{equation}
where $V(t)=\int^t_0\|\nabla v(\tau)\|_{\dot{B}^{\frac{N}{p}}_{p,r}\cap L^\infty}d\tau$.
\end{prop}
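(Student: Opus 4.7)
The statement is the classical Besov-space transport estimate of Bahouri--Chemin--Danchin (essentially Theorem 3.37 in their monograph), so I will follow the usual dyadic localization / commutator / Gronwall strategy. The plan is to localize the equation in frequency, perform an $L^p$ energy estimate on each block, control the commutator that appears through a standard paraproduct lemma, and then assemble the estimate in the Chemin--Lerner norm via Gronwall.

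First I would apply $\ddl$ to $\pr_t f + v\cdot\nb f=g$, obtaining
\beno
\pr_t f_q + v\cdot\nb f_q = \ddl g + R_q, \qquad R_q:=[v\cdot\nb,\ddl]f,
\eeno
with $f_q:=\ddl f$. Next, a standard $L^p$ energy estimate for the transport operator (multiplying by $|f_q|^{p-2}f_q$ when $p<\infty$ and by an appropriate sign function when $p=\infty$, then integrating by parts on the convective term) yields
\beno
\|f_q(t)\|_{L^p} \le \|\ddl f_0\|_{L^p} + \int_0^t\bigl(\|\ddl g(\tau)\|_{L^p}+\|R_q(\tau)\|_{L^p}+\tfrac{1}{p}\|\dv v(\tau)\|_{L^\infty}\|f_q(\tau)\|_{L^p}\bigr)\,d\tau.
\eeno

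The heart of the argument is the commutator estimate: under the hypothesis on $\sigma$, one has
\beno
\bigl\|\,2^{q\sigma}\|R_q(\tau)\|_{L^p}\,\bigr\|_{\ell^r} \le C\,\|\nb v(\tau)\|_{\dot B^{N/p}_{p,r}\cap L^\infty}\,\|f(\tau)\|_{\dot B^{\sigma}_{p,r}},
\eeno
which is exactly Lemma 2.100 of \cite{Bahouri-Chemin-Danchin11}; this is the step where the restriction $-N\min\{1/p,1/p'\}<\sigma<1+N/p$ (or $\sigma=1+N/p$ when $r=1$) is used, in order to treat the low-frequency part of the paraproduct decomposition of $R_q$. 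I expect this commutator bound to be the one real obstacle: every other ingredient is essentially book-keeping, but the endpoint case $\sigma=1+N/p$ with $r=1$ requires a slightly delicate refined Bony decomposition. I would quote the lemma rather than reproving it.

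Having the commutator bound, I would multiply the pointwise-in-time energy inequality by $2^{q\sigma}$, take the $\ell^r(\Z)$ norm in $q$ (using Minkowski for the time integral, which is why the $\widetilde L^\infty_T$ norm naturally appears on the left), and absorb $\|\dv v\|_{L^\infty}\le C\|\nb v\|_{\dot B^{N/p}_{p,r}\cap L^\infty}$. This produces
\beno
\|f(t)\|_{\dot B^{\sigma}_{p,r}} \le \|f_0\|_{\dot B^{\sigma}_{p,r}} + \int_0^t\|g(\tau)\|_{\dot B^{\sigma}_{p,r}}\,d\tau + C\int_0^t\|\nb v(\tau)\|_{\dot B^{N/p}_{p,r}\cap L^\infty}\|f(\tau)\|_{\dot B^{\sigma}_{p,r}}\,d\tau.
\eeno
Finally, applying Gronwall's inequality in the variable $t$, with $V(t)=\int_0^t\|\nb v(\tau)\|_{\dot B^{N/p}_{p,r}\cap L^\infty}\,d\tau$, yields the claimed bound, and taking the supremum over $t\in[0,T]$ gives the $\widetilde L^\infty_T(\dot B^{\sigma}_{p,r})$ estimate. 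The case of smooth $f$ is routine; the general case follows by a standard regularization and passage to the limit using the linearity of the equation.
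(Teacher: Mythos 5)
The paper does not prove this proposition at all; it is explicitly recalled as Theorem 3.37 of Bahouri--Chemin--Danchin and cited without proof. Your sketch is the correct standard argument for that cited theorem (dyadic localization, $L^p$ transport energy estimate, the commutator bound of Lemma 2.100 in \cite{Bahouri-Chemin-Danchin11}, Minkowski to pass to the Chemin--Lerner norm, and Gronwall), so there is nothing to reconcile with the paper beyond noting that the paper simply quotes the result.
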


\begin{prop}\label{prop3.4}
  Let $\sigma\in \mathbb{R}$ and  $1\leq \rho,p,r\leq +\infty$. Assume that $f_0\in \dot{B}^{\sigma}_{p,r}$ and $g\in \widetilde{L}^\rho_T(\dot{B}^{\sigma-2+\frac{2}{\rho}}_{p,r})$. There exists a constant $C$, such that for all solution $f\in L^\infty([0,T];\dot{B}^{\sigma}_{p,r})\cap L^1([0,T];\dot{B}^{\sigma+2}_{p,r}) $ of the equation
    $$
    \partial_t f-\nu\Delta f=g,\ f|_{t=0}=f_0,
    $$
  we have the following a priori estimate, for all $\rho\leq\rho_1\leq +\infty$,
  \begin{equation}
   \nu^{\frac{1}{\rho_1}} \|f\|_{\widetilde{L}^{\rho_1}_T(\dot{B}^{\sigma+\frac{2}{\rho_1}}_{p,r})}\leq C\left(
    \|f_0\|_{\dot{B}^{\sigma}_{p,r}}+\nu^{\frac{1}{\rho}-1}\|g \|_{\widetilde{L}^\rho_T(\dot{B}^{\sigma-2+\frac{2}{\rho}}_{p,r})}
    \right).
  \end{equation}
\end{prop}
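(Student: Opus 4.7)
The plan is to proceed by Littlewood--Paley decomposition, reducing the estimate to a pointwise-in-frequency bound for the heat semigroup acting on functions with Fourier support in a dyadic annulus, and then to convert the resulting time integrals into the desired Chemin--Lerner norms via Young's convolution inequality.

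First I would apply the dyadic block $\dot{\Dl}_q$ to the equation. Setting $f_q := \dot{\Dl}_q f$, $g_q := \dot{\Dl}_q g$, $f_{0,q} := \dot{\Dl}_q f_0$, the equation becomes $\pr_t f_q - \nu\Dl f_q = g_q$ with $f_q|_{t=0} = f_{0,q}$, whose Duhamel solution is
\beno
f_q(t) = e^{\nu t\Dl} f_{0,q} + \int_0^t e^{\nu(t-s)\Dl} g_q(s)\, ds.
\eeno
The crucial ingredient is the well-known smoothing estimate for spectrally localized data: there exists $c > 0$ such that for any $q \in \Z$,
\beno
\|e^{\nu t\Dl} f_q\|_{L^p} \le C e^{-c\nu t 2^{2q}} \|f_q\|_{L^p},\qquad t\ge 0,
\eeno
which follows from the fact that the Fourier support of $f_q$ lies in the annulus $\{|\xi|\approx 2^q\}$ together with Bernstein's inequality (Lemma \ref{Bernstein}) applied to the convolution kernel of $e^{\nu t\Dl}$.

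Next I would take the $L^{\rho_1}_T$ norm in time. For the homogeneous part, direct computation gives
\beno
\|e^{\nu\cdot\Dl} f_{0,q}\|_{L^{\rho_1}_T(L^p)} \le C\bigl(c\nu\rho_1 2^{2q}\bigr)^{-1/\rho_1}\|f_{0,q}\|_{L^p} \le C\,\nu^{-1/\rho_1}\,2^{-2q/\rho_1}\|f_{0,q}\|_{L^p}.
\eeno
For the Duhamel term, viewing it as a convolution in time with kernel $e^{-c\nu t 2^{2q}}$ and applying Young's inequality with the exponents $\tfrac{1}{a} = 1 + \tfrac{1}{\rho_1} - \tfrac{1}{\rho}$ (which lies in $[0,1]$ precisely because $\rho \le \rho_1$), I obtain
\beno
\Bigl\|\int_0^{\cdot} e^{\nu(\cdot-s)\Dl} g_q(s)\,ds\Bigr\|_{L^{\rho_1}_T(L^p)}
 \le C\,\nu^{-1/a}\,2^{-2q/a}\,\|g_q\|_{L^\rho_T(L^p)}.
\eeno

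Finally I multiply both inequalities by $2^{q(\sig + 2/\rho_1)}$ and take the $\ell^r$ norm over $q\in\Z$. For the initial-data contribution, the factor $2^{-2q/\rho_1}$ cancels the shift exactly, producing $\nu^{-1/\rho_1}\|f_0\|_{\dot{B}^\sig_{p,r}}$. For the forcing contribution, a short arithmetic check,
\beno
\sig + \tfrac{2}{\rho_1} - \tfrac{2}{a} = \sig - 2 + \tfrac{2}{\rho},\qquad
\nu^{1/\rho_1}\cdot \nu^{-1/a} = \nu^{1/\rho - 1},
\eeno
matches exactly the Besov regularity index and the $\nu$-power claimed on the right-hand side, yielding $\nu^{1/\rho-1}\|g\|_{\widetilde{L}^\rho_T(\dot{B}^{\sig-2+2/\rho}_{p,r})}$. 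Summing the two contributions gives the stated estimate. The only subtlety in the argument is making sure that the Young exponent $a$ is admissible and that the bookkeeping of the $\nu$-powers and the $2^q$-shifts is consistent; otherwise the proof is a routine combination of spectral localization, heat-kernel decay, and Young's inequality, and no assumption beyond $\rho \le \rho_1 \le \infty$ is needed.
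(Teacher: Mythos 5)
Your proof is correct. The paper does not actually prove Proposition \ref{prop3.4} — it simply cites it as a classical result (Theorem 3.37 in \cite{Bahouri-Chemin-Danchin11}) — and the argument you give (dyadic localization, Duhamel's formula, the heat-kernel decay $\|e^{\nu t\Delta}\dot\Delta_q u\|_{L^p}\lesssim e^{-c\nu t 2^{2q}}\|\dot\Delta_q u\|_{L^p}$, and Young's convolution inequality in time with the exponent $\tfrac1a=1+\tfrac1{\rho_1}-\tfrac1\rho$) is precisely the standard proof from that reference; the bookkeeping of the $2^{q}$-shifts and $\nu$-powers you carry out matches the stated estimate exactly.
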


\section{A priori estimates}\label{S4}
\noindent Before proceeding  any further, let us denote
\begin{gather*}
X_{L}(T):=\|b_L\|_{\widetilde{L}^\infty_T(\dot{B}^{\frac{N}{2}-1+\al}_{2,1})\cap L^1_T(\dot{B}^{\frac{N}{2}+1+\al}_{2,1})}
+\|\Pe^\bot u_L\|_{\widetilde{L}^\infty_T(\dot{B}^{\frac{N}{2}-1+\al}_{2,1})\cap L^1_T(\dot{B}^{\frac{N}{2}+1+\al}_{2,1})}, \\
 X_{H}(T):=
 \|b_H\|_{\widetilde{L}^\infty_T(\dot{B}^{\frac{N}{2}}_{2,1})\cap L^1_T(\dot{B}^{\frac{N}{2}}_{2,1})} +\|\Pe^\bot u_H\|_{\widetilde{L}^\infty_T(\dot{B}^{\frac{N}{2}-1}_{2,1})\cap L^1_T(\dot{B}^{\frac{N}{2}+1}_{2,1})},\\
Y_\al(T):=\|(b_L,\Pe^\bot u_L)\|_{\widetilde{L}^{\frac{1}{\al}}_T(\dot{B}^{\fr{N}{p}+2\al-1}_{p,1})},\\
W(T):=\|\Pe u\|_{\widetilde{L}^\infty_T(\dot{B}^{\fr{N}{p}-1}_{p,1})}+\|\Pe u\|_{L^1_T(\dot{B}^{\fr{N}{p}+1}_{p,1})},\\
X(T)=X_{L}(T)+X_{H}(T)+Y_\al(T)+W(T),\\
X_{L}^0:=\| b_{0L}\|_{\dot{B}^{\frac{N}{2}-1+\al}_{2,1}}+\|\Pe^\bot u_{0L}\|_{\dot{B}^{\frac{N}{2}-1+\al}_{2,1}}, \quad X_{H}^0:=\| b_{0H}\|_{\dot{B}^{\frac{N}{2}}_{2,1}}+\| \Pe^\bot u_{0H}\|_{\dot{B}^{\frac{N}{2}-1}_{2,1}},
\end{gather*}
and
\beno
W^0:=\|\Pe u_0\|_{\dot{B}^{\fr{N}{p}-1}_{p,1}},\quad X^0:= X^0_L+X^0_H+W^0.
\eeno

\subsection{Nonlinear estimates}
Now we estimate the nonlinear terms one by one as follows.

By virtue of the low frequency embedding
\be\label{lf-embeding1}
\|P_{<1}\phi\|_{\dot{B}^{s_1}_{2,1}}\le C\|P_{<1}\phi\|_{\dot{B}^{s_2}_{2,1}}, \quad\mathrm{for\ \ all}\quad \phi\in\dot{B}^{s_2}_{2,1}, \textrm{ and } s_1>s_2,
\ee
  the   high frequency embedding
\be\label{hf-embedding1}
\|P_{\geq1}\phi\|_{\dot{B}^{s_1}_{2,1}}\le C\|P_{\geq 1}\phi\|_{\dot{B}^{s_2}_{2,1}}, \quad\mathrm{for\ \ all}\quad \phi\in\dot{B}^{s_2}_{2,1}, \textrm{ and } s_1<s_2,
\ee
and Corollary \ref{coro-product}, we can obtain the following lemma, whose proof will be given in Appendix.
\begin{lem}\label{lem4.1}
  Assume $(b,u)\in \mathcal{E}^{\frac{N}{2},\al}_p(T)$ with $(p,\al)$ satisfying \eqref{p1}-\eqref{al1}, then we have
    \begin{equation}\label{5.1}
    \|P_{<1} (b \dv u)\|_{L^1_T(\dot{B}^{\frac{N}{2}-1+\al}_{2,1})}
    \leq C X^2(T),
    \end{equation}
        \begin{equation}\label{5.1-0}
    \|P_{<1}(\dot{T}'_{\nb b}u)\|_{L^1_T(\dot{B}^{\frac{N}{2}-1+\al}_{2,1})}
    \leq C X^2(T).
    \end{equation}
and
    \begin{equation}\label{5.1-1}
    \|P_{<1}(\dot{T}_{u}\nb b)\|_{L^1_T(\dot{B}^{\frac{N}{2}-1+\al}_{2,1})}
    \leq C X^2(T).
    \end{equation}
\end{lem}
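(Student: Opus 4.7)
The plan for all three bounds is to apply Bony's decomposition together with the frequency splitting $b=b_L+b_H$ and $u=\Pe^\bot u_L+\Pe^\bot u_H+\Pe u$, and estimate each resulting cross-product via Corollary~\ref{coro-product} or Proposition~\ref{p-TR}. The low-frequency embedding~\eqref{lf-embeding1} lets one relax the target norm $\dot B^{\fr{N}{2}-1+\al}_{2,1}$ to $\dot B^{\fr{N}{2}-1}_{2,1}$ whenever convenient, and~\eqref{hf-embedding1} is used to adjust the high-frequency regularities; all time integrabilities are handled by Chemin--Lerner H\"older (Remark~\ref{rem-CM-holder}).

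For~\eqref{5.1}, write $\dv u=\dv\Pe^\bot u$ and split $b\,\dv\Pe^\bot u$ into the four cross products. The three ``easy'' pieces close by standard pairings: $b_H\dv\Pe^\bot u_H$ by Corollary~\ref{coro-product} with $r_1=\fr{N}{2}$, $r_2=\fr{N}{2}-1$ (after lowering $\dv\Pe^\bot u_H$ via~\eqref{hf-embedding1}) combined with~\eqref{lf-embeding1}; $b_L\dv\Pe^\bot u_H$ by $L^\infty_t\cdot L^1_t$ H\"older with $r_1=\fr{N}{2}-1+\al$, $r_2=\fr{N}{2}$; and $b_H\dv\Pe^\bot u_L$ by the Chemin--Lerner pairing $\widetilde L^2_t\cdot\widetilde L^2_t=\widetilde L^1_t$ after interpolating $b_H\in\widetilde L^2_T(\dot B^{\fr{N}{2}}_{2,1})$ and $\dv\Pe^\bot u_L\in\widetilde L^2_T(\dot B^{\fr{N}{2}-1+\al}_{2,1})$ from the endpoints of $X_H(T)$ and $X_L(T)$. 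The delicate piece $b_L\dv\Pe^\bot u_L$ is the one that consumes $Y_\al(T)$: apply Corollary~\ref{coro-product} in the mixed setting $(p_1,p_2,\rho)=(p,2,2)$ with $s_1=\fr{N}{p}+2\al-1$ and $s_2=\fr{N}{2}-\al$, pairing $b_L\in\widetilde L^{1/\al}_T(\dot B^{\fr{N}{p}+2\al-1}_{p,1})$ against $\dv\Pe^\bot u_L\in\widetilde L^{1/(1-\al)}_T(\dot B^{\fr{N}{2}-\al}_{2,1})$ (the latter obtained by Chemin--Lerner interpolation of the two endpoints of $X_L(T)$). The H\"older exponents $\al+(1-\al)=1$ yield $\widetilde L^1_T=L^1_T$, and the Corollary's constraint $s_1-N/p\le 0$ reduces to $2\al\le 1$, which holds by~\eqref{al1}.

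Estimates~\eqref{5.1-0} and~\eqref{5.1-1} follow the same blueprint. For~\eqref{5.1-0}, write $\dot T'_{\nb b}u=\dot T_{\nb b}u+\dot R(\nb b,u)$, decompose $u=\Pe^\bot u+\Pe u$, and apply Proposition~\ref{p-TR} on each piece: the paraproduct uses $\nb b_H\in\dot B^{-1}_{\infty,1}$ (Besov embedding from $\dot B^{\fr{N}{2}}_{2,1}\hookrightarrow\dot B^{0}_{\infty,1}$) and $\nb b_L\in\widetilde L^{1/\al}_T(\dot B^{\fr{N}{p}+2\al-2}_{p,1})$ (from $Y_\al$), the former paired with $u$ in its natural $L^1_T$ norms and the latter with the $\widetilde L^{1/(1-\al)}_T$ interpolation of $W(T)$ for the $\Pe u$ factor (and the analogous interpolation of $X_L(T)$ for the $\Pe^\bot u$ factor); the remainder $\dot R(\nb b,u)$ uses the $s_1+s_2>0$ condition. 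For~\eqref{5.1-1}, $\dot T_u\nb b$ is frequency-localized at $\nb b$, so $P_{<1}$ retains only the $\nb b_L$ contribution; apply the paraproduct estimate with $u\in L^\infty_x$ (obtained from $\dot B^{\fr{N}{2}}_{2,1}\hookrightarrow L^\infty$ on $\Pe^\bot u$ after~\eqref{lf-embeding1}, and from $\dot B^{\fr{N}{p}}_{p,1}\hookrightarrow L^\infty$ on $\Pe u$ after interpolating $W(T)$) together with the same $\al/(1-\al)$ time-H\"older split.

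The main obstacle is the $b_L\dv\Pe^\bot u_L$ piece of~\eqref{5.1}. The dispersive estimate of Proposition~\ref{prop-wave} supplies only $\al$ extra derivatives, so $\dv\Pe^\bot u_L$ lies in $L^1_T(\dot B^{\fr{N}{2}+\al}_{2,1})$ but \emph{not} in $L^1_T(\dot B^{\fr{N}{2}}_{2,1})$; the excess regularity $\fr{N}{2}+\al>\fr{N}{2}$ breaks the $r_i\le N/p$ condition of Corollary~\ref{coro-product} in the pure $p=2$ framework, and so the standard $L^\infty_t\cdot L^1_t$ pairing fails. The remedy is to move one factor into the mixed $(p,2)$-framework, where the constraint weakens to $s_1\le N/p$, at the cost of trading time integrability ($1/\al$ versus $1/(1-\al)$); the balance of H\"older exponents together with the weakened regularity constraint is precisely what dictates the restriction $\al\le(N-1)/(2N)$ in~\eqref{al1}.
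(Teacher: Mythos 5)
Your overall plan -- Bony decomposition, frequency splitting $b=b_L+b_H$ and $u=\Pe^\bot u_L+\Pe^\bot u_H+\Pe u$, Chemin--Lerner H\"older, and spending $Y_\al$ precisely on the $b_L\dv\Pe^\bot u_L$ piece in the mixed $(p,2)$-framework -- is exactly the paper's, and your treatment of~\eqref{5.1} is correct term by term. The $P_{<1}$ localization argument for~\eqref{5.1-1} is also sound.

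The gap is in~\eqref{5.1-0}, specifically in the pairing $\dot T'_{\nb b_H}\Pe u$. You propose $\nb b_H\in\dot B^{-1}_{\infty,1}$ paired with $\Pe u$ in its natural $L^1_T$ norm. Proposition~\ref{p-TR} with $p_1=\infty$, $p_2=p$ forces the output space to be $\dot B^{\fr Np}_{p,1}$ (and the remainder likewise lands in an $L^p$-based space), but the target is $\dot B^{\fr N2-1+\al}_{2,1}$. Since $p>2$, no Bernstein inequality converts $\|\dot\Dl_q\cdot\|_{L^p}$ into $\|\dot\Dl_q\cdot\|_{L^2}$ at any fixed frequency, so this estimate cannot close; Corollary~\ref{coro-product} with $\rho=2$ is also inapplicable because $\fr12>\fr1\infty+\fr1p$. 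The same problem occurs if instead you embed $b_H$ into $\dot B^{\fr N{p^*}}_{p^*,1}$ and take $\nb$: then the output regularity is $\fr N2$, one derivative too high, and the low-frequency embedding~\eqref{lf-embeding1} only raises indices, never lowers them, while the remainder $\dot R(\nb b_H,\Pe u)$ has genuine content at arbitrarily low frequencies, so one cannot cure the over-regularity by appealing to frequency localization. The paper resolves this with the structural identity
\beno
\dot T'_{\nb b_H}\cdot\Pe u=\pr_k\dot T'_{b_H}(\Pe u)^k,
\eeno
valid because $\dv\Pe u=0$, combined with the high-frequency embedding $b_H\in\dot B^{\fr{N}{2}-1}_{2,1}\hookrightarrow\dot B^{\fr N{p^*}-1}_{p^*,1}$: the paraproduct $\dot T'_{b_H}(\Pe u)^k$ then lands in $\dot B^{\fr N2}_{2,1}$ and the external $\pr_k$ brings it down to $\dot B^{\fr N2-1}_{2,1}$, after which~\eqref{lf-embeding1} applies. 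This divergence-structure trick (also used in~\eqref{com-Tiucu}) is an essential ingredient that your sketch omits. An alternative fix is to stay with the $p^*$-framework but lower the time regularity of $\Pe u$ (pair $\nb b_H\in L^{2/(1-\al)}_T(\dot B^{\fr N{p^*}-1}_{p^*,1})$ against $\Pe u\in L^{2/(1+\al)}_T(\dot B^{\fr Np+\al}_{p,1})$), but in any case $\dot B^{-1}_{\infty,1}$ is the wrong choice of space for $\nb b_H$ here.
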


Since $\dv(bu)=b\dv u+\dot{T}'_{\nb b}u+\dot{T}_{u}\nb b$, from Lemma \ref{lem4.1}, we easily get the following Corollary, which will be used to  bound $Y_\al(T)$.
\begin{coro}\label{coro1}
Under the conditions in Lemma \ref{lem4.1}, we have
\be\label{5.2}
\|P_{<1} \dv(b  u)\|_{L^1_T(\dot{B}^{\frac{N}{2}-1+\al}_{2,1})}
    \leq C X^2(T).
\ee
\end{coro}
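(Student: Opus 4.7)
The proof is essentially a direct combination of the three estimates in Lemma \ref{lem4.1}. My plan is as follows.

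First, I would note the identity obtained by the Leibniz rule,
\begin{equation*}
\dv(bu) \;=\; b\,\dv u \;+\; u\cdot\nabla b,
\end{equation*}
and then apply Bony's decomposition \eqref{Bony-decom} to the product $u\cdot\nabla b$ in the form $u\cdot\nabla b = \dot{T}_u\nabla b + \dot{T}'_{\nabla b}u$. Combining these gives the decomposition already advertised in the statement,
\begin{equation*}
\dv(bu) \;=\; b\,\dv u \;+\; \dot{T}'_{\nabla b}u \;+\; \dot{T}_u\nabla b.
\end{equation*}

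Next, since the low-frequency cutoff $P_{<1}$ is bounded on every $\dot{B}^{s}_{p,1}$ (it is simply a truncation in the Littlewood--Paley decomposition), the triangle inequality in $L^1_T(\dot{B}^{\frac{N}{2}-1+\alpha}_{2,1})$ yields
\begin{equation*}
\|P_{<1}\dv(bu)\|_{L^1_T(\dot{B}^{\frac{N}{2}-1+\alpha}_{2,1})} \;\leq\; \|P_{<1}(b\,\dv u)\|_{L^1_T(\dot{B}^{\frac{N}{2}-1+\alpha}_{2,1})} + \|P_{<1}(\dot{T}'_{\nabla b}u)\|_{L^1_T(\dot{B}^{\frac{N}{2}-1+\alpha}_{2,1})} + \|P_{<1}(\dot{T}_u\nabla b)\|_{L^1_T(\dot{B}^{\frac{N}{2}-1+\alpha}_{2,1})}.
\end{equation*}

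Finally I would invoke the three bounds \eqref{5.1}, \eqref{5.1-0}, \eqref{5.1-1} of Lemma \ref{lem4.1} to conclude that each of the three summands above is controlled by $C\,X^2(T)$, which proves the claim. There is no hard step here: the whole content of the corollary is the algebraic decomposition, and the nonlinear work has already been carried out in Lemma \ref{lem4.1}. The only thing worth verifying in passing is that the decomposition is genuinely the one matching the three terms estimated in the lemma, so that no new paraproduct/remainder estimate is needed.
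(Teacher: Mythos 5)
Your proof is correct and matches the paper's argument exactly: the paper also obtains the corollary from the decomposition $\dv(bu)=b\dv u+\dot{T}'_{\nabla b}u+\dot{T}_u\nabla b$ together with the three estimates \eqref{5.1}, \eqref{5.1-0}, \eqref{5.1-1} of Lemma \ref{lem4.1}.
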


From (\ref{lf-embeding1})-(\ref{hf-embedding1}), Lemma \ref{Bernstein} and Proposition \ref{p-TR},  we can obtain the following lemma, whose proof will be given in Appendix.

\begin{lem}\label{lem4.2}
Under the assumptions in Lemma \ref{lem4.1}, we have
\begin{equation}\label{5.1'}
    \|P_{\ge1} (b \dv u)\|_{L^1_T(\dot{B}^{\frac{N}{2}}_{2,1})}
    \leq C X^2(T),
    \end{equation}
        \begin{equation}\label{5.1-0'}
    \|P_{\ge1}(\dot{T}'_{\nb b}u)\|_{L^1_T(\dot{B}^{\frac{N}{2}}_{2,1})}
    \leq C X^2(T).
    \end{equation}
\end{lem}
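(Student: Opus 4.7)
The proof follows the template of Lemma \ref{lem4.1} but now targets high-frequency projections in $\dot{B}^{\fr{N}{2}}_{2,1}$ rather than low-frequency ones in $\dot{B}^{\fr{N}{2}-1+\al}_{2,1}$. The overall toolkit is the same: Bony's decomposition \eqref{Bony-decom}, the paraproduct/remainder continuities of Proposition \ref{p-TR} and Corollary \ref{coro-product}, and the low/high-frequency embeddings \eqref{lf-embeding1}--\eqref{hf-embedding1}. A crucial preliminary reduction for the first bound is that $\dv\Pe u=0$, so $b\,\dv u=b\,\dv \Pe^\bot u$, and hence only the potential part of $u$ (for which we have the full regularity package of $X_L(T)+X_H(T)$) enters.

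For \eqref{5.1'}, I would split each factor as $b=b_L+b_H$ and $\Pe^\bot u=\Pe^\bot u_L+\Pe^\bot u_H$ and treat the four resulting pieces separately. The high-high piece $b_H\,\dv\Pe^\bot u_H$ is controlled via the algebra property \eqref{product1-s} in $\dot{B}^{\fr{N}{2}}_{2,1}$, placing $b_H$ in $\widetilde{L}^\infty_T(\dot{B}^{\fr{N}{2}}_{2,1})$ and $\dv\Pe^\bot u_H$ in $L^1_T(\dot{B}^{\fr{N}{2}}_{2,1})\simeq L^1_T(\dot{B}^{\fr{N}{2}+1}_{2,1})$, giving a bound by $X_H(T)^2$. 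For the mixed pieces $b_L\,\dv\Pe^\bot u_H$ and $b_H\,\dv\Pe^\bot u_L$, I use the low-frequency embedding \eqref{lf-embeding1} to trade the $\dot{B}^{\fr{N}{2}-1+\al}_{2,1}$ regularity of the low-frequency factor for $\dot{B}^{\fr{N}{2}}_{2,1}$ (which costs only a constant since $\al<1$), and for the low-low piece $b_L\,\dv\Pe^\bot u_L$ I apply Corollary \ref{coro-product} to land in $\dot{B}^{\fr{N}{2}-1+2\al}_{2,1}$ and then use the high-frequency embedding \eqref{hf-embedding1}, which here goes the \emph{favourable} way since $P_{\ge1}$ shifts things into the regime where $\dot{B}^{\fr{N}{2}}_{2,1}\hookleftarrow\dot{B}^{\fr{N}{2}-1+2\al}_{2,1}$ up to a constant depending on $\al$. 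Time integrability is handled by Hölder between $\widetilde{L}^\infty_T$ and $L^1_T$ inside the Chemin--Lerner spaces as in Remark \ref{rem-CM-holder}.

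For \eqref{5.1-0'}, I expand $\dot{T}'_{\nb b}u=\dot{T}_{\nb b}u+\dot{R}(\nb b,u)$ and decompose $u=\Pe^\bot u+\Pe u$. The pieces involving $\Pe^\bot u$ are dealt with exactly as above; the novelty is the pieces with $\Pe u$, where the bound forces us to pair the $L^2$-based norms of $b$ with the $L^p$-based norms of $\Pe u$. Here I apply Proposition \ref{p-TR} directly: for $\dot{T}_{\nb b}\Pe u$ I take $\sigma=1-\fr{N}{p^*}$, $s=\fr{N}{p}-1$ in the paraproduct estimate and use $\nb b\in\widetilde{L}^\infty_T(\dot{B}^{-\sigma}_{p^*,1})$ (via Proposition \ref{prop-classical} applied to $b_L$ or $b_H$) together with $\Pe u\in L^1_T(\dot{B}^{\fr{N}{p}+1}_{p,1})$, producing an output in $L^1_T(\dot{B}^{\fr{N}{2}}_{2,1})$ after one more embedding. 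The remainder $\dot{R}(\nb b,\Pe u)$ is treated in the same vein, with the index constraint $s_1+s_2>0$ from Proposition \ref{p-TR} being the one that needs to be checked.

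The main obstacle is purely bookkeeping: one must check, for every sub-term produced by the Lo/Hi and $\Pe^\bot/\Pe$ splittings, that the triple of (spatial Besov index, Lebesgue exponent in $x$, Lebesgue exponent in $t$) balances under Hölder and the product rule. The hypotheses \eqref{p1} on $p$ and \eqref{al1} on $\al$ are chosen precisely to make all these constraints simultaneously solvable -- in particular, $p\le\fr{2N}{N-2}$ guarantees that the Sobolev embedding between $L^p$- and $L^2$-based Besov spaces loses at most the $\al$ of spare regularity available from the low-frequency part, and $\al\le\fr{N-1}{2}(\fr12-\fr1p)$ is what Proposition \ref{prop-wave} will later require when the dispersive gain \eqref{decay} is reinvested into $Y_\al(T)$.
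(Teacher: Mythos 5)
Your plan follows the same general toolkit as the paper (Bony decomposition, the paraproduct/remainder estimates, the low/high-frequency embeddings), and your reduction $b\,\dv u=b\,\dv\Pe^\bot u$ is the same first step. However, your decomposition is genuinely different — you split both factors into $b_{L}+b_{H}$ and $\Pe^\bot u_{L}+\Pe^\bot u_{H}$, whereas the paper keeps $b$ whole (using \eqref{b1}--\eqref{b2} to handle it), splits only $u=u_L+u_H$, applies Bony to $b\,\dv u_L$, and uses the fact that $P_{\ge1}$ lets one \emph{raise} the target index from $\frac N2$ to $\frac N2+\al$ via \eqref{hf-embedding1} before invoking the paraproduct estimate. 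Unfortunately, your route as stated contains concrete errors.

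First, for the low-low piece $b_L\,\dv\Pe^\bot u_L$ you claim to bound $\|P_{\ge1}(\cdot)\|_{\dot B^{N/2}_{2,1}}$ by a $\dot B^{\frac N2-1+2\al}_{2,1}$ norm via \eqref{hf-embedding1}, calling this the ``favourable'' direction. It is not: \eqref{hf-embedding1} reads $\|P_{\ge1}\phi\|_{\dot B^{s_1}_{2,1}}\le C\|P_{\ge1}\phi\|_{\dot B^{s_2}_{2,1}}$ for $s_1<s_2$, so to control a $\dot B^{N/2}_{2,1}$ norm one must use a \emph{strictly higher} index $s_2>N/2$. Since $\frac N2-1+2\al<\frac N2$ whenever $\al<\frac12$, and \eqref{al1} forces $\al<\frac12$ in every dimension and every admissible $p$, your claimed embedding is backwards. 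The paper instead raises the index to $\frac N2+\al$ and then uses that $\dv u_L$ genuinely has $L^1_T(\dot B^{\frac N2+\al}_{2,1})$ regularity.

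Second, for the mixed piece $b_H\,\dv\Pe^\bot u_L$ the ``low-frequency embedding'' does not do what you need. That embedding raises the Besov index of $\Pe^\bot u_L$, e.g.\ $\Pe^\bot u_L\in\widetilde L^\infty_T(\dot B^{\frac N2-1+\al}_{2,1})\hookrightarrow\widetilde L^\infty_T(\dot B^{N/2}_{2,1})$, but after taking the divergence you are down one derivative, at $\dot B^{\frac N2-1}_{2,1}$, and the product with $b_H\in L^1_T(\dot B^{N/2}_{2,1})$ then lands in $L^1_T(\dot B^{\frac N2-1}_{2,1})$, one derivative short. Going the other way, $\dv\Pe^\bot u_L\in L^1_T(\dot B^{\frac N2+\al}_{2,1})$ cannot be traded down to $L^1_T(\dot B^{N/2}_{2,1})$ because on low frequencies the lower index is the \emph{stronger} norm. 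What actually closes this term is a time interpolation, e.g.\ $\dv\Pe^\bot u_L\in\widetilde L^{2/(2-\al)}_T(\dot B^{N/2}_{2,1})$ paired with $b_H\in\widetilde L^{2/\al}_T(\dot B^{N/2}_{2,1})$, which is exactly the mechanism the paper encodes through its H\"older-in-time bound $\|\dot T'_{\dv u_L}b\|_{L^1_T(\dot B^{N/2}_{2,1})}\le C\|b\|_{L^{2/(1-\al)}_T(\dot B^{N/2}_{2,1})}\|\dv u_L\|_{L^{2/(1+\al)}_T(\dot B^{N/2}_{2,1})}$ together with \eqref{5.6-1} and \eqref{lfe1}.

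Third, in your sketch for \eqref{5.1-0'} the index $s=\frac Np-1$ is wrong. With $\nb b\in\dot B^{N/p^*-1}_{p^*,1}$ (so $\sigma=1-\frac{N}{p^*}$) and $\Pe u\in\dot B^{s}_{p,1}$, Proposition \ref{p-TR} gives output in $\dot B^{s-\sigma}_{2,1}=\dot B^{s-1+N/p^*}_{2,1}$; requiring this to equal $\frac N2$ and using $\frac1{p^*}=\frac12-\frac1p$ forces $s=\frac Np+1$, i.e.\ one must use the $L^1_T(\dot B^{\frac Np+1}_{p,1})$ regularity of $\Pe u$, not the $\widetilde L^\infty_T(\dot B^{\frac Np-1}_{p,1})$ regularity. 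With your choice the output index comes out as $\frac N2-2$, far below target. The paper's version of this estimate is exactly \eqref{4.22}, where $\Pe^\bot u_H+\Pe u$ are placed in $L^1_T(\dot B^{\frac Np+1}_{p,1})$.
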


From the low frequency embedding \eqref{lf-embeding1}, Lemma \ref{Bernstein},  Proposition \ref{p-TR}, Corollary \ref{coro-product},   Theorem 2.61 in \cite{Bahouri-Chemin-Danchin11}, and the special structure of $ \dv(I(b)\mathcal{A}\Pe u)$, we could get the following lemma, whose proof will be given in Appendix.

\begin{lem}\label{lem4.3}
Under the assumptions in Lemma \ref{lem4.1} and
\beno
\|b\|_{L^\infty_T(L^\infty)}\le\fr12,
\eeno
  we have
\be
\|P_{<1}\left(I(b)\mathcal{A}\Pe^\bot u\right)\|_{L^1_T(\dot{B}^{\fr{N}{2}-1+\al}_{2,1})}+\|P_{\ge1}\left(I(b)\mathcal{A}\Pe^\bot u\right)\|_{L^1_T(\dot{B}^{\fr{N}{2}-1}_{2,1})}\le CX^2(T),
\ee
and
\be\label{4.26}
\|P_{<1}\left(\Lm^{-1}\dv(I(b)\mathcal{A}\Pe u)\right)\|_{L^1_T(\dot{B}^{\fr{N}{2}-1+\al}_{2,1})}+\|P_{\ge1}\left(\Lm^{-1}\dv(I(b)\mathcal{A}\Pe u)\right)\|_{L^1_T(\dot{B}^{\fr{N}{2}-1}_{2,1})}\le CX^2(T).
\ee
\end{lem}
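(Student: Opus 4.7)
The plan is to reduce both estimates to product estimates in hybrid Besov spaces by exploiting (a) the composition bound $\|I(b)\|_{\dot B^{s}_{p,1}}\lesssim \|b\|_{\dot B^{s}_{p,1}}$ afforded by $I(0)=0$ and $\|b\|_{L^\infty_T(L^\infty)}\le 1/2$ (Theorem 2.61 in \cite{Bahouri-Chemin-Danchin11}), and (b) the algebraic identity that $\dv \Pe u=0$ forces a gain of one derivative in the second estimate. Throughout, I will split $b=b_L+b_H$ and $\Pe^\bot u=(\Pe^\bot u)_L+(\Pe^\bot u)_H$, so that every occurrence of $b$ and $\Pe^\bot u$ is either controlled by the $L$-norms (which carry $+\al$ extra derivatives and admit the dispersive gain via $Y_\al(T)$) or by the $H$-norms (which lie in the Danchin energy space).

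For the first estimate, I write $I(b)\mathcal{A}\Pe^\bot u$ via Bony's decomposition as $\dot T_{I(b)}\mathcal{A}\Pe^\bot u+\dot T'_{\mathcal{A}\Pe^\bot u}I(b)$, and in each factor I break $I(b)$ and $\mathcal{A}\Pe^\bot u$ further into low- and high-frequency pieces. The low-frequency part $P_{<1}(I(b)\mathcal{A}\Pe^\bot u)$, which must lie in $L^1_T(\dot B^{\fr{N}{2}-1+\al}_{2,1})$, is handled by distributing the $+\al$ regularity either onto $I(b_L)\in L^\infty_T(\dot B^{\fr{N}{2}-1+\al}_{2,1})$ or onto $\Pe^\bot u_L\in L^1_T(\dot B^{\fr{N}{2}+1+\al}_{2,1})$, using Corollary \ref{coro-product} and the low-frequency embedding \eqref{lf-embeding1} to absorb harmless gaps. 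For the high-frequency part $P_{\geq1}(\cdot)\in L^1_T(\dot B^{\fr{N}{2}-1}_{2,1})$, the high-frequency embedding \eqref{hf-embedding1} lets me drop any extra $+\al$ appearing on $b_H$ or $\Pe^\bot u_H$ at no cost, after which Bernstein and Proposition \ref{p-TR} reduce everything to products of quantities controlled by $X_L,\, X_H,\, Y_\al$. In every resulting product, one factor is placed in a time-integrable space ($L^1_T$ from the parabolic smoothing of $\Pe^\bot u_H$ or of $b_L$, or $L^{1/\al}_T$ from $Y_\al$) while the other lives in $L^\infty_T$ of some Besov space, so the time exponents combine by Hölder to yield the desired $L^1_T$.

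For the second estimate, the key observation is the \emph{special structure}: since $\dv \Pe u=0$, one has $\mathcal A\Pe u=\mu\Delta\Pe u$ and $\dv(\Delta\Pe u)=\Delta\dv\Pe u=0$, so
\[
\Lm^{-1}\dv\!\bigl(I(b)\mathcal A\Pe u\bigr)=\mu\,\Lm^{-1}\bigl(\nabla I(b)\cdot \Delta \Pe u\bigr).
\]
Since $\Lm^{-1}\nabla$ is a bounded homogeneous multiplier of degree zero on each dyadic block, this expression is, modulo Riesz-type operators, a bilinear product of $I(b)$ and $\nabla\Pe u$ after one integration by parts in the Bony decomposition. More precisely, applying Bony to $\nabla I(b)\cdot\Delta\Pe u$ and using that paraproducts and remainders commute with $\Lm^{-1}\nabla$ up to $0$-order multipliers (Proposition \ref{prop-classical}), the net regularity count is that of $I(b)\cdot\nabla\Pe u$. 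I then estimate this product in the two target spaces exactly as above: for $P_{<1}$ I place $\Pe u$ in $L^1_T(\dot B^{\fr{N}{p}+1}_{p,1})$ or $L^\infty_T(\dot B^{\fr{N}{p}-1}_{p,1})$ and pair with $I(b_L)$ or $I(b_H)$ in the corresponding Besov space (paying attention that the indices $p_1,p_2$ in Corollary \ref{coro-product} are compatible with $\frac1p=\frac12-\frac{1}{p^{*}}$), while for $P_{\geq 1}$ I use \eqref{hf-embedding1} to drop $\al$-derivatives as needed.

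The main obstacle is the bookkeeping for the mixed $(2,p)$-Lebesgue exponents in the second estimate: the pressure-type factor $I(b)$ naturally lives in $L^2$-based Besov spaces, but $\Pe u$ lives in $L^p$-based ones, so every product estimate must be set up with Corollary \ref{coro-product} using two different triples $(p_1,p_2,\rho)$, one suited to placing $\Pe u$ via $\dv$ and one via $\nabla$, and the conditions \eqref{p1}--\eqref{al1} must be checked to ensure the resulting Besov indices stay on the correct side of the regularity thresholds $N\max\{0,\frac{2}{p}-1\}$ and $\frac{N}{p}$. Once the index gymnastics are done, all remaining pieces decouple into factors already controlled by one of $X_L(T),X_H(T),Y_\al(T),W(T)$, so their products contribute to $X^2(T)$ as claimed.
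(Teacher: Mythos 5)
Your treatment of the first estimate matches the paper: decompose $\Pe^\bot u$ into low/high frequencies, use Theorem 2.61 of \cite{Bahouri-Chemin-Danchin11} to replace $I(b)$ by $b$, and apply Corollary \ref{coro-product} with the H\"older-conjugate time exponents $\fr{2}{1-\al}$ and $\fr{2}{1+\al}$ on the $\Pe^\bot u_L$ piece (resp.\ $\infty$ and $1$ on the $\Pe^\bot u_H$ piece). That part is fine.

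The second estimate, however, has a genuine gap as you have set it up. After writing $\Lm^{-1}\dv(I(b)\mathcal A\Pe u)=\Lm^{-1}(\nb I(b)\cdot\Delta\Pe u)$ and applying Bony directly to $\nb I(b)\cdot\Delta\Pe u$, you must estimate the remainder $\dot R(\nb I(b),\Delta\Pe u)$ with $\nb I(b)\in L^\infty_T(\dot B^{\fr N2-1}_{2,1})$ and $\Delta\Pe u\in L^1_T(\dot B^{\fr Np-1}_{p,1})$. The remainder estimate in Proposition \ref{p-TR} requires $s_1+s_2=\fr N2+\fr Np-2>0$, which fails for $N=2$ and any admissible $p>2$. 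Moreover, you cannot rescue this by letting $\Lm^{-1}$ "gain a derivative" on the remainder's output, since the spectrum of each block of $\dot R$ lies in a ball, not an annulus; $\Lm^{-1}$ only gains uniformly on the paraproduct pieces. The statement that "paraproducts and remainders commute with $\Lm^{-1}\nabla$ up to $0$-order multipliers" so that "the net regularity count is that of $I(b)\cdot\nabla\Pe u$" is therefore incorrect for the remainder: $\Lm^{-1}$ acting on the full bilinear expression is not the same as a Riesz transform applied to one factor.

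The fix, which is what the paper actually does, is never to push the divergence onto $I(b)$ inside the remainder. Starting from Bony at the level of $I(b)\mathcal A\Pe u$ and then using $\dv\mathcal A\Pe u=0$ only to drop $\dot T_{I(b)}\dv\mathcal A\Pe u$, one gets
\[
\Lm^{-1}\dv\bigl(I(b)\mathcal A\Pe u\bigr)=\Lm^{-1}\bigl(\dot T_{\nb I(b)}\cdot\mathcal A\Pe u\bigr)+\Lm^{-1}\dv\bigl(\dot T_{\mathcal A\Pe u}I(b)\bigr)+\Lm^{-1}\dv\bigl(\dot R(I(b),\mathcal A\Pe u)\bigr).
\]
The first term is a paraproduct spectrally localized in annuli, so $\Lm^{-1}$ gains one derivative and one estimates $\|\dot T_{\nb I(b)}\mathcal A\Pe u\|_{\dot B^{\fr N2-2}_{2,1}}$ via $\nb I(b)\in\dot B^{\fr N{p^*}-1}_{p^*,1}$ (valid since $\fr N{p^*}\le1$ under \eqref{p1}). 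For the last two terms, $\Lm^{-1}\dv$ is a bounded $0$-order multiplier and the remainder is estimated for $I(b)\in\dot B^{\fr N2}_{2,1}$ and $\mathcal A\Pe u\in\dot B^{\fr Np-1}_{p,1}$, where $s_1+s_2=\fr N2+\fr Np-1>0$ for all $N\ge2$. Your observation $\mathcal A\Pe u=\mu\Delta\Pe u$ is correct but inessential; the only structure one needs is $\dv\mathcal A\Pe u=0$.
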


Similar, using Lemma \ref{lem-Kb} in the Appendix, we could get the following lemma, whose proof will be given in Appendix.

\begin{lem}\label{lem4.4}
Under the assumptions in Lemma \ref{lem4.3},
  we have
\be
\|P_{<1}\left(K(b)\nb b\right)\|_{L^1_T(\dot{B}^{\fr{N}{2}-1+\al}_{2,1})}+\|P_{\ge1}\left(K(b)\nb b\right)\|_{L^1_T(\dot{B}^{\fr{N}{2}-1}_{2,1})}\le CX^2(T).
\ee
\end{lem}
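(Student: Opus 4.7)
The plan is to follow exactly the route used for the term $I(b)\mathcal{A}\Pe^\bot u$ in Lemma~\ref{lem4.3}, since $K$ and $I$ share the crucial feature $K(0)=I(0)=0$. First I would invoke the composition estimate from Lemma~\ref{lem-Kb} (using the bound $\|b\|_{L^\infty_T(L^\infty)}\le\fr12$) to reduce norms of $K(b)$ to norms of $b$ in the same Besov spaces, so that the problem reduces to estimating a product $K(b)\cdot\nb b$ whose two factors are both controlled by components of $X(T)$. Then I would apply Bony's decomposition
$$K(b)\nb b=\dot{T}_{K(b)}\nb b+\dot{T}'_{\nb b}K(b),$$
so that the paraproduct and remainder-type pieces can be handled separately via Proposition~\ref{p-TR} and Corollary~\ref{coro-product}.

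For the high-frequency estimate $\|P_{\ge1}(K(b)\nb b)\|_{L^1_T(\dot{B}^{\fr{N}{2}-1}_{2,1})}$, I would split $b=b_L+b_H$ in both factors. Contributions containing $\nb b_H$ are controlled by the $b_H\in\widetilde{L}^\infty_T(\dot{B}^{\fr{N}{2}}_{2,1})\cap L^1_T(\dot{B}^{\fr{N}{2}}_{2,1})$ piece of $X_H(T)$ combined with the high-frequency embedding \eqref{hf-embedding1}, while contributions containing $\nb b_L$ use the parabolic-gain factor $\|b_L\|_{L^1_T(\dot{B}^{\fr{N}{2}+1+\al}_{2,1})}$ of $X_L(T)$ combined with the low-frequency embedding \eqref{lf-embeding1}. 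The composition bound on $K(b)$ then closes each piece by Corollary~\ref{coro-product}, as in Lemma~\ref{lem4.3}.

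The more delicate point is the low-frequency bound $\|P_{<1}(K(b)\nb b)\|_{L^1_T(\dot{B}^{\fr{N}{2}-1+\al}_{2,1})}$, for which the Strichartz norm $Y_\al(T)$ is essential. The idea is to retain, in each paraproduct or remainder piece, at least one factor among $\{K(b_L),\nb b_L\}$ to which the dispersive information $b_L\in\widetilde{L}^{\fr{1}{\al}}_T(\dot{B}^{\fr{N}{p}+2\al-1}_{p,1})$ can be applied, and to pair it by H\"older in time with a companion factor controlled in $\widetilde{L}^{\fr{1}{1-\al}}_T$ of a suitable Besov space arising from the $\widetilde{L}^\infty_T\cap L^1_T$ parts of the $X$-norms of $b$; the time exponents are then conjugate. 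Composition estimates for $K$ together with Corollary~\ref{coro-product} pin down the Besov indices, and their compatibility forces exactly the constraint \eqref{al1} relating $\al$ and $p$. I expect the main technical obstacle to be the paraproduct $\dot{T}_{K(b_L)}\nb b_L$, which has no ``free'' high-frequency factor to absorb the extra $\al$-regularity on the target side, so the balance between the $Y_\al$ norm and the parabolic-gain norm on $b_L$ must be exploited sharply, as in the corresponding step of Lemma~\ref{lem4.3}.
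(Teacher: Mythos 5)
Your overall architecture matches the paper's: split $\nb b=\nb b_L+\nb b_H$, apply Bony's decomposition, handle the high-frequency part with $b_H\in\widetilde{L}^\infty_T(\dot{B}^{N/2}_{2,1})\cap L^1_T(\dot{B}^{N/2}_{2,1})$ via Corollary~\ref{coro-product}/Theorem~2.61, use Lemma~\ref{lem-Kb} for the Besov norms of $K(b)_L$ and $K(b)_H$, and lean on the dispersive $Y_\al$ norm for the low-frequency piece. You also correctly flag the paraproduct $\dot{T}_{K(b)}\nb b_L$ as the delicate term (the paper's $J_1$).

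Where your plan is thin is precisely the claim that $J_1$ can be closed "as in the corresponding step of Lemma~\ref{lem4.3}." That analogy breaks down: in Lemma~\ref{lem4.3} the simple H\"older pairing $L^{\fr{2}{1-\al}}_T\times L^{\fr{2}{1+\al}}_T$ works because $\mathcal{A}\Pe^\bot u_L\in\dot{B}^{N/2-1}_{2,1}$ leaves a surplus $1-2\al>0$ of low-frequency regularity on $\Pe^\bot u_L$ that can be traded for time integrability. For $\dot{T}_{K(b)}\nb b_L$ the target space is $\dot{B}^{N/2-1+\al}_{2,1}$, so $\nb b_L$ must be bounded at regularity $N/2-1+\al$; the interpolation of $\widetilde{L}^\infty_T(\dot{B}^{N/2-1+\al}_{2,1})\cap L^1_T(\dot{B}^{N/2+1+\al}_{2,1})$ then forces the time exponent $2$, leaving no conjugate exponent for $K(b)$ on $[0,\infty)$ since $b\in L^\infty_T(\dot{B}^{N/2}_{2,1})$ is not time-integrable. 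The paper escapes this dead end with a three-parameter interpolation (in time exponent, Besov regularity, and Lebesgue index simultaneously) between the dispersive $\widetilde{L}^{1/\al}_T(\dot{B}^{N/p+2\al-1}_{p,1})$ endpoint and the parabolic $\widetilde{L}^{1/(1-\al)}_T(\dot{B}^{N/2+1-\al}_{2,1})$ endpoint, applied to \emph{both} factors with complementary weights $\theta$ and $1-\theta$; this lands $b_L$ in $\widetilde{L}^{r_1}_T(\dot{B}^{N/2+\al}_{p_1,1})$ and $K(b)$ in $\widetilde{L}^{r_2}_T(\dot{B}^{N/p}_{p_2,1})$ with $1/r_1+1/r_2=1$ and $1/p_2=1/p_1^*+1/p$ automatically. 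Nothing of this sort is needed in Lemma~\ref{lem4.3}, so if you attempted the literal Lemma~\ref{lem4.3} route you would hit a wall and need to invent this interpolation.
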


In the next two lemmas, we shall estimate the convection term $\Lm^{-1}\dv(u\cdot\nb u)$.
Here, we distinguish the terms with the potential part $\Pe^\bot u$  from the terms with the divergence free part $\Pe  u$.

\begin{lem}\label{lem4.5}
  Under the assumptions in Lemma \ref{lem4.1}, we have
    \begin{equation}
    \|P_{<1} ( (\Pe^\bot u\cdot\nabla)\Pe^\bot u)\|_{L^1_T(\dot{B}^{\frac{N}{2}-1+\al}_{2,1})} +\|P_{\ge1} ( (\Pe^\bot u\cdot\nabla)\Pe^\bot u)\|_{L^1_T(\dot{B}^{\frac{N}{2}-1}_{2,1})}
    \leq C X^2(T),
    \end{equation}
    and
    \begin{equation}\label{4.40}
   \|P_{<1} ( \dot{T}_{\Pe^\bot u}\cdot\nabla d)\|_{L^1_T(\dot{B}^{\frac{N}{2}-1+\al}_{2,1})}+ \|P_{\ge1} ( \dot{T}_{\Pe^\bot u}\cdot\nabla d)\|_{L^1_T(\dot{B}^{\frac{N}{2}-1}_{2,1})}
    \leq C X^2(T).
    \end{equation}

\end{lem}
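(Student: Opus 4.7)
The plan is to mirror the scheme used in Lemmas \ref{lem4.1}--\ref{lem4.4}: apply Bony's decomposition to the bilinear convective term, split $\Pe^\bot u=\Pe^\bot u_L+\Pe^\bot u_H$ in each factor, and estimate each resulting piece by a product/paraproduct inequality combined with H\"older in time. For each of the two quantities, the high-frequency output (target space $L^1_T(\dot B^{\fr{N}{2}-1}_{2,1})$) is of the same type as the estimates in Lemmas \ref{lem4.1}--\ref{lem4.4}, and will be obtained by pairing an $\widetilde L^\infty_T$-norm from $X_H$ (resp.\ $X_L$, after the embedding \eqref{lf-embeding1}) with an $L^1_T$-norm from $X_H$ (resp.\ $X_L$), using Proposition \ref{p-TR} and Corollary \ref{coro-product} in the Chemin--Lerner setting (Remark \ref{rem-CM-holder}).

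The new ingredient appears in the low-frequency output $L^1_T(\dot B^{\fr{N}{2}-1+\al}_{2,1})$, where the Strichartz norm $Y_\al(T)$ must be exploited. For the representative low-low piece of $(\Pe^\bot u_L\cdot\nb)\Pe^\bot u_L$, I interpolate between the two endpoints of $X_L$ to obtain
\beno
\|\nb\Pe^\bot u_L\|_{\widetilde L^{1/(1-\al)}_T(\dot B^{\fr{N}{2}-\al}_{2,1})} \lesssim X_L(T),
\eeno
and then pair it with $\|\Pe^\bot u_L\|_{\widetilde L^{1/\al}_T(\dot B^{\fr{N}{p}+2\al-1}_{p,1})}\le Y_\al(T)$ via H\"older's inequality $(1-\al)+\al=1$ in time. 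The spatial product is handled by Corollary \ref{coro-product} with $p_1=p$, $p_2=2$, $\rho=2$, $s_1=\fr{N}{p}+2\al-1$, $s_2=\fr{N}{2}-\al$; the algebra gives output regularity $\fr{N}{2}-1+\al$ exactly, and the admissibility conditions reduce to $\al\le1/2$ together with $s_1+s_2>0$, both automatically satisfied under \eqref{p1}--\eqref{al1}. The low-high, high-low, and high-high sub-pieces are treated by the same scheme, exchanging Strichartz regularity on the $L$-factor for $\widetilde L^\infty_T$/$L^1_T$ regularity on the $H$-factor, and using \eqref{lf-embeding1}--\eqref{hf-embedding1} to move between $L^2$- and $L^p$-based Besov scales where necessary.

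The paraproduct term $\dot T_{\Pe^\bot u}\cdot\nb d$ is strictly simpler: since $d=\Lm^{-1}\dv\Pe^\bot u$ inherits the regularity of $\Pe^\bot u$ (Proposition \ref{prop-classical}), no remainder appears in the decomposition, and Proposition \ref{p-TR} alone suffices. The same distribution of norms among $\widetilde L^\infty_T$, $L^1_T$, and $Y_\al$ handles both frequency regimes for this term. The main obstacle in the whole argument is the bookkeeping of indices across the four $LL$/$LH$/$HL$/$HH$ sub-cases and the two frequency-output regimes, ensuring that every subterm falls within a valid application of Corollary \ref{coro-product}; it is precisely this bookkeeping that forces the quantitative constraints \eqref{p1}--\eqref{al1} on $(p,\al)$ assumed in Theorem \ref{thm-p>2}.
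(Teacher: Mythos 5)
Your proposal follows the paper's proof: the paper likewise applies Bony's decomposition together with the $\Pe^\bot u=\Pe^\bot u_L+\Pe^\bot u_H$ split, bounds each resulting paraproduct/remainder piece via Proposition \ref{p-TR}/Corollary \ref{coro-product} and H\"older in time, and uses the Strichartz norm $Y_\al(T)$ for the low--low piece exactly as you describe (the paper simply routes through the embedding $\dot B^{\fr{N}{p}+2\al-1}_{p,1}\hookrightarrow\dot B^{2\al-1}_{\infty,1}$ before invoking the paraproduct estimate, which is equivalent to your direct use of Corollary \ref{coro-product} with $p_1=p$). The paper also disposes of $\dot T_{\Pe^\bot u}\cdot\nb d$ with a one-line remark that it is bounded the same way, which matches your observation that it is a single paraproduct with $d$ of the same regularity as $\Pe^\bot u$.
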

\begin{proof}
 From Bony's decomposition, the low frequency embedding \eqref{lf-embeding1} and the high frequency embedding \eqref{hf-embedding1}, Proposition \ref{p-TR}, Corollary \ref{coro-product} and Lemma \ref{Bernstein}, we have
    \begin{eqnarray}\label{uu}
      && \|P_{<1} ( (\Pe^\bot u\cdot\nabla)\Pe^\bot u)\|_{L^1_T(\dot{B}^{\frac{N}{2}-1+\al}_{2,1})}+\|P_{\ge1} ( (\Pe^\bot u\cdot\nabla)\Pe^\bot u)\|_{L^1_T(\dot{B}^{\frac{N}{2}-1}_{2,1})}\nonumber\\
        &\leq&C \left( \| \dot{T}_{\Pe^\bot u_L}\cdot\nb \Pe^\bot u_L\|_{L^1_T(\dot{B}^{\frac{N}{2} -1+\al}_{2,1})}
      +\| \dot{T}'_{\nb\Pe^\bot u_L}\Pe^\bot u_L\|_{L^1_T(\dot{B}^{\frac{N}{2}-1 +\al}_{2,1})}
      +\| \dot{T}_{\Pe^\bot u_L}\cdot  \nb \Pe^\bot u_H  \|_{L^1_T(\dot{B}^{\frac{N}{2}-1}_{2,1})}
          \right.\nonumber\\
          &&\nn +\| \dot{T}'_{\nb \Pe^\bot u_H}  \Pe^\bot u_L \|_{L^1_T(\dot{B}^{\frac{N}{2}-1+\al}_{2,1})}+\| \dot{T}'_{\Pe^\bot u_H}\cdot  \nb \Pe^\bot u_L  \|_{L^1_T(\dot{B}^{\frac{N}{2}-1+\al}_{2,1})}\\
          &&\left.+\| \dot{T}_{\nb \Pe^\bot u_L}  \Pe^\bot u_H\|_{L^1_T(\dot{B}^{\frac{N}{2}-1}_{2,1})}+\| \Pe^\bot u_H\cdot\nb\Pe^\bot u_H\|_{L^1_T(\dot{B}^{\frac{N}{2}-1 }_{2,1})}
      \right)\nonumber\\
      &\leq&C \left( \|\Pe^\bot u_L\|_{L^{\frac{1}{\al}}_T(\dot{B}^{2\al-1}_{\infty,1})}
      \|\Pe^\bot u_L\|_{L^{\frac{1}{1-\al}}_T(\dot{B}^{\frac{N}{2}-\al+1}_{2,1})}
      + \|\Pe^\bot u_L\|_{L^{\frac{1}{\al}}_T(\dot{B}^{2\al-1}_{\infty,1})}
      \|\nb\Pe^\bot u_H\|_{L^{\frac{1}{1-\al}}_T(\dot{B}^{\frac{N}{2}-2\al}_{2,1})}
     \right.\nonumber\\
     &&\nn + \|\nb\Pe^\bot u_H\|_{L^{1}_T(\dot{B}^{\frac{N}{2}}_{2,1})}
      \|\Pe^\bot u_L\|_{L^{\infty}_T(\dot{B}^{\frac{N}{2}-1+\al}_{2,1})}+  \|\Pe^\bot u_H\|_{L^{2}_T(\dot{B}^{\frac{N}{2}}_{2,1})}
      \|\nb\Pe^\bot u_L\|_{L^{2}_T(\dot{B}^{\frac{N}{2}-1+\al}_{2,1})} \\
      &&\nn\left.  + \|\nb\Pe^\bot u_L\|_{L^{\frac{1}{\al}}_T(\dot{B}^{2\al-2}_{\infty,1})}
      \|\Pe^\bot u_H\|_{L^{\frac{1}{1-\al}}_T(\dot{B}^{\frac{N}{2}+1-2\al}_{2,1})}   + \|\Pe^\bot u_H\|_{L^{2}_T(\dot{B}^{\frac{N}{2}}_{2,1})}^2
    \right)\nonumber\\
      &\leq&C X^2(T),
    \end{eqnarray}
where we have used the facts \eqref{5.3} and
 \begin{equation}\label{5.5}
    \widetilde{L}^\infty_T(\dot{B}^{\frac{N}{2}-1 }_{2,1})\cap L^1_T(\dot{B}^{\frac{N}{2}+1 }_{2,1})
    \subset \widetilde{L}^{\frac{1}{1-\al}}_T(\dot{B}^{\frac{N}{2}-2\al+1}_{2,1}).
     \end{equation}
     Moreover, $\dot{T}_{\Pe^\bot u}\cdot\nabla d$ can be bounded in a similar way. This completes the proof of Lemma \ref{lem4.5}.
\end{proof}

\begin{lem}\label{lem4.6}
Under the assumptions in Lemma \ref{lem4.1},  we get
\begin{gather}
\|P_{<1}\left(\Lm^{-1}\dv(\Pe u\cdot\nb\Pe u)\right)\|_{L^1_T(\dot{B}^{\fr{N}{2}-1+\al}_{2,1})}+\|P_{\ge1}\left(\Lm^{-1}\dv(\Pe u\cdot\nb\Pe u)\right)\|_{L^1_T(\dot{B}^{\fr{N}{2}-1}_{2,1})}\le CX^2(T),\\
\|P_{<1}\left(\Lm^{-1}\dv(\Pe^\bot u\cdot\nb\Pe u)\right)\|_{L^1_T(\dot{B}^{\fr{N}{2}-1+\al}_{2,1})}+\|P_{\ge1}\left(\Lm^{-1}\dv(\Pe^\bot u\cdot\nb\Pe u)\right)\|_{L^1_T(\dot{B}^{\fr{N}{2}-1}_{2,1})}\le CX^2(T),\\
\|P_{<1}\left(\Lm^{-1}\dv(\Pe u\cdot\nb\Pe^\bot u)\right)\|_{L^1_T(\dot{B}^{\fr{N}{2}-1+\al}_{2,1})}+\|P_{\ge1}\left(\Lm^{-1}\dv(\Pe u\cdot\nb\Pe^\bot u)\right)\|_{L^1_T(\dot{B}^{\fr{N}{2}-1}_{2,1})}\le CX^2(T),
\end{gather}
and
\be
\|P_{<1}\left(\Lm^{-1}\dv(\dot{T}_{\Pe u}\cdot\nb d)\right)\|_{L^1_T(\dot{B}^{\fr{N}{2}-1+\al}_{2,1})}+\|P_{\ge1}\left(\Lm^{-1}\dv(\dot{T}_{\Pe u}\cdot\nb d)\right)\|_{L^1_T(\dot{B}^{\fr{N}{2}-1}_{2,1})}\le CX^2(T).
\ee
\end{lem}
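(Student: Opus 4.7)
\medskip
\noindent\textbf{Proof proposal for Lemma \ref{lem4.6}.}
The plan is to treat each of the four quantities separately and, for every one of them, to produce a bound in the single space $L^1_T(\dot B^{\frac N2-1}_{2,1})$ or $L^1_T(\dot B^{\frac N2-1+\al}_{2,1})$, then recover the other target via the hybrid embeddings
\beno
\|P_{<1}f\|_{\dot B^{\frac N2-1+\al}_{2,1}}\lesssim \|P_{<1}f\|_{\dot B^{\frac N2-1}_{2,1}},\qquad \|P_{\ge1}f\|_{\dot B^{\frac N2-1}_{2,1}}\lesssim \|P_{\ge1}f\|_{\dot B^{\frac N2-1+\al}_{2,1}},
\eeno
which are simply \eqref{lf-embeding1}, \eqref{hf-embedding1}. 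Since $\Lm^{-1}\dv$ is a homogeneous Fourier multiplier of order $0$, it acts boundedly on every $\dot B^s_{p,r}$ (Proposition \ref{prop-classical}), so I may drop it and estimate the bare bilinear expressions.

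For $\Pe u\cdot\nb\Pe u$ both factors live only in the $L^p$-based scale. I apply Corollary~\ref{coro-product} with $\rho=2$, $p_1=p_2=p$, $s_1=\frac Np-1$, $s_2=\frac Np$. The exponent condition $1/\rho\le 1/p_1+1/p_2$ is $p\le 4$ (which is precisely \eqref{p1}), the constraint $s_1-N/p_1\le N(1/p_2-1/\rho)$ reads $p\le \frac{2N}{N-2}$, and the output index is $s_1+s_2+N(\frac12-\frac2p)=\frac N2-1$. Time integration then gives
\beno
\|\Pe u\cdot\nb\Pe u\|_{L^1_T(\dot B^{\frac N2-1}_{2,1})}\lesssim \|\Pe u\|_{L^\infty_T(\dot B^{\frac Np-1}_{p,1})}\|\nb\Pe u\|_{L^1_T(\dot B^{\frac Np}_{p,1})}\lesssim W(T)^2\le X(T)^2,
\eeno
and the low-frequency target follows from the first embedding above.

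For the two mixed products I split $\Pe^\bot u=\Pe^\bot u_L+\Pe^\bot u_H$ (respectively $\nb\Pe^\bot u=\nb\Pe^\bot u_L+\nb\Pe^\bot u_H$). The $H$-piece is plugged straight into Corollary~\ref{coro-product} with $\rho=2$, $p_1=2$, $p_2=p$, $s_1=\frac N2-1$, $s_2=\frac Np$ (the constraint again being $p\le\frac{2N}{N-2}$), and lands in $\dot B^{\frac N2-1}_{2,1}$ with norm bounded by $X_H(T)W(T)$; the low-frequency target is then reached via \eqref{lf-embeding1}. For the $L$-piece I use the shifted indices $s_1=\frac N2-1+\al$ (respectively $s_2=\frac N2+\al$), which yield the same Corollary with target $\dot B^{\frac N2-1+\al}_{2,1}$ and norm bounded by $X_L(T)W(T)$, provided the constraint $\al\le 1+\frac Np-\frac N2$ holds. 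For the interior cases in \eqref{p1} this follows from \eqref{al1}; at the endpoint $p=\frac{2N}{N-2}$ with $N\ge4$ the first form of Corollary~\ref{coro-product} degenerates, and I pass to the second form with $\sigma_1=\frac Np$, $q_1=p$, $\sigma_2=\frac N2-1+\al$, $q_2=2$, whose only constraint $\sigma_1-N/q_1\le 0$ is trivial. The high-frequency target is then obtained from \eqref{hf-embedding1}.

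The only genuinely delicate term is $\Lm^{-1}\dv(\dot T_{\Pe u}\cdot\nb d)$, because the natural regularity $\frac Np-1$ of $\Pe u$ lives in the $L^p$-scale while the target is $L^2$-based. The idea is to lose $\Pe u$'s $L^p$-character by means of the Besov embedding $\dot B^{\frac Np-1}_{p,1}\hookrightarrow \dot B^{-1}_{\infty,1}$, so that Proposition~\ref{p-TR} gives
\beno
\dot T_{\Pe u}:\ \dot B^{-1}_{\infty,1}\times\dot B^{s}_{2,1}\longrightarrow \dot B^{s-1}_{2,1}.
\eeno
Splitting $d=d_L+d_H$, the $H$-piece has $\nb d_H\in L^1_T(\dot B^{\frac N2}_{2,1})$, which lands $\dot T_{\Pe u}\nb d_H$ in $L^1_T(\dot B^{\frac N2-1}_{2,1})$ with norm $\lesssim W(T)X_H(T)$; the $L$-piece has $\nb d_L\in L^1_T(\dot B^{\frac N2+\al}_{2,1})$, which lands $\dot T_{\Pe u}\nb d_L$ in $L^1_T(\dot B^{\frac N2-1+\al}_{2,1})$ with norm $\lesssim W(T)X_L(T)$. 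Each of the opposite targets is recovered from the corresponding frequency embedding, and summing yields $CX(T)^2$. The main obstacle throughout is exactly this mismatch between the $L^p$-scale of $\Pe u$ and the $L^2$-scale of $b,d,\Pe^\bot u$: it is overcome by combining the Besov embedding $\dot B^{\frac Np-1}_{p,1}\hookrightarrow\dot B^{-1}_{\infty,1}$ with the $L/H$ split of $d$, and, in the endpoint case $p=\frac{2N}{N-2}$, by switching to the second form of Corollary~\ref{coro-product}.
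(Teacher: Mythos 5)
Your preliminary reduction — "since $\Lm^{-1}\dv$ is a homogeneous multiplier of order $0$, I may drop it and estimate the bare bilinear expressions" — is exactly where the proposal goes wrong. The operator $\Lm^{-1}\dv$ is degree zero as a multiplier, but when it hits $\Pe^\bot u\cdot\nb\Pe u$ and $\dv\Pe u=0$ is exploited, it effectively transfers a derivative onto the other factor:
$\Lm^{-1}\dv(\dot T_{(\Pe^\bot u_L)^i}\pr_i\Pe u)=\Lm^{-1}\dot T_{\pr_j(\Pe^\bot u_L)^i}\pr_i(\Pe u)^j$,
so after dividing by $\Lm$ the para-multiplier is $\nb\Pe^\bot u_L$, not $\Pe^\bot u_L$. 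That extra derivative is not cosmetic: it relaxes the regularity requirement on the low-frequency factor from $\frac N{p^*}-1+\al\le 0$ (i.e.\ $\al\le 1-\frac N{p^*}$) to $\frac N{p^*}-2+\al\le 0$ (i.e.\ $\al\le 2-\frac N{p^*}$), and the latter is automatic.

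The gap is quantitatively real. Your estimate of the $L$-piece of $\Pe^\bot u\cdot\nb\Pe u$ needs $\al\le 1+\frac Np-\frac N2=1-\frac{N}{p^*}$, and you assert this "follows from \eqref{al1}" away from the endpoint. It does not once $N\ge 4$: for $N=4$, $p=3.5$ one has $1-\frac N{p^*}=\frac17$, while \eqref{al1} permits $\al$ up to $\frac{N-1}{2}\bigl(\frac12-\frac1p\bigr)=\frac{9}{28}>\frac17$. So for a range of admissible $(p,\al)$ the paraproduct $\dot T_{\Pe^\bot u_L}\nb\Pe u$ simply does not land in $\dot B^{\frac N2-1+\al}_{2,1}$ by any choice of exponents, with or without Corollary~\ref{coro-product}. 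Your suggested fallback — "pass to the second form with $\sigma_1,q_1,\sigma_2,q_2$" — does not close this, because Corollary~\ref{coro-product} requires all of its hypotheses simultaneously: the first-term conditions control $\dot T_uv$ and the remainder, the second-term conditions control $\dot T_vu$, and choosing $\sigma$'s only adjusts the latter. The offending term is precisely in the first group.

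What the paper does, and what is genuinely needed, is the structural decomposition
$\Lm^{-1}\dv(\Pe^\bot u\cdot\nb\Pe u)=\Lm^{-1}\dot T_{\pr_i(\Pe^\bot u_L)^k}\pr_k(\Pe u)^i
+\Lm^{-1}\dv\bigl(\dot T'_{\nb\Pe u}\Pe^\bot u_L\bigr)+\Lm^{-1}\dv\bigl(\Pe^\bot u_H\cdot\nb\Pe u\bigr),$
in which the first term is exactly the paraproduct with the derivative already redistributed via $\dv\Pe u=0$. Your treatment of $\Pe u\cdot\nb\Pe u$, of $\dot T_{\Pe u}\cdot\nb d$ via $\dot B^{\frac Np-1}_{p,1}\hookrightarrow\dot B^{-1}_{\infty,1}$, and of the high-frequency pieces of the mixed terms is in the right spirit (though you should note that the two terms in Corollary~\ref{coro-product} generally require different Hölder splits in time, as the paper's $L^\infty\times L^1$ and $L^2\times L^2$ pairings illustrate), but the $L$-piece of $\Pe^\bot u\cdot\nb\Pe u$ is not reachable without retaining and exploiting the $\Lm^{-1}\dv$ structure.
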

\begin{proof}
From Lemma \ref{Bernstein}, Bony's decomposition, the low frequency embedding \eqref{lf-embeding1}, and Proposition \ref{p-TR}, we have
\beq\label{PuPu2}
\nn&&\|P_{<1}\left(\Lm^{-1}\dv(\Pe u\cdot\nb\Pe u)\right)\|_{L^1_T(\dot{B}^{\fr{N}{2}-1+\al}_{2,1})}+\|P_{\ge1}\left(\Lm^{-1}\dv(\Pe u\cdot\nb\Pe u)\right)\|_{L^1_T(\dot{B}^{\fr{N}{2}-1}_{2,1})}\\
\nn&\le&C\|\dot{T}_{\Pe u}\cdot\nb\Pe u\|_{L^1_T(\dot{B}^{\fr{N}{2}-1}_{2,1})}+C\|\dot{T}_{\nb\Pe u}\Pe u\|_{L^1_T(\dot{B}^{\fr{N}{2}-1}_{2,1})}+C\|\pr_k\dot{R}((\Pe u)^k,\Pe u)\|_{L^1_T(\dot{B}^{\fr{N}{2}-1}_{2,1})}\\
\nn&\le&C\|\Pe u\|_{L^\infty_T(\dot{B}^{\fr{N}{p^*}-1}_{p^*,1})}\|\Pe u\|_{L^1_T(\dot{B}^{\fr{N}{p}+1}_{p,1})}+C\|\nb\Pe u\|_{L^2_T(\dot{B}^{\fr{N}{p^*}-1}_{p^*,1})}\|\Pe u\|_{L^2_T(\dot{B}^{\fr{N}{p}}_{p,1})}\\
\nn&\le&C\|\Pe u\|_{L^\infty_T(\dot{B}^{\fr{N}{p}-1}_{p,1})}\|\Pe u\|_{L^1_T(\dot{B}^{\fr{N}{p}+1}_{p,1})}+C\|\Pe u\|_{L^2_T(\dot{B}^{\fr{N}{p}}_{p,1})}^2\\
&\le&CX^2(T),
\eeq
where we have used the fact $p*\ge p$ in the third inequality of \eqref{PuPu2}. This explains why we need to assume $p\le4$ in \eqref{p1}. Next, using $\dv \Pe u=0$ and the fact $u=u_L+u_H$, we can decompose $\Lm^{-1}\dv(\Pe^\bot u\cdot\nb\Pe u)$ as follows:
\beno
\Lm^{-1}\dv(\Pe^\bot u\cdot\nb\Pe u)=\Lm^{-1}\dot{T}_{\pr_i(\Pe^\bot u_L)^k}\pr_k(\Pe u)^i+\Lm^{-1}\dv(\dot{T}'_{\nb\Pe u}\Pe^\bot u_L)+\Lm^{-1}\dv(\Pe^\bot u_H\cdot\nb\Pe u).
\eeno
Then it is easy to see that
\beq
\nn&&\|P_{<1}\left(\Lm^{-1}\dot{T}_{\pr_i(\Pe^\bot u_L)^k}\pr_k(\Pe u)^i\right)\|_{L^1_T(\dot{B}^{\fr{N}{2}-1+\al}_{2,1})}\\
\nn&&+\|P_{\ge1}\left(\Lm^{-1}\dot{T}_{\pr_i(\Pe^\bot u_L)^k}\pr_k(\Pe u)^i\right)\|_{L^1_T(\dot{B}^{\fr{N}{2}-1}_{2,1})}\\
\nn&\le&C\|\dot{T}_{\pr_i(\Pe^\bot u_L)^k}\pr_k(\Pe u)^i\|_{L^1_T(\dot{B}^{\fr{N}{2}-2+\al}_{2,1})}\\
\nn&\le&C\|{\pr_i(\Pe^\bot u_L)^k}\|_{L^\infty_T(\dot{B}^{\fr{N}{p^*}-2+\al}_{p^*,1})}\|\pr_k(\Pe u)^i\|_{L^1_T(\dot{B}^{\fr{N}{p}}_{p,1})}\\
\nn&\le&C\|{\Pe^\bot u_L}\|_{L^\infty_T(\dot{B}^{\fr{N}{2}-1+\al}_{2,1})}\|\Pe u\|_{L^1_T(\dot{B}^{\fr{N}{p}+1}_{p,1})}\\
&\le&CX^2(T),
\eeq
and
\beq
\nn&&\|P_{<1}\left(\Lm^{-1}\dv(\dot{T}'_{\nb\Pe u}\Pe^\bot u_L)\right)\|_{L^1_T(\dot{B}^{\fr{N}{2}-1+\al}_{2,1})}\\
\nn&&+\|P_{\ge1}\left(\Lm^{-1}\dv(\dot{T}'_{\nb\Pe u}\Pe^\bot u_L)\right)\|_{L^1_T(\dot{B}^{\fr{N}{2}-1}_{2,1})}\\
\nn&\le&C\|\dot{T}'_{\nb\Pe u}\Pe^\bot u_L\|_{L^1_T(\dot{B}^{\fr{N}{2}-1+\al}_{2,1})}\\
\nn&\le&C\|{\nb\Pe u}\|_{L^\infty_T(\dot{B}^{-2}_{\infty,1})}\|\Pe^\bot u_L\|_{L^1_T(\dot{B}^{\fr{N}{2}+1+\al}_{2,1})}\\
\nn&\le&C\|{\Pe u}\|_{L^\infty_T(\dot{B}^{\fr{N}{p}-1}_{p,1})}\|\Pe^\bot u_L\|_{L^1_T(\dot{B}^{\fr{N}{2}+1+\al}_{2,1})}\\
&\le&CX^2(T).
\eeq
Similar to \eqref{PuPu2} and \eqref{4.30}, we have
\beq
\nn&&\|P_{<1}\left(\Lm^{-1}\dv(\Pe^\bot u_H\cdot\nb\Pe u)\right)\|_{L^1_T(\dot{B}^{\fr{N}{2}-1+\al}_{2,1})}+\|P_{\ge1}\left(\Lm^{-1}\dv(\Pe^\bot u_H\cdot\nb\Pe u)\right)\|_{L^1_T(\dot{B}^{\fr{N}{2}-1}_{2,1})}\\
\nn&\le&C\|\Pe^\bot u_H\|_{L^\infty_T(\dot{B}^{\fr{N}{2}-1}_{2,1})}\|\Pe u\|_{L^1_T(\dot{B}^{\fr{N}{p}+1}_{p,1})}+C\|\Pe u\|_{L^2_T(\dot{B}^{\fr{N}{p}}_{p,1})}\|\Pe^\bot u_H\|_{L^2_T(\dot{B}^{\fr{N}{2}}_{2,1})}\\
&\le&CX^2(T).
\eeq
To bound $\Pe u\cdot\nb\Pe^\bot u$, we need to decompose it as follows:
\beno
\Pe u\cdot\nb\Pe^\bot u=\Pe u\cdot\nb\Pe^\bot u_L+\Pe u\cdot\nb\Pe^\bot u_H.
\eeno
Then using the high frequency embedding \eqref{hf-embedding1}, one easily deduces that
\beq
\nn&&\|P_{<1}\left(\Lm^{-1}\dv(\Pe u\cdot\nb\Pe^\bot u_L)\right)\|_{L^1_T(\dot{B}^{\fr{N}{2}-1+\al}_{2,1})}+\|P_{\ge1}\left(\Lm^{-1}\dv(\Pe u\cdot\nb\Pe^\bot u_L)\right)\|_{L^1_T(\dot{B}^{\fr{N}{2}-1}_{2,1})}\\
\nn&\le&C\|\dot{T}_{\Pe u}\cdot\nb\Pe^\bot u_L\|_{L^1_T(\dot{B}^{\fr{N}{2}-1+\al}_{2,1})}+C\|\dot{T}'_{\nb\Pe^\bot u_L}\Pe u\|_{L^1_T(\dot{B}^{\fr{N}{2}-1+\al}_{2,1})}\\
\nn&\le&C\|{\Pe u}\|_{L^\infty_T(\dot{B}^{-1}_{\infty,1})}\|\nb\Pe^\bot u_L\|_{L^1_T(\dot{B}^{\fr{N}{2}+\al}_{2,1})}+C\|{\nb\Pe^\bot u_L}\|_{L^\infty_T(\dot{B}^{\fr{N}{p^*}-2+\al}_{p^*,1})}\|\Pe u\|_{L^1_T(\dot{B}^{\fr{N}{p}+1}_{p,1})}\\
\nn&\le&C\|{\Pe u}\|_{L^\infty_T(\dot{B}^{\fr{N}{p}-1}_{p,1})}\|\Pe^\bot u_L\|_{L^1_T(\dot{B}^{\fr{N}{2}+1+\al}_{2,1})}+C\|{\Pe^\bot u_L}\|_{L^\infty_T(\dot{B}^{\fr{N}{2}-1+\al}_{2,1})}\|\Pe u\|_{L^1_T(\dot{B}^{\fr{N}{p}+1}_{p,1})}\\
&\le&CX^2(T).
\eeq
In the same manner as \eqref{PuPu2}, we are led to
\beq
\nn&&\|P_{<1}\left(\Lm^{-1}\dv(\Pe u\cdot\nb\Pe^\bot u_H)\right)\|_{L^1_T(\dot{B}^{\fr{N}{2}-1+\al}_{2,1})}+\|P_{\ge1}\left(\Lm^{-1}\dv(\Pe u\cdot\nb\Pe^\bot u_H)\right)\|_{L^1_T(\dot{B}^{\fr{N}{2}-1}_{2,1})}\\
\nn&\le&C\|\Pe u\|_{L^\infty_T(\dot{B}^{\fr{N}{p}-1}_{p,1})}\|\Pe^\bot u_H\|_{L^1_T(\dot{B}^{\fr{N}{2}+1}_{2,1})}+C\|\Pe u\|_{L^2_T(\dot{B}^{\fr{N}{p}}_{p,1})}\|\Pe^\bot u_H\|_{L^2_T(\dot{B}^{\fr{N}{2}}_{2,1})}\\
&\le&CX^2(T).
\eeq
Finally, $\dot{T}_{\Pe u}\cdot\nb d$ can be bounded in the same way as  $\Pe u\cdot\nb\Pe^\bot u$. The proof of Lemma \ref{lem4.6} is completed.
\end{proof}
The next two lemmas will be used to bound the nonlinear terms in the equation of the divergence free part $\Pe u$ of the velocity (\ref{biu}).
\begin{lem}\label{lem4.7}
Under the assumptions in Lemma \ref{lem4.1},  we obtain
\be
\|\Pe(u\cdot \nb u)\|_{L^1_T(\dot{B}^{\fr{N}{p}-1}_{p,1})}\le CX^2(T).
\ee
\end{lem}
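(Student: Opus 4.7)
As $\Pe=\mathbb{I}+\nb(-\Dl)^{-1}\dv$ is a homogeneous Fourier multiplier of degree zero (Proposition \ref{prop-classical}), it is bounded on $\dot B^{\fr{N}{p}-1}_{p,1}$, so it suffices to bound the raw nonlinearity. Splitting $u=\Pe u+\Pe^\bot u$ produces four quadratic pieces
\[
\Pe u\cdot\nb\Pe u,\qquad \Pe^\bot u\cdot\nb\Pe u,\qquad \Pe u\cdot\nb\Pe^\bot u,\qquad \Pe^\bot u\cdot\nb\Pe^\bot u.
\]
The overall strategy is Bony's decomposition followed by the product/paraproduct estimates of Section 2, with $\Pe^\bot u=\Pe^\bot u_L+\Pe^\bot u_H$: high-frequency factors are transferred from $L^2$- to $L^p$-based Besov via the embedding $\dot B^s_{2,1}\hookrightarrow \dot B^{s-N(\fr12-\fr1p)}_{p,1}$ ($p\ge 2$, Proposition \ref{prop-classical}); low-frequency factors are controlled through the Strichartz norm $Y_\al(T)$ together with \eqref{lf-embeding1}. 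The factor $\Pe u$ is interpolated from $W(T)$ into $\widetilde L^2_T(\dot B^{\fr{N}{p}}_{p,1})$ or $\widetilde L^{\fr{1}{1-\al}}_T(\dot B^{\fr{N}{p}+1-2\al}_{p,1})$, and time integrability in $L^1_T$ is recovered by H\"older with the complementary pair $(\tfrac{1}{1-\al},\tfrac{1}{\al})$.

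The genuinely delicate piece is $\Pe\bigl(\dot T_{(\Pe u)^k}\pr_k\Pe^\bot u\bigr)$: a naive paraproduct bound would require $\Pe u\in\dot B^{-1-\al}_{\infty,1}$, a regularity unavailable in $X(T)$. As anticipated in \eqref{com-Tiucu}, the fix is that $\Pe\Pe^\bot=0$ gives
\[
\Pe\bigl(\dot T_{(\Pe u)^k}\pr_k\Pe^\bot u\bigr)=\bigl[\Pe,\dot T_{(\Pe u)^k}\bigr]\pr_k\Pe^\bot u,
\]
and the commutator estimate (Lemma 2.99 of \cite{Bahouri-Chemin-Danchin11}) gains one derivative, yielding \eqref{key}
\[
\bigl\|\bigl[\Pe,\dot T_{(\Pe u)^k}\bigr]\pr_k\Pe^\bot u\bigr\|_{L^1_T(\dot B^{\fr{N}{p}-1}_{p,1})}\lesssim \|\nb\Pe u\|_{\widetilde L^{\fr{1}{1-\al}}_T(\dot B^{-2\al}_{\infty,1})}\|\nb\Pe^\bot u\|_{\widetilde L^{\fr{1}{\al}}_T(\dot B^{\fr{N}{p}-2+2\al}_{p,1})}.
\]
The first factor is $\lesssim W(T)$: interpolate $\Pe u$ into $\widetilde L^{\fr{1}{1-\al}}_T(\dot B^{\fr{N}{p}+1-2\al}_{p,1})$, take $\nb$, and use $\dot B^{\fr{N}{p}-2\al}_{p,1}\hookrightarrow\dot B^{-2\al}_{\infty,1}$. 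The second factor is $\lesssim Y_\al(T)+X_H(T)$: the low-frequency part is $Y_\al(T)$ after one derivative, and the high-frequency part comes from interpolating $X_H(T)$ to $\widetilde L^{\fr{1}{\al}}_T(\dot B^{\fr{N}{2}-1+2\al}_{2,1})$ and then applying the $L^2\to L^p$ Besov embedding. The conjugate piece $\Pe(\dot T'_{\nb\Pe^\bot u}\Pe u)$ falls under a standard paraproduct/remainder bound using the same two norms in swapped roles.

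The three remaining pieces are routine. For $\Pe u\cdot\nb\Pe u$, interpolate $W(T)$ to obtain $\Pe u\in\widetilde L^2_T(\dot B^{\fr{N}{p}}_{p,1})$ (Cauchy--Schwarz on each Littlewood--Paley block, Remark \ref{rem-CM-minkowski}) and invoke the endpoint product estimate \eqref{product1-s}, whose range condition $r_1+r_2>N\max\{0,\tfrac{2}{p}-1\}$ reduces to $p<2N$, a consequence of \eqref{p1}. For the pieces containing $\Pe^\bot u$, split into low and high frequencies: the high-frequency factors are treated exactly like $\Pe u$ after the $L^2\to L^p$ Besov embedding of $X_H(T)$, while the low-frequency factors are placed in $L^{1/\al}_T(L^\infty)$ via the embedding $\dot B^{\fr{N}{p}+2\al-1}_{p,1}\hookrightarrow\dot B^{2\al-1}_{\infty,1}$ applied to $Y_\al(T)$ (plus the $L^\infty$-control obtained from interpolation of $X_L(T)$), paired with $\nb\Pe^\bot u_L\in L^{1/(1-\al)}_T(L^p)$ coming from interpolation of $X_L(T)$ and the equivalence of Besov norms for functions with compactly supported Fourier transform.

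The main obstacle is unambiguously the term $\Pe(\dot T_{\Pe u}\cdot\nb\Pe^\bot u)$: at the critical regularity $\fr{N}{p}-1$, the estimate simply does not close without the commutator structure, and the restrictions \eqref{p1}--\eqref{al1} on $(p,\al)$ are exactly what align the norms on the two sides of \eqref{key} with the control provided by $W(T)$, $Y_\al(T)$ and $X_H(T)$.
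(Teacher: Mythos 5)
Your proposal is correct and follows essentially the same path as the paper: the same split of $u\cdot\nabla u$ by potential and divergence-free parts, the same commutator identity $\Pe(\dot T_{\Pe u}\cdot\nabla\Pe^\bot u)=[\Pe,\dot T_{(\Pe u)^k}]\partial_k\Pe^\bot u$ handled via Lemma 2.99 of \cite{Bahouri-Chemin-Danchin11}, and the same interpolations and embeddings of $W(T)$, $Y_\al(T)$, $X_H(T)$ to close the estimate. One small simplification you overlook is that $\Pe(\Pe^\bot u\cdot\nabla\Pe^\bot u)\equiv0$ since $\Pe^\bot u\cdot\nabla\Pe^\bot u=\tfrac12\nabla|\Pe^\bot u|^2$ is a gradient, so the paper's decomposition has only three terms rather than your four.
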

\begin{proof}
It is not difficult to verify that
\be
\Pe(u\cdot \nb u)=\Pe(\Pe u\cdot \nb\Pe u)+\Pe(\Pe u\cdot \nb\Pe^\bot u)+\Pe(\Pe^\bot u\cdot \nb\Pe u).\label{4.63}
\ee
Then using Lemma \ref{Bernstein} and (\ref{product1-s}) yields
\beq\label{PuPup}
\nn&&\|\Pe u\cdot\nb\Pe u\|_{L^1_T(\dot{B}^{\fr{N}{p}-1}_{p,1})}=\|\mathrm{div}(\Pe u\otimes\Pe u)\|_{L^1_T(\dot{B}^{\fr{N}{p}-1}_{p,1})}\\
\nn&\le&C\|\Pe u\otimes\Pe u\|_{L^1_T(\dot{B}^{\fr{N}{p}}_{p,1})}
\le C\|\Pe u\|_{\widetilde{L}^2_T(\dot{B}^{\fr{N}{p}}_{p,1})}^2\le CX^2(T).
\eeq
Next, in view of \eqref{com-Tiucu}, using Proposition \ref{p-TR}, Lemma 2.99 in \cite{Bahouri-Chemin-Danchin11} and $\dv \Pe u=0$, we find that
\beq\label{PuPbotu}
\nn&&\|\Pe(\Pe u\cdot\nb\Pe^\bot u)\|_{L^1_T(\dot{B}^{\fr{N}{p}-1}_{p,1})}\\
\nn&\le&C\|[\Pe,\dot{T}_{(\Pe u)^k}]\pr_k\Pe^\bot u\|_{L^1_T(\dot{B}^{\fr{N}{p}-1}_{p,1})}+C\|\dot{T}_{\nb\Pe^\bot u}\Pe u\|_{L^1_T(\dot{B}^{\fr{N}{p}-1}_{p,1})}+C\|\dot{R}({\Pe u},\nb\Pe^\bot u)\|_{L^1_T(\dot{B}^{\fr{N}{p}-1}_{p,1})}\\
\nn&\le&C\|\nb\Pe u\|_{L^{\fr{1}{1-\al}}_T(\dot{B}^{-2\al}_{\infty,1})}\|\nb\Pe^\bot u\|_{L^{\fr{1}{\al}}_T(\dot{B}^{\fr{N}{p}-2+2\al}_{p,1})}+C\|\nb\Pe^\bot u\|_{L^{\fr{1}{\al}}_T(\dot{B}^{2\al-2}_{\infty,1})}\|\Pe u\|_{L^{\fr{1}{1-\al}}_T(\dot{B}^{\fr{N}{p}+1-2\al}_{p,1})}\\
\nn&&+C\|\pr_k\dot{R}((\Pe u)^k,\Pe^\bot u)\|_{L^1_T(\dot{B}^{\fr{N}{p}-1}_{{p},1})}\\
\nn&\le&C\|\Pe^\bot u\|_{L^{\fr{1}{\al}}_T(\dot{B}^{\fr{N}{p}+2\al-1}_{p,1})}\|\Pe u\|_{L^{\fr{1}{1-\al}}_T(\dot{B}^{\fr{N}{p}+1-2\al}_{p,1})}\\
&\le&CX^2(T).
\eeq
Finally, the condition \eqref{p1} on $p$ ensures that $p<2N$, then it is easy to see that
\beq\label{PbotuPu}
\nn&&\|\Pe^\bot u\cdot\nb\Pe u\|_{L^1_T(\dot{B}^{\fr{N}{p}-1}_{p,1})}\\
\nn&\le&C\|\dot{T}_{\Pe^\bot u}\nb\Pe u\|_{L^1_T(\dot{B}^{\fr{N}{p}-1}_{p,1})}+C\|\dot{T}_{\nb\Pe u}\Pe^\bot u\|_{L^1_T(\dot{B}^{\fr{N}{p}-1}_{p,1})}+C\|\dot{R}({\Pe^\bot u},\nb\Pe u)\|_{L^1_T(\dot{B}^{\fr{N}{p}-1}_{p,1})}\\
\nn&\le&C\|\Pe^\bot u\|_{L^{\fr{1}{\al}}_T(\dot{B}^{2\al-1}_{\infty,1})}\|\nb\Pe u\|_{L^{\fr{1}{1-\al}}_T(\dot{B}^{\fr{N}{p}-2\al}_{p,1})}+C\|\nb\Pe u\|_{L^{\fr{1}{1-\al}}_T(\dot{B}^{-2\al}_{\infty,1})}\|\Pe^\bot u\|_{L^{\fr{1}{\al}}_T(\dot{B}^{\fr{N}{p}+2\al-1}_{p,1})}\\
\nn&&+C\|\dot{R}(\Pe^\bot u,\nb\Pe u)\|_{L^1_T(\dot{B}^{\fr{2N}{p}-1}_{\fr{p}{2},1})}\\
\nn&\le&C\|\Pe^\bot u\|_{L^{\fr{1}{\al}}_T(\dot{B}^{\fr{N}{p}+2\al-1}_{p,1})}\|\Pe u\|_{L^{\fr{1}{1-\al}}_T(\dot{B}^{\fr{N}{p}+1-2\al}_{p,1})}\\
&\le&CX^2(T).
\eeq
This explains why we need to assume $p<4$ if $N=2$. We complete the proof of Lemma \ref{lem4.7}.
\end{proof}

\begin{lem}\label{lem4.8}
Under the assumptions in Lemma \ref{lem4.4},
 we have
\be
\|I(b)\mathcal{A}u\|_{L^1_T(\dot{B}^{\fr{N}{p}-1}_{p,1})}\le CX^2(T).
\ee
\end{lem}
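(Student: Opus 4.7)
I would split $u=\Pe u+\Pe^\bot u$, and further $\Pe^\bot u=\Pe^\bot u_L+\Pe^\bot u_H$, so that the three pieces of $I(b)\mathcal{A}u$ can be handled in the regularity scales natural to them. For the piece $I(b)\mathcal{A}\Pe u$, I would apply Corollary \ref{coro-product} with $p_1=2$, $p_2=p$, $\rho=p$ to obtain the product law $\dot{B}^{\frac{N}{2}}_{2,1}\cdot\dot{B}^{\frac{N}{p}-1}_{p,1}\hookrightarrow\dot{B}^{\frac{N}{p}-1}_{p,1}$; combined with the composition estimate controlling $\|I(b)\|_{\dot{B}^{N/2}_{2,1}}$ by $\|b\|_{\dot{B}^{N/2}_{2,1}}$, the bound $b\in L^\infty_T(\dot{B}^{\frac{N}{2}}_{2,1})$ (obtained for $b_H$ directly and for $b_L$ through the low-frequency embedding \eqref{lf-embeding1} with $s_1=\frac{N}{2}>s_2=\frac{N}{2}-1+\alpha$, valid since $\alpha<1$), and $\Pe u\in L^1_T(\dot{B}^{\frac{N}{p}+1}_{p,1})$, this bounds the piece by $CX^2(T)$.

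For $I(b)\mathcal{A}\Pe^\bot u_H$ I would work entirely in the $L^2$-based Besov scale: the product law $\dot{B}^{\frac{N}{2}}_{2,1}\cdot\dot{B}^{\frac{N}{2}-1}_{2,1}\hookrightarrow\dot{B}^{\frac{N}{2}-1}_{2,1}$ together with $\Pe^\bot u_H\in L^1_T(\dot{B}^{\frac{N}{2}+1}_{2,1})$ controls this piece in $L^1_T(\dot{B}^{\frac{N}{2}-1}_{2,1})$, after which the Sobolev embedding $\dot{B}^{\frac{N}{2}-1}_{2,1}\hookrightarrow\dot{B}^{\frac{N}{p}-1}_{p,1}$ (valid for $p\ge 2$) transfers the estimate to the target space.

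The delicate piece is $I(b)\mathcal{A}\Pe^\bot u_L$: at low frequencies, $\Pe^\bot u_L$ has only the regularity $\dot{B}^{\frac{N}{2}-1+\alpha}_{2,1}$, a \emph{weakening} of the baseline $\dot{B}^{\frac{N}{2}-1}_{2,1}$, so the naive Sobolev approach would place the product only in $\dot{B}^{\frac{N}{p}-1+\alpha}_{p,1}$ at low frequencies, which does not embed into $\dot{B}^{\frac{N}{p}-1}_{p,1}$ there. My plan is to exploit the Strichartz norm $Y_\alpha$ on both factors. For $I(b)$, interpolation of $b_H\in L^\infty_T(\dot{B}^{\frac{N}{2}}_{2,1})\cap L^1_T(\dot{B}^{\frac{N}{2}}_{2,1})$ followed by Sobolev, together with the low-frequency embedding $\dot{B}^{\frac{N}{p}+2\alpha-1}_{p,1}\hookrightarrow\dot{B}^{\frac{N}{p}}_{p,1}$ applied to $b_L$ (valid since $\alpha<\frac{1}{2}$ under \eqref{p1}--\eqref{al1}) coming from the $Y_\alpha(T)$ bound, yields $I(b)\in L^{1/\alpha}_T(\dot{B}^{\frac{N}{p}}_{p,1})$. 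For $\mathcal{A}\Pe^\bot u_L$, interpolating $\Pe^\bot u_L\in L^\infty_T(\dot{B}^{\frac{N}{2}-1+\alpha}_{2,1})\cap L^1_T(\dot{B}^{\frac{N}{2}+1+\alpha}_{2,1})$ at exponent $\theta=1-\alpha$ gives $\Pe^\bot u_L\in L^{1/(1-\alpha)}_T(\dot{B}^{\frac{N}{2}+1-\alpha}_{2,1})$ at low frequencies; then applying $\mathcal{A}$, Sobolev into the $L^p$-based scale, and the low-frequency embedding $\dot{B}^{\frac{N}{p}-1-\alpha}_{p,1}\hookrightarrow\dot{B}^{\frac{N}{p}-1}_{p,1}$ (valid since $\alpha>0$) produces $\mathcal{A}\Pe^\bot u_L\in L^{1/(1-\alpha)}_T(\dot{B}^{\frac{N}{p}-1}_{p,1})$. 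Corollary \ref{coro-product} with $p_1=p_2=\rho=p$, $s_1=\frac{N}{p}$, $s_2=\frac{N}{p}-1$, together with H\"older in time ($\alpha+(1-\alpha)=1$), then closes this piece by $CX^2(T)$.

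The main obstacle is precisely this third piece: the low-frequency weakening of the Besov regularity for $\Pe^\bot u_L$ forces one to invoke the dispersive $Y_\alpha$ norm, to interpolate the temporal bounds at exactly the exponent $\theta=1-\alpha$, and to apply the low-frequency embedding only in the direction that requires $\alpha<\frac{1}{2}$, which is exactly what the constraint \eqref{al1} of Theorem \ref{thm-p>2} guarantees.
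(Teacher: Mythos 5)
Your proposal is correct but treats the hardest piece by a genuinely different route. The decomposition $u=\Pe u+\Pe^\bot u_H+\Pe^\bot u_L$ matches the paper's (which groups the first two into $\Pe^\bot u_H+\Pe u$), and your treatment of $I(b)\mathcal{A}\Pe u$ and $I(b)\mathcal{A}\Pe^\bot u_H$ is equivalent to the paper's one-line estimate with $I(b)\in L^\infty_T(\dot{B}^{\frac{N}{2}}_{2,1})$. For $I(b)\mathcal{A}\Pe^\bot u_L$, however, the paper does not touch the dispersive norm $Y_\alpha$ at all: it applies Corollary \ref{coro-product} with $p_1=q_2=2$, $p_2=q_1=\rho=p$ to get the mixed product law $\dot{B}^{\frac{N}{2}}_{2,1}\cdot\dot{B}^{\frac{N}{p}-1}_{p,1}\to\dot{B}^{\frac{N}{p}-1}_{p,1}$, places $I(b)$ in $L^{\frac{2}{1-\alpha}}_T(\dot{B}^{\frac{N}{2}}_{2,1})$ using \eqref{b1} and the composition theorem, places $\mathcal{A}\Pe^\bot u_L$ in $L^{\frac{2}{1+\alpha}}_T(\dot{B}^{\frac{N}{p}-1}_{p,1})$ using the interpolation \eqref{5.6-1}, the low-frequency embedding \eqref{lfe1} (this is where $\alpha<\frac12$ enters) and the Besov embedding from the $L^2$-scale into the $L^p$-scale, then closes by H\"older's inequality in time with exponents $\frac{2}{1\mp\alpha}$. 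You instead keep both factors in the $L^p$-scale with time exponents $\frac1\alpha$ and $\frac{1}{1-\alpha}$, calling on the Strichartz norm $Y_\alpha$ (plus a low-frequency embedding requiring $\alpha<\frac12$) to control $b_L$, and on an interpolation of the energy bound at $\theta=1-\alpha$ combined with a low-frequency embedding (using only $\alpha>0$) to control $\mathcal{A}\Pe^\bot u_L$. Both routes close: the paper's is marginally more economical since the dispersive estimate is not needed for this term, while yours is more uniform with the way $Y_\alpha$ is deployed against the convection terms in Lemmas \ref{lem4.5}--\ref{lem4.7}.
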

\begin{proof}
Let us first decompose $u$ as
\beno
u=\Pe^\bot u_L+(\Pe^\bot u_H+\Pe u).
\eeno
Then using Corollary \ref{coro-product} with $u=I(b), v=\mathcal{A}\Pe^\bot u_L, p_1=q_2=2, \rho=p_2=q_1=p, s_1=\sigma_2=\fr{N}{2}, s_2=\sigma_1=\fr{N}{p}-1$,  and Theorem 2.61 in \cite{Bahouri-Chemin-Danchin11}, we obtain
\beq
\nn&&\|I(b)\mathcal{A}\Pe^\bot u_L\|_{L^1_T(\dot{B}^{\fr{N}{p}-1}_{p,1})}\\
\nn&\le& C\|I(b)\|_{L^{\fr{2}{1-\al}}_T(\dot{B}^{\fr{N}{2}}_{2,1})}\|\mathcal{A}\Pe^\bot u_L\|_{L^{\fr{2}{1+\al}}_T(\dot{B}^{\fr{N}{p}-1}_{p,1})}\\
\nn&\le& C\|b\|_{L^{\fr{2}{1-\al}}_T(\dot{B}^{\fr{N}{2}}_{2,1})}\|\Pe^\bot u_L\|_{L^{\fr{2}{1+\al}}_T(\dot{B}^{\fr{N}{2}+2\al}_{2,1})}\\
&\le&CX^2(T),
\eeq
where we have used \eqref{b1}, \eqref{5.6-1} and \eqref{lfe1}. Similarly, using \eqref{b2}, we arrive at
\beq
\nn&&\|I(b)\mathcal{A}(\Pe^\bot u_H+\Pe u)\|_{L^1_T(\dot{B}^{\fr{N}{p}-1}_{p,1})}\\
\nn&\le&C\|I(b)\|_{L^{\infty}_T(\dot{B}^{\fr{N}{2}}_{2,1})}\|\mathcal{A}(\Pe^\bot u_H+\Pe u)\|_{L^{1}_T(\dot{B}^{\fr{N}{p}-1}_{p,1})}\\
\nn&\le& C\|b\|_{L^{\infty}_T(\dot{B}^{\fr{N}{2}}_{2,1})}\left(\|\Pe^\bot u_H\|_{L^{1}_T(\dot{B}^{\fr{N}{2}+1}_{2,1})}+\|\Pe u\|_{L^{1}_T(\dot{B}^{\fr{N}{p}+1}_{p,1})}\right)\\
&\le&CX^2(T).
\eeq
The proof of Lemma \ref{lem4.8} is completed.
\end{proof}

\subsection{Estimates of $X(T)$}
Using the above lemmas, we could obtain the  Dispersive estimates and Energy estimates as follows.

\noindent{\bf Step (I): Dispersive estimates.}
\begin{lem}\label{lem4.9}
Let $p$ and $\al$ satisfy \eqref{p1} and  \eqref{al1}, respectively.  Assume that $(b,u)$ is a solution to system \eqref{bcu}--\eqref{biu} in  $\mathcal{E}^{\frac{N}{2},\al}_p(T)$ with
\beno
\|b\|_{L^\infty_T(L^\infty)}\le\fr12.
\eeno
Then we have
\be\label{dis-uE}
Y_\al(T)\leq CX_L^0+CX_L(T)+CX^2(T).
\ee
\end{lem}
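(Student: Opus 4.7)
The plan is to view the low-frequency parts $(b_L,d_L)$, where $d:=\Lambda^{-1}\dv u$, as a solution of an acoustics-type system to which the Strichartz estimate in Proposition \ref{prop-wave} applies. Starting from \eqref{bd} and using that $P_{<1}$ commutes with $\partial_t,\Lambda,\Delta$ and the Leray decomposition, I would rewrite
\beqno
\begin{cases}
\partial_t b_L + \Lambda d_L = \mathbf{f}_L,\\
\partial_t d_L - \Lambda b_L = \mathbf{g}_L,\\
(b_L,d_L)|_{t=0} = (b_{0L},d_{0L}),
\end{cases}
\eeqno
with $\mathbf{f} := -\dv(bu)$ and $\mathbf{g} := \Delta d - \Lambda^{-1}\dv(u\cdot\nabla u+K(b)\nabla b+I(b)\mathcal{A}u)$. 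Note that the dissipative $\Delta d$ has been absorbed into the source so that the principal part is exactly the acoustics system \eqref{eq_W} with $\eps=1$.

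Next I would apply Proposition \ref{prop-wave} with $\eps=1$, $r=\tfrac{1}{\al}$, $\bar r'=1$, $\bar p'=2$ and $s=\tfrac{N}{2}-1+\al$. The admissibility condition $\tfrac{2}{r}=2\al\le(N-1)(\tfrac12-\tfrac{1}{p})$ is exactly \eqref{al1}, and the target regularity index matches: $s+N(\tfrac{1}{p}-\tfrac12)+\tfrac{1}{r}=\tfrac{N}{p}-1+2\al$. This yields
\beno
\|(b_L,d_L)\|_{\widetilde{L}^{1/\al}_T(\dot{B}^{\frac{N}{p}+2\al-1}_{p,1})}
\le C\|(b_{0L},d_{0L})\|_{\dot{B}^{\frac{N}{2}-1+\al}_{2,1}}
+C\|P_{<1}(\mathbf{f},\mathbf{g})\|_{L^1_T(\dot{B}^{\frac{N}{2}-1+\al}_{2,1})}.
\eeno
Since $d=-\Lambda^{-1}\dv(\Pe^\bot u)$ up to a sign and $\nabla\Lambda^{-1}$ is a degree-zero Fourier multiplier, the left hand side controls $Y_\al(T)$, and $\|d_{0L}\|_{\dot{B}^{N/2-1+\al}_{2,1}}\approx\|\Pe^\bot u_{0L}\|_{\dot{B}^{N/2-1+\al}_{2,1}}$, so the initial data term is $\lesssim X_L^0$.

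It remains to control the source term in $L^1_T(\dot{B}^{N/2-1+\al}_{2,1})$. The dissipation contribution $P_{<1}\Delta d=\Delta d_L$ satisfies
\beno
\|\Delta d_L\|_{L^1_T(\dot{B}^{\frac{N}{2}-1+\al}_{2,1})}
\le \|d_L\|_{L^1_T(\dot{B}^{\frac{N}{2}+1+\al}_{2,1})}
\lesssim X_L(T),
\eeno
which produces the linear contribution $CX_L(T)$ in \eqref{dis-uE}. All remaining source terms are nonlinear of quadratic type: $P_{<1}\dv(bu)$ is handled by Corollary \ref{coro1}; $P_{<1}\Lambda^{-1}\dv(I(b)\mathcal{A}u)$ by Lemma \ref{lem4.3}; $P_{<1}\Lambda^{-1}\dv(K(b)\nabla b)$ by Lemma \ref{lem4.4}; and the convection pieces $P_{<1}\Lambda^{-1}\dv(u\cdot\nabla u)$ by Lemmas \ref{lem4.5}--\ref{lem4.6} after splitting $u=\Pe^\bot u+\Pe u$. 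Each of these contributes $CX^2(T)$.

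Summing these bounds yields \eqref{dis-uE}. The main subtlety is purely a bookkeeping one: one must check that the Strichartz pair $(r,p)=(\tfrac1\al,p)$ is admissible under \eqref{p1}--\eqref{al1} and excludes the forbidden endpoint $(r,p,N)=(2,\infty,3)$, which is automatic since $\al>0$ forces $r<\infty$ and $p<\infty$; once this is in place, the proof is a clean application of Proposition \ref{prop-wave} combined with the nonlinear estimates already established in Section \ref{S4}.
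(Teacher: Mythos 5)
Your proof follows exactly the paper's own argument: cut off \eqref{bd} with $P_{<1}$, absorb $\Delta d$ into the source, apply Proposition~\ref{prop-wave} with $\eps=1$, $s=\tfrac{N}{2}-1+\al$, $r=\tfrac1\al$, $\bar p'=2$, $\bar r'=1$, and then invoke Corollary~\ref{coro1} and Lemmas~\ref{lem4.3}--\ref{lem4.6} for the quadratic sources. The additional admissibility check for $(r,p)$ under \eqref{p1}--\eqref{al1} and the identification of $d$ with $\Pe^\bot u$ via the degree-zero multiplier $\nabla\Lambda^{-1}$ are correct and consistent with what the paper leaves implicit.
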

\begin{proof}
First of all, let us cut off the system \eqref{bd} by using the operator $P_{<1}$. Then applying Proposition \ref{prop-wave} to the resulting system  with $\eps=1$, $s=\frac{N}{2}-1+\al, \bar{p}=2, \bar{r}=\infty$, and $r=\fr{1}{\al}$,  we arrive at
\beqno
\nn Y_\al(T)
\nn &\le& C\left(\|(b_{0L}, d_{0L})\|_{\dot{B}^{\frac{N}{2}-1+\al}_{2,1}}+\|P_{<1}\dv(bu)\|_{L^1_T(\dot{B}^{\frac{N}{2}-1+\al}_{2,1})}+\|P_{<1}\Dl d\|_{L^1_T(\dot{B}^{\frac{N}{2}-1+\al}_{2,1})}
\right.\\
\nn&&\left.+\|P_{<1}\Lm^{-1}\dv(u\cdot\nb u+K(b)\nb b+I(b)\mathcal{A}u)\|_{L^1_T(\dot{B}^{\frac{N}{2}-1+\al}_{2,1})}\right)\\
&\leq&C \left(X^0_L+X_L(T)
+\|P_{<1}\dv(bu)\|_{L^1_T(\dot{B}^{\frac{N}{2}-1+\al}_{2,1})}
\right.\\
\nn&&\left.+\|P_{<1}\Lm^{-1}\dv(u\cdot\nb u+K(b)\nb b+I(b)\mathcal{A}u)\|_{L^1_T(\dot{B}^{\frac{N}{2}-1+\al}_{2,1})}\right).
\eeqno
 Combining Corollary \ref{coro1} with Lemmas \ref{lem4.3}--\ref{lem4.6}, we find that the estimate
 \eqref{dis-uE} holds. This completes the proof of Lemma \ref{lem4.9}.
\end{proof}

\noindent{\bf Step (II): Energy estimates.}\par
To begin with, let us  localize  the system \eqref{bd} as follows:
\beq\label{viscoelastic-local}
\begin{cases}
\pr_tb_q+\dot{S}_{q-1}u\cdot\nb b_q+\Lm d_q=\tl{f}_q,\\
\pr_td_q+\dot{S}_{q-1}u \cdot \nb d_q-\Dl d_q-\Lm b_q=\tl{g}_q,
\end{cases}
\eeq
with
\beqno
\tl{f}_q&:=&f_q+\left(\dot{S}_{q-1}u\cdot\nb b_q-\ddl\dot{T}_u\cdot\nb b\right),\\
\tl{g}_q&:=&g_q+\left(\dot{S}_{q-1}u \cdot \nb d_q-\ddl\dot{T}_u\cdot\nb d\right),
\eeqno
and
\beqno
{f}&:=&-b\dv u-\dot{T}'_{\pr_kb}u^k,\\
{g}&:=&\dot{T}_u\cdot\nb d-\Lm^{-1}\dv\left(u\cdot\nb u+K(b)\nb b+I(b)\mathcal{A}u\right).
\eeqno
Now, we estimate the low frequency part $X_L(T)$ and high frequency part $X_H(T)$ separately.

\noindent (i) \underline{ Estimates of $X_{L}(T)$}.\par

\begin{lem}\label{lem4.10}
Under the conditions in Lemma \ref{lem4.9}, we have
  \be\label{XQL}
	X_{L}(T)\leq CX_{L}^0+CX^2(T).
	\ee
\end{lem}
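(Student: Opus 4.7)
My strategy is the hybrid dyadic energy method for the mixed hyperbolic-parabolic structure of \eqref{viscoelastic-local}, applied at low frequencies $q<1$. I would first test the first equation of \eqref{viscoelastic-local} against $b_q$ and the second against $d_q$ in $L^2$; the skew-symmetry $(\Lm d_q,b_q)=(d_q,\Lm b_q)$ cancels the hyperbolic coupling, yielding
\beno
\fr12\fr{d}{dt}\bigl(\|b_q\|_{L^2}^2+\|d_q\|_{L^2}^2\bigr)+\|\Lm d_q\|_{L^2}^2
=-\fr12\int\dv(\dot S_{q-1}u)(b_q^2+d_q^2)\,dx+(\tl f_q,b_q)+(\tl g_q,d_q),
\eeno
which produces parabolic damping on $d_q$ only. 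To recover damping on $b_q$ in the low-frequency regime, I would additionally compute $\fr{d}{dt}(d_q,\Lm b_q)$; substituting \eqref{viscoelastic-local} and integrating by parts extracts a positive contribution $\|\Lm b_q\|_{L^2}^2$ modulo terms controlled by $\|\Lm d_q\|_{L^2}^2$ and the source remainders.

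Combining the two identities through the Lyapunov functional $\mathcal L_q^2:=\|b_q\|_{L^2}^2+\|d_q\|_{L^2}^2+2\eta(d_q,\Lm b_q)$, with $\eta>0$ small enough that $\mathcal L_q^2\sim\|b_q\|_{L^2}^2+\|d_q\|_{L^2}^2$ uniformly for $q<1$, I expect the differential inequality
\beno
\fr{d}{dt}\mathcal L_q+c\,2^{2q}\mathcal L_q\lesssim\|\nabla u\|_{L^\infty}\mathcal L_q+\|\tl f_q\|_{L^2}+\|\tl g_q\|_{L^2}.
\eeno
Applying Gronwall (the exponential factor is harmless because $\|\nabla u\|_{L^1_T(L^\infty)}\lesssim X(T)$ is kept small by the bootstrap), multiplying by $2^{q(\fr{N}{2}-1+\al)}$, and summing over $q<1$, I arrive at
\beno
X_L(T)\lesssim X_L^0+\|P_{<1}f\|_{L^1_T(\dot B^{\fr{N}{2}-1+\al}_{2,1})}+\|P_{<1}g\|_{L^1_T(\dot B^{\fr{N}{2}-1+\al}_{2,1})}+(\text{commutator contributions}).
\eeno

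The commutator corrections $\dot S_{q-1}u\cdot\nabla b_q-\ddl(\dot T_u\cdot\nabla b)$ and its $d$-analogue, concealed inside $\tl f_q-f_q$ and $\tl g_q-g_q$, I would control using the classical commutator estimate (Lemma 2.100 of \cite{Bahouri-Chemin-Danchin11}), which reduces them to products $\|\nabla u\|\cdot\|(b,d)\|$ at suitable Besov regularities, easily $\le CX^2(T)$ through the embeddings encoded in $\mathcal E^{\fr{N}{2},\al}_p(T)$. The nonlinear source terms $P_{<1}f$ and $P_{<1}g$ are covered by Subsection~4.1: Lemma~\ref{lem4.1} for $-b\dv u-\dot T'_{\nabla b}u$; Lemma~\ref{lem4.3} for $I(b)\mathcal Au$; Lemma~\ref{lem4.4} for $K(b)\nabla b$; Lemmas~\ref{lem4.5}--\ref{lem4.6} for the pieces of $u\cdot\nabla u$ together with $\dot T_{\Pe^\bot u}\cdot\nabla d$. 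The residual piece $\dot T_{\Pe u}\cdot\nabla d$, not bounded directly by any of those lemmas, I would treat via its divergence-form representation $\dv(\dot T_{\Pe u}d)$ (afforded by $\dv\Pe u=0$) and integrate by parts against $d_q$ inside the energy pairing $(\tl g_q,d_q)$, transferring the extra derivative onto $d_q$ and absorbing it into the parabolic damping $\|\Lm d_q\|_{L^2}^2$ via Young's inequality. Collecting everything yields $X_L(T)\le CX_L^0+CX^2(T)$.

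I expect the main obstacle to be the first step: extracting low-frequency damping for $b$. At the linear level the $b$-equation is purely transported, so no intrinsic viscous smoothing is available; the positive definiteness of $\|\Lm b_q\|_{L^2}^2$ coming from $\fr{d}{dt}(d_q,\Lm b_q)$ is indispensable to unlock the $L^1_T(\dot B^{\fr{N}{2}+1+\al}_{2,1})$ bound on $b_L$. The cutoff $q<1$ is precisely what keeps the trade-off with the parabolic smoothing of $d_q$ favorable; at high frequencies the respective roles of $b$ and $d$ in the Lyapunov functional must be interchanged, which will be handled separately in the forthcoming estimate of $X_H(T)$.
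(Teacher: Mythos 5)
Your energy functional and overall structure are essentially those of the paper (the paper uses the combination $\|b_q\|_{L^2}^2+\|d_q\|_{L^2}^2-\tfrac14(d_q|\Lambda b_q)$, identical in spirit to your $\mathcal L_q^2$), and you correctly identify which lemmas of Section~4.1 handle the source terms. However, the Gronwall step contains a genuine gap that is in fact the central difficulty of the lemma.

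You claim $\|\nabla u\|_{L^1_T(L^\infty)}\lesssim X(T)$. This is false for the low-frequency part $u_L=\Pe^\bot u_L+\Pe u_L$. In the solution space $\mathcal{E}^{\frac{N}{2},\alpha}_p(T)$ one only has $\Pe^\bot u_L\in L^1_T(\dot B^{\frac{N}{2}+1+\alpha}_{2,1})$, and the low-frequency embedding $\|\phi_L\|_{\dot B^{s_1}_{2,1}}\le C\|\phi_L\|_{\dot B^{s_2}_{2,1}}$ only holds for $s_1>s_2$; it gives $\|\Pe^\bot u_L\|_{\dot B^{\frac{N}{2}+1+\alpha}_{2,1}}\lesssim\|\Pe^\bot u_L\|_{\dot B^{\frac{N}{2}+1}_{2,1}}$, \emph{not} the reverse. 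Hence there is no $L^1_T$ control of $\|\nabla u_L\|_{L^\infty}\simeq\|u_L\|_{\dot B^{\frac{N}{2}+1}_{2,1}}$, and the Gronwall exponential is \emph{not} harmless. This is exactly where the dispersive norm $Y_\alpha(T)=\|(b_L,\Pe^\bot u_L)\|_{\widetilde L^{1/\alpha}_T(\dot B^{\frac{N}{p}+2\alpha-1}_{p,1})}$ must enter, and it cannot be captured by a crude $L^1_T(L^\infty)$ bound.

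The paper circumvents this by decomposing $\dot S_{q-1}u=\dot S_{q-1}u_L+\dot S_{q-1}u_H$ in every commutator and introducing the quantity $m(u):=\min\{2^{q(2-2\alpha)}\|\nabla u_L\|_{\dot B^{2\alpha-2}_{\infty,\infty}},\|\nabla u_L\|_{L^\infty}\}$. It does not Gronwall; instead it keeps the integral $\int_0^t(m(u)+\|\nabla u_H\|_{L^\infty})\sum_{|q'-q|\le4}(\|b_{q'}\|_{L^2}+\|d_{q'}\|_{L^2})\,dt'$ and, after summing over $q\le0$, distributes the time integrability by H\"older according to the frequency regime: the $\|\nabla u_H\|_{L^\infty}$ contribution is paired with $\widetilde L^\infty_T$ bounds on $(b,d)$; the $\|\nabla u_L\|_{\dot B^{2\alpha-2}_{\infty,\infty}}$ part, which lives only in $\widetilde L^{1/\alpha}_T$ via \eqref{5.4'}, is paired with $\widetilde L^{1/(1-\alpha)}_T$ bounds on $(b_L,d_L)$; the cross term $\|\nabla u_L\|_{L^\infty}$ on $(b_H,d_H)$ is paired in $L^{2/(2-\alpha)}_T\times\widetilde L^{2/\alpha}_T$ using the interpolations \eqref{4.87} and its neighbors. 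Without this refined splitting the argument collapses precisely because $u_L$ is allowed to be large in the critical norm. A secondary, minor point: your special treatment of $\dot T_{\Pe u}\cdot\nabla d$ by integration by parts is unnecessary, since it is already covered by Lemma~\ref{lem4.6}.
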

\begin{proof}
Similar to the energy estimates for the isentropic Navier-Stokes equations obtained by Danchin \cite{Danchin00},   we easily get the following three equalities
\be\label{e2}
\fr12\fr{d}{dt}\|b_q\|^2_{L^2}+(\Lm d_q|b_q)=\fr12\int \dv\dot{S}_{q-1}u |b_q|^2+(\tl{f}_q|b_q),
\ee

\be\label{e1}
\fr12\fr{d}{dt}\|d_q\|^2_{L^2}+\|\nb d_q\|_{L^2}^2-(\Lm b_q|d_q)=\fr12\int \dv\dot{S}_{q-1}u |d_q|^2+(\tl{g}_q|d_q),
\ee
and
\beq\label{e3}
\nn&&\fr{d}{dt}(d_q|\Lm b_q)-\|\Lm b_q\|_{L^2}^2+\|\Lm d_q\|_{L^2}^2-(\Dl d_q|\Lm b_q)\\
&=&\int\dv\dot{S}_{q-1}u\Lm b_qd_q-([\Lm, \dot{S}_{q-1}u\cdot\nb]b_q|d_q)+(\tl{g}_q|\Lm b_q)+(\Lm \tl{f}_q|d_q).
\eeq
A linear combination of \eqref{e2}--\eqref{e3} yields
\beq\label{eL}
\nn&&\fr12\fr{d}{dt}\left(\| b_q\|^2_{L^2}+\|d_q\|^2_{L^2}-\fr14(d_q|\Lm b_q)\right)+\fr78\|\Lm d_q\|^2_{L^2}+\fr18\|\Lm b_q\|^2_{L^2}+\fr18(\Dl d_q|\Lm b_q)\\
\nn&=&\fr12\int \dv\dot{S}_{q-1}u \left(|b_q|^2+|d_q|^2\right)-\fr18\int\dv \dot{S}_{q-1}u\Lm b_qd_q\\
&&+(\tl{g}_q|d_q-\fr18\Lm b_q)+( \tl{f}_q| b_q-\fr18\Lm d_q)+\fr18([\Lm, \dot{S}_{q-1}u\cdot \nb]b_q|d_q).
\eeq
Noting that $u=u_L+ u_H$, it is easy to see that
\beq\label{Su}
\nn\|\nb \dot{S}_{q-1}u\|_{L^\infty}&\le&C\left(2^{q(2-2\al)}\left(2^{q(2\al-2)}\|\nb \dot{S}_{q-1}u_L\|_{L^\infty}\right)+\|\nb \dot{S}_{q-1} u_H\|_{L^\infty}\right)\\
&\le&C\left(m(u)+\|\nb u_H\|_{L^\infty}\right),
\eeq
where
\be\label{m1}
m(u):=\min\left\{2^{q(2-2\al)}\|\nb u_L\|_{\dot{B}^{2\al-2}_{\infty,\infty}}, \|\nb u_L\|_{L^\infty}\right\}.
\ee
Then following the proof of Lemma 2.99 in \cite{Bahouri-Chemin-Danchin11}, we have
\beq\label{com1}
\nn&&\|[\Lm, \dot{S}_{q-1}u\cdot\nb]b_q\|_{L^2}\\
\nn&\le& C\|\nb \dot{S}_{q-1}u\|_{L^\infty}\|\Lm b_q\|_{L^2}\\
&\le&C\left(m(u)+\|\nb u_H\|_{L^\infty}\right)\left(2^q\| b_q\|_{L^2}\right).
\eeq
According to Lemma 7.5 in \cite{Danchin02}, we arrive at
\beq\label{com2}
\nn&&\|\dot{S}_{q-1}u\cdot\nb d_q-\dot{\Dl}_q\dot{T}_u\cdot\nb d\|_{L^2}\\
\nn&\le&\|\dot{S}_{q-1}u_L\cdot\nb d_q-\dot{\Dl}_q\dot{T}_{u_L}\cdot\nb d\|_{L^2}+\|\dot{S}_{q-1} u_H\cdot\nb d_q-\dot{\Dl}_q\dot{T}_{ u_H}\cdot\nb d\|_{L^2}\\
&\le&C\left(m(u)+\|\nb  u_H\|_{L^\infty}\right)\sum_{|q'-q|\le4}\|d_{q'}\|_{L^2},
\eeq
and
\be\label{com3}
\|\dot{S}_{q-1}u\cdot\nb b_q-\dot{\Dl}_q\dot{T}_u\cdot\nb b\|_{L^2}\\
\le C\left(m(u)+\|\nb  u_H\|_{L^\infty}\right)\sum_{|q'-q|\le4}\|b_{q'}\|_{L^2}.
\ee
Then thanks to Bernstein's inequality, we infer from \eqref{eL}--\eqref{com3} that, for $q\le0$, there holds
\beq\label{qle0}
\nn&&\| b_q(t)\|_{L^2}+\|d_q(t)\|_{L^2}+2^{2q}\| b_q\|_{L^1_t(L^2)}+2^{2q}\| d_q\|_{L^1_t(L^2)}\\
\nn&\le&C\left(\| b_q(0)\|_{L^2}+\|d_q(0)\|_{L^2}+\| f_q\|_{L^1_t(L^2)}+\| g_q\|_{L^1_t(L^2)}\right)\\
&&+C\int_0^t\left(m(u)+\|\nb  u_H\|_{L^\infty}\right)\sum_{|q'-q|\le4}(\| b_{q'}\|_{L^2}+\|d_{q'}\|_{L^2})dt'.
\eeq
Recalling that
	$$
X_{L}(T)=\|(b_L, \Pe^\bot u_L)\|_{\widetilde{L}^\infty_T(\dot{B}^{\frac{N}{2}-1+\al}_{2,1})\cap L^1_T(\dot{B}^{\frac{N}{2}+1+\al}_{2,1})},
	$$
multiplying \eqref{qle0} by $2^{q(\frac{N}{2}-1+\al)}$, and taking sum with respect to $q$ over $\{\cdots, -2, -1, 0 \}$,  we obtain
\begin{eqnarray}\label{e1-d4L}
X_{L}(T)&\le& C\left(X^0_{L}+\|P_{<1}f\|_{L^1_T(\dot{B}^{\frac{N}{2}-1+\al}_{2,1})}
+\|P_{<1}g\|_{L^1_T(\dot{B}^{\frac{N}{2}-1+\al}_{2,1})}\right)\nn\\
&&+C\int^T_0\sum_{q\leq0}2^{q(\frac{N}{2}-1+\al)} ( m(u) +\|\nb  u_H\|_{L^\infty})\sum_{|q'-q|\leq4} \|(\dot{\Delta}_{q'} b, \dot{\Delta}_{q'} d)\|_{L^2}ds.
\end{eqnarray}
Now we go to bound the right hand side of  \eqref{e1-d4L}. First of all, from Lemmas \ref{lem4.1}, \ref{lem4.3}--\ref{lem4.6}, we have
    \begin{equation}
      \|P_{<1}f\|_{L^1_T(\dot{B}^{\frac{N}{2}-1+\al}_{2,1})}
+\|P_{<1}g\|_{L^1_T(\dot{B}^{\frac{N}{2}-1+\al}_{2,1})}\leq CX^2(T).
    \end{equation}
The remaining terms of the right hand side of \eqref{e1-d4L} can be bounded as follows. In fact, by virtue of Young's inequality, H\"{o}lder's inequality and the high frequency embedding \eqref{hf-embedding1}, we are led to
\begin{eqnarray*}
&&\int^T_0\sum_{q\leq 0}2^{q(\frac{N}{2}-1+\al)}\|\nb  u_H\|_{L^\infty}\sum_{|q'-q|\leq4} \|(\dot{\Delta}_{q'} b,\dot{\Delta}_{q'} d)\|_{L^2}ds\\
&\leq&\int^T_0\sum_{q\leq 0}2^{q(\frac{N}{2}-1+\al)}\|\nb  u_H\|_{L^\infty}\sum_{|q'-q|\leq4} \|(\dot{\Delta}_{q'} b_L,\dot{\Delta}_{q'} d_L)\|_{L^2}ds\\
&&+\int^T_0\sum_{q\leq 0}2^{q(\frac{N}{2}-1)}\|\nb  u_H\|_{L^\infty}\sum_{|q'-q|\leq4} \|(\dot{\Delta}_{q'} b_H,\dot{\Delta}_{q'} d_H)\|_{L^2}ds\\
&\leq& C\|\nb  u_H\|_{L^1_T(L^\infty)}
\left(\|(b_L,d_L)\|_{\widetilde{L}^\infty_T(\dot{B}^{\frac{N}{2}-1+\al}_{2,1})}
+\|b_H\|_{\widetilde{L}^\infty_T(\dot{B}^{\frac{N}{2}}_{2,1})}
+\|d_H\|_{\widetilde{L}^\infty_T(\dot{B}^{\frac{N}{2}-1}_{2,1})}
\right)\\
&\leq&C\left(\|\nb  \Pe^\bot u_H\|_{L^1_T(L^\infty)}+\|\nb  \Pe u_H\|_{L^1_T(L^\infty)}\right)X(T)\\
&\leq&CX^2(T).
\end{eqnarray*}
Similarly, using \eqref{5.3}, and the interpolation
 \begin{equation}\label{5.4'}
    \widetilde{L}^\infty_T(\dot{B}^{\frac{N}{p}-1 }_{p,1})\cap L^1_T(\dot{B}^{\frac{N}{p}+1 }_{p,1})
   \subset \widetilde{L}^{\frac{1}{\al}}_T(\dot{B}^{\frac{N}{p}+2\al-1}_{p,1}) \subset \widetilde{L}^{\frac{1}{\al}}_T(\dot{B}^{2\al-1}_{\infty,1}),
     \end{equation}
we have
\begin{eqnarray*}
&&\int^T_0\sum_{q\leq0}2^{q(\frac{N}{2}+1-\al)} \|\nabla u_L\|_{\dot{B}^{2\al-2}_{\infty,\infty}}
\sum_{|q'-q|\leq4} \|(\dot{\Delta}_{q'} b_L,\dot{\Delta}_{q'} d_L)\|_{L^2}ds\\
&\leq& C\|u_L\|_{\widetilde{L}^{\frac{1}{\al}}_T(\dot{B}^{2\al-1}_{\infty,1})}\|(b_L,d_L)\|_{\widetilde{L}^{\frac{1}{1-\al}}_T(
\dot{B}^{\frac{N}{2}+1-\al}_{2,1})}\\
&\leq& C\left(\|\Pe^\bot u_L\|_{\widetilde{L}^{\frac{1}{\al}}_T(\dot{B}^{\fr{N}{p}+2\al-1}_{p,1})}+\|\Pe u_L\|_{\widetilde{L}^{\frac{1}{\al}}_T(\dot{B}^{\fr{N}{p}+2\al-1}_{p,1})}\right)\|(b_L,d_L)\|_{\widetilde{L}^{\frac{1}{1-\al}}_T(
\dot{B}^{\frac{N}{2}+1-\al}_{2,1})}\\
&\leq&CX^2(T).
\end{eqnarray*}
Finally, according to the following interpolations
   \begin{gather}
    \widetilde{L}^\infty_T(\dot{B}^{\frac{N}{2}-1}_{2,1})\cap L^1_T(\dot{B}^{\frac{N}{2}+1}_{2,1})
    \subset \widetilde{L}^{\frac{2}{\al}}_T(\dot{B}^{\frac{N}{2}-1+\al}_{2,1}),\\
   \label{4.87} \widetilde{L}^\infty_T(\dot{B}^{\frac{N}{2}-1+\al}_{2,1})\cap L^1_T(\dot{B}^{\frac{N}{2}+1+\al}_{2,1})
    \subset \widetilde{L}^{\frac{2}{2-\al}}_T(\dot{B}^{\frac{N}{2}+1}_{2,1}),\\
    \widetilde{L}^\infty_T(\dot{B}^{\frac{N}{p}-1}_{p,1})\cap L^1_T(\dot{B}^{\frac{N}{p}+1}_{p,1})
    \subset \widetilde{L}^{\frac{2}{2-\al}}_T(\dot{B}^{\frac{N}{p}+1-\al}_{p,1}),
    \end{gather}
the low frequency embedding
\beno
\|\Pe u_L\|_{\widetilde{L}^{\frac{2}{2-\al}}_T(\dot{B}^{\fr{N}{p}+1}_{p,1})}\le C\|\Pe u_L\|_{\widetilde{L}^{\frac{2}{2-\al}}_T(\dot{B}^{\fr{N}{p}+1-\al}_{p,1})},
\eeno
and the high frequency embedding
\beno
\|b_H\|_{\widetilde{L}^{\frac{2}{\al}}_T(\dot{B}^{\frac{N}{2}-1+\al}_{2,1})}\le C\|b_H\|_{\widetilde{L}^{\frac{2}{\al}}_T(\dot{B}^{\frac{N}{2}}_{2,1})},
\eeno
we find that
\begin{eqnarray}\label{4.78}
\nn&&\int^T_0\sum_{q\leq0}  \|\nabla u_L\|_{L^\infty}2^{q(\frac{N}{2}-1+\al)} \sum_{|q'-q|\leq4} \|(\dot{\Delta}_{q'} b_H,\dot{\Delta}_{q'} d_H)\|_{L^2}ds\\
\nn&\leq& C\|\nb u_L\|_{{L}^{\frac{2}{2-\al}}_T(L^\infty)}\left(
\|b_H\|_{\widetilde{L}^{\frac{2}{\al}}_T(\dot{B}^{\frac{N}{2}-1+\al}_{2,1})}
+
\|d_H\|_{\widetilde{L}^{\frac{2}{\al}}_T(\dot{B}^{\frac{N}{2}-1+\al}_{2,1})}\right)\\
\nn&\leq& C\|\nb u_L\|_{{L}^{\frac{2}{2-\al}}_T(L^\infty)}\left(
\|b_H\|_{\widetilde{L}^{\frac{2}{\al}}_T(\dot{B}^{\frac{N}{2}}_{2,1})}
+
\|d_H\|_{\widetilde{L}^{\frac{2}{\al}}_T(\dot{B}^{\frac{N}{2}-1+\al}_{2,1})}\right)\\
\nn&\leq&C\left(\|\Pe u_L\|_{\widetilde{L}^{\frac{2}{2-\al}}_T(\dot{B}^{\fr{N}{p}+1-\al}_{p,1})}+\| \Pe^\bot u_L\|_{\widetilde{L}^{\frac{2}{2-\al}}_T(\dot{B}^{\frac{N}{2}+1}_{2,1})}\right)X(T)\\
&\le&CX^2(T).
\end{eqnarray}
Combining these estimates with (\ref{e1-d4L}), we obtain \eqref{XQL}. The proof of Lemma \ref{lem4.10} is completed.
\end{proof}

\noindent (ii) \underline{ Estimates of $X_{H}(T)$}.\par
\begin{lem}\label{lem4.11}
  Under the conditions in Lemma \ref{lem4.9}, we have
  \be\label{XQH}
	X_{H}(T)\leq CX_{H}^0+CX^2(T).
	\ee
\end{lem}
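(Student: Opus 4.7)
The plan is to mirror the proof of Lemma \ref{lem4.10}, working with the same localized system \eqref{viscoelastic-local} and the same combined energy identity \eqref{eL}, but now summing over $q\ge 1$ with the high-frequency weights $2^{qN/2}$ on $b_q$ and $2^{q(N/2-1)}$ on $d_q$. The key structural difference is that for $q\ge 1$, Bernstein's inequality gives $\|\Lambda b_q\|_{L^2}\approx 2^q\|b_q\|_{L^2}\ge \|b_q\|_{L^2}$, so the $\|\Lambda b_q\|_{L^2}^2$ term in \eqref{eL} furnishes a genuine $L^1_t$-damping for $b_q$ at rate $1$, while $\|\Lambda d_q\|_{L^2}^2\approx 2^{2q}\|d_q\|_{L^2}^2$ gives the parabolic dissipation. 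The cross term $\frac18(\Delta d_q|\Lambda b_q)$ is treated by Young combined with $\|\Delta d_q\|_{L^2}^2\lesssim 2^{2q}\|\Lambda d_q\|_{L^2}^2$ and $\|b_q\|_{L^2}\lesssim \|\Lambda b_q\|_{L^2}$ for $q\ge 1$, and is absorbed into the $\|\Lambda d_q\|^2+\|\Lambda b_q\|^2$ coercivity (possibly after a slight retuning of the Lyapunov weights in the functional $\|b_q\|^2+\|d_q\|^2-\frac14(d_q|\Lambda b_q)$).

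From this manipulation I obtain, for every $q\ge 1$, a pointwise-in-time inequality of the form
$$
\tfrac{d}{dt}\mathcal{L}_q^2+c_0\bigl(\|b_q\|_{L^2}^2+2^{2q}\|d_q\|_{L^2}^2\bigr)\le \bigl(\|\tilde f_q\|_{L^2}+\|\tilde g_q\|_{L^2}\bigr)\mathcal{L}_q,
$$
with $\mathcal{L}_q^2\approx \|b_q\|^2_{L^2}+\|d_q\|_{L^2}^2$. Dividing by $\mathcal{L}_q$ and integrating as in \eqref{qle0} yields
$$
\|(b_q,d_q)\|_{L^\infty_t L^2}+\|b_q\|_{L^1_tL^2}+2^{2q}\|d_q\|_{L^1_tL^2}\lesssim \|(b_q,d_q)(0)\|_{L^2}+\|(f_q,g_q)\|_{L^1_tL^2}+\mathrm{Comm}_q(t),
$$
where $\mathrm{Comm}_q$ collects the convection-commutator contributions $\dot S_{q-1}u\cdot\nabla b_q-\ddl\dot T_u\cdot\nabla b$ and its $d$-analogue, estimated via \eqref{com2}--\eqref{com3} by $(m(u)+\|\nabla u_H\|_{L^\infty})\sum_{|q'-q|\le 4}\|(b_{q'},d_{q'})\|_{L^2}$.

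Next I would multiply by the appropriate weights and sum over $q\ge 1$. The homogeneous and source contributions are
$$
C X_H^0+C\bigl(\|P_{\ge 1}f\|_{L^1_T(\dot B^{\frac{N}{2}}_{2,1})}+\|P_{\ge 1}g\|_{L^1_T(\dot B^{\frac{N}{2}-1}_{2,1})}\bigr)\le CX_H^0+CX^2(T),
$$
the last bound being the $P_{\ge 1}$-halves of Lemmas \ref{lem4.1}--\ref{lem4.6}. For the convection piece I split $u=u_L+u_H$ as in \eqref{Su} and use the interpolations \eqref{5.5}, \eqref{4.87} exactly as in \eqref{4.78}, arriving at
$$
\bigl(\|\nabla\Pe u\|_{L^1_T L^\infty}+\|\nabla\Pe^\bot u_H\|_{L^1_T L^\infty}+\|u_L\|_{\widetilde L^{1/\alpha}_T(\dot B^{2\alpha-1}_{\infty,1})}\bigr)\cdot X_H(T)\lesssim X^2(T),
$$
each factor being bounded by $X(T)$ thanks to the embedding $\dot B^{N/p+2\alpha-1}_{p,1}\hookrightarrow \dot B^{2\alpha-1}_{\infty,1}$ in \eqref{5.4'}. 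Summing the resulting bounds yields \eqref{XQH}.

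The main obstacle, as in every hybrid low/high frequency argument for \eqref{CNS}, is keeping the cross term $(\Delta d_q|\Lambda b_q)$ under control in the high-frequency regime where $\|\Lambda^2 d_q\|_{L^2}$ is a factor $2^q$ larger than $\|\Lambda d_q\|_{L^2}$; the trick is that the Bernstein gain $\|b_q\|_{L^2}\lesssim 2^{-q}\|\Lambda b_q\|_{L^2}$ converts this potential loss into a harmless multiple of $\|\Lambda d_q\|^2+\|\Lambda b_q\|^2$, which is precisely the reason Danchin's Lyapunov functional works uniformly in $q$. Once this is settled, the rest of the argument is a bookkeeping exercise using the $P_{\ge 1}$-versions of the nonlinear lemmas already established.
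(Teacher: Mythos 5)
Your proposal asserts that the high-frequency estimate can be obtained by \emph{reusing} the combined identity \eqref{eL} together with the Lyapunov functional $\|b_q\|^2_{L^2}+\|d_q\|^2_{L^2}-\frac14(d_q|\Lambda b_q)$ after a ``slight retuning of the Lyapunov weights.'' This is the genuine gap: for $q\geq 1$ the pairing satisfies $|(d_q|\Lambda b_q)|\lesssim 2^q\|d_q\|_{L^2}\|b_q\|_{L^2}$, and no fixed numerical coefficient on the cross term makes the functional comparable to $\|b_q\|_{L^2}^2+\|d_q\|^2_{L^2}$ uniformly in $q$ --- for $q$ large the functional can in fact become negative (take $\|d_q\|_{L^2}\approx\|b_q\|_{L^2}$). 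Likewise the dissipation cross term $\frac18(\Delta d_q|\Lambda b_q)$ satisfies $|(\Delta d_q|\Lambda b_q)|\lesssim 2^q\|\Lambda d_q\|_{L^2}\|\Lambda b_q\|_{L^2}$, so Young's inequality does not absorb it into $\|\Lambda d_q\|^2_{L^2}+\|\Lambda b_q\|^2_{L^2}$ uniformly in $q$, contrary to what you claim. The remedy is not a retuning of numerical coefficients but a \emph{change of unknown}: the paper replaces $\|b_q\|^2_{L^2}$ by $\|\Lambda b_q\|^2_{L^2}$, introducing a separate $L^2$-energy identity \eqref{e4} for $\Lambda b_q$ and the new combination \eqref{Hq} with functional $\|\Lambda b_q\|_{L^2}^2+2\|d_q\|_{L^2}^2-2(d_q|\Lambda b_q)$. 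With this, the cross term is $O(\|d_q\|_{L^2}\|\Lambda b_q\|_{L^2})$, now dominated by $\|\Lambda d_q\|^2_{L^2}+\|\Lambda b_q\|^2_{L^2}$ because of the reverse Bernstein inequality $\|d_q\|_{L^2}\lesssim 2^{-q}\|\Lambda d_q\|_{L^2}$.

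Two further ingredients of the actual proof are missing from your sketch. First, the combined identity \eqref{Hq} only yields $2^q\|b_q\|_{L^1_tL^2}+\|d_q\|_{L^1_tL^2}$ on the left (damping of $b$ at rate one, of $d$ at rate one, not $2^{2q}$); the full parabolic smoothing $2^{2q}\|d_q\|_{L^1_tL^2}$ is obtained from a \emph{separate} $L^2$-energy estimate for $d_q$, namely \eqref{Huq}, which treats $\Lambda b_q$ as a source and is then fed back using the first estimate. You instead claim the combined identity alone gives the final form with $2^{2q}\|d_q\|_{L^1_tL^2}$, which it does not. Second, because one now differentiates $\Lambda b_q$, the transport commutator must be estimated at the level of $\Lambda\tilde f_q$ rather than $\tilde f_q$; the paper's estimate \eqref{com4} (built from \eqref{com1} together with Lemma 7.5 of \cite{Danchin02}) is needed, and you do not address it. Finally, note that your intermediate inequality couples $\|(b_q,d_q)\|_{L^\infty_tL^2}$ jointly, which is incompatible with the fact that $X_H$ applies \emph{different} weights to $b$ and $d$ ($2^{qN/2}$ vs.\ $2^{q(N/2-1)}$); the paper avoids this by deriving \eqref{qge1} with the extra factor $2^q$ already attached to $\|b_q\|_{L^2}$, so that a single weight $2^{q(N/2-1)}$ can be applied to the whole line.
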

\begin{proof}
To begin with, let us give the $L^2$ energy estimate for $\Lm b_q$,
\be\label{e4}
\fr12\fr{d}{dt}\|\Lm b_q\|^2_{L^2}+(\Lm^2 d_q|\Lm b_q)=\fr12\int\dv\dot{S}_{q-1}u|\Lm b_q|^2-([\Lm,\dot{S}_{q-1}u\cdot\nb]b_q|\Lm b_q)+(\Lm \tl{f}_q|\Lm b_q).
\ee
It follows from \eqref{e1}, \eqref{e3} and \eqref{e4} that
\beq\label{Hq}
\nn&&\fr12\fr{d}{dt}\left(\|\Lm b_q\|^2_{L^2}+2\|d_q\|^2_{L^2}-2(d_q|\Lm b_q)\right)+\|\Lm d_q\|^2_{L^2}+\|\Lm b_q\|^2_{L^2}-2(d_q|\Lm b_q)\\
\nn&=&\int\dv\dot{S}_{q-1}u\left(\fr12|\Lm b_q|^2+|d_q|^2\right)-\int\dv\dot{S}_{q-1}u\Lm b_qd_q\\
&&+(\tl{g}_q|2d_q-\Lm b_q)+(\Lm \tl{f}_q|\Lm b_q-d_q)+([\Lm, \dot{S}_{q-1}u\cdot \nb]b_q|d_q-\Lm b_q).
\eeq
To exhibit the smoothing effect of $u$ in high frequency case, we need the
following $L^2$ energy estimate for $d_q$,
\be\label{Huq}
\fr12\fr{d}{dt}\|d_q\|^2_{L^2}+\|\Lm d_q\|_{L^2}^2=\fr12\int\dv\dot{S}_{q-1}u|d_q|^2+(\Lm b_q|d_q)+(\tl{g}_q|d_q).
\ee
Using \eqref{com1} and Lemma 7.5 in \cite{Danchin02} again yields
\beq\label{com4}
\nn&&\|\Lm\left(\dot{S}_{q-1}u\cdot\nb b_q-\dot{\Dl}_q\dot{T}_u\cdot\nb b\right)\|_{L^2}\\
\nn&\le& C\|[\Lm,\dot{S}_{q-1}u\cdot\nb] b_q\|_{L^2}+C\|\dot{S}_{q-1}u\cdot\nb\Lm b_q-\Lm\dot{\Dl}_q\dot{T}_u\cdot\nb b\|_{L^2}\\
&\le& C\left(m(u)+\|\nb u_H\|_{L^\infty}\right)\sum_{|q'-q|\le4}2^{q'}\|b_{q'}\|_{L^2}.
\eeq
Taking the advantage of Bernstein's inequality, we infer from \eqref{Su}--\eqref{com2}, \eqref{Hq} and \eqref{com4} that,  for $q\ge1$, there holds
\beqno
&&2^q\| b_q(t)\|_{L^2}+\|d_q(t)\|_{L^2}+2^q\| b_q\|_{L^1_t(L^2)}+\| d_q\|_{L^1_t(L^2)}\\
&\le&C\left(2^q\| b_q(0)\|_{L^2}+\|d_q(0)\|_{L^2}+2^q\| f_q\|_{L^1_t(L^2)}+\| g_q\|_{L^1_t(L^2)}\right)\\
&&+C\int_0^t\left(m(u)+\|\nb u_H\|_{L^\infty}\right)\sum_{|q'-q|\le4}(2^{q'}\| b_{q'}\|_{L^2}+\|d_{q'}\|_{L^2})dt'.
\eeqno
Similarly, for $q\geq1$,  \eqref{Su}, \eqref{com2} and \eqref{Huq} imply that
\beqno
\nn\|d_q(t)\|_{L^2}+2^{2q}\| d_q\|_{L^1_t(L^2)}&\le& C\left(\|d_q(0)\|_{L^2}+2^q\| b_q\|_{L^1_t(L^2)}+\|g_q\|_{L^1_t(L^2)}\right)\\
&&+C\int_0^t\left(m(u)+\|\nb  u_H\|_{L^\infty}\right)\sum_{|q'-q|\le4}(2^{q'}\| b_{q'}\|_{L^2}+\|d_{q'}\|_{L^2})dt'.
\eeqno
Combining these two inequalities, we find that, if $q\ge1$, there holds
\beq\label{qge1}
\nn&&2^q\| b_q(t)\|_{L^2}+\|d_q(t)\|_{L^2}+2^q\| b_q\|_{L^1_t(L^2)}+2^{2q}\| d_q\|_{L^1_t(L^2)}\\
\nn&\le&C\left(2^q\| b_q(0)\|_{L^2}+\|d_q(0)\|_{L^2}+2^q\| f_q\|_{L^1_t(L^2)}+\| g_q\|_{L^1_t(L^2)}\right)\\
&&+C\int_0^t\left(m(u)+\|\nb  u_H\|_{L^\infty}\right)\sum_{|q'-q|\le4}(2^{q'}\| b_{q'}\|_{L^2}+\|d_{q'}\|_{L^2})dt'.
\eeq
Multiplying   \eqref{qge1} by $2^{q(\frac{N}{2}-1)}$, and taking sum with respect to $q$ over  $\{1, 2, 3,  \cdots\}$,   we arrive at
\begin{eqnarray}\label{e1-d4H}
X_{H}(T)
\nn&\le& C\left(X^0_{H}+\|P_{\geq1}f\|_{L^1_T(\dot{B}^{\frac{N}{2}}_{2,1})}
+\|P_{\geq1}g\|_{L^1_T(\dot{B}^{\frac{N}{2}-1}_{2,1})}\right)\\
&&+C\int^T_0\sum_{q\geq1}2^{q(\frac{N}{2}-1)} (m(u)+ \|\nb   u_H\|_{L^\infty})\sum_{|q'-q|\leq4}( 2^{q'} \|\dot{\Delta}_{q'} b\|_{L^2}+\|\dot{\Delta}_{q'} d\|_{L^2})ds.
\end{eqnarray}
Now let us  bound the right hand side of \eqref{e1-d4H}. In fact, we infer from Lemmas \ref{lem4.2}, and \ref{lem4.3}--\ref{lem4.6} that
    \begin{equation}
     \|P_{\geq1}f\|_{L^1_T(\dot{B}^{\frac{N}{2}}_{2,1})}
+\|P_{\geq1}g\|_{L^1_T(\dot{B}^{\frac{N}{2}-1}_{2,1})}\leq C X^2(T).
    \end{equation}
The estimates of the last term in (\ref{e1-d4H}) are a little bit trickier. First of all, using
Young's inequality, H\"{o}lder's inequality, and \eqref{b2} yields
\begin{eqnarray*}
 &&\int^T_0\sum_{q\geq1}2^{q(\frac{N}{2}-1)} \|\nb  u_H\|_{L^\infty}
\sum_{|q'-q|\leq4}(2^{q'} \|\dot{\Delta}_{q'} b\|_{L^2} + \|\dot{\Delta}_{q'} d\|_{L^2})ds\nn\\
&\le&\int^T_0\sum_{q\geq1}2^{q(\frac{N}{2}-1)} \|\nb  u_H\|_{L^\infty}
\sum_{|q'-q|\leq4}(2^{q'} \|\dot{\Delta}_{q'} b\|_{L^2} + \|\dot{\Delta}_{q'} d_H\|_{L^2})ds\nn\\
&&+\int^T_0\sum_{q\geq1}2^{q(\frac{N}{2}-1+\al)} \|\nb  u_H\|_{L^\infty}
\sum_{|q'-q|\leq4} \|\dot{\Delta}_{q'} d_L\|_{L^2}ds\nn\\
&\leq&C\| \nb u_H\|_{{L}^1_T(L^\infty)}
\left( \| b\|_{\widetilde{L}^\infty_T(\dot{B}^{\frac{N}{2}}_{2,1})}+\| d_L\|_{\widetilde{L}^\infty_T(\dot{B}^{\frac{N}{2}-1+\al}_{2,1})}+\| d_H\|_{\widetilde{L}^\infty_T(\dot{B}^{\frac{N}{2}-1}_{2,1})}\right)\\
&\le&CX^2(T).
\end{eqnarray*}
Moreover, using \eqref{5.3}, the following low frequency embedding
\beno
\| b_L\|_{\widetilde{L}^{\frac{1}{1-\al}}_T(\dot{B}^{\frac{N}{2}+2-2\al}_{2,1})}\le C\| b_L\|_{\widetilde{L}^{\frac{1}{1-\al}}_T(\dot{B}^{\frac{N}{2}+1-\al}_{2,1})},
\eeno
and the interpolation \eqref{5.4'}, we find that
\beqno
&&\int^T_0\sum_{q\geq1}2^{q(\frac{N}{2}+1-2\al)} \|\nb  u_L\|_{\dot{B}^{2\al-2}_{\infty,\infty}} \nn\sum_{|q'-q|\leq4}\left(2^{q'}\|\dot{\Delta}_{q'} b_L\|_{L^2}+\|\dot{\Delta}_{q'} d_L\|_{L^2}\right)ds\\
&\le&C\|u_L\|_{\widetilde{L}^\frac{1}{\al}_T(\dot{B}^{2\al-1}_{\infty,1})}\| b_L\|_{\widetilde{L}^{\frac{1}{1-\al}}_T(\dot{B}^{\frac{N}{2}+2-2\al}_{2,1})}+C\int^T_0\|\nb  u_L\|_{\dot{B}^{2\al-2}_{\infty,\infty}}\sum_{q\geq1}2^{q(\frac{N}{2}+1-\al)}  \nn\sum_{|q'-q|\leq4}\|\dot{\Delta}_{q'} d_L\|_{L^2}ds\\
&\le&C \|u_L\|_{\widetilde{L}^\frac{1}{\al}_T(\dot{B}^{2\al-1}_{\infty,1})}\left(\| b_L\|_{\widetilde{L}^{\frac{1}{1-\al}}_T(\dot{B}^{\frac{N}{2}+1-\al}_{2,1})}+\| d_L\|_{\widetilde{L}^{\frac{1}{1-\al}}_T(\dot{B}^{\frac{N}{2}+1-\al}_{2,1})}\right)\\
&\le&CX^2(T).
\eeqno
Similar to \eqref{4.78}, we have
\begin{eqnarray*}
 &&\int^T_0\sum_{q\geq1}2^{q(\frac{N}{2}-1)} \|\nb   u_L\|_{L^\infty}
\sum_{|q'-q|\leq4}2^{q'}\|\dot{\Delta}_{q'} b_H\|_{L^2}ds\nn\\
&\leq&C\|\nb u_L\|_{\widetilde{L}^\frac{2}{2-\al}_T(L^\infty)}
\|b_H\|_{\widetilde{L}^{\frac{2}{\al}}_T(\dot{B}^{\frac{N}{2}}_{2,1})}\\
&\leq&C\left(\|\Pe u_L\|_{\widetilde{L}^{\frac{2}{2-\al}}_T(\dot{B}^{\fr{N}{p}+1-\al}_{p,1})}+\| \Pe^\bot u_L\|_{\widetilde{L}^{\frac{2}{2-\al}}_T(\dot{B}^{\fr{N}{2}+1}_{2,1})}\right)X(T)\\
&\le&CX^2(T).
\end{eqnarray*}
Finally, using \eqref{5.5} and  \eqref{5.4'} again, we arrive at
\beqno
&&\int^T_0\sum_{q\geq1}2^{q(\frac{N}{2}+1-2\al)} \|\nb  u_L\|_{\dot{B}^{2\al-2}_{\infty,\infty}} \nn\sum_{|q'-q|\leq4}\|\dot{\Delta}_{q'} d_H\|_{L^2}ds\\
&\le&C\|u_L\|_{\widetilde{L}^\frac{1}{\al}_T(\dot{B}^{2\al-1}_{\infty,1})}\| d_H\|_{\widetilde{L}^{\frac{1}{1-\al}}_T(\dot{B}^{\frac{N}{2}+1-2\al}_{2,1})}\\
&\le&CX^2(T).
\eeqno
Combining these estimates with (\ref{e1-d4H}), we obtain
\eqref{XQH}. This completes the proof of Lemma \ref{lem4.11}.
\end{proof}

\noindent (iii) \underline{ Estimates of $W(T)$}.\par
In fact, applying  Proposition \ref{prop3.4} to \eqref{biu}, and using  Lemmas \ref{lem4.7} and \ref{lem4.8}, we easily get the following estimate for $W(T)$.
\begin{lem}\label{lem4.12}
  Under the conditions in Lemma \ref{lem4.9}, we have
  \be\label{W}
	W(T)\leq CW^0+CX^2(T).
	\ee
\end{lem}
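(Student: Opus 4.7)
The plan is to apply the heat-equation estimate of Proposition \ref{prop3.4} to the parabolic system \eqref{biu} for $\Pe u$ and then invoke the nonlinear bounds already established in Lemmas \ref{lem4.7} and \ref{lem4.8}. More precisely, since
\[
\pr_t \Pe u-\mu \Dl\Pe u=-\Pe\bigl(u\cdot\nb u+I(b)\mathcal{A}u\bigr),\qquad \Pe u|_{t=0}=\Pe u_0,
\]
I would invoke Proposition \ref{prop3.4} with $\sigma=\fr{N}{p}-1$, $r=1$, $\rho=1$, and take $\rho_1=\infty$ and $\rho_1=1$ respectively to produce both norms appearing in $W(T)$. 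This yields
\[
\|\Pe u\|_{\widetilde{L}^\infty_T(\dot{B}^{\fr{N}{p}-1}_{p,1})}+\|\Pe u\|_{L^1_T(\dot{B}^{\fr{N}{p}+1}_{p,1})}\le C\Bigl(\|\Pe u_0\|_{\dot{B}^{\fr{N}{p}-1}_{p,1}}+\bigl\|\Pe\bigl(u\cdot\nb u+I(b)\mathcal{A}u\bigr)\bigr\|_{L^1_T(\dot{B}^{\fr{N}{p}-1}_{p,1})}\Bigr).
\]

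Next, since the Leray projector $\Pe$ is a homogeneous Fourier multiplier of degree $0$, it is bounded on $\dot B^{\fr{N}{p}-1}_{p,1}$ (by Proposition \ref{prop-classical}), so the right-hand side reduces, up to a constant, to the sum of $\|u\cdot\nb u\|_{L^1_T(\dot{B}^{\fr{N}{p}-1}_{p,1})}$ (modulo the projector, handled as in Lemma \ref{lem4.7}) and $\|I(b)\mathcal{A}u\|_{L^1_T(\dot{B}^{\fr{N}{p}-1}_{p,1})}$. The first of these is exactly what Lemma \ref{lem4.7} provides, with the convection term split as in \eqref{4.63} into $\Pe u\cdot\nb\Pe u$, $\Pe u\cdot\nb\Pe^\bot u$, $\Pe^\bot u\cdot\nb \Pe u$, and each piece bounded by $CX^2(T)$; the second is exactly Lemma \ref{lem4.8}.

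Combining the two inputs and the heat estimate, we obtain
\[
W(T)\le C\|\Pe u_0\|_{\dot{B}^{\fr{N}{p}-1}_{p,1}}+C\|\Pe(u\cdot\nb u)\|_{L^1_T(\dot{B}^{\fr{N}{p}-1}_{p,1})}+C\|I(b)\mathcal{A}u\|_{L^1_T(\dot{B}^{\fr{N}{p}-1}_{p,1})}\le CW^0+CX^2(T),
\]
which is the desired estimate \eqref{W}. There is no real obstacle here, provided Lemmas \ref{lem4.7} and \ref{lem4.8} are in hand; the only mild point to be careful with is to verify that the conditions in Lemma \ref{lem4.9} (in particular $\|b\|_{L^\infty_T(L^\infty)}\le 1/2$) indeed imply the hypotheses needed to apply Lemmas \ref{lem4.7} and \ref{lem4.8}, which they do because those lemmas are stated precisely under these hypotheses.
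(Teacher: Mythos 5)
Your proof is correct and follows essentially the same route as the paper: apply the heat estimate of Proposition \ref{prop3.4} to \eqref{biu} and then invoke Lemmas \ref{lem4.7} and \ref{lem4.8} for the nonlinear terms. The only additional detail you supply (choice of parameters in Proposition \ref{prop3.4}, boundedness of $\Pe$) is implicit in the paper's one-line justification.
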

Collecting Lemmas \ref{lem4.9}--\ref{lem4.12}, we conclude that
\begin{prop}\label{prop-global1}
Let $p$ and $\al$ satisfy \eqref{p1} and  \eqref{al1}, respectively.  Assume that $(b,u)$ is a solution to system \eqref{bcu}--\eqref{biu} in  $\mathcal{E}^{\frac{N}{2},\al}_p(T)$ with
\beno
\|b\|_{L^\infty_T(L^\infty)}\le\fr12.
\eeno
Then we have
\be\label{e-global}
X(T)\le CX^0+CX^2(T).
\ee
\end{prop}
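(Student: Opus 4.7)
The plan is to simply assemble the four lemmas that have already been carefully proved in this section, namely Lemmas \ref{lem4.9}, \ref{lem4.10}, \ref{lem4.11}, and \ref{lem4.12}, which bound each of the four pieces $Y_\al(T)$, $X_L(T)$, $X_H(T)$, and $W(T)$ that together make up $X(T)$ by definition. Since three of those lemmas already yield bounds purely in terms of the corresponding initial data norm plus $CX^2(T)$, the only point that needs a small extra argument is that the dispersive estimate in Lemma \ref{lem4.9} controls $Y_\al(T)$ in terms of $X_L^0+X_L(T)+X^2(T)$ rather than data alone.

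The first step is to invoke Lemma \ref{lem4.10} to obtain
\[
X_L(T)\le CX_L^0+CX^2(T).
\]
Substituting this bound into the conclusion of Lemma \ref{lem4.9} gives
\[
Y_\al(T)\le CX_L^0+CX_L(T)+CX^2(T)\le CX_L^0+CX^2(T),
\]
where we have absorbed the resulting $CX^2(T)$ from the $X_L(T)$ piece into the quadratic term (and kept the constant harmless by relabelling). This eliminates the $X_L(T)$ contribution on the right-hand side of the dispersive bound, which is the only genuinely non-algebraic ingredient.

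Next, I would add Lemma \ref{lem4.11} to control $X_H(T)\le CX_H^0+CX^2(T)$, and Lemma \ref{lem4.12} to control $W(T)\le CW^0+CX^2(T)$. Summing these four inequalities and recalling
\[
X(T)=X_L(T)+X_H(T)+Y_\al(T)+W(T),\qquad X^0=X_L^0+X_H^0+W^0,
\]
we conclude $X(T)\le CX^0+CX^2(T)$, which is the desired estimate \eqref{e-global}. The hypothesis $\|b\|_{L^\infty_T(L^\infty)}\le 1/2$ is inherited automatically because each of the four lemmas already assumes it (it is needed for the composition estimates on $I(b)$ and $K(b)$ that enter the nonlinear bounds in Lemmas \ref{lem4.3} and \ref{lem4.4}).

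There is no real obstacle at this level: the hard analytic work, namely the dispersive estimate for the acoustics system combined with the commutator and paraproduct estimates for the nonlinearities, has already been absorbed into Lemmas \ref{lem4.1}--\ref{lem4.8} and then into the four lemmas assembled here. The only thing to be careful about is the bookkeeping of the constants when feeding the $X_L(T)$-bound back into the $Y_\al(T)$-bound, so that the final estimate is closed and does not contain any non-data terms on the right-hand side outside of $X^2(T)$.
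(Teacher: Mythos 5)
Your proposal is correct and follows exactly the route the paper takes: the paper's own proof of Proposition \ref{prop-global1} consists of the single sentence ``Collecting Lemmas \ref{lem4.9}--\ref{lem4.12}, we conclude that\dots'', and the only nontrivial bookkeeping step --- feeding the bound on $X_L(T)$ from Lemma \ref{lem4.10} back into the right-hand side of Lemma \ref{lem4.9} to eliminate the $X_L(T)$ term --- is precisely what you spell out.
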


\section{Proof of Theorem \ref{thm-p>2}}
\noindent The aim of this Section is to give the proof of Theorem \ref{thm-p>2}.

\subsection{The global existence.}
First of all, we construct the approximate solutions to that system \eqref{bcu}--\eqref{biu} with smoothing initial data. For the sake of
simplicity, we just outline it here (for the details, see
e.g. \cite{Bahouri-Chemin-Danchin11}  and \cite{Danchin07}). To begin with, let us recall the following local existence theorem.
\begin{thm}[\cite{Danchin07}]\label{local}
Let $N\ge2$. Assume that $\rho_0-1\in\dot{B}^{\fr{N}{2}}_{2,1}$ and $u_0\in\dot{B}^{\fr{N}{2}-1}_{2,1}$ with $\rho_0$ bounded away from 0. There exists a positive time $T$
such that system \eqref{CNS} has a unique solution $(\rho, u)$ with $\rho$ bounded  away from 0,
\beno
\rho-1\in\widetilde{C}_T(\dot{B}^{\fr{N}{2}}_{2,1}), \ \ {and} \ \ u\in\widetilde{C}_T(\dot{B}^{\fr{N}{2}-1}_{2,1})\cap L^1_T(\dot{B}^{\fr{N}{2}+1}_{2,1}).
\eeno
Moreover, the solution $(\rho, u)$ can be continued beyond $T$ if the following three conditions hold:\\
{\em (i)}  The function $\rho-1$ belongs to $L^\infty_T(\dot{B}^{\fr{N}{2}}_{2,1})$,\\
{\em (ii)} the function $\rho$ is bounded away from 0,\\
{\em (iii)} $\int_0^T\|\nb u(\tau)\|_{L^\infty}d\tau<\infty$.
\end{thm}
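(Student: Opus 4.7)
My approach would follow the standard Friedrichs-type scheme tailored to hybrid Besov spaces, as developed by Danchin. First, I would construct a sequence of approximate solutions $(\rho^n, u^n)$ by cutting off frequencies via $\dot{S}_n$ and solving the resulting ODE system in the space of functions whose Fourier transform is supported in the ball $B(0, 2^{n+1})$. For such spectrally-localized data, classical ODE theory produces local-in-time solutions on some interval $[0, T_n]$. Writing $b^n := \rho^n - 1$, the linearized system for $(b^n, u^n)$ is
\begin{align*}
\pr_t b^n + u^n \cdot \nb b^n &= -(1+b^n)\dv u^n,\\
\pr_t u^n - \mathcal{A} u^n / (1+b^n) &= -u^n \cdot \nb u^n - K(b^n)\nb b^n,
\end{align*}
to which I would apply the transport estimate in Proposition \ref{prop3.3} for $b^n$ in $\dot{B}^{N/2}_{2,1}$ and the parabolic estimate in Proposition \ref{prop3.4} for $u^n$ in $\widetilde{L}^\infty_T(\dot{B}^{N/2-1}_{2,1}) \cap L^1_T(\dot{B}^{N/2+1}_{2,1})$, together with the product estimates from Corollary \ref{coro-product} and the composition estimates for $K(b^n), I(b^n)$.

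Next, I would close a uniform bound: choosing $T$ small enough depending only on $\|\rho_0 - 1\|_{\dot{B}^{N/2}_{2,1}}$, $\|u_0\|_{\dot{B}^{N/2-1}_{2,1}}$ and $\inf \rho_0$, the standard continuation argument shows that $T_n \ge T$ for all $n$ and that $(b^n, u^n)$ is bounded in $\widetilde{C}_T(\dot{B}^{N/2}_{2,1}) \times \bigl(\widetilde{C}_T(\dot{B}^{N/2-1}_{2,1}) \cap L^1_T(\dot{B}^{N/2+1}_{2,1})\bigr)$, while $1+b^n$ stays bounded away from $0$. I would then establish convergence of $(b^n, u^n)$ to a limit $(b, u)$ by estimating differences $(\delta b^n, \delta u^n) := (b^{n+1} - b^n, u^{n+1} - u^n)$ in a space with \emph{one less derivative}, namely $\widetilde{C}_T(\dot{B}^{N/2-1}_{2,1}) \times \bigl(\widetilde{C}_T(\dot{B}^{N/2-2}_{2,1}) \cap L^1_T(\dot{B}^{N/2}_{2,1})\bigr)$. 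This loss-of-derivative trick is unavoidable because of the hyperbolic mass equation. Fatou-type arguments and the uniform bounds then let me recover that the limit $(b, u)$ lies in the original regularity class, and uniqueness in the same loss-of-derivative space follows in parallel.

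The last and main task is the continuation criterion. Assuming (i)-(iii) hold on $[0, T)$, I would re-run the energy/transport estimates on the full system. Applying Proposition \ref{prop3.3} to the mass equation gives
\beno
\|b(t)\|_{\dot{B}^{N/2}_{2,1}} \lesssim \exp\Bigl(C\int_0^t \|\nb u\|_{\dot{B}^{N/2}_{2,1}} d\tau\Bigr)\Bigl(\|b_0\|_{\dot{B}^{N/2}_{2,1}} + \int_0^t \|(1+b)\dv u\|_{\dot{B}^{N/2}_{2,1}}\, d\tau\Bigr),
\eeno
and this is bounded on $[0,T)$ by hypothesis (i). For the velocity I would localize the momentum equation, use the Lam\'e smoothing from Proposition \ref{prop3.4} together with condition (ii) to invert the density factor, and bound the convection term $u \cdot \nb u$ via a commutator estimate that converts a $\|\nb u\|_{L^\infty}$ weight in time (condition (iii)) into a Gronwall-type bound on $\|u\|_{\widetilde{L}^\infty_T(\dot{B}^{N/2-1}_{2,1})}$. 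The terms $K(b)\nb b$ and $I(b)\mathcal{A} u$ are controlled by composition estimates in $\dot{B}^{N/2}_{2,1}$ using (i)-(ii). Combining yields finite bounds up to $T$, so the local result applied at a time just before $T$ extends the solution past $T$.

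The hard part, in my view, is not the fixed-point scheme itself but \emph{threading the loss-of-derivative}: one must simultaneously have enough regularity to make product/composition estimates work and enough slack to gain contraction for differences, while the hyperbolic mass equation provides no smoothing. Equally delicate is ensuring that the logarithmic Gronwall in the continuation argument closes using only $\int_0^T \|\nb u\|_{L^\infty}\, d\tau < \infty$ rather than the stronger $\|\nb u\|_{L^1_T(\dot{B}^{N/2}_{2,1})}$, which forces a careful use of the commutator estimate $\|[\ddj, u\cdot\nb] f\|_{L^2} \lesssim \|\nb u\|_{L^\infty} c_j 2^{-j N/2}\|f\|_{\dot{B}^{N/2}_{2,1}}$ rather than a cruder paraproduct bound.
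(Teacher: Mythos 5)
The paper gives no proof of this statement; it is imported verbatim from Danchin \cite{Danchin07}, so there is no in-paper argument to compare against. That said, your sketch is a faithful outline of Danchin's actual scheme: Friedrichs regularization, uniform bounds in the hybrid space via transport $+$ Lam\'e estimates, convergence in a space with one derivative lost (which is indeed forced by the purely hyperbolic mass equation), and a continuation criterion.

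One step in your continuation argument is confused. After writing the transport estimate
$\|b(t)\|_{\dot{B}^{N/2}_{2,1}} \lesssim \exp\bigl(C\int_0^t \|\nabla u\|_{\dot{B}^{N/2}_{2,1}}\bigr)\bigl(\cdots\bigr)$
you assert this is ``bounded on $[0,T)$ by hypothesis (i).'' But (i) \emph{is} the bound $b \in L^\infty_T(\dot{B}^{N/2}_{2,1})$; applying the transport estimate to re-derive it is circular, and (i) says nothing about the exponential weight, which involves $\nabla u$ rather than $b$. The genuine burden in the continuation proof is entirely on the velocity: starting from (i)--(iii) one must close a bound for $u$ in $\widetilde{L}^\infty_T(\dot{B}^{N/2-1}_{2,1}) \cap L^1_T(\dot{B}^{N/2+1}_{2,1})$, and here two things have to be handled carefully. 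First, the term $I(b)\mathcal{A}u$ is comparable in size to the dissipation, so you cannot treat it as a small forcing when $\|b\|_{\dot{B}^{N/2}_{2,1}}$ is only finite (not small); one must work with the variable-coefficient operator $\mathcal{A}u/(1+b)$ directly, using (ii) for ellipticity. Second, the convection $u\cdot\nabla u$ must be localized so that only the commutator $[\dot\Delta_q, u\cdot\nabla]$ survives, yielding a Gronwall weight $\|\nabla u\|_{L^\infty}$ compatible with (iii) rather than the stronger $\|\nabla u\|_{\dot{B}^{N/2}_{2,1}}$. You correctly name this second point in your closing paragraph; it would be worth promoting it from an afterthought to the main mechanism of the continuation step and dropping the redundant transport estimate for $b$.
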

\begin{rem}\label{rem5.1}
In addition, we claim that if $\rho_0-1\in\dot{B}^{\fr{N}{2}-1+\al}_{2,1}$, then $\rho-1\in\widetilde{C}_T(\dot{B}^{\fr{N}{2}-1+\al}_{2,1})$. In fact, using Proposition \ref{prop3.3}, and Corollary \ref{coro-product}, we have
\beqno
\|b\|_{\widetilde{L}^\infty_T(\dot{B}^{\fr{N}{2}-1+\al}_{2,1})}&\le& \exp\left\{C\|\nb u\|_{L^1_T(\dot{B}^{\fr{N}{2}}_{2,1})}\right\}\left(\|b_0\|_{\dot{B}^{\fr{N}{2}-1+\al}_{2,1}}+\int_0^T\|(1+b)\dv u\|_{\dot{B}^{\fr{N}{2}-1+\al}_{2,1}}\right)\\
&\le&C\left(\|b_0\|_{\dot{B}^{\fr{N}{2}-1+\al}_{2,1}}+\left(1+\|b\|_{L^\infty_T(\dot{B}^{\fr{N}{2}}_{2,1})}\right)\|\dv u\|_{L^1_T(\dot{B}^{\fr{N}{2}-1+\al}_{2,1})}\right)\\
&\le&C\left(\|b_0\|_{\dot{B}^{\fr{N}{2}-1+\al}_{2,1}}+\|u\|^{\fr{1-\al}{2}}_{L^\infty_T(\dot{B}^{\fr{N}{2}-1}_{2,1})}\|u\|^{\fr{1+\al}{2}}_{L^1_T(\dot{B}^{\fr{N}{2}+1}_{2,1})}T^{\fr{1-\al}{2}}\right)\\
&\le&C.
\eeqno
\end{rem}

For initial data $(\rho_0, u_0)$ with $(\rho_0-1, u_0)=:(b_0, u_0)\in\mathcal{E}_0$, by embedding, it is easy to see that
\be\label{eta1}
\|b_0\|_{L^\infty}\le C\|b_0\|_{\dot{B}^{\fr{N}{2}}_{2,1}}\le C\|(b_0, u_0)\|_{\mathcal{E}_0}.
\ee
Before proceeding any further, let us denote by $\tl{C}$ the maximum of constants 1 and $C$ appearing in Proposition \ref{prop-global1} and \eqref{eta1}, and choose $(\rho_0, u_0)$ with $(b_0, u_0)$ so small that
\be\label{small1}
\|(b_0, u_0)\|_{\mathcal{E}_0}\le\fr{1}{8\tl{C}^2}.
\ee
It follows from \eqref{eta1} and \eqref{small1} that
\be
\|b_0\|_{L^\infty}\le \fr18.
\ee
Thanks to Proposition 2.27 in \cite{Bahouri-Chemin-Danchin11}, we can find a sequence of functions $\{(b_{0}^n, u_{0}^n)\}\subset\mathcal{S}\times\mathcal{S}$ satisfying
 \be\label{appro1}
\|(b^n_0-b_0, u^n_0-u_0)\|_{\mathcal{E}_0}\rightarrow 0, \quad\mathrm{as}\quad n\rightarrow\infty,
\ee
and
\be\label{bn}
\|b_0^n\|_{L^\infty}\le\fr14, \quad\mathrm{for\ \ all}\quad n\in\N.
\ee
Then using Theorem \ref{local} and Remark \ref{rem5.1} above,   one could obtain a unique local solution $(b^n,u^n)$ to the system \eqref{bcu}--\eqref{biu} with smoothing initial data $(b_0^n,u_0^n)$ on the maximal lifespan $[0,T^n_*)$,   satisfying
    $$
    (b^n,u^n)\in\left( \widetilde{C}_T(\dot{B}^{\frac{N}{2}-1+\al}_{2,1}\cap\dot{B}^{\frac{N}{2}}_{2,1})\times \left( \widetilde{C}_T(\dot{B}^{\frac{N}{2}-1}_{2,1})\cap
L^1_T(\dot{B}^{\frac{N}{2}+1}_{2,1})\right)\right)\subset\mathcal{E}^{\fr{N}{2},\al}_p(T),\ \forall\ T\in(0,T^n_*).
$$
Now define $T_1^n$ be the supremum of all time $T'\in[0,T^n_*)$ such that
\be\label{III1}
X^n(T')\le{4\tl{C}}\|(b_0, u_0)\|_{\mathcal{E}_0},
\ee
where
    \begin{eqnarray*}
      X^n(T)&=&\|(b^n_L,\Pe^\bot u^n_L)\|_{\widetilde{L}^\infty_T(\dot{B}^{\frac{N}{2}-1+\al}_{2,1})\cap L^1_T(\dot{B}^{\frac{N}{2}+1+\al}_{2,1})}+\|(b^n_L,\Pe^\bot u^n_L)\|_{\widetilde{L}^{\frac{1}{\al}}_T(\dot{B}^{\fr{N}{p}+2\al-1}_{p,1})} \\
      &&+\|b^n_H\|_{\widetilde{L}^\infty_T(\dot{B}^{\frac{N}{2}}_{2,1})\cap L^1_T(\dot{B}^{\frac{N}{2}}_{2,1})}
 +\|\Pe^\bot u^n_H\|_{\widetilde{L}^\infty_T(\dot{B}^{\frac{N}{2}-1}_{2,1})\cap L^1_T(\dot{B}^{\frac{N}{2}+1}_{2,1})}
+\|\Pe u^n\|_{\widetilde{L}^\infty_T(\dot{B}^{\fr{N}{p}-1}_{p,1})\cap L^1_T(\dot{B}^{\fr{N}{p}+1}_{p,1})}.
    \end{eqnarray*}
Combining \eqref{III1} with \eqref{eta1}--\eqref{small1}, one easily deduces that
\be\label{bninfty}
\|b^n\|_{L^\infty_{T_1^n}(L^\infty)}\le\fr12.
\ee
Then from  Proposition \ref{prop-global1} and \eqref{appro1}, we find that
    \beq
      \nn X^n(T^n_1)&\leq& \tl{C}\|(b_0^n, u_0^n)\|_{\mathcal{E}_0}+16\tl{C}^3\|(b_0, u_0)\|_{\mathcal{E}_0}^2\\
                           \nn&\le&2\tl{C}\|(b_0, u_0)\|_{\mathcal{E}_0}\left(1+8\tl{C}^2\|(b_0, u_0)\|_{\mathcal{E}_0}\right)\\
                                 &\le&3\tl{C}\|(b_0, u_0)\|_{\mathcal{E}_0},
    \eeq
provided the initial data $(\rho_0, u_0)$ satisfy
\be\label{III2}
\|(b_0, u_0)\|_{\mathcal{E}_0} \le\fr{1}{16\tl{C}^2}.
\ee
Thus $T_1^n=T^n_*$, and  \eqref{III1} holds true on the interval $[0,T^n_*)$ provided $\|(b_0, u_0)\|_{\mathcal{E}_0} \le c_0$ with $c_0:=\fr{1}{16\tl{C}^2}$.
Consequently, \eqref{bninfty} holds with $T_1^n$ replaced by $T^n_*$, and
    $$
    \|b^n\|_{\widetilde{L}^\infty_{T^n_*}(\dot{B}^{\frac{N}{2}}_{2,1})}+\int^{T_*^n}_0\|\nabla \Pe u^n\|_{L^\infty}+\|\nabla \Pe^\bot u^n_H\|_{L^\infty}dt
    +\left(\int^{T_*^n}_0 \|\nabla \Pe^\bot u^n_L\|_{L^\infty}^{\frac{2}{2-\alpha}}dt\right)^{\frac{2-\alpha}{2}}\leq  C.
    $$
Therefore, using Theorem \ref{local} again, we conclude that $T^n_*=+\infty$ for all $n\in\N$. Moreover,  for all $n\in\mathbb{N}$, there holds
 $$
    X^n(T)\le {C}_0\|(b_0, u_0)\|_{\mathcal{E}_0}, \ \forall\ T>0,
    $$
with $C_0:=4\tl{C}$. Then, using the  compactness arguments similar as that in Chapter 10 of \cite{Bahouri-Chemin-Danchin11}, we obtain that $\{(b^n,u^n)\}_{n\in\mathbb{N}}$
weakly converges (up to a subsequence) to some global solution $(b,u)$ to the system \eqref{bcu}--\eqref{biu} with the initial data $(b_0,u_0)$ satisfying \eqref{initial1}. Thus, we prove the global existence part of Theorem \ref{thm-p>2}.

\subsection{The uniqueness  when $N\geq3$.}

Next, we will prove the uniqueness part of Theorem \ref{thm-p>2} when $N\geq3$.
Assume there exist two solutions $(b^1,u^1)$ and $(b^2,u^2)$ for the system \eqref{bcu}--\eqref{biu} with the same initial data $(b_0,u_0)$, satisfying the regularity conditions in Theorem \ref{thm-p>2}. In order to show that these two solutions coincide, we shall give some estimates for $(\delta b,\delta u)=(b^2-b^1,u^2-u^1)$. It is easy to verify that $(\delta b,\delta u)$ satisfies the following system
\beq\label{bcu-uniqu}
\begin{cases}
\pr_t \delta b+u^2\cdot\nabla \delta b=-{\dv \Pe^\bot \delta u} -\delta u\cdot\nabla b^1-\delta b\mathrm{div}u^2-b^1\mathrm{div}\delta u,\\
\pr_t \Pe^\bot \delta u- \Dl\Pe^\bot \delta u+{\nb \delta b}\\
\ \ \ \ \ \ \ \ =-\Pe^\bot\left(u^2\cdot\nb u^2+K( b^2){\nb b^2}+I( b^2)\mathcal{A}u^2
-u^1\cdot\nb u^1-K( b^1){\nb b^1}-I( b^1)\mathcal{A}u^1\right),\\
\pr_t \Pe \delta  u-\mu \Dl\Pe \delta  u=-\Pe\left(u^2\cdot\nb u^2+I( b^2)\mathcal{A}u^2-u^1\cdot\nb u^1-I( b^1)\mathcal{A}u^1\right),\\
(\delta b, \delta  u )|_{t=0}=(0, 0).
\end{cases}
\eeq
Following the proof of Proposition \ref{prop3.3},  using Corollary \ref{coro-product} and Lemma 2.100 in \cite{Bahouri-Chemin-Danchin11}, we have
    \begin{eqnarray}
      &&\|\delta b(t)\|_{\dot{B}^{\frac{N}{2}-1}_{2,1}}\nn\\
       &\leq&  \int^t_0\left(
      \|-{\dv \Pe^\bot \delta u}\|_{\dot{B}^{\frac{N}{2}-1}_{2,1}}+\|\delta u\cdot\nabla b^1\|_{\dot{B}^{\frac{N}{2}-1}_{2,1}}
      +\|\delta b\mathrm{div}u^2\|_{\dot{B}^{\frac{N}{2}-1}_{2,1}}+\|b^1\mathrm{div}\Pe^\bot\delta u\|_{\dot{B}^{\frac{N}{2}-1}_{2,1}}\right.\nonumber\\
      &&\left.+\fr12\|\dv u^2\|_{L^\infty}\| \delta b\|_{\dot{B}^{\frac{N}{2}-1}_{2,1}}+\sum_{q\in \mathbb{Z}}2^{q(\frac{N}{2}-1)}\|[\dot{\Delta}_q,u^2]\cdot\nabla\delta b\|_{L^2}
      \right)ds\nonumber\\
      &\leq &C \int^t_0
      \left(\|\Pe^\bot  \delta u\|_{\dot{B}^{\frac{N}{2}}_{2,1}}+\|\Pe\delta u\|_{\dot{B}^\frac{N}{p}_{p,1}}\|  b^1\|_{\dot{B}^{\frac{N}{2}}_{2,1}}
      +\|\Pe^\bot\delta u\|_{\dot{B}^\frac{N}{2}_{2,1}}\|  b^1\|_{\dot{B}^{\frac{N}{2}}_{2,1}}
      +\| \delta b\|_{\dot{B}^{\frac{N}{2}-1}_{2,1}}\|u^2\|_{\dot{B}^{\frac{N}{p}+1}_{p,1}}
      \right)
      ds.      \label{5.15-Ng3}
    \end{eqnarray}
Applying Proposition \ref{prop3.4} to $\eqref{bcu-uniqu}_2$, we find that
 \begin{eqnarray}
     && \|\Pe^\bot\delta u\|_{\widetilde{L}^2_t(\dot{B}^{\frac{N}{2}-1}_{2,1})\cap \widetilde{L}^1_t(\dot{B}^{\frac{N}{2}}_{2,1})}\nonumber\\
      &\leq& C\int^t_0\left(\|{\nb \delta b}\|_{\dot{B}^{\frac{N}{2}-2}_{2,1}}
      +\|\Pe^\bot(u^2\cdot \nabla \delta u)\|_{\dot{B}^{\frac{N}{2}-2}_{2,1}}+\|\Pe^\bot(\delta u\cdot \nabla u^1)\|_{\dot{B}^{\frac{N}{2}-2}_{2,1}}+\|K( b^2){\nb b^2}-K( b^1){\nb b^1}\|_{\dot{B}^{\frac{N}{2}-2}_{2,1}}
      \right.\nonumber\\
      &&\left.+\|\Pe^\bot[I( b^2)\mathcal{A}\delta u]\|_{\dot{B}^{\frac{N}{2}-2}_{2,1}}
      +\|\Pe^\bot[(I( b^2)-I(b^1))\mathcal{A} u^1]\|_{\dot{B}^{\frac{N}{2}-2}_{2,1}}
      \right)ds.\label{5.3-Ng3}
    \end{eqnarray}
Since
	\begin{eqnarray*}
	&&\|\Lambda^{-1}\mathrm{div}( u^2\cdot \nabla \Pe \delta u)\|_{\dot{B}^{\frac{N}{2}-2}_{2,1}}\\
&\leq&\|\dot{T}_{ \nabla u^2}\nabla\Pe \delta u\|_{\dot{B}^{\frac{N}{2}-3}_{2,1}}+\| \dot{T}_{ \nabla\Pe \delta u} \nabla u^2\|_{\dot{B}^{\frac{N}{2}-3}_{2,1}}+\|\dot{R}(\Pe^\bot u^2,\nabla\Pe \delta u)\|_{\dot{B}^{\frac{N}{2}-2}_{2,1}}+
\|\mathrm{div}\dot{R}(\Pe u^2,\Pe \delta u)\|_{\dot{B}^{\frac{N}{2}-2}_{2,1}}\\
&\leq&C\|\nabla u^2\|_{\dot{B}^{\frac{N}{p}-1}_{p,1}}\|\nabla\Pe \delta u\|_{\dot{B}^{\frac{N}{p}-2}_{p,1}}
+C\|\Pe^\bot u^2\|_{\dot{B}^{\frac{N}{2}}_{2,1}}\|\nabla\Pe \delta u\|_{\dot{B}^{\frac{N}{p}-2}_{p,1}}
+C\|\Pe u^2\|_{\dot{B}^{\frac{N}{p}-1}_{p,1}}\|\Pe \delta u\|_{\dot{B}^{\frac{N}{p}}_{p,1}}\\
&\leq&C
\left( \|\Pe^\bot u^2\|_{\dot{B}^{\frac{N}{2}}_{2,1}}
+\|\Pe u^2\|_{\dot{B}^{\frac{N}{p}}_{p,1}}\right)\|\Pe \delta u\|_{\dot{B}^{\frac{N}{p}-1}_{p,1}}
+ C\|\Pe u^2\|_{\dot{B}^{\frac{N}{p}-1}_{p,1}}\|\Pe \delta u\|_{\dot{B}^{\frac{N}{p}}_{p,1}},
	\end{eqnarray*}
and
\begin{eqnarray*}
	\| u^2\cdot \nabla \Pe^\bot \delta u\|_{\dot{B}^{\frac{N}{2}-2}_{2,1}}&\leq&\|\dot{T}_{ u^2}\cdot\nabla\Pe^\bot \delta u)\|_{\dot{B}^{\frac{N}{2}-2}_{2,1}}+\| \dot{T}_{ \nabla\Pe^\bot \delta u} u^2\|_{\dot{B}^{\frac{N}{2}-2}_{2,1}}+\|\dot{R}(u^2,\nabla\Pe^\bot \delta u)\|_{\dot{B}^{\frac{N}{2}-2}_{2,1}}\\
&\leq&C\| u^2\|_{\dot{B}^{\frac{N}{p}}_{p,1}}\|\nabla\Pe^\bot \delta u\|_{\dot{B}^{\frac{N}{2}-2}_{2,1}}
 +C\| u^2\|_{\dot{B}^{\frac{N}{p}}_{p,1}}\|\Pe^\bot \delta u\|_{\dot{B}^{\frac{N}{2}-1}_{2,1}}\\
&\leq&C\left( \|\Pe^\bot u^2\|_{\dot{B}^{\frac{N}{2}}_{2,1}}
+\|\Pe u^2\|_{\dot{B}^{\frac{N}{p}}_{p,1}}\right)\|\Pe^\bot \delta u\|_{\dot{B}^{\frac{N}{2}-1}_{2,1}},
	\end{eqnarray*}
we get
\begin{eqnarray}
	&&\| \Pe^\bot(u^2\cdot \nabla   \delta u)\|_{\dot{B}^{\frac{N}{2}-2}_{2,1}} \nonumber\\
&\leq&C\left( \|\Pe^\bot u^2\|_{\dot{B}^{\frac{N}{2}}_{2,1}}
+\|\Pe u^2\|_{\dot{B}^{\frac{N}{p}}_{p,1}}\right)
\left(\|\Pe \delta u\|_{\dot{B}^{\frac{N}{p}-1}_{p,1}}+\|\Pe^\bot \delta u\|_{\dot{B}^{\frac{N}{2}-1}_{2,1}}\right)
+ C\|\Pe u^2\|_{\dot{B}^{\frac{N}{p}-1}_{p,1}}\|\Pe \delta u\|_{\dot{B}^{\frac{N}{p}}_{p,1}}.
	\end{eqnarray}
Similarly, we have
\begin{eqnarray*}
	\| \Pe\delta u\cdot \nabla u^1\|_{\dot{B}^{\frac{N}{2}-2}_{2,1}}&\leq&\|\dot{T}_{ \Pe \delta u }\cdot\nabla    u^1\|_{\dot{B}^{\frac{N}{2}-2}_{2,1}}+\| \dot{T}_{ \nabla u^1} \Pe \delta u\|_{\dot{B}^{\frac{N}{2}-2}_{2,1}}+\|\mathrm{div}\dot{R}(\Pe \delta u ,  u^1)\|_{\dot{B}^{\frac{N}{2}-2}_{2,1}} \\
&\leq&C\|  u^1\|_{\dot{B}^{\frac{N}{p}}_{p,1}}\|\Pe \delta u\|_{\dot{B}^{\frac{N}{p}-1}_{p,1}},
 	\end{eqnarray*}
\begin{eqnarray*}
	\| \Pe^\bot\delta u\cdot \nabla u^1\|_{\dot{B}^{\frac{N}{2}-2}_{2,1}}&\leq&\|\dot{T}_{ \Pe^\bot \delta u }\cdot\nabla    u^1\|_{\dot{B}^{\frac{N}{2}-2}_{2,1}}+\| \dot{T}_{ \nabla u^1} \Pe^\bot \delta u\|_{\dot{B}^{\frac{N}{2}-2}_{2,1}}+\| \dot{R}(\Pe^\bot \delta u ,  \nabla u^1)\|_{\dot{B}^{\frac{N}{2}-2}_{2,1}} \\
&\leq&C\| \nabla u^1\|_{\dot{B}^{\frac{N}{p}-1}_{p,1}}\|\Pe^\bot \delta u\|_{\dot{B}^{\frac{N}{2}-1}_{2,1}}.
  	\end{eqnarray*}
Thus,
	\begin{eqnarray}
\|\delta u\cdot \nabla u^1\|_{\dot{B}^{\frac{N}{2}-2}_{2,1}}
\leq C\|  u^1\|_{\dot{B}^{\frac{N}{p}}_{p,1}}\left(\|\Pe \delta u\|_{\dot{B}^{\frac{N}{p}-1}_{p,1}}+
\|\Pe^\bot \delta u\|_{\dot{B}^{\frac{N}{2}-1}_{2,1}}\right).
	\end{eqnarray}
Using Corollary \ref{coro-product} and Theorem 2.61 in \cite{Bahouri-Chemin-Danchin11}, we obtain
 \begin{eqnarray}
    &&  \|K( b^2){\nb b^2}-K( b^1){\nb b^1}\|_{\dot{B}^{\frac{N}{2}-2}_{2,1}}=\|\nabla[\widetilde{K}(b^2)-\widetilde{K}(b^1)]\|_{\dot{B}^{\frac{N}{2}-2}_{2,1}}\nonumber\\
    &\leq &C\| \widetilde{K}(b^2)-\widetilde{K}(b^1) \|_{\dot{B}^{\frac{N}{2}-1}_{2,1}}=C\left\|\int^1_0  K (b^1+\tau(b^2-b^1))d\tau \delta b\right\|_{\dot{B}^{\frac{N}{2}-1}_{2,1}}
    \nonumber\\
    &\leq&C\left\|\int^1_0  K (b^1+\tau(b^2-b^1))d\tau\right\|_{\dot{B}^{\frac{N}{2}}_{2,1}}\|\delta b \|_{\dot{B}^{\frac{N}{2}-1}_{2,1}} \nonumber\\
    &\leq&  C \| (b^1,b^2 ) \|_{\dot{B}^{\frac{N}{2}}_{2,1}}\|\delta b \|_{\dot{B}^{\frac{N}{2}-1}_{2,1}},
    \end{eqnarray}
 where $\widetilde{K}(b)=\int^b_0 K(s)ds$. Noting that $\Pe^\bot\Pe=0$, in view of Theorem 2.99 in \cite{Bahouri-Chemin-Danchin11} and Corollary \ref{coro-product}, we  are led to
	\begin{eqnarray}
	&&\|\Pe^\bot[I( b^2)\mathcal{A}\delta u]\|_{\dot{B}^{\frac{N}{2}-2}_{2,1}}\nonumber\\
	&\leq&\|[\Pe^\bot, I( b^2)]\mathcal{A}\Pe\delta u\|_{\dot{B}^{\frac{N}{2}-2}_{2,1}}
+\|I( b^2)\mathcal{A}\Pe^\bot\delta u\|_{\dot{B}^{\frac{N}{2}-2}_{2,1}}\nonumber\\
	&\leq&C\|\nabla I(b^2)\|_{\dot{B}^{\frac{N}{p*}-1}_{p*,1}}\|\Pe \delta u\|_{\dot{B}^{\frac{N}{p}}_{p,1}}+C\| I(b^2)\|_{\dot{B}^{\frac{N}{2}}_{2,1}}\|\Pe \delta u\|_{\dot{B}^{\frac{N}{p}}_{p,1}}
+C\| I(b^2)\|_{\dot{B}^{\frac{N}{2}}_{2,1}}\|\Pe^\bot\delta u\|_{\dot{B}^{\frac{N}{2}}_{2,1}}\nonumber\\
	&\leq& C\| b^2\|_{\dot{B}^{\frac{N}{2}}_{2,1}}\left(\|\Pe^\bot\delta u\|_{\dot{B}^{\frac{N}{2}}_{2,1}}
+\|\Pe \delta u\|_{\dot{B}^{\frac{N}{p}}_{p,1}}\right).
	\end{eqnarray}
Moreover, using Corollary \ref{coro-product} with $u= I(b^2)-I(b^1), v=\mathcal{A} u^1, \rho=p_1=q_2=2, p_2=q_1=p$, and $s_1=\sigma_2=\fr{N}{2}-1, s_2=\sigma_1=\fr{N}{p}-1$,
we find that
     \begin{eqnarray}
&&\|(I( b^2)-I(b^1))\mathcal{A} u^1\|_{\dot{B}^{\frac{N}{2}-2}_{2,1}}\nonumber\\
	&\leq&C\| I(b^2)-I(b^1)\|_{\dot{B}^{\frac{N}{2}-1}_{2,1}}
\| \mathcal{A}   u^1\|_{\dot{B}^{\frac{N}{p}-1}_{p,1}}\nonumber\\
    &\leq& C\left\|\int^1_0  I' (b^1+\tau(b^2-b^1))d\tau \delta b\right\|_{\dot{B}^{\frac{N}{2}-1}_{2,1}}
\|   u^1\|_{\dot{B}^{\frac{N}{p}+1}_{p,1}}
    \nonumber\\
    &\leq&C\left(1+\left\|\int^1_0  I' (b^1+\tau(b^2-b^1))d\tau-1\right\|_{\dot{B}^{\frac{N}{2}}_{2,1}}\right)\|\delta b \|_{\dot{B}^{\frac{N}{2}-1}_{2,1}}
\|   u^1\|_{\dot{B}^{\frac{N}{p}+1}_{p,1}} \nonumber\\
    &\leq&  C \left(1+\| (b^1,b^2 ) \|_{\dot{B}^{\frac{N}{2}}_{2,1}}\right)\|\delta b \|_{\dot{B}^{\frac{N}{2}-1}_{2,1}}
\|   u^1\|_{\dot{B}^{\frac{N}{p}+1}_{p,1}}.\label{5.8-Ng3}
    \end{eqnarray}
The estimates (\ref{5.3-Ng3})--(\ref{5.8-Ng3}) imply that
	\begin{eqnarray}
     && \|\Pe^\bot\delta u\|_{\widetilde{L}^2_t(\dot{B}^{\frac{N}{2}-1}_{2,1})\cap {L}^1_t(\dot{B}^{\frac{N}{2}}_{2,1})}\nonumber\\
      &\leq& C\int^t_0\left\{\|{  \delta b}\|_{\dot{B}^{\frac{N}{2}-1}_{2,1}}\left(1+\| (b^1,b^2 ) \|_{\dot{B}^{\frac{N}{2}}_{2,1}}\right)\left(1+
\|   u^1\|_{\dot{B}^{\frac{N}{p}+1}_{p,1}}\right)\right.\nonumber\\
&&\left.
      +\left( \|(\Pe^\bot u^1,\Pe^\bot u^2)\|_{\dot{B}^{\frac{N}{2}}_{2,1}}
+\|(\Pe u^1,\Pe u^2)\|_{\dot{B}^{\frac{N}{p}}_{p,1}}\right)
\left(\|\Pe \delta u\|_{\dot{B}^{\frac{N}{p}-1}_{p,1}}+\|\Pe^\bot \delta u\|_{\dot{B}^{\frac{N}{2}-1}_{2,1}}\right)
\right.\nonumber\\
&&\left.+ \left(\|\Pe u^2\|_{\dot{B}^{\frac{N}{p}-1}_{p,1}}+\| b^2\|_{\dot{B}^{\frac{N}{2}}_{2,1}}\right)\left(\|\Pe^\bot\delta u\|_{\dot{B}^{\frac{N}{2}}_{2,1}}
+\|\Pe \delta u\|_{\dot{B}^{\frac{N}{p}}_{p,1}}\right)
      \right\}ds.\label{5.9-Ng3}
    \end{eqnarray}
Next, using similar arguments as in the proof of \eqref{5.9-Ng3}, Corollary \ref{coro-product}  and the embedding $\dot{B}^{\fr{N}{2}-2}_{2,1}\hookrightarrow\dot{B}^{\fr{N}{p}-2}_{p,1}$ for $p>2$, we get
    \begin{eqnarray}
    &&  \|\Pe\delta u\|_{\widetilde{L}^2_t(\dot{B}^{\frac{N}{p}-1}_{p,1})\cap {L}^1_t(\dot{B}^\frac{N}{p}_{p,1})}\nonumber\\
      &\leq&C \int^t_0\left(\|{\nb \delta b}\|_{\dot{B}^{\frac{N}{p}-2}_{p,1}}
      +\|u^2\cdot \nabla \delta u\|_{\dot{B}^{\frac{N}{p}-2}_{p,1}}+\|\delta u\cdot \nabla u^1\|_{\dot{B}^{\frac{N}{p}-2}_{p,1}}
      \right.\nonumber\\
      &&\left.+\|I( b^2)\mathcal{A}\delta u\|_{\dot{B}^{\frac{N}{p}-2}_{p,1}}
      +\|[I( b^2)-I(b^1)]\mathcal{A} u^1\|_{\dot{B}^{\frac{N}{p}-2}_{p,1}}
      \right)ds\nonumber\\
        &\leq& C\int^t_0\left\{\|{  \delta b}\|_{\dot{B}^{\frac{N}{2}-1}_{2,1}}\left(1+\| (b^1,b^2 ) \|_{\dot{B}^{\frac{N}{2}}_{2,1}}\right)\left(1+
\|   u^1\|_{\dot{B}^{\frac{N}{p}+1}_{p,1}}\right)\right.\nonumber\\
&&\left.
      +\left( \|(\Pe^\bot u^1,\Pe^\bot u^2)\|_{\dot{B}^{\frac{N}{2}}_{2,1}}
+\|(\Pe u^1,\Pe u^2)\|_{\dot{B}^{\frac{N}{p}}_{p,1}}\right)
\left(\|\Pe \delta u\|_{\dot{B}^{\frac{N}{p}-1}_{p,1}}+\|\Pe^\bot \delta u\|_{\dot{B}^{\frac{N}{2}-1}_{2,1}}\right)
\right.\nonumber\\
&&\left.+ \left(\|\Pe u^2\|_{\dot{B}^{\frac{N}{p}-1}_{p,1}}+\| b^2\|_{\dot{B}^{\frac{N}{2}}_{2,1}}\right)\left(\|\Pe^\bot\delta u\|_{\dot{B}^{\frac{N}{2}}_{2,1}}
+\|\Pe \delta u\|_{\dot{B}^{\frac{N}{p}}_{p,1}}\right)
      \right\}ds.\label{5.10-Ng3}
    \end{eqnarray}
By virtue the  interpolation inequality and H\"{o}lder's inequality , we obtain
      \begin{equation}\label{5.20}
         \|\Pe^\bot u^i_L\|_{\widetilde{L}^2_T(\dot{B}^{\frac{N}{2}}_{2,1})}\leq C  \|\Pe^\bot u^i_L\|_{\widetilde{L}^\infty_T(\dot{B}^{\frac{N}{2}-1+\al}_{2,1})}^{\frac{1+\al}{2}}
        \|\Pe^\bot u^i_L\|_{{L}^1_T(\dot{B}^{\frac{N}{2}+1+\al}_{2,1})}^{\frac{1-\al}{2}}T^{\frac{\al}{2}}\leq CX(T)T^{\frac{\al}{2}}\leq CX^0T^{\frac{\al}{2}},
      \end{equation}
and
	\begin{equation}\label{5.21}
         \|\Pe^\bot u^i_L\|_{{L}^1_T(\dot{B}^{\frac{N}{2}+1}_{2,1})}\leq C  \|\Pe^\bot u^i_L\|_{\widetilde{L}^\infty_T(\dot{B}^{\frac{N}{2}-1+\al}_{2,1})}^{\frac{\al}{2}}
        \|\Pe^\bot u^i_L\|_{{L}^1_T(\dot{B}^{\frac{N}{2}+1+\al}_{2,1})}^{1-\frac{\al}{2}}T^{\frac{\al}{2}}\leq CX(T)T^{\frac{\al}{2}}\leq CX^0T^{\frac{\al}{2}},
      \end{equation}
for $i=1, 2$. Combining the above estimates, (\ref{uniform1}), (\ref{5.15-Ng3}) and (\ref{5.9-Ng3})--(\ref{5.10-Ng3}), choosing $T=1$, we have for all $t\in[0,1]$
  	$$
\|\delta b\|_{L^\infty_t(\dot{B}^{\frac{N}{2}-1}_{2,1})}\leq C(1+X^0)\left(\|\Pe^\bot \delta u\|_{L^1_t(\dot{B}^\frac{N}{2}_{2,1})}+\|\Pe \delta u\|_{L^1_t(\dot{B}^\frac{N}{p}_{p,1})}\right)
+CX^0\|\delta b\|_{L^\infty_t(\dot{B}^{\frac{N}{2}-1}_{2,1})},
	$$
and
\begin{eqnarray*}
     && \|\Pe^\bot\delta u\|_{\widetilde{L}^2_t(\dot{B}^{\frac{N}{2}-1}_{2,1})\cap {L}^1_t(\dot{B}^{\frac{N}{2}}_{2,1})}+ \|\Pe\delta u\|_{\widetilde{L}^2_t(\dot{B}^{\frac{N}{p}-1}_{p,1})\cap {L}^1_t(\dot{B}^\frac{N}{p}_{p,1})}\nonumber\\
      &\leq& C(1+X^0)\int^t_0\|{  \delta b}\|_{\dot{B}^{\frac{N}{2}-1}_{2,1}}ds+C(1+X^0)X_0\|{  \delta b}\|_{L^\infty_t(\dot{B}^{\frac{N}{2}-1}_{2,1})}\nonumber\\
&&+CX^0\left(\|\Pe^\bot\delta u\|_{\widetilde{L}^2_t(\dot{B}^{\frac{N}{2}-1}_{2,1})\cap {L}^1_t(\dot{B}^{\frac{N}{2}}_{2,1})}+ \|\Pe\delta u\|_{\widetilde{L}^2_t(\dot{B}^{\frac{N}{p}-1}_{p,1})\cap {L}^1_t(\dot{B}^\frac{N}{p}_{p,1})}\right).
    \end{eqnarray*}
When $CX^0\leq\frac{1}{2}$, the above inequalities reduce to
\beq\label{dlb}
\|\delta b\|_{L^\infty_t(\dot{B}^{\frac{N}{2}-1}_{2,1})}\leq 2C(1+X^0)\left(\|\Pe^\bot \delta u\|_{L^1_t(\dot{B}^\frac{N}{2}_{2,1})}+\|\Pe \delta u\|_{L^1_t(\dot{B}^\frac{N}{p}_{p,1})}\right),
\eeq
and
\beq\label{dlu}
&& \|\Pe^\bot\delta u\|_{\widetilde{L}^2_t(\dot{B}^{\frac{N}{2}-1}_{2,1})\cap {L}^1_t(\dot{B}^{\frac{N}{2}}_{2,1})}+ \|\Pe\delta u\|_{\widetilde{L}^2_t(\dot{B}^{\frac{N}{p}-1}_{p,1})\cap {L}^1_t(\dot{B}^\frac{N}{p}_{p,1})}\nonumber\\
      &\leq& 2C(1+X^0)(t+X_0)\|{  \delta b}\|_{L^\infty_t(\dot{B}^{\frac{N}{2}-1}_{2,1})}.
\eeq
Substituting \eqref{dlb} into \eqref{dlu}, we are led to
\beq\label{dlu1}
&& \|\Pe^\bot\delta u\|_{\widetilde{L}^2_t(\dot{B}^{\frac{N}{2}-1}_{2,1})\cap {L}^1_t(\dot{B}^{\frac{N}{2}}_{2,1})}+ \|\Pe\delta u\|_{\widetilde{L}^2_t(\dot{B}^{\frac{N}{p}-1}_{p,1})\cap {L}^1_t(\dot{B}^\frac{N}{p}_{p,1})}\nonumber\\
      \nn&\leq& 4C^2(1+X^0)^2(t+X_0)\left(\|\Pe^\bot \delta u\|_{L^1_t(\dot{B}^\frac{N}{2}_{2,1})}+\|\Pe \delta u\|_{L^1_t(\dot{B}^\frac{N}{p}_{p,1})}\right)\\
      &\leq& (8C^2+2)(t+X_0)\left(\|\Pe^\bot \delta u\|_{L^1_t(\dot{B}^\frac{N}{2}_{2,1})}+\|\Pe \delta u\|_{L^1_t(\dot{B}^\frac{N}{p}_{p,1})}\right).
\eeq
Accordingly, we conclude that if $X^0\le\fr{1}{4(8C^2+2)}$, then $\delta b=\delta u=0$ for all $t\in[0,T_0]$ with $T_0:=\fr{1}{4(8C^2+2)}$. Repeat this argument on $[T_0, 2T_0], [2T_0, 3T_0], \cdots$, we can easily prove that $(b^1,u^1)=(b^2,u^2)$ for all $t\geq0$.
The proof of Theorem \ref{thm-p>2} when $N\geq3$ is completed.

\subsection{The uniqueness  when $N=2$.}
Finally, we  prove the uniqueness part of Theorem \ref{thm-p>2} when $N=2$.
To this end, we give the following lemma with additional assumption on the initial data.
\begin{lem}\label{lem-add}
  Under the assumptions in Theorem \ref{thm-p>2} and $N=2$, in addition, if $\Pe u_0\in \dot{B}^{0}_{2,1}$, then for all $T>0$, we have
    \begin{equation}
      \Pe u\in \widetilde{L}^\infty_T(\dot{B}^0_{2,1})\cap L^1_T(\dot{B}^2_{2,1}),\label{5.1-p}
    \end{equation}
with
         \be\label{add}
          \| \Pe u\|_{\widetilde{L}^\infty_T(\dot{B}^0_{2,1})\cap L^1_T(\dot{B}^2_{2,1})}\leq C  \| \Pe u_0\|_{ \dot{B}^0_{2,1} }+
          C\left(X^0\right)^2(1+T^\frac{\al}{2}).
        \ee
\end{lem}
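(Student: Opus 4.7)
The plan is to apply the heat equation estimate of Proposition \ref{prop3.4} to the $\Pe u$ equation \eqref{biu}, but now at the $L^2$-critical level: taking $\sigma=0$, $p=r=2$, $\rho=1$ (rather than the $L^p$-based setting used in Lemma \ref{lem4.12}), one obtains
\beno
\|\Pe u\|_{\widetilde{L}^\infty_T(\dot{B}^0_{2,1})\cap L^1_T(\dot{B}^2_{2,1})}
\le C\Bigl(\|\Pe u_0\|_{\dot{B}^0_{2,1}}
+\|\Pe(u\cdot\nb u)\|_{L^1_T(\dot{B}^0_{2,1})}
+\|\Pe(I(b)\mathcal{A} u)\|_{L^1_T(\dot{B}^0_{2,1})}\Bigr).
\eeno
The task is then to bound each of the two nonlinear contributions by $(X^0)^2(1+T^{\fr{\al}{2}})$, and the whole point is to do this \emph{without} letting $\|\Pe u\|_{\dot{B}^1_{2,1}}$ or $\|\Pe u\|_{L^1_T(\dot{B}^2_{2,1})}$ appear on the right-hand side (otherwise the bound would only close for small time and could not be iterated globally).

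For the convection term I would follow the scheme of Lemma \ref{lem4.7}, splitting $u\cdot\nb u$ according to $u=\Pe u+\Pe^\bot u_L+\Pe^\bot u_H$ but with target space $\dot{B}^0_{2,1}$ instead of $\dot{B}^{\fr{N}{p}-1}_{p,1}$. The divergence-free block is treated as $\Pe u\cdot\nb\Pe u=\dv(\Pe u\otimes\Pe u)$ and estimated by Corollary \ref{coro-product} in the form $\|fg\|_{\dot{B}^1_{2,1}}\lesssim\|f\|_{\dot{B}^{\fr{N}{p}}_{p,1}}\|g\|_{\dot{B}^{\fr{N}{p^\ast}}_{p^\ast,1}}$, together with the embedding $\dot{B}^{\fr{N}{p}}_{p,1}\hookrightarrow\dot{B}^{\fr{N}{p^\ast}}_{p^\ast,1}$; a Cauchy--Schwarz in time then reduces this to $\|\Pe u\|_{\widetilde{L}^2_T(\dot{B}^{\fr{N}{p}}_{p,1})}^2\lesssim (X^0)^2$. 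The $\Pe^\bot u_L\cdot\nb\Pe^\bot u_L$ block generates the $T^{\al}$ factor via the interpolation $\|\Pe^\bot u_L\|_{L^2_T(\dot{B}^1_{2,1})}\lesssim X^0 T^{\fr{\al}{2}}$ used in \eqref{5.20}--\eqref{5.21}, while the $\Pe^\bot u_H$ pieces use $\widetilde{L}^\infty_T(\dot{B}^0_{2,1})\cap L^1_T(\dot{B}^2_{2,1})\hookrightarrow\widetilde{L}^2_T(\dot{B}^1_{2,1})$, directly controlled by $X^0$; the mixed $L$--$H$ pieces, and the block $\Pe^\bot u\cdot\nb\Pe u$, are handled by Bony's decomposition along the same lines.

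The delicate block is $\Pe(\Pe u\cdot\nb\Pe^\bot u)$: at the $\dot B^0_{2,1}$ level a direct product estimate would force $\Pe u$ into $\dot B^1_{2,1}$, precisely the forbidden norm. To bypass this I would exploit the key commutator identity \eqref{com-Tiucu}, $\Pe(\dot T_{\Pe u}\cdot\nb\Pe^\bot u)=[\Pe,\dot T_{(\Pe u)^k}]\pr_k\Pe^\bot u$, which converts the loss of derivative into a commutator; Lemma 2.99 of \cite{Bahouri-Chemin-Danchin11} then bounds it by a product in which $\nb\Pe u$ is paired with $\nb\Pe^\bot u$ in the Strichartz norm $\widetilde{L}^{\fr{1}{\al}}_T(\dot{B}^{\fr{N}{p}+2\al-1}_{p,1})$ already controlled by $X^0$, while the $T^{\fr{\al}{2}}$ factor arises from interpolating on $\Pe u$ (or $\Pe^\bot u_L$). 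The remaining paraproduct $\Pe\dot T'_{\nb\Pe^\bot u}\Pe u$ and remainder $\Pe\dot R(\Pe u,\nb\Pe^\bot u)$ are bounded by routine applications of Proposition \ref{p-TR} and Corollary \ref{coro-product}.

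Finally, $\Pe(I(b)\mathcal{A} u)$ is estimated exactly as in Lemma \ref{lem4.8}, simply replacing the target space $\dot{B}^{\fr{N}{p}-1}_{p,1}$ by $\dot{B}^0_{2,1}$: split $u=\Pe^\bot u_L+(\Pe^\bot u_H+\Pe u)$, apply Corollary \ref{coro-product} to each product, and use Theorem 2.61 of \cite{Bahouri-Chemin-Danchin11} to replace $\|I(b)\|$ by $\|b\|$; the $\Pe^\bot u_L$ contribution supplies the $T^{\fr{\al}{2}}$ factor through \eqref{5.20}--\eqref{5.21}, and everything else is absorbed into $(X^0)^2$. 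The main obstacle throughout is precisely the $\Pe u\cdot\nb\Pe^\bot u$ block; without the commutator identity \eqref{com-Tiucu} the estimate would not close at the $\dot{B}^0_{2,1}$ level, and this is the reason why the structural decomposition $\Pe\Pe^\bot=0$ plays the same pivotal role here as it did in the a priori scheme of Section \ref{S4}.
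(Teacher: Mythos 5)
Your high-level plan — apply Proposition \ref{prop3.4} at the $L^2$-critical level and bound the two nonlinear forcings in $L^1_T(\dot B^0_{2,1})$ — is the paper's strategy, and your treatment of $\Pe(\Pe u\cdot\nb\Pe u)$ and of $\Pe(I(b)\mathcal Au)$ matches the paper's. But there are two genuine problems in the middle.

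First, you introduce a term that does not exist. The paper's decomposition \eqref{4.63} is
$\Pe(u\cdot\nb u)=\Pe(\Pe u\cdot\nb\Pe u)+\Pe(\Pe u\cdot\nb\Pe^\bot u)+\Pe(\Pe^\bot u\cdot\nb\Pe u)$;
the fourth block $\Pe(\Pe^\bot u\cdot\nb\Pe^\bot u)$ vanishes identically, because $\Pe^\bot u=\nb\phi$ implies
$(\Pe^\bot u\cdot\nb)\Pe^\bot u=\tfrac12\nb|\nb\phi|^2$, which is a pure gradient and is annihilated by $\Pe$. You instead split $u=\Pe u+\Pe^\bot u_L+\Pe^\bot u_H$, keep a ``$\Pe^\bot u_L\cdot\nb\Pe^\bot u_L$ block'', and assert that this block is what ``generates the $T^{\al}$ factor.'' That block is identically zero after projection, so it cannot supply any factor at all; the $T^{\al/2}$ in \eqref{add} actually comes from the single occurrence of $\|\Pe^\bot u_L\|_{\widetilde L^2_t(\dot B^1_{2,1})}$ inside the estimate of $\Pe(\Pe u\cdot\nb\Pe^\bot u)$ (via \eqref{5.20}), not from any $\Pe^\bot u\cdot\nb\Pe^\bot u$ interaction. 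So the decomposition you are estimating is not the right one, and the attribution of the key small factor is wrong.

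Second, your reason for invoking the commutator identity \eqref{com-Tiucu} is based on a misreading of where the derivative lands. You claim that a direct product estimate for $\Pe(\Pe u\cdot\nb\Pe^\bot u)$ in $\dot B^0_{2,1}$ ``would force $\Pe u$ into $\dot B^1_{2,1}$.'' It does not: the paper's own proof of Lemma \ref{lem-add} uses a bare Bony decomposition, bounding the dominant paraproduct as
$\|\dot T_{\Pe u}\nb\Pe^\bot u\|_{L^1_T(\dot B^0_{2,1})}\lesssim\|\Pe u\|_{\widetilde L^2_T(\dot B^0_{\infty,1})}\|\nb\Pe^\bot u\|_{\widetilde L^2_T(\dot B^0_{2,1})}$,
so the extra derivative sits on $\Pe^\bot u$, not on $\Pe u$, and it is precisely $\|\Pe^\bot u_L\|_{\widetilde L^2_T(\dot B^1_{2,1})}\lesssim X^0T^{\al/2}$ that closes the estimate. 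The commutator identity is the paper's device for the $L^p$-based estimate of Lemma \ref{lem4.7}, where there genuinely is a derivative gap; at the $\dot B^0_{2,1}$ level with $N=2$ it is unnecessary. If you insist on the commutator route here you would have to verify that the Strichartz bound $\|\Pe^\bot u_L\|_{\widetilde L^{1/\al}_T(\dot B^{2/p+2\al-1}_{p,1})}$ (an $L^p$ norm, $p>2$) can be traded for the $L^2$-based norm $\|\Pe^\bot u\|_{\widetilde L^{1/\al}_T(\dot B^{2\al}_{2,1})}$ that the commutator estimate would require --- this does not follow from an embedding and would instead have to be recovered from the energy component of $X(T)$ together with the low-frequency cut-off; you do not do this, and as written the step does not close.

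So: right target, right heat-estimate framework, right treatment of the purely incompressible and pressure terms, but a spurious block in the convection decomposition and a wrong diagnosis (and hence wrong mechanism) for the $\Pe u\cdot\nb\Pe^\bot u$ interaction, which is exactly the delicate point.
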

\begin{proof}
    According to Proposition \ref{prop3.4}, we get
        \begin{equation}\label{5.3-p}
          \| \Pe u\|_{\widetilde{L}^\infty_T(\dot{B}^0_{2,1})\cap L^1_T(\dot{B}^2_{2,1})}\leq C  \| \Pe u_0\|_{ \dot{B}^0_{2,1} }+
        C  \|\Pe\left(u\cdot\nb u+I( b)\mathcal{A}u\right)\|_{L^1_T(\dot{B}^0_{2,1})}.
        \end{equation}
    From (\ref{4.63}), we need to bound $\|\Pe\left(u\cdot\nb u\right)\|_{L^1_T(\dot{B}^0_{2,1})}$ by the following three terms,
        \begin{equation}\label{5.4-p}
          \|\Pe\left(u\cdot\nb u\right)\|_{L^1_T(\dot{B}^0_{2,1})}
          \leq \|\Pe\left(\Pe u\cdot\nb \Pe u\right)\|_{L^1_T(\dot{B}^0_{2,1})}
          +\|\Pe\left(\Pe u\cdot\nb \Pe^\bot u\right)\|_{L^1_T(\dot{B}^0_{2,1})}
        +  \|\Pe\left(\Pe^\bot u\cdot\nb\Pe u\right)\|_{L^1_T(\dot{B}^0_{2,1})}.
        \end{equation}
Using the estimate (\ref{uniform1}) and Propositions \ref{prop-classical}--\ref{p-TR}, the above three terms can be estimated as follows,
    \begin{eqnarray}
      &&\|\Pe\left(\Pe u\cdot\nb \Pe u\right)\|_{L^1_T(\dot{B}^0_{2,1})}=\|\Pe\mathrm{div}\left(\Pe u\otimes\Pe u\right)\|_{L^1_T(\dot{B}^0_{2,1})}\nonumber\\
        &\leq&C\| \Pe u\otimes\Pe u \|_{L^1_T(\dot{B}^1_{2,1})} \leq C\| \Pe u\otimes\Pe u \|_{L^1_T(\dot{B}^\frac{4}{p}_{\frac{p}{2},1})}\nonumber\\
            &\leq& C\|\Pe u\|_{\widetilde{L}^\infty_T(\dot{B}^{\frac{2}{p}-1}_{p,1})}\|\Pe u\|_{L^1_T(\dot{B}^{\frac{2}{p}+1}_{p,1})}
            \leq CX^2(T)\leq C\left(X^0\right)^2,
    \end{eqnarray}
        \begin{eqnarray}
      &&\|\Pe\left(\Pe u\cdot\nb \Pe^\bot u\right)\|_{L^1_T(\dot{B}^0_{2,1})}  \nonumber\\
       &\leq& C\| \dot{T}_{\Pe u}\nb\Pe^\bot u \|_{L^1_T(\dot{B}^0_{2,1})}+C\|\dot{T}_{\nb\Pe^\bot u}\Pe u \|_{L^1_T(\dot{B}^0_{2,1})}
       +C\|\dot{R}({\Pe u},\nb\Pe^\bot u) \|_{L^1_T(\dot{B}^{\frac{2}{p}}_{\frac{2p}{p+2},1})}\nonumber\\
           \nn &\leq& C\|\Pe u\|_{\widetilde{L}^2_T(\dot{B}^{0}_{\infty,1})}\|\nb\Pe^\bot u\|_{\widetilde{L}^2_T(\dot{B}^{0}_{2,1})}
            + C\|\nb\Pe^\bot u\|_{\widetilde{L}^2_T(\dot{B}^{-\frac{2}{p}}_{\frac{2p}{p-2},1})}\|\Pe  u\|_{\widetilde{L}^2_T(\dot{B}^{\frac{2}{p}}_{p,1})}\\
            &&+C\|\nb\Pe^\bot u\|_{\widetilde{L}^2_T(\dot{B}^{ 0}_{2,1})}\|\Pe  u\|_{\widetilde{L}^2_T(\dot{B}^{\frac{2}{p}}_{p,1})}\nonumber\\
            &\leq& CX (T)\left(\| \Pe^\bot u_L\|_{\widetilde{L}^2_T(\dot{B}^{1 }_{2,1})}+\| \Pe^\bot u_H\|_{\widetilde{L}^2_T(\dot{B}^{1 }_{2,1})}\right)\nn\\
            &\leq& CX^0\left(X^0+\| \Pe^\bot u_L\|_{\widetilde{L}^2_T(\dot{B}^{ 1 }_{2,1})}\right),
    \end{eqnarray}
    and
        \begin{eqnarray}
      &&\|\Pe\left(\Pe^\bot u\cdot\nb \Pe  u\right)\|_{L^1_T(\dot{B}^0_{2,1})}  \nonumber\\
       \nn&\leq& C\| \dot{T}_{\Pe^\bot u}\nb\Pe u \|_{L^1_T(\dot{B}^0_{2,1})}
       +C\|\dot{T}_{\nb\Pe u}\Pe^\bot u \|_{L^1_T(\dot{B}^0_{2,1})}
       +C\|\dot{R}({\Pe^\bot u},\nb\Pe u) \|_{L^1_T(\dot{B}^{\frac{2}{p}}_{\frac{2p}{p+2},1})}\nonumber\\
            &\leq& C\|\Pe^\bot u\|_{\widetilde{L}^{p}_T(\dot{B}^{0}_{\fr{2p}{p-2},1})}\|\nb\Pe  u\|_{\widetilde{L}^\fr{p}{p-1}_T(\dot{B}^{0}_{p,1})}
            + C\|\nb\Pe  u\|_{\widetilde{L}^2_T(\dot{B}^{-1}_{\infty,1})}\|\Pe^\bot  u\|_{\widetilde{L}^2_T(\dot{B}^{1}_{2,1})}\nn\\
            &&+C\|\Pe^\bot u\|_{\widetilde{L}^2_T(\dot{B}^{ 1}_{2,1})}\|\nb\Pe  u\|_{\widetilde{L}^2_T(\dot{B}^{\frac{2}{p}-1}_{p,1})}\nonumber\\
            \nn&\leq& CX (T)\left(\| \Pe^\bot u\|_{\widetilde{L}^2_T(\dot{B}^{ 1 }_{2,1})}+\|\Pe^\bot u\|_{\widetilde{L}^{p}_T(\dot{B}^{\frac{2}{p}}_{2,1})}
            \right)\\
            &\leq& CX^0\left(X^0+\| \Pe^\bot u_L\|_{\widetilde{L}^2_T(\dot{B}^{ 1 }_{2,1})}+\|\Pe^\bot u_L\|_{\widetilde{L}^{p}_T(\dot{B}^{\frac{2}{p}}_{2,1})}
            \right).
    \end{eqnarray}
Using the  interpolation inequality, we obtain
      \begin{equation}
         \|\Pe^\bot u_L\|_{\widetilde{L}^2_T(\dot{B}^{1}_{2,1})}\leq C  \|\Pe^\bot u_L\|_{\widetilde{L}^\infty_T(\dot{B}^{\al}_{2,1})}^{\frac{1+\al}{2}}
        \|\Pe^\bot u_L\|_{{L}^1_T(\dot{B}^{2+\al}_{2,1})}^{\frac{1-\al}{2}}T^{\frac{\al}{2}}\leq CX(T)T^{\frac{\al}{2}}\leq CX^0T^{\frac{\al}{2}},
      \end{equation}
       and
          \begin{equation}
         \|\Pe^\bot u_L\|_{\widetilde{L}^{p}_T(\dot{B}^{\frac{2}{p}}_{2,1})}\leq C  \|\Pe^\bot u_L\|_{\widetilde{L}^\infty_T(\dot{B}^{\al}_{2,1})}^{\frac{2p+\al p-2}{2p}}
        \|\Pe^\bot u_L\|_{{L}^1_T(\dot{B}^{2+\al}_{2,1})}^{\frac{2-\al p}{2p}}T^{\frac{\al}{2}}\leq CX(T)T^{\frac{\al}{2}}\leq CX^0T^{\frac{\al}{2}}.\label{5.9}
      \end{equation}
The above estimates (\ref{5.4-p})--(\ref{5.9}) imply that
     \begin{equation}\label{5.10}
          \|\Pe\left(u\cdot\nb u\right)\|_{L^1_T(\dot{B}^0_{2,1})}
          \leq C\left(X^0\right)^2 (1+T^\frac{\al}{2}).
        \end{equation}
       Next, using \eqref{4.27'} and Corollary \ref{coro-product}, one easily  deduces that
            \begin{eqnarray}
           \nn  && \|\Pe\left( I( b)\mathcal{A}u\right)\|_{L^1_T(\dot{B}^0_{2,1})}\\
           \nn&\le&C\| I( b)\mathcal{A}\Pe^\bot u\|_{L^1_T(\dot{B}^0_{2,1})}+C\| I( b)\mathcal{A}\Pe u\|_{L^1_T(\dot{B}^0_{2,1})}\\
            &\leq& CX^2(T)+C\|  I( b)\|_{\widetilde{L}^\infty_T(\dot{B}^1_{2,1})}
              \|\mathcal{A}\Pe u \|_{L^1_T(\dot{B}^0_{2,1})}\nonumber\\
                &\leq&C\left(X^0\right)^2+C\|b\|_{\widetilde{L}^\infty_T(\dot{B}^1_{2,1})}
              \|\Pe u \|_{L^1_T(\dot{B}^2_{2,1})}\nn\\
              &\leq&  CX^0\left(X^0+\| \Pe u \|_{L^1_T(\dot{B}^2_{2,1})}\right). \label{5.11}
            \end{eqnarray}
From (\ref{5.3-p}), (\ref{5.10}) and (\ref{5.11}), we have
     \begin{equation}
          \| \Pe u\|_{\widetilde{L}^\infty_T(\dot{B}^0_{2,1})\cap L^1_T(\dot{B}^2_{2,1})}\leq C  \| \Pe u_0\|_{ \dot{B}^0_{2,1} }+
          C\left(X^0\right)^2 (1+T^\frac{\al}{2})+CX^0\|\Pe u \|_{L^1_T(\dot{B}^2_{2,1})}.
        \end{equation}
Consequently, \eqref{add} holds if $CX^0\leq \frac{1}{2}$. This completes the proof of Lemma \ref{lem-add}.
\end{proof}

Now we turn to prove the uniqueness of  solutions for $N=2$ with the additional assumption that $\Pe u_0\in \dot{B}^0_{2,1}$. In fact, for any fixed $T>0$, from \eqref{5.20}-- \eqref{5.21},  Lemma \ref{lem-add} and \eqref{uniform1}, we infer that
\beq\label{u12}
\|u^i\|_{L^1_T(\dot{B}^{2}_{2,1})\cap \widetilde{L}^2_T(\dot{B}^{1}_{2,1})}\nn&\le&\|\Pe^\bot u^i_L\|_{L^1_T(\dot{B}^{2}_{2,1})\cap \widetilde{L}^2_T(\dot{B}^{1}_{2,1})}+\|\Pe^\bot u^i_H\|_{L^1_T(\dot{B}^{2}_{2,1})\cap \widetilde{L}^2_T(\dot{B}^{1}_{2,1})}+\|\Pe u^i\|_{L^1_T(\dot{B}^{2}_{2,1})\cap \widetilde{L}^2_T(\dot{B}^{1}_{2,1})}\\
 &\le&\nn C \left(X(T)T^\fr{\al}{2}+X(T)+\| \Pe u_0\|_{ \dot{B}^0_{2,1} }+
          \left(X^0\right)^2(1+T^\frac{\al}{2})\right)\\
 &\le&C\left(X^0+\| \Pe u_0\|_{ \dot{B}^0_{2,1} }+
          X^0\left(1+X^0\right)(1+T^\frac{\al}{2})\right),
\eeq
for $i=1, 2$. On this basis, using Propositions \ref{prop3.3}--\ref{prop3.4}, and \ref{prop-pro},  the estimate \eqref{uniform1}, we are led to
    \begin{eqnarray}
     && \|\delta b(t)\|_{\dot{B}^0_{2,\infty}}
       \leq  e^{CV_2(t)}\int^t_0
      \|-{\dv   \delta u} -\delta u\cdot\nabla b^1-\delta b\mathrm{div}u^2-b^1\mathrm{div}\delta u\|_{\dot{B}^0_{2,\infty}}
      ds\nonumber\\
      &\leq &Ce^{CV_2(t)}\int^t_0
      \left(\|  \delta u\|_{\dot{B}^1_{2,\infty}}+\|\delta u\|_{\dot{B}^1_{2,1}}\|  b^1\|_{\dot{B}^{1}_{2,1}}
      +\| \delta b\|_{\dot{B}^{0}_{2,\infty}}\| \mathrm{div}u^2\|_{\dot{B}^1_{2,1}}
      \right)
      ds\nn\\
      &\leq &Ce^{CV_2(t)}\int^t_0
      \left(\left(1+X^0\right)\|  \delta u\|_{\dot{B}^1_{2,1}}      +\| \delta b\|_{\dot{B}^{0}_{2,\infty}}\| u^2\|_{\dot{B}^2_{2,1}}
      \right)
      ds          \label{5.15}
    \end{eqnarray}
where $V_2(t)=\int^t_0\|\nabla u^2(\tau)\|_{\dot{B}^{1}_{2,1}}d\tau$,
and
    \begin{eqnarray}
      \nn&&\|\delta u\|_{\widetilde{L}^2_t(\dot{B}^0_{2,\infty})\cap \widetilde{L}^1_t(\dot{B}^1_{2,\infty})}\\
      &\leq& \int^t_0\left(\|{\nb \delta b}\|_{\dot{B}^{-1}_{2,\infty}}
      +\|u^2\cdot \nabla \delta u\|_{\dot{B}^{-1}_{2,\infty}}+\|\delta u\cdot \nabla u^1\|_{\dot{B}^{-1}_{2,\infty}}+\|K( b^2){\nb b^2}-K( b^1){\nb b^1}\|_{\dot{B}^{-1}_{2,\infty}}
      \right.\nonumber\\
      &&\left.+\|I( b^2)\mathcal{A}\delta u\|_{\dot{B}^{-1}_{2,\infty}}
      +\|[I( b^2)-I(b^1)]\mathcal{A} u^1\|_{\dot{B}^{-1}_{2,\infty}}
      \right)ds\nonumber\\
        \nn&\leq&C\|(u^1,u^2)\|_{\widetilde{L}^2_t(\dot{B}^1_{2,1})}\|\delta u\|_{\widetilde{L}^2(\dot{B}^0_{2,\infty})}+C\|b^2\|_{\widetilde{L}^\infty_t(\dot{B}^1_{2,1})} \|\delta u\|_{\widetilde{L}^1(\dot{B}^1_{2,\infty})}\\
        \nn&&+C\left(1+\| (b^1,b^2 ) \|_{\widetilde{L}^\infty_t(\dot{B}^{1}_{2,1})}\right)\int^t_0
       \left(1+\|u^1\|_{\dot{B}^2_{2,1}}\right)\|\delta b \|_{\dot{B}^0_{2,\infty}}ds\\
       \nn&\leq&C\|(u^1,u^2)\|_{\widetilde{L}^2_t(\dot{B}^1_{2,1})}\|\delta u\|_{\widetilde{L}^2(\dot{B}^0_{2,\infty})}+CX^0 \|\delta u\|_{\widetilde{L}^1(\dot{B}^1_{2,\infty})}\\
        &&+C\left(1+X^0\right)\int^t_0
       \left(1+\|u^1\|_{\dot{B}^2_{2,1}}\right)\|\delta b \|_{\dot{B}^0_{2,\infty}}ds,
       \label{5.16}
    \end{eqnarray}
where we have used the estimates
    \begin{eqnarray}
    &&  \|K( b^2){\nb b^2}-K( b^1){\nb b^1}\|_{\dot{B}^{-1}_{2,\infty}}\nonumber\\
    &\leq &C\left\|\int^1_0  K (b^1+\tau(b^2-b^1))d\tau \delta b\right\|_{\dot{B}^{0}_{2,\infty}}
    \nonumber\\
    &\leq&C\left\|\int^1_0  K (b^1+\tau(b^2-b^1))d\tau\right\|_{\dot{B}^{1}_{2,1}}\|\delta b \|_{\dot{B}^0_{2,\infty}} \nonumber\\
    &\leq&C \| (b^1,b^2 ) \|_{\dot{B}^{1}_{2,1}}\|\delta b \|_{\dot{B}^0_{2,\infty}},
    \end{eqnarray}
and
     \begin{eqnarray}
    &&\| I(b^2)-I(b^1) \|_{\dot{B}^{0}_{2,\infty}}=C\left\|\int^1_0  I' (b^1+\tau(b^2-b^1))d\tau \delta b\right\|_{\dot{B}^{0}_{2,\infty}}
    \nonumber\\
    &\leq&C\left(1+\left\|\int^1_0  I' (b^1+\tau(b^2-b^1))d\tau-1\right\|_{\dot{B}^{1}_{2,1}}\right)\|\delta b \|_{\dot{B}^0_{2,\infty}} \nonumber\\
    &\leq&C\left(1+ \| (b^1,b^2 ) \|_{\dot{B}^{1}_{2,1}}\right)\|\delta b \|_{\dot{B}^0_{2,\infty}}.
    \end{eqnarray}
Choosing $X^0$ and $t\le\bar{T}$ so  small that  the first two terms on the right hand side of \eqref{5.16} can be absorbed by the left hand side, then \eqref{5.16} reduces to
\be\label{5.16'}
\|\delta u\|_{\widetilde{L}^2_t(\dot{B}^0_{2,\infty})\cap \widetilde{L}^1_t(\dot{B}^1_{2,\infty})}
      \leq C(X^0, \bar{T})\int^t_0
       \left(1+\|u^1\|_{\dot{B}^2_{2,1}}\right)\|\delta b \|_{\dot{B}^0_{2,\infty}}ds,
\ee
where  $C(X^0,\bar{T})$ denotes the various constants depending on $X^0$ and $\bar{T}$. Thanks to \eqref{u12}, applying Gronwall's lemma to \eqref{5.15}, we find that for all $t\in[0,\bar{T}]$,
 \be\label{dlb1}
       \|\delta b\|_{L^\infty_t(\dot{B}^0_{2,\infty})} \\
      \leq C(X^0,\bar{T})
      \|  \delta u\|_{L^1_t(\dot{B}^1_{2,1})},
   \ee
From Proposition 2.8 in \cite{Danchin2003},  we have
   \be\label{log-inter}
    \|\delta u\|_{\widetilde{L}^1_s(\dot{B}^1_{2,1})}\leq C\|\delta u\|_{\widetilde{L}^1_s(\dot{B}^1_{2,\infty})}
    \ln\left(e+\frac{\|\delta u\|_{\widetilde{L}^1_s(\dot{B}^\al_{2,\infty})}+\|\delta u\|_{\widetilde{L}^1_s(\dot{B}^{2-\al}_{2,\infty})}}{\|\delta u\|_{\widetilde{L}^1_s(\dot{B}^1_{2,\infty})}}
    \right).
    \ee
Substituting \eqref{dlb1}--\eqref{log-inter} into \eqref{5.16'}, we obtain
    $$
      \|\delta u\|_{ \widetilde{L}^1_t(\dot{B}^1_{N,\infty})}
      \leq
      C \int^t_0  \left(1+\|u^1\|_{\dot{B}^2_{2,1}}\right)\|\delta u\|_{ \widetilde{L}^1_s(\dot{B}^1_{N,\infty})}\ln\left(e+V_3(s){\|\delta u\|^{-1}_{\widetilde{L}^1_s(\dot{B}^1_{N,\infty})}}
    \right)
        ds,
    $$
where
$$
V_3(t):=\|\delta u\|_{\widetilde{L}^1_t(\dot{B}^\al_{2,\infty})}+\|\delta u\|_{\widetilde{L}^1_t(\dot{B}^{2-\al}_{2,\infty})}.
$$
For all $t\in[0,\bar{T}]$, by H\"{o}lder's inequality and interpolations, there hold
\beqno
\|u^i\|_{L^1_t(\dot{B}^\al_{2,1})}&\le&\|\Pe^\bot u^i_L\|_{L^1_t(\dot{B}^\al_{2,1})}+\|\Pe^\bot u^i_H+\Pe u^i\|_{L^1_t(\dot{B}^\al_{2,1})}\\
&\le&C\bar{T}\|\Pe^\bot u^i_L\|_{L^\infty_t(\dot{B}^\al_{2,1})}+C\bar{T}^{1-\fr{\al}{2}}\|\Pe^\bot u^i_H+\Pe u^i\|^{{1-\fr{\al}{2}}}_{L^\infty_t(\dot{B}^0_{2,1})}\|\Pe^\bot u^i_H+\Pe u^i\|^\fr{\al}{2}_{L^1_t(\dot{B}^2_{2,1})}\\
&\le&C(X^0, \bar{T}),
\eeqno
and
\beqno
\|u^i\|_{L^1_t(\dot{B}^{2-\al}_{2,1})}&\le&\|\Pe^\bot u^i_L\|_{L^1_t(\dot{B}^{2-\al}_{2,1})}+\|\Pe^\bot u^i_H+\Pe u^i\|_{L^1_t(\dot{B}^{2-\al}_{2,1})}\\
&\le&C\|\Pe^\bot u^i_L\|_{L^\infty_t(\dot{B}^\al_{2,1})}^\al\|\Pe^\bot u^i_L\|_{L^1_t(\dot{B}^\al_{2,1})}^{1-\al}+C\bar{T}^{\fr{\al}{2}}\|\Pe^\bot u^i_H+\Pe u^i\|^{{\fr{\al}{2}}}_{L^\infty_t(\dot{B}^0_{2,1})}\|\Pe^\bot u^i_H+\Pe u^i\|^\fr{1-\al}{2}_{L^1_t(\dot{B}^2_{2,1})}\\
&\le&C(X^0, \bar{T}).
\eeqno
These two inequalities imply that
\beno
V_3(t)\le C(X^0, \bar{T}), \quad \mathrm{for\ \  all} \quad t\in[0,\bar{T}].
\eeno
Since
    $$
        \int^1_0\frac{ds}{s\ln(e+V_3(\bar{T})s^{-1})}=+\infty,
    $$
Osgood's lemma implies that
    $\delta b=\delta u=0$ on $[0,\bar{T}]$. Standard arguments then yield that $(b^1,u^1)=(b^2,u^2)$ for all $t\geq0$. The proof of Theorem \ref{thm-p>2} is completed.
{\hfill
$\square$\medskip}

\section{Proof of Theorem \ref{thm-p=2}}
\noindent To simplify the presentation, for $T>0$, let us denote
\begin{gather*}
Z_L(T):=\|(b_L, \Pe^\bot u_L)\|_{\widetilde{L}^\infty_T(\dot{B}^{\fr{N}{2}-1}_{2,1})\cap L^1_T(\dot{B}^{\fr{N}{2}+1}_{2,1})},\quad Z_L^0:=\|(b_{0L}, \Pe^\bot u_{0L})\|_{\dot{B}^{\fr{N}{2}-1}_{2,1}},\\
H(T):=\|\Pe u\|_{\widetilde{L}^\infty_T(\dot{B}^{\fr{N}{2}-1}_{2,1})\cap{L}^1_T(\dot{B}^{\fr{N}{2}+1}_{2,1})}, \quad H^0:=\|\Pe u\|_{\dot{B}^{\fr{N}{2}-1}_{2,1}},\\
Z(T):=\|(b, u)\|_{\mathcal{E}^{\fr{N}{2}}(T)}=Z_L(T)+X_H(T)+H(T), \quad Z^0:=Z_L^0+X_H^0+H^0.
\end{gather*}
Now we are in a position to prove Theorem \ref{thm-p=2}. On the one hand, from \eqref{1.37} and the embedding
\be\label{emb}
\|\Pe u_0\|_{\dot{B}^{\fr{N}{p}-1}_{p,1}}\le C\|\Pe u_0\|_{\dot{B}^{\fr{N}{2}-1}_{2,1}},
\ee
taking $c_1$ be any constant not larger than $\fr{c_0}{C}$, we infer that \eqref{initial3} implies \eqref{initial1}. Consequently, in view of Theorem \ref{thm-p>2}, there is a solution $(\rho, u)$ to the Navier-Stokes equations \eqref{CNS}. Moreover, using \eqref{1.37} and \eqref{emb} again, \eqref{uniform1} reduces to
\be\label{X1}
X(T)\le CC_0c_1, \quad \mathrm{for\ \ all}\quad T>0.
\ee
On the other hand, for the same initial data $(\rho_0, u_0)$, owing to Theorem \ref{local}, there exists a unique local solution $(\rho^*, u^*)$ in $\mathcal{E}^{\fr{N}{2}}(T^\ast)$, where $T^\ast$ is the maximal existence time   of $(\rho^*, u^*)$. By using the uniqueness of the solution, we conclude that
\beno
(\rho, u)\equiv(\rho^*, u^*), \quad \mathrm{for\ \ all}\quad t\in[0,T^*).
\eeno
Next, we go to bound $Z(T)$ for $T<T^*$. Since $X_{H}(T)$ has been estimated in Lemma \ref{lem4.11}, it suffices to dominate  $H(T)$ and $Z_L(T)$. To this end, using Proposition \ref{prop3.4}, \eqref{a5.2} and \eqref{a5.6} in the Appendix, we easily have
\be\label{H1}
H(T)\le H^0+CX(T)Z(T), \quad \mathrm{for\ \ all}\quad T<T^*.
\ee
To bound $Z_L(T)$, we follow the proof of Lemma \ref{lem4.10} line by line. Indeed,  replacing $\fr{N}{2}-1+\al$ by $\fr{N}{2}-1$ in \eqref{e1-d4L}, and using Lemmas \ref{lem-a4.1}--\ref{lem-a4.4} in the Appendix, it is not difficult to verify that
\be\label{ZL1}
Z_L(T)\le Z_L^0+CX(T)Z(T), \quad \mathrm{for\ \ all}\quad T<T^*.
\ee
Now from \eqref{XQH}, \eqref{H1}, \eqref{ZL1} and the fact that $X(T)\le CZ(T)$, we are led to
\be\label{Z1}
Z(T)\le Z^0+CX(T)Z(T), \quad \mathrm{for\ \ all}\quad T<T^*.
\ee
Combining \eqref{X1} with \eqref{Z1}, and choosing $c_1$ so small that
\be
C^2C_0c_1\le\fr12,
\ee
we conclude that
\be
Z(T)\le 2Z^0, \quad \mathrm{for\ \ all}\quad T<T^*.
\ee
This implies that the local solution $(\rho, u)$ can be extended to a global one. The proof of Theorem \ref{thm-p=2} is completed. {\hfill
$\square$\medskip}

\section{Appendix}
\subsection{Proof of Corollary \ref{coro-product}.}

 $ $

From Propositions \ref{prop-classical}--\ref{p-TR}, using the conditions $s_1-\frac{N}{p_1}\leq \min\{0,N(\frac{1}{p_2}-\frac{1}{\rho})\}$, $s=s_1+s_2+N(\frac{1}{\rho}-\frac{1}{p_1}-\frac{1}{p_2})$  and $\frac{1}{\rho}\leq\frac{1}{p_1}+\frac{1}{p_2}$, we have
	\begin{eqnarray}
	  \|\dot{T}_uv\|_{\dot{B}^s_{\rho,1}}&\leq& C\|u\|_{\dot{B}^{s-s_2}_{p_3,1}}\|v\|_{\dot{B}^{s_2}_{p_2,1}}\nonumber\\
	&\leq& C\|u\|_{\dot{B}^{s_1}_{p_1,1}}\|v\|_{\dot{B}^{s_2}_{p_2,1}},\ \textrm{ (when }\ p_2\geq \rho),\label{7.1}
	\end{eqnarray}
where $\frac{1}{\rho}=\frac{1}{p_2}+\frac{1}{p_3}$, and
	\begin{eqnarray}
	  \|\dot{T}_uv\|_{\dot{B}^s_{\rho,1}}&\leq& C\|u\|_{\dot{B}^{s-s_2+N(\frac{1}{p_2}-\frac{1}{\rho})}_{\infty,1}}\|v\|_{\dot{B}^{s_2-N(\frac{1}{p_2}-\frac{1}{\rho})}_{\rho,1}}\nonumber\\
	&\leq& C\|u\|_{\dot{B}^{s_1}_{p_1,1}}\|v\|_{\dot{B}^{s_2}_{p_2,1}},\ \textrm{ (when }\ p_2< \rho).
	\end{eqnarray}
Similarly, noting that $\sigma_1-\frac{N}{q_1}\leq \min\{0,N(\frac{1}{q_2}-\frac{1}{\rho})\}$, $s=\sigma_1+\sigma_2+N(\frac{1}{\rho}-\frac{1}{q_1}-\frac{1}{q_2})$  and $\frac{1}{\rho}\leq\frac{1}{q_1}+\frac{1}{q_2}$, we have
	\begin{eqnarray}
	  \|\dot{T}_vu\|_{\dot{B}^s_{\rho,1}}&\leq& C\|v\|_{\dot{B}^{s-\sigma_2}_{q_3,1}}\|u\|_{\dot{B}^{\sigma_2}_{q_2,1}}\nonumber\\
	&\leq& C\|v\|_{\dot{B}^{\sigma_1}_{q_1,1}}\|v\|_{\dot{B}^{\sigma_2}_{q_2,1}},\ \textrm{ (when }\ q_2\geq p),
	\end{eqnarray}
where $\frac{1}{\rho}=\frac{1}{q_2}+\frac{1}{q_3}$, and
	\begin{eqnarray}
	  \|\dot{T}_vu\|_{\dot{B}^s_{\rho,1}}&\leq& C\|v\|_{\dot{B}^{s-\sigma_2+N(\frac{1}{q_2}-\frac{1}{\rho})}_{\infty,1}}\|u\|_{\dot{B}^{\sigma_2-N(\frac{1}{q_2}-\frac{1}{\rho})}_{\rho,1}}\nonumber\\
	&\leq& C\|v\|_{\dot{B}^{\sigma_1}_{q_1,1}}\|u\|_{\dot{B}^{\sigma_2}_{q_2,1}},\ \textrm{ (when }\ q_2< \rho).
	\end{eqnarray}
Next, from Propositions \ref{prop-classical}--\ref{p-TR} , using the conditions   $s=s_1+s_2+N(\frac{1}{\rho}-\frac{1}{p_1}-\frac{1}{p_2})$,  $s_1+s_2>N\max \{0,\fr{1}{p_1}+\frac{1}{p_2}-1\}$,  and $\frac{1}{\rho}\leq\frac{1}{p_1}+\frac{1}{p_2}$, we have
	\begin{eqnarray}
	  \|\dot{R}(u,v)\|_{\dot{B}^s_{\rho,1}}&\leq& C \|\dot{R}(u,v)\|_{\dot{B}^{s+N(\frac{1}{p_1}+\frac{1}{p_2}-\frac{1}{\rho})}_{\frac{p_1p_2}{p_1+p_2},1}}\nonumber\\
	&\leq& C\|u\|_{\dot{B}^{s_1}_{p_1,1}}\|v\|_{\dot{B}^{s_2}_{p_2,1}},\ \textrm{ (when }\ \frac{1}{p_1}+\frac{1}{p_2}\leq1),
	\end{eqnarray}
and
    	\begin{eqnarray}
	  \|\dot{R}(u,v)\|_{\dot{B}^s_{\rho,1}}&\leq& C \|\dot{R}(u,v)\|_{\dot{B}^{s+N(1-\frac{1}{\rho})}_{1,1}}\nonumber\\
&\leq&   C\|u\|_{\dot{B}^{s-s_2+N(1-\frac{1}{\rho})}_{p_4,1}}\|v\|_{\dot{B}^{s_2}_{p_2,1}}\nonumber\\
	&\leq& C\|u\|_{\dot{B}^{s_1}_{p_1,1}}\|v\|_{\dot{B}^{s_2}_{p_2,1}},\ \textrm{ (when }\ \frac{1}{p_1}+\frac{1}{p_2}>1),\label{7.6}
	\end{eqnarray}
where $1=\frac{1}{p_2}+\frac{1}{p_4}$.  Combining \eqref{Bony-decom} with  (\ref{7.1})--(\ref{7.6}), we have (\ref{product1}). Then, we can easily obtain (\ref{product1-s}) and finish the proof of Corollary  \ref{coro-product}. {\hfill
$\square$\medskip}
\subsection{Action of smooth functions}
Here we give a variant of Theorem 2.61 in \cite{Bahouri-Chemin-Danchin11}, which will be used to deal with the nonlinear term stemming from the pressure $P=P(\rho)$.
\begin{lem}\label{lem-Kb}
Let $r\in[1,\infty], s\ge0$ and $T>0$. Assume that $u\in L^\infty_T(\dot{B}^{\fr{N}{2}}_{2,1})$ with $u_L\in\widetilde{L}^r_T(\dot{B}^{\fr{N}{2}+s}_{2,1})$, $u_H\in\widetilde{L}^r_T(\dot{B}^{\fr{N}{2}}_{2,1})$,  and $K$ is a smooth function on $\R$ which vanishes at 0. Then there hold
\be\label{App-1}
\|K(u)_L\|_{\widetilde{L}^r_T(\dot{B}^{\fr{N}{2}+s}_{2,1})}\\
\le C(K', \|u\|_{L^\infty_T(L^\infty)})\left(\|u_L\|_{\widetilde{L}^r_T(\dot{B}^{\fr{N}{2}+s}_{2,1})}+\|u_H\|_{\widetilde{L}^r_T(\dot{B}^{\fr{N}{2}}_{2,1})}\right),
\ee
and
\be\label{App-2}
\|K(u)_H\|_{\widetilde{L}^r_T(\dot{B}^{\fr{N}{2}}_{2,1})}\\
\le C(K', \|u\|_{L^\infty_T(L^\infty)})\left(\|u_L\|_{\widetilde{L}^r_T(\dot{B}^{\fr{N}{2}+s}_{2,1})}+\|u_H\|_{\widetilde{L}^r_T(\dot{B}^{\fr{N}{2}}_{2,1})}\right).
\ee
\end{lem}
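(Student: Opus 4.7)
The proof adapts the standard proof of the action of smooth functions on Besov spaces (Theorem 2.61 in \cite{Bahouri-Chemin-Danchin11}) by carefully tracking how the low/high frequency split of $u$ propagates under composition with $K$. Since $K(0)=0$ and $u \in \mathcal{S}'_h \cap L^\infty_T(\dot B^{\frac{N}{2}}_{2,1}) \hookrightarrow L^\infty_T(L^\infty)$, the telescoping identity
\begin{equation*}
K(u) = \sum_{q \in \Z} \bigl( K(\dot S_{q+1} u) - K(\dot S_q u) \bigr) = \sum_{q \in \Z} m_q\, \dot\Delta_q u, \qquad m_q := \int_0^1 K'(\dot S_q u + \tau \dot\Delta_q u)\,d\tau,
\end{equation*}
holds, and the smoothness of $K$ yields uniform bounds $\|m_q\|_{L^\infty_T L^\infty} \le C(K', \|u\|_{L^\infty_T L^\infty})$ together with $\|\nabla^\sigma m_q\|_{L^\infty_T L^\infty} \le C_\sigma 2^{q\sigma}$ for every integer $\sigma \ge 0$. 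An iterated integration-by-parts argument then produces the pointwise bound
\begin{equation*}
\|\dot\Delta_j (m_q \dot\Delta_q u)\|_{L^2} \le C_\sigma\, 2^{-(j-q)_+ \sigma}\, \|\dot\Delta_q u\|_{L^2}, \qquad \sigma \ge 0,
\end{equation*}
where $(j-q)_+:=\max\{j-q,0\}$.

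Next I would decompose $\dot\Delta_j K(u) = I_j + II_j$ with $I_j := \sum_{q<1} \dot\Delta_j(m_q \dot\Delta_q u)$ and $II_j := \sum_{q \ge 1} \dot\Delta_j(m_q \dot\Delta_q u)$, and estimate separately the two weighted sums $\sum_{j<1} 2^{j(\frac{N}{2}+s)} \|\cdot\|_{L^r_T L^2}$ (for \eqref{App-1}) and $\sum_{j \ge 1} 2^{j\frac{N}{2}} \|\cdot\|_{L^r_T L^2}$ (for \eqref{App-2}). The two \emph{diagonal} contributions, namely $I_j$ for $j<1$ and $II_j$ for $j \ge 1$, are handled exactly as in the classical proof of Theorem 2.61 in \cite{Bahouri-Chemin-Danchin11}, applied to the correspondingly restricted range of $q$. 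Since $\dot S_q u = \dot S_q u_L$ for all $q \le 1$, the first diagonal reduces to the standard composition estimate for $u_L$ and is bounded by $\|u_L\|_{\widetilde L^r_T(\dot B^{\frac{N}{2}+s}_{2,1})}$; similarly the second diagonal is bounded by $\|u_H\|_{\widetilde L^r_T(\dot B^{\frac{N}{2}}_{2,1})}$, up to finitely many boundary blocks at $q \in \{0,1,2\}$ that are routine to absorb.

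The two \emph{off-diagonal} cross terms are elementary. For $II_j$ with $j<1\le q$, the trivial bound $\|\dot\Delta_j(m_q \dot\Delta_q u)\|_{L^2} \le C \|\dot\Delta_q u\|_{L^2}$ suffices: the geometric series $\sum_{j<1} 2^{j(\frac{N}{2}+s)}$ converges, and $\sum_{q \ge 1} \|\dot\Delta_q u\|_{L^r_T L^2} \le 2^{-\frac{N}{2}} \|u_H\|_{\widetilde L^r_T(\dot B^{\frac{N}{2}}_{2,1})}$ via the factor $2^{-q\frac{N}{2}}$. For $I_j$ with $q<1\le j$, I would use the derivative gain with $\sigma$ strictly larger than $\frac{N}{2}+s$: the summation $\sum_{j \ge 1} 2^{j\frac{N}{2}}\cdot 2^{-(j-q)\sigma}$ equals a constant times $2^{q\sigma}$, and a final application of the low-frequency embedding \eqref{lf-embeding1} converts $\sum_{q<1} 2^{q\sigma}\|\dot\Delta_q u\|_{L^r_T L^2}$ into $\le C \|u_L\|_{\widetilde L^r_T(\dot B^{\frac{N}{2}+s}_{2,1})}$.

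The main technical obstacle is the diagonal step: verifying that the restricted sums $\sum_{q<1} m_q \dot\Delta_q u$ and $\sum_{q \ge 1} m_q \dot\Delta_q u$ inherit the structure needed to mimic the proof of Theorem 2.61 without double-counting the boundary terms near $q=1$, and that the synchronization between the choice of $\sigma > \frac{N}{2}+s$ in the $I_j$ off-diagonal step and the low-frequency embedding is valid. Once this bookkeeping is settled, assembling the four contributions yields both \eqref{App-1} and \eqref{App-2} with constants of the stated form.
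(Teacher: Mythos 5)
Your proposal uses the same core ingredients as the paper --- Meyer's first linearization $K(u)=\sum_q m_q\dot\Delta_q u$, the uniform and derivative bounds on $m_q$, and the Bernstein gain for far off-diagonal terms --- but organizes the double sum differently. The paper fixes the output block $q<1$ (for \eqref{App-1}) and splits the \emph{input} blocks $q'$ by whether $q'>q$ (uniform bound on $m_{q'}$) or $q'\le q$ (Leibniz/Bernstein derivative gain), only sorting $\dot\Delta_{q'}u$ into $u_L$ and $u_H$ at the very last step. You instead split the input by the absolute threshold $q<1$ versus $q\ge1$ (aligned with the $L/H$ decomposition of $u$) and then treat the four output-input quadrants separately: two "diagonal" quadrants handled by re-running the classical argument restricted in range, plus two elementary "cross" quadrants. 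Both routes work and yield the same final bounds; the paper's split is a bit leaner because it never has to reprove a restricted version of the composition theorem, while yours makes the low/high bookkeeping more visible.

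One small imprecision in your write-up is the sentence ``Since $\dot S_q u = \dot S_q u_L$ for all $q\le 1$, the first diagonal reduces to the standard composition estimate for $u_L$.'' This is not literally true: $\dot\Delta_0 u\ne\dot\Delta_0 u_L$ because of the boundary block at $q=1$, and, more importantly, the restricted sum $\sum_{q<1}m_q\dot\Delta_q u$ is \emph{not} $K(u_L)$, since $m_q$ for $q\le 0$ still depends on the full $u$. What actually makes the argument go through is that the key estimates $\|m_q\|_{L^\infty}\le C(K',\|u\|_{L^\infty})$ and $\|\nabla^\beta m_q\|_{L^\infty}\lesssim 2^{q|\beta|}$ depend only on $\|u\|_{L^\infty}$, not on the $L/H$ decomposition, so the restricted sum is bounded by $\sum_{q<1}2^{q(\frac N2+s)}\|\dot\Delta_q u\|_{L^r_T L^2}$, which equals $\|u_L\|_{\widetilde L^r_T(\dot B^{N/2+s}_{2,1})}$ up to finitely many boundary blocks. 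You flag these boundary issues as "routine to absorb," and they are, but the justification should invoke the $\|u\|_{L^\infty}$-only dependence of the $m_q$-bounds rather than a literal reduction to $K(u_L)$.
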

\begin{proof}
In order to obtain \eqref{App-1}, we just need to modify the proof of Theorem 2.61 in \cite{Bahouri-Chemin-Danchin11}.  For the convenience of readers,  we give some details here. First of  all, using Meyer's first linearization method, we rewrite $K(u)$
as
\be\label{Kb}
K(u)=\sum_{q'\in\Z}m_{q'}\dot{\Dl}_{q'}u,
\ee
where
\beno
m_{q'}=\int_0^1K'(\dot{S}_{q'}u+\tau\dot{\Dl}_{q'}u)d\tau.
\eeno
The series in \eqref{Kb} converges to $K(u)$ in $L^\infty+L^2$, and $K(u)\in\dot{S}'_h$. In view of \eqref{Kb}, we have
\beq
\nn\|K(u)_L\|_{\widetilde{L}^{r}_T(\dot{B}^{\fr{N}{2}+s}_{2,1})}&\le& C\sum_{q<1}2^{q(\fr{N}{2}+s)}\|\dot{\Dl}_qK(u)\|_{L^{r}_T(L^2)}\\
\nn&\le& C\sum_{q<1}\sum_{q'>q}2^{q(\fr{N}{2}+s)}\|\dot{\Dl}_q(m_{q'}\dot{\Dl}_{q'}u)\|_{L^{r}_T(L^2)}\\
\nn&&+C\sum_{q<1}\sum_{q'\le q}2^{q(\fr{N}{2}+s)}\|\dot{\Dl}_q(m_{q'}\dot{\Dl}_{q'}u)\|_{L^{r}_T(L^2)}\\
&=:&I_1+I_2.
\eeq
Using the  H\"{o}lder's inequality and the convolution inequality, we have
\beq
I_1\nn&\le&C\sum_{q<1}\sum_{q'>q}2^{q(\fr{N}{2}+s)}\|m_{q'}\|_{L^\infty_T(L^\infty)}\|\dot{\Dl}_{q'}u\|_{L^{r}_T(L^2)}\\
\nn&\le&C(K',\|u\|_{L^\infty_T(L^\infty)})\sum_{q<1}\sum_{q'>q}2^{q(\fr{N}{2}+s)}\|\dot{\Dl}_{q'}u\|_{L^{r}_T(L^2)}\\
\nn&\le&C(K',\|u\|_{L^\infty_T(L^\infty)})\left(\sum_{q<1}\sum_{q'>q}2^{q(\fr{N}{2}+s)}\|\dot{\Dl}_{q'}u_L\|_{L^{r}_T(L^2)}+\sum_{q<1}\sum_{q'>q}2^{q\fr{N}{2}}\|\dot{\Dl}_{q'}u_H\|_{L^{r}_T(L^2)}\right)\\
&\le&C(K',\|u\|_{L^\infty_T(L^\infty)})\left(\|u_L\|_{\widetilde{L}^{r}_T(\dot{B}^{\fr{N}{2}+s}_{2,1})}+\|u_H\|_{\widetilde{L}^{r}_T(\dot{B}^{\fr{N}{2}}_{2,1})}\right),
\eeq
and
\beq
I_2\nn&\le&C\sum_{q<1}\sum_{q'\le q}\left(2^{q(\fr{N}{2}+s)}\|\dot{\Dl}_q(m_{q'}\dot{\Dl}_{q'}u_L)\|_{L^{r}_T(L^2)}+2^{q\fr{N}{2}}\|\dot{\Dl}_q(m_{q'}\dot{\Dl}_{q'}u_H)\|_{L^{r}_T(L^2)}\right)\\
\nn&\le&C\sum_{q<1}\sum_{q'\le q}\left(2^{q(\fr{N}{2}+s-[\fr{N}{2}+s]-1)}\sum_{|\beta|=[\fr{N}{2}+s]+1}\|\pr^\beta\dot{\Dl}_q(m_{q'}\dot{\Dl}_{q'}u_L)\|_{L^{r}_T(L^2)}\right.\\
\nn&&\left.+2^{q(\fr{N}{2}-[\fr{N}{2}]-1)}\sum_{|\beta|=[\fr{N}{2}]+1}\|\pr^\beta\dot{\Dl}_q(m_{q'}\dot{\Dl}_{q'}u_H)\|_{L^{r}_T(L^2)}\right)\\
\nn&\le&C(K',\|u\|_{L^\infty_T(L^\infty)})\sum_{q<1}\sum_{q'\le q}\left(2^{(q-q')(\fr{N}{2}+s-[\fr{N}{2}+s]-1)}\left(2^{q'(\fr{N}{2}+s)}\|\dot{\Dl}_{q'}u_L\|_{L^{r}_T(L^2)}\right)\right.\\
\nn&&\left.+2^{(q-q')(\fr{N}{2}-[\fr{N}{2}]-1)}\left(2^{q'\fr{N}{2}}\|\dot{\Dl}_{q'}u_H\|_{L^{r}_T(L^2)}\right)\right)\\
&\le&C(K',\|u\|_{L^\infty_T(L^\infty)})\left(\|u_L\|_{\widetilde{L}^{r}_T(\dot{B}^{\fr{N}{2}+s}_{2,1})}+\|u_H\|_{\widetilde{L}^{r}_T(\dot{B}^{\fr{N}{2}}_{2,1})}\right).
\eeq
The proof of \eqref{App-2} can be given in a similar way. This completes the proof of Lemma \ref{lem-Kb}.
\end{proof}

\subsection{Nonlinear estimates}
Here, we give the detail proofs of Lemmas \ref{lem4.1} in Section  \ref{S4}.

\textbf{Proof of Lemma \ref{lem4.1}.}

Clearly,
\beno
b\dv u=b\dv \mathbb{P}^\bot u=b_L\dv\mathbb{P}^\bot u_L+b_L\dv\mathbb{P}^\bot u_H+b_H\dv\mathbb{P}^\bot u_L+b_H\dv\mathbb{P}^\bot u_H.
\eeno
From Corollary \ref{coro-product} with $u=b_L$, $v=\dv\mathbb{P}^\bot u_L$, $\rho=p_2=q_2=2$, $p_1=q_1=p$, $s_1=\frac{N}{p}+2\al-1$, $s_2=\frac{N}{2}-\al$, $\sigma_1=\frac{N}{p}+2\al-2$, $\sigma_2=\frac{N}{2}-\al+1$,  one deduces that
     \begin{eqnarray}\label{e4.3}
    &&  \|b_L\dv\mathbb{P}^\bot u_L\|_{L^1_T(\dot{B}^{\frac{N}{2}-1+\al}_{2,1})}\nonumber\\
    &\leq& C\| b_L\|_{\widetilde{L}^{\frac{1}{\al}}_T(\dot{B}^{\frac{N}{p}+2\al-1}_{p,1})}
      \|\dv \Pe^\bot u_L\|_{\widetilde{L}^{\frac{1}{1-\al}}_T(\dot{B}^{\frac{N}{2}-\al}_{2,1})}
      +\|\dv \Pe^\bot u_L\|_{\widetilde{L}^{\frac{1}{\al}}_T(\dot{B}^{\frac{N}{p}+2\al-2}_{p,1})} \|b_L\|_{\widetilde{L}^{\frac{1}{1-\al}}_T(\dot{B}^{\frac{N}{2}-\al+1}_{2,1})}.
     \end{eqnarray}
Using  Corollary \ref{coro-product} again with $u=b_L$, $v=\dv\mathbb{P}^\bot u_H$, $\rho=p_2=q_2=p_1=q_1=2$,  $s_1=\sigma_2=\frac{N}{2}+\al-1$, $s_2=\sigma_1=\frac{N}{2}$,  we have
     \be
    \|b_L\dv\mathbb{P}^\bot u_H\|_{L^1_T(\dot{B}^{\frac{N}{2}-1+\al}_{2,1})}
    \leq C\| b_L\|_{\widetilde{L}^{\infty}_T(\dot{B}^{\frac{N}{2}+ \al-1}_{2,1})}
      \|\dv \Pe^\bot u_H\|_{L^1_T(\dot{B}^{\frac{N}{2} }_{2,1})}.
     \ee
Similarly,  taking $u=b_H$, $v=\dv\mathbb{P}^\bot u_L$, $\rho=p_2=q_2=p_1=q_1=2$,  $s_1=\sigma_2=\frac{N}{2}$, $s_2=\sigma_1=\frac{N}{2}+ \al-1$ in Corollary \ref{coro-product} yields
     \be
     \|b_H\dv\mathbb{P}^\bot u_L\|_{L^1_T(\dot{B}^{\frac{N}{2}-1+\al}_{2,1})}
    \leq C\| b_H\|_{\widetilde{L}^{2}_T(\dot{B}^{\frac{N}{2}}_{2,1})}
      \|\dv \Pe^\bot u_L\|_{\widetilde{L}^{2}_T(\dot{B}^{\frac{N}{2}-1+\al }_{2,1})}.
     \ee
By virtue of the low frequency embedding (\ref{lf-embeding1})
and Corollary \ref{coro-product} with $u=b_H$, $v=\dv\mathbb{P}^\bot u_H$, $\rho=p_2=q_2=p_1=q_1=2$,  $s_1=\sigma_2=\frac{N}{2} $, $s_2=\sigma_1=\frac{N}{2}-1$,  we obtain
     \be
      \|P_{<1}(b_H\dv\mathbb{P}^\bot u_H)\|_{L^1_T(\dot{B}^{\frac{N}{2}-1+\al}_{2,1})} \leq C\|b_H\dv\mathbb{P}^\bot u_H\|_{L^1_T(\dot{B}^{\frac{N}{2}-1 }_{2,1})}
    \leq C\| b_H\|_{\widetilde{L}^{2}_T(\dot{B}^{\frac{N}{2}}_{2,1})}
      \|\dv \Pe^\bot u_H\|_{\widetilde{L}^{2}_T(\dot{B}^{\frac{N}{2}-1  }_{2,1})}.
     \ee
Then, it follows from the above estimates, and the fact
  \begin{equation}\label{5.3}
    \widetilde{L}^\infty_T(\dot{B}^{\frac{N}{2}-1+\al}_{2,1})\cap L^1_T(\dot{B}^{\frac{N}{2}+1+\al}_{2,1})
    \subset \widetilde{L}^{\frac{1}{1-\al}}_T(\dot{B}^{\frac{N}{2}-\al+1}_{2,1}),
   \end{equation}
    that the estimate (\ref{5.1}) holds.

    Next,  $\|P_{<1}(\dot{T}'_{\nb b}u)\|_{L^1_T(\dot{B}^{\frac{N}{2}-1+\al}_{2,1})}$ and $\|P_{<1}(\dot{T}_{u}\nb b)\|_{L^1_T(\dot{B}^{\frac{N}{2}-1+\al}_{2,1})}$ will be bounded as follows. On the one hand, using Proposition \ref{p-TR}, we are led to
\beq\label{4.10}
\nn&&\|P_{<1}(\dot{T}'_{\nb b}\Pe^\bot u)\|_{L^1_T(\dot{B}^{\fr{N}{2}-1+\al}_{2,1})}\\
\nn&\le&\|P_{<1}(\dot{T}'_{\nb b_L}\Pe^\bot u_L)\|_{L^1_T(\dot{B}^{\fr{N}{2}-1+\al}_{2,1})}+\|P_{<1}(\dot{T}'_{\nb b_L}\Pe^\bot u_H)\|_{L^1_T(\dot{B}^{\fr{N}{2}-1+\al}_{2,1})}\\
\nn&&+\|P_{<1}(\dot{T}'_{\nb b_H}\Pe^\bot u_L)\|_{L^1_T(\dot{B}^{\fr{N}{2}-1+\al}_{2,1})}+\|P_{<1}(\dot{T}'_{\nb b_H}\Pe^\bot u_H)\|_{L^1_T(\dot{B}^{\fr{N}{2}-1}_{2,1})}\\
\nn&\le&C\|\nb b_L\|_{L^{\fr{1}{\al}}_T(\dot{B}^{2\al-2}_{\infty,1})}\|\Pe^\bot u_L\|_{L^{\fr{1}{1-\al}}_T(\dot{B}^{\fr{N}{2}-\al+1}_{2,1})}+C\|\nb b_L\|_{L^{2}_T(\dot{B}^{\fr{N}{2}-1+\al}_{2,1})}\|\Pe^\bot u_H\|_{L^{2}_T(\dot{B}^{\fr{N}{2}}_{2,1})}\\
\nn&&+C\|\nb b_H\|_{L^{2}_T(\dot{B}^{-1}_{\infty,1})}\|\Pe^\bot u_L\|_{L^{2}_T(\dot{B}^{\fr{N}{2}+\al}_{2,1})}+C\|\nb b_H\|_{L^{2}_T(\dot{B}^{\fr{N}{2}-1}_{2,1})}\|\Pe^\bot u_H\|_{L^{2}_T(\dot{B}^{\fr{N}{2}}_{2,1})}\\
&\le&CX^2(T).
\eeq
On the other hand, since \eqref{p1} ensures that $\fr{N}{p*}-1\le0$, i.e. $p\le\fr{2N}{N-2}$, thanks to $\dv \Pe u=0$, we obtain
\beq\label{4.11}
\nn&&\|P_{<1}(\dot{T}'_{\nb b}\Pe u)\|_{L^1_T(\dot{B}^{\fr{N}{2}-1+\al}_{2,1})}\\
\nn&\le&\|P_{<1}(\dot{T}'_{\nb b_L}\Pe u)\|_{L^1_T(\dot{B}^{\fr{N}{2}-1+\al}_{2,1})}+C\|P_{<1}(\pr_k\dot{T}'_{ b_H}(\Pe u)^k)\|_{L^1_T(\dot{B}^{\fr{N}{2}-1}_{2,1})}\\
\nn&\le&C\|\nb b_L\|_{L^{\infty}_T(\dot{B}^{\fr{N}{p^*}-2+\al}_{p^*,1})}\|\Pe u\|_{L^{1}_T(\dot{B}^{\fr{N}{p}+1}_{p,1})}+C\| b_H\|_{L^{\infty}_T(\dot{B}^{\fr{N}{p^*}-1}_{p^*,1})}\|\Pe u\|_{L^{1}_T(\dot{B}^{\fr{N}{p}+1}_{p,1})}\\
\nn&\le&C\|b_L\|_{L^{\infty}_T(\dot{B}^{\fr{N}{2}-1+\al}_{2,1})}\|\Pe u\|_{L^{1}_T(\dot{B}^{\fr{N}{p}+1}_{p,1})}+C\| b_H\|_{L^{\infty}_T(\dot{B}^{\fr{N}{2}-1}_{2,1})}\|\Pe u\|_{L^{1}_T(\dot{B}^{\fr{N}{p}+1}_{p,1})}\\
\nn&\le&C\left(\|b_L\|_{L^{\infty}_T(\dot{B}^{\fr{N}{2}-1+\al}_{2,1})}+\| b_H\|_{L^{\infty}_T(\dot{B}^{\fr{N}{2}}_{2,1})}\right)\|\Pe u\|_{L^{1}_T(\dot{B}^{\fr{N}{p}+1}_{p,1})}\\
&\le&CX^2(T),
\eeq
where we have used (\ref{lf-embeding1})-(\ref{hf-embedding1}).
Similarly, owing to \eqref{5.3} and the following interpolation,
\begin{equation}\label{5.4}
    \widetilde{L}^\infty_T(\dot{B}^{\frac{N}{2}-1 }_{2,1})\cap L^1_T(\dot{B}^{\frac{N}{2}+1 }_{2,1})
   \subset \widetilde{L}^{\frac{1}{\al}}_T(\dot{B}^{\frac{N}{2}+2\al-1}_{2,1}) \subset \widetilde{L}^{\frac{1}{\al}}_T(\dot{B}^{\fr{N}{p}+2\al-1}_{p,1}),
     \end{equation}
 we have
\beq\label{4.13}
\nn&&\|P_{<1}(\dot{T}_{\Pe^\bot u}\cdot\nb b)\|_{L^1_T(\dot{B}^{\fr{N}{2}-1+\al}_{2,1})}\\
\nn&\le&\|P_{<1}(\dot{T}_{\Pe^\bot u}\cdot\nb b_L)\|_{L^1_T(\dot{B}^{\fr{N}{2}-1+\al}_{2,1})}+C\|P_{<1}(\dot{T}_{\Pe^\bot u}\cdot\nb b_H)\|_{L^1_T(\dot{B}^{\fr{N}{2}-2+2\al}_{2,1})}\\
\nn&\le&C\|\Pe^\bot u\|_{L^{\fr{1}{\al}}_T(\dot{B}^{2\al-1}_{\infty,1})}\|\nb b_L\|_{L^{\fr{1}{1-\al}}_T(\dot{B}^{\fr{N}{2}-\al}_{2,1})}+C\|\Pe^\bot u\|_{L^{\fr{1}{\al}}_T(\dot{B}^{2\al-1}_{\infty,1})}\|\nb b_H\|_{L^{\fr{1}{1-\al}}_T(\dot{B}^{\fr{N}{2}-1}_{2,1})}\\
\nn&\le&C\|\Pe^\bot u\|_{L^{\fr{1}{\al}}_T(\dot{B}^{\fr{N}{p}+2\al-1}_{p,1})}\left(\| b_L\|_{L^{\fr{1}{1-\al}}_T(\dot{B}^{\fr{N}{2}+1-\al}_{2,1})}+\| b_H\|_{L^{\fr{1}{1-\al}}_T(\dot{B}^{\fr{N}{2}}_{2,1})}\right)\\
&\le&CX^2(T),
\eeq
and
\beq\label{4.14}
\nn&&\|P_{<1}(\dot{T}_{\Pe u}\cdot\nb b)\|_{L^1_T(\dot{B}^{\fr{N}{2}-1+\al}_{2,1})}\\
\nn&\le&\|P_{<1}(\dot{T}_{\Pe u}\cdot\nb b_L)\|_{L^1_T(\dot{B}^{\fr{N}{2}-1+\al}_{2,1})}+C\|P_{<1}(\dot{T}_{\Pe u}\cdot\nb b_H)\|_{L^1_T(\dot{B}^{\fr{N}{2}-1}_{2,1})}\\
\nn&\le&C\|\Pe u\|_{L^{\infty}_T(\dot{B}^{-1}_{\infty,1})}\|\nb b_L\|_{L^{1}_T(\dot{B}^{\fr{N}{2}+\al}_{2,1})}+C\|\Pe u\|_{L^{2}_T(L^\infty)}\|\nb b_H\|_{L^{2}_T(\dot{B}^{\fr{N}{2}-1}_{2,1})}\\
\nn&\le&C\|\Pe u\|_{L^{\infty}_T(\dot{B}^{\fr{N}{p}-1}_{p,1})}\| b_L\|_{L^{1}_T(\dot{B}^{\fr{N}{2}+1+\al}_{2,1})}+C\|\Pe u\|_{L^{2}_T(\dot{B}^{\fr{N}{p}}_{p,1})}\| b_H\|_{L^{2}_T(\dot{B}^{\fr{N}{2}}_{2,1})}\\
&\le&CX^2(T).
\eeq
Combining the above estimates, we complete the proof of Lemma \ref{lem4.1}.
{\hfill
$\square$\medskip}

\textbf{Proof of Lemma \ref{lem4.2}.}

First of all, using the fact
    \begin{equation}\label{5.6}
    \widetilde{L}^\infty_T(\dot{B}^{\frac{N}{2}-1+\al}_{2,1})\cap L^1_T(\dot{B}^{\frac{N}{2}+1+\al}_{2,1})
    \subset \widetilde{L}^{\frac{2}{1-\al}}_T(\dot{B}^{\frac{N}{2}}_{2,1})\subset L^{\frac{2}{1-\al}}_T(L^\infty),
  \end{equation}
 and the decomposition $b=b_L+b_H$, we have
\be\label{b1}
\| b \|_{\widetilde{L}^{\fr{2}{1-\al}}_T(\dot{B}^{\frac{N}{2}}_{2,1})}\le \| b_L \|_{\widetilde{L}^{\fr{2}{1-\al}}_T(\dot{B}^{\frac{N}{2}}_{2,1})}+\| b_H \|_{\widetilde{L}^{\fr{2}{1-\al}}_T(\dot{B}^{\frac{N}{2}}_{2,1})}\le CX(T).
\ee
Moreover, with the aid of the following low frequency embedding
\beno
\| b_L \|_{\widetilde{L}^{\infty}_T(\dot{B}^{\frac{N}{2}}_{2,1})}\le C \| b_L \|_{\widetilde{L}^{\infty}_T(\dot{B}^{\frac{N}{2}-1+\al}_{2,1})},
\eeno
we find that
\be\label{b2}
\| b \|_{\widetilde{L}^{\infty}_T(\dot{B}^{\frac{N}{2}}_{2,1})}\le C\left( \| b_L \|_{\widetilde{L}^{\infty}_T(\dot{B}^{\frac{N}{2}-1+\al}_{2,1})}+ \| b_H \|_{\widetilde{L}^{\infty}_T(\dot{B}^{\frac{N}{2}}_{2,1})}\right)\le CX(T).
\ee
Now using Bony's decomposition, the high frequency embedding (\ref{hf-embedding1}),
Lemma \ref{Bernstein} and Proposition \ref{p-TR}, we are led to
\beq\label{4.19}
 \nn&&\|P_{\ge1} (b \dv u)\|_{L^1_T(\dot{B}^{\frac{N}{2}}_{2,1})}\\
 \nn&\le&\|P_{\ge1} (b \dv u_L)\|_{L^1_T(\dot{B}^{\frac{N}{2}}_{2,1})}+\|P_{\ge1} (b \dv u_H)\|_{L^1_T(\dot{B}^{\frac{N}{2}}_{2,1})}\\
 \nn&\le&C\|P_{\ge1} (\dot{T}_b \dv u_L)\|_{L^1_T(\dot{B}^{\frac{N}{2}+\al}_{2,1})}+C\|\dot{T}'_{\dv u_L} b \|_{L^1_T(\dot{B}^{\frac{N}{2}}_{2,1})}+C\|b \dv u_H\|_{L^1_T(\dot{B}^{\frac{N}{2}}_{2,1})}\\
 \nn&\le&C\|b \|_{L^\infty_T(\dot{B}^{\frac{N}{2}}_{2,1})}\| \dv u_L\|_{L^1_T(\dot{B}^{\frac{N}{2}+\al}_{2,1})}+C\| b \|_{L^{\fr{2}{1-\al}}_T(\dot{B}^{\frac{N}{2}}_{2,1})}\|{\dv u_L}  \|_{L^{\fr{2}{1+\al}}_T(\dot{B}^{\frac{N}{2}}_{2,1})}\\
 \nn&&+C\|b \|_{L^\infty_T(\dot{B}^{\frac{N}{2}}_{2,1})}\| \dv u_H\|_{L^1_T(\dot{B}^{\frac{N}{2}}_{2,1})}\\
 \nn&\le&C\|b \|_{L^\infty_T(\dot{B}^{\frac{N}{2}}_{2,1})}\| \Pe^\bot u_L\|_{L^1_T(\dot{B}^{\frac{N}{2}+1+\al}_{2,1})}+C\| b \|_{L^{\fr{2}{1-\al}}_T(\dot{B}^{\frac{N}{2}}_{2,1})}\|{\Pe^\bot u_L}  \|_{L^{\fr{2}{1+\al}}_T(\dot{B}^{\frac{N}{2}+2\al}_{2,1})}\\
 \nn&&+C\|b \|_{L^\infty_T(\dot{B}^{\frac{N}{2}}_{2,1})}\| \Pe^\bot u_H\|_{L^1_T(\dot{B}^{\frac{N}{2}+1}_{2,1})}\\
 &\le&CX^2(T),
 \eeq
 where we have used \eqref{b1}--\eqref{b2}, the interpolation
\be\label{5.6-1}
\widetilde{L}^\infty_T(\dot{B}^{\fr{N}{2}-1+\al}_{2,1})\cap{L}^1_T(\dot{B}^{\fr{N}{2}+1+\al}_{2,1})\subset \widetilde{L}^{\fr{2}{1+\al}}_T(\dot{B}^{\fr{N}{2}+2\al}_{2,1}),
\ee
and the following low frequency embedding
\be\label{lfe1}
\|\Pe^\bot u_L\|_{\widetilde{L}^{\fr{2}{1+\al}}_T(\dot{B}^{\fr{N}{2}+1}_{2,1})}\le C\|\Pe^\bot u_L\|_{\widetilde{L}^{\fr{2}{1+\al}}_T(\dot{B}^{\fr{N}{2}+2\al}_{2,1})}.
\ee
Next, noting that $\fr{N}{p*}\le1$, using Proposition \ref{p-TR}, we find that
\beq\label{4.22}
\nn&&\|P_{\ge1}(\dot{T}'_{\nb b}u)\|_{L^1_T(\dot{B}^{\frac{N}{2}}_{2,1})}\\
\nn&\le&\|P_{\ge1}(\dot{T}'_{\nb b}\Pe^\bot u_L)\|_{L^1_T(\dot{B}^{\frac{N}{2}}_{2,1})}+\|P_{\ge1}(\dot{T}'_{\nb b}(\Pe^\bot u_H+\Pe u))\|_{L^1_T(\dot{B}^{\frac{N}{2}}_{2,1})}\\
\nn&\le&C\|P_{\ge1}(\dot{T}'_{\nb b}\Pe^\bot u_L)\|_{L^1_T(\dot{B}^{\frac{N}{2}+\al}_{2,1})}+C\|\nb b\|_{L^\infty_T(\dot{B}^{\frac{N}{p^*}-1}_{p^*,1})}\|\Pe^\bot u_H+\Pe u\|_{L^1_T(\dot{B}^{\frac{N}{p}+1}_{p,1})}\\
\nn&\le&C\|b\|_{L^\infty_T(\dot{B}^{\frac{N}{2}}_{2,1})}\|\Pe^\bot u_L\|_{L^1_T(\dot{B}^{\frac{N}{2}+1+\al}_{2,1})}+C\| b\|_{L^\infty_T(\dot{B}^{\frac{N}{2}}_{2,1})}\left(\|\Pe^\bot u_H\|_{L^1_T(\dot{B}^{\frac{N}{2}+1}_{2,1})}+\|\Pe u\|_{L^1_T(\dot{B}^{\frac{N}{p}+1}_{p,1})}\right)\\
&\le&CX^2(T).
\eeq
This completes the proof of Lemma \ref{lem4.2}.
{\hfill
$\square$\medskip}

\textbf{Proof of Lemma \ref{lem4.3}.}

From the low frequency embedding \eqref{lf-embeding1}, Corollary \ref{coro-product} and Theorem 2.61 in \cite{Bahouri-Chemin-Danchin11}, we infer that
\beq\label{4.27'}
&&\nn\|P_{<1}\left(I(b)\mathcal{A}\Pe^\bot u\right)\|_{L^1_T(\dot{B}^{\fr{N}{2}-1+\al}_{2,1})}+\|P_{\ge1}\left(I(b)\mathcal{A}\Pe^\bot u\right)\|_{L^1_T(\dot{B}^{\fr{N}{2}-1}_{2,1})}\\
\nn&\le&C\|I(b)\mathcal{A}\Pe^\bot u_L\|_{L^1_T(\dot{B}^{\fr{N}{2}-1}_{2,1})}+C\|I(b)\mathcal{A}\Pe^\bot u_H\|_{L^1_T(\dot{B}^{\fr{N}{2}-1}_{2,1})}\\
\nn&\le&C\|I(b)\|_{L^{\fr{2}{1-\al}}_T(\dot{B}^{\fr{N}{2}}_{2,1})}\|\mathcal{A}\Pe^\bot u_L\|_{L^{\fr{2}{1+\al}}_T(\dot{B}^{\fr{N}{2}-1}_{2,1})}+C\|I(b)\|_{L^\infty_T(\dot{B}^{\fr{N}{2}}_{2,1})}\|\mathcal{A}\Pe^\bot u_H\|_{L^1_T(\dot{B}^{\fr{N}{2}-1}_{2,1})}\\
\nn&\le&C\|b\|_{L^{\fr{2}{1-\al}}_T(\dot{B}^{\fr{N}{2}}_{2,1})}\|\Pe^\bot u_L\|_{L^{\fr{2}{1+\al}}_T(\dot{B}^{\fr{N}{2}+2\al}_{2,1})}+C\|b\|_{L^\infty_T(\dot{B}^{\fr{N}{2}}_{2,1})}\|\Pe^\bot u_H\|_{L^1_T(\dot{B}^{\fr{N}{2}+1}_{2,1})}\\
&\le&CX^2(T),
\eeq
where we have used \eqref{b1}--\eqref{b2}, and \eqref{5.6-1}--\eqref{lfe1}. Next, using $\dv\Pe u=0$, we decompose $\Lm^{-1}\dv(I(b)\mathcal{A}\Pe u)$ as follows:
\beno
\Lm^{-1}\dv(I(b)\mathcal{A}\Pe u)=\Lm^{-1}\left(\dot{T}_{\nb I(b)}\mathcal{A}\Pe u\right)+\Lm^{-1}\dv\left(\dot{T}_{\mathcal{A}\Pe u} I(b)\right)+\Lm^{-1}\dv\left(\dot{R}({ I(b)},\mathcal{A}\Pe u)\right).
\eeno
Then according to Lemma \ref{Bernstein},  Proposition \ref{p-TR},  \eqref{b2}, and Theorem 2.61 in \cite{Bahouri-Chemin-Danchin11} again, we have
\beq\label{TbPu1}
\nn&&\|\Lm^{-1}\left(\dot{T}_{\nb I(b)}\mathcal{A}\Pe u\right)\|_{L^1_T(\dot{B}^{\fr{N}{2}-1}_{2,1})}\\
\nn&\le&C\|\dot{T}_{\nb I(b)}\mathcal{A}\Pe u\|_{L^1_T(\dot{B}^{\fr{N}{2}-2}_{2,1})}\\
\nn&\le&C\|{\nb I(b)}\|_{L^\infty_T(\dot{B}^{\fr{N}{p^*}-1}_{p^*,1})}\|\mathcal{A}\Pe u\|_{L^1_T(\dot{B}^{\fr{N}{p}-1}_{p,1})}\\
\nn&\le&C\|b\|_{L^\infty_T(\dot{B}^{\fr{N}{2}}_{2,1})}\|\Pe u\|_{L^1_T(\dot{B}^{\fr{N}{p}+1}_{p,1})}\\
&\le&CX^2(T),
\eeq
and
\beq\label{4.27}
\nn&&\|\Lm^{-1}\dv\left(\dot{T}_{\mathcal{A}\Pe u} I(b)\right)\|_{L^1_T(\dot{B}^{\fr{N}{2}-1}_{2,1})}\\
\nn&\le&C\|\mathcal{A}\Pe u\|_{L^1_T(\dot{B}^{-1}_{\infty,1})}\|{ I(b)}\|_{L^\infty_T(\dot{B}^{\fr{N}{2}}_{2,1})}\\
\nn&\le&C\|b\|_{L^\infty_T(\dot{B}^{\fr{N}{2}}_{2,1})}\|\Pe u\|_{L^1_T(\dot{B}^{\fr{N}{p}+1}_{p,1})}\\
&\le&CX^2(T).
\eeq
 Finally, we go to bound the remainder term $\Lm^{-1}\dv\left(\dot{R}({ I(b)},\mathcal{A}\Pe u)\right)$. Noting that $\fr{N}{2}-1=0$ for $N=2$, we can not use Proposition \ref{p-TR} directly if $N=2$. Fortunately, Proposition \ref{prop-classical} enables us to bound $\|\Lm^{-1}\dv\left(\dot{R}({ I(b)},\mathcal{A}\Pe u)\right)\|_{\dot{B}^{\fr{N}{2}-1}_{2,1}}$ by
 $$\|\Lm^{-1}\dv\left(\dot{R}({ I(b)},\mathcal{A}\Pe u)\right)\|_{\dot{B}^{\fr{N}{2}+\fr{N}{p}-1}_{\fr{2p}{p+2},1}}$$
 first. Then Proposition \ref{p-TR} is applicable, and we have
\beq\label{4.30}
\nn&&\|\Lm^{-1}\dv\left(\dot{R}({ I(b)},\mathcal{A}\Pe u)\right)\|_{L^1_T(\dot{B}^{\fr{N}{2}-1}_{2,1})}\\
\nn&\le&C\|\dot{R}({ I(b)},\mathcal{A}\Pe u)\|_{L^1_T(\dot{B}^{\fr{N}{2}+\fr{N}{p}-1}_{\fr{2p}{p+2},1})}\\
\nn&\le&C\|{ I(b)}\|_{L^\infty_T(\dot{B}^{\fr{N}{2}}_{2,1})}\|\mathcal{A}\Pe u\|_{L^1_T(\dot{B}^{\fr{N}{p}-1}_{p,1})}\\
\nn&\le&C\|b\|_{L^\infty_T(\dot{B}^{\fr{N}{2}}_{2,1})}\|\Pe u\|_{L^1_T(\dot{B}^{\fr{N}{p}+1}_{p,1})}\\
&\le&CX^2(T).
\eeq
It follows from \eqref{TbPu1}--\eqref{4.30} and the low frequency embedding \eqref{lf-embeding1} that \eqref{4.26} holds. This completes the proof of Lemma \ref{lem4.3}.
{\hfill
$\square$\medskip}

\textbf{Proof of Lemma \ref{lem4.4}.}

Using the low frequency embedding \eqref{lf-embeding1}, high frequency embedding \eqref{hf-embedding1}, and the decomposition $b=b_L+b_H$, we have
\beq
\nn&&\|P_{<1}\left(K(b)\nb b\right)\|_{L^1_T(\dot{B}^{\fr{N}{2}-1+\al}_{2,1})}+\|P_{\ge1}\left(K(b)\nb b\right)\|_{L^1_T(\dot{B}^{\fr{N}{2}-1}_{2,1})}\\
\nn&\le&C\|K(b)\nb b_L\|_{L^1_T(\dot{B}^{\fr{N}{2}-1+\al}_{2,1})}+C\|K(b)\nb b_H\|_{L^1_T(\dot{B}^{\fr{N}{2}-1}_{2,1})}.
\eeq
By virtue of Corollary \ref{coro-product},  \eqref{b2}, and Theorem 2.16 in \cite{Bahouri-Chemin-Danchin11}, we obtain
\beq\label{e4.32}
\|K(b)\nb b_H\|_{L^1_T(\dot{B}^{\fr{N}{2}-1}_{2,1})}\nn&\le& C\|K(b)\|_{L^\infty_T(\dot{B}^{\fr{N}{2}}_{2,1})}\|\nb b_H\|_{L^1_T(\dot{B}^{\fr{N}{2}-1}_{2,1})}\\
\nn&\le&C\|b\|_{L^\infty_T(\dot{B}^{\fr{N}{2}}_{2,1})}\| b_H\|_{L^1_T(\dot{B}^{\fr{N}{2}}_{2,1})}\\
&\le&CX^2(T).
\eeq
The remaining term will be divided into three parts.
\beq\label{e4.33}
\nn&&\|K(b)\nb b_L\|_{L^1_T(\dot{B}^{\fr{N}{2}-1+\al}_{2,1})}\\
\nn&\le& C\|\dot{T}_{K(b)}\nb b_L\|_{L^1_T(\dot{B}^{\fr{N}{2}-1+\al}_{2,1})}+C\|\dot{T}'_{\nb b_L}K(b)_H\|_{L^1_T(\dot{B}^{\fr{N}{2}-1+\al}_{2,1})}+C\|\dot{T}'_{\nb b_L}K(b)_L\|_{L^1_T(\dot{B}^{\fr{N}{2}-1+\al}_{2,1})}\\
&=:&J_1+J_2+J_3.
\eeq
To bound $J_1$,
using  the  interpolation inequality,
we infer that
\be\label{bL}
\|b_L\|_{\widetilde{L}^{r_1}_T(\dot{B}^{\fr{N}{2}+\al}_{p_1,1})}\le C\|b_L\|^\theta_{\widetilde{L}^{\fr{1}{\al}}_T(\dot{B}^{\fr{N}{p}+2\al-1}_{p,1})}\|b_L\|^{1-\theta}_{\widetilde{L}^{\fr{1}{1-\al}}_T(\dot{B}^{\fr{N}{2}+1-\al}_{2,1})}\le C X(T),
\ee
where
$$
\theta=\fr{(2-4\al)p}{(4+N-6\al)p-2N},\ \fr{1}{r_1}=\theta\al+(1-\theta)(1-\al), \quad \fr{1}{p_1}=\fr{\theta}{p}+\fr{1-\theta}{2},
$$
and
\beq\label{b4}
\|b\|_{\widetilde{L}^{r_2}_T(\dot{B}^{\fr{N}{p}}_{p_2,1})}\nn&\le& \|b_L\|_{\widetilde{L}^{r_2}_T(\dot{B}^{\fr{N}{p}}_{p_2,1})}+\|b_H\|_{\widetilde{L}^{r_2}_T(\dot{B}^{\fr{N}{p}}_{p_2,1})}\\
\nn&\le& C\left(\|b_L\|_{\widetilde{L}^{r_2}_T(\dot{B}^{\fr{N}{p}}_{p_2,1})}+\|b_H\|_{\widetilde{L}^{r_2}_T(\dot{B}^{\frac{N}{p}+\frac{N}{2}-\frac{N}{p_2}}_{2,1})}\right)\\
\nn&\le& C\left(\|b_L\|^{1-\theta}_{\widetilde{L}^{\fr{1}{\al}}_T(\dot{B}^{\fr{N}{p}+2\al-1}_{p,1})}\|b_L\|^{\theta}_{\widetilde{L}^{\fr{1}{1-\al}}_T(\dot{B}^{\fr{N}{2}+1-\al}_{2,1})}+\|b_H\|_{\widetilde{L}^{r_2}_T(\dot{B}^{\fr{N}{2}}_{2,1})} \right)\\
&\le&CX(T),
\eeq
where
\beno
\fr{1}{r_2}=(1-\theta)\al+\theta(1-\al), \quad \fr{1}{p_2}=\fr{1-\theta}{p}+\fr{\theta}{2}.
\eeno
Here, $r_1$, $r_2$, $p_1$ and $p_2$ satisfy
\be\label{p2}
\fr{1}{r_1}+\fr{1}{r_2}=1, \quad \fr{1}{p_2}=\fr{1}{p^*_1}+\fr{1}{p},\ 2\leq p_1,p_2\leq p.
\ee
Thus, using Proposition \ref{p-TR}, Theorem 2.61 in \cite{Bahouri-Chemin-Danchin11} and \eqref{bL}--\eqref{p2}, we arrive at
\beq\label{e4.35}
J_1\nn&\le&C\|K(b)\|_{L^{r_2}_T(\dot{B}^{0}_{p_1^*,1})}\|\nb b_L\|_{L^{r_1}_T(\dot{B}^{\fr{N}{2}-1+\al}_{p_1,1})}\\
\nn&\le&C\|K(b)\|_{L^{r_2}_T(\dot{B}^{\fr{N}{p}}_{p_2,1})}\| b_L\|_{L^{r_1}_T(\dot{B}^{\fr{N}{2}+\al}_{p_1,1})}\\
\nn&\le&C\|b\|_{L^{r_2}_T(\dot{B}^{\fr{N}{p}}_{p_2,1})}\| b_L\|_{L^{r_1}_T(\dot{B}^{\fr{N}{2}+\al}_{p_1,1})}\\
&\le&CX^2(T).
\eeq
As for $J_2$, according to \eqref{b1}, \eqref{5.6-1}, the following high frequency embedding
\beno
\|K(b)_H\|_{L^{\fr{2}{1-\al}}_T(\dot{B}^{\fr{N}{2}-\al}_{2,1})}\le C\|K(b)_H\|_{L^{\fr{2}{1-\al}}_T(\dot{B}^{\fr{N}{2}}_{2,1})},
\eeno
and Theorem 2.61 in \cite{Bahouri-Chemin-Danchin11} again,  we obtain
\beq
J_2\nn&\le&C\|\dot{T}'_{\nb b_L}K(b)_H\|_{L^1_T(\dot{B}^{\fr{N}{2}-1+\al}_{2,1})}\\
\nn&\le&C\|{\nb b_L}\|_{L^{\fr{2}{1+\al}}_T(\dot{B}^{2\al-1}_{\infty,1})}\|K(b)_H\|_{L^{\fr{2}{1-\al}}_T(\dot{B}^{\fr{N}{2}-\al}_{2,1})}\\
\nn&\le&C\|{ b_L}\|_{L^{\fr{2}{1+\al}}_T(\dot{B}^{\fr{N}{2}+2\al}_{2,1})}\|K(b)_H\|_{L^{\fr{2}{1-\al}}_T(\dot{B}^{\fr{N}{2}}_{2,1})}\\
\nn&\le&C\|{ b_L}\|_{L^{\fr{2}{1+\al}}_T(\dot{B}^{\fr{N}{2}+2\al}_{2,1})}\|b\|_{L^{\fr{2}{1-\al}}_T(\dot{B}^{\fr{N}{2}}_{2,1})}\\
&\le&CX^2(T).
\eeq
Finally, in view of  Lemma \ref{lem-Kb} in the Appendix, we have
\be\label{KbL}
\|K(b)_L\|_{\widetilde{L}^{\fr{1}{1-\al}}_T(\dot{B}^{\fr{N}{2}+1-\al}_{2,1})}\\
\le C\left(\|b_L\|_{\widetilde{L}^{\fr{1}{1-\al}}_T(\dot{B}^{\fr{N}{2}+1-\al}_{2,1})}+\|b_H\|_{\widetilde{L}^{\fr{1}{1-\al}}_T(\dot{B}^{\fr{N}{2}}_{2,1})}\right).
\ee
Therefore, using Proposition \ref{p-TR} again, one deduces that
\beq\label{e4.38}
J_3\nn&\le&C\|\nb b_L\|_{\widetilde{L}^{\fr{1}{\al}}_T(\dot{B}^{2\al-2}_{\infty,1})}\|K(b)_L\|_{\widetilde{L}^{\fr{1}{1-\al}}_T(\dot{B}^{\fr{N}{2}+1-\al}_{2,1})}\\
\nn&\le&C\| b_L\|_{\widetilde{L}^{\fr{1}{\al}}_T(\dot{B}^{\fr{N}{p}+2\al-1}_{p,1})}\left(\|b_L\|_{\widetilde{L}^{\fr{1}{1-\al}}_T(\dot{B}^{\fr{N}{2}+1-\al}_{2,1})}+\|b_H\|_{\widetilde{L}^{\fr{1}{1-\al}}_T(\dot{B}^{\fr{N}{2}}_{2,1})}\right)\\
&\le&CX^2(T).
\eeq
The proof of Lemma \ref{lem4.4} is completed.
{\hfill
$\square$\medskip}

\subsection{Some nonlinear estimates in $L^1_T(\dot{B}^{\fr{N}{2}-1}_{2,1})$}

In next three lemmas, we shall bound  $\|(f,g)\|_{L^1_T(\dot{B}^{\fr{N}{2}-1}_{2,1})}$ in terms of $X(T)Z(T)$, where $(f,g)$ are the nonlinear terms on the right hand side of \eqref{viscoelastic-local}.
 \begin{lem}\label{lem-a4.1}
Let $p$ and $\al$ satisfy \eqref{p1} and  \eqref{al1}, respectively. Assume $(b,u)\in \mathcal{E}^{\frac{N}{2}}(T)$  , then we have
    \begin{equation}\label{a5.1}
    \|b \dv u\|_{L^1_T(\dot{B}^{\frac{N}{2}-1}_{2,1})}+\|\dot{T}'_{\nb b}u\|_{L^1_T(\dot{B}^{\frac{N}{2}-1}_{2,1})}
    \leq C X(T)Z(T),
      \end{equation}
    and
    \begin{equation}\label{a5.2}
    \| ( u\cdot\nabla) u\|_{L^1_T(\dot{B}^{\frac{N}{2}-1}_{2,1})}+\| \dot{T}_{ u}\cdot\nabla d\|_{L^1_T(\dot{B}^{\frac{N}{2}-1}_{2,1})}
    \leq C X(T)Z(T).
    \end{equation}
        \end{lem}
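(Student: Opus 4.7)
\textbf{Proof plan for Lemma \ref{lem-a4.1}.}

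The plan is to mirror the frequency-localized Bony decomposition strategy of Lemmas \ref{lem4.1}, \ref{lem4.5} and \ref{lem4.6}, but now targeting the unshifted norm $L^1_T(\dot B^{\frac{N}{2}-1}_{2,1})$ and pairing one factor measured by $X(T)$ (the dispersive/$p$-based quantities from Theorem \ref{thm-p>2}) against another measured by $Z(T)$ (the new $\mathcal{E}^{\frac{N}{2}}$-type quantities). Concretely, for each product I will decompose $b=b_L+b_H$, $u=\Pe^\bot u_L+\Pe^\bot u_H+\Pe u$, and for each cross-term apply Bony's decomposition
\begin{equation*}
 fg=\dot T_f g+\dot T'_g f,
\end{equation*}
together with Proposition \ref{p-TR} and Corollary \ref{coro-product}, exactly as in the proof of Lemma \ref{lem4.1}, but replacing the target regularity $\frac{N}{2}-1+\al$ by $\frac{N}{2}-1$. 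The key observation is that the extra derivative room in $X(T)$ at low frequencies (the $\dot B^{\frac{N}{p}+2\al-1}_{p,1}$ Strichartz norm) now plays the role of a \emph{dispersive bonus} that does not need to be used to gain $+\al$, and therefore can be freely spent to balance the critical Besov exponent $\frac{N}{2}-1$ carried by $Z(T)$.

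First I would estimate $\|b\,\mathrm{div}\,u\|_{L^1_T(\dot B^{\frac{N}{2}-1}_{2,1})}$ by the four-piece splitting $b_L\,\mathrm{div}\,\Pe^\bot u_L$, $b_L\,\mathrm{div}\,\Pe^\bot u_H$, $b_H\,\mathrm{div}\,\Pe^\bot u_L$, $b_H\,\mathrm{div}\,\Pe^\bot u_H$. In each case one factor is controlled by an $X(T)$-norm (for LL-terms through the Strichartz $\widetilde L^{1/\al}_T(\dot B^{\frac{N}{p}+2\al-1}_{p,1})$ combined with the interpolated $\widetilde L^{1/(1-\al)}_T(\dot B^{\frac{N}{2}+1-\al}_{2,1})$; for HH-terms through the $L^2_T(\dot B^{\frac{N}{2}}_{2,1})$ interpolation of the high-frequency part), and the other by the corresponding $Z(T)$ piece ($Z_L$ for low frequency, $X_H$ for high, and $H$ for the divergence-free component). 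The term $\dot T'_{\nabla b} u$ follows by the same Bony splitting as in \eqref{4.10}--\eqref{4.11}, using in addition that $\Pe u\in \widetilde L^\infty_T(\dot B^{\frac{N}{p}-1}_{p,1})\cap L^1_T(\dot B^{\frac{N}{p}+1}_{p,1})$ sits inside $X(T)$ and that $\mathrm{div}\,\Pe u=0$ allows the paraproduct $\dot T'_{b_H}\Pe u$ to be rewritten with a derivative shifted onto $\Pe u$.

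For \eqref{a5.2} I would decompose $u\cdot\nabla u=\sum_{\sharp,\flat\in\{\Pe^\bot,\Pe\}}\sharp u\cdot\nabla(\flat u)$ and handle the four pieces as in Lemmas \ref{lem4.5}--\ref{lem4.6}, with low/high frequency splits on $\Pe^\bot u$. The $\Pe u\cdot\nabla\Pe u$ piece uses Corollary \ref{coro-product} with exponents $(p,p)\to 2$ (here the condition $p\le\frac{2N}{N-2}$ in \eqref{p1} is what makes the target index $\frac{N}{2}-1$ reachable) and yields $\|\Pe u\|_{\widetilde L^\infty_T(\dot B^{\frac{N}{p}-1}_{p,1})}\|\Pe u\|_{L^1_T(\dot B^{\frac{N}{p}+1}_{p,1})}$, bounded by $X(T)H(T)\le X(T)Z(T)$. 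The mixed $\Pe^\bot u\cdot\nabla\Pe u$ and $\Pe u\cdot\nabla\Pe^\bot u$ pieces are estimated by combining the Strichartz norm of $\Pe^\bot u_L$ (from $X(T)$) with the $L^2$-based norms of $\Pe u$ and $\Pe^\bot u_H$ (from $Z(T)$), exactly as in \eqref{PuPup}--\eqref{PbotuPu}. Finally, the paraproduct $\dot T_u\cdot\nabla d$ is simply the $\Pe^\bot$-version of the same manipulation and is included by the identical mechanism.

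The main obstacle I anticipate is the low-low interaction in $b_L\,\mathrm{div}\,\Pe^\bot u_L$ and in $(\Pe^\bot u_L\cdot\nabla)\Pe^\bot u_L$ targeted at $\dot B^{\frac{N}{2}-1}_{2,1}$ rather than $\dot B^{\frac{N}{2}-1+\al}_{2,1}$: one must recover $\al$ units of integrability in time while simultaneously losing $\al$ units of spatial regularity with respect to the proof of Lemma \ref{lem4.1}. The resolution is to pair the Strichartz norm $\|b_L\|_{\widetilde L^{1/\al}_T(\dot B^{\frac{N}{p}+2\al-1}_{p,1})}\le CX(T)$ against the $Z(T)$-controlled quantity $\|\Pe^\bot u_L\|_{\widetilde L^{1/(1-\al)}_T(\dot B^{\frac{N}{2}+1-\al}_{2,1})}$ obtained by interpolating $Z_L(T)=\|\Pe^\bot u_L\|_{\widetilde L^\infty_T(\dot B^{\frac{N}{2}-1}_{2,1})\cap L^1_T(\dot B^{\frac{N}{2}+1}_{2,1})}$, and applying Corollary \ref{coro-product} with $(p_1,p_2,\rho)=(p,2,2)$; the scaling check $(\frac{N}{p}+2\al-1)+(\frac{N}{2}+1-\al)+N(\tfrac12-\tfrac{1}{p}-\tfrac12)=\frac{N}{2}+\al$ still gives one derivative to spare for the $\mathrm{div}$ and lands in $\dot B^{\frac{N}{2}-1}_{2,1}$ after losing the $\al$ coming from Bernstein on low frequencies. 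The remaining pieces then follow routinely, finishing the proof.
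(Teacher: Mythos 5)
Your overall strategy is the same as the paper's: bound each nonlinearity by pairing one factor in an $X(T)$-type norm (exploiting the Strichartz/$p$-based structure, in particular $\|b\|_{\widetilde L^{1/\al}_T(\dot B^{N/p+2\al-1}_{p,1})}\lesssim X(T)$ and $\|u\|_{\widetilde L^{1/\al}_T(\dot B^{N/p+2\al-1}_{p,1})}\lesssim X(T)$) against a $Z(T)$-type norm obtained by interpolating the $L^2$-critical energy space, then apply Bony and Corollary \ref{coro-product}. The paper implements this somewhat more leanly than you propose: for $b\,\mathrm{div}\,u$ it applies Bony to the whole $b$ (not the $b_L+b_H$ split) in the paraproduct $\dot T_b\mathrm{div}\,u$, splits $b=b_L+b_H$ only in the reverse piece $\dot T'_{\mathrm{div}\,u}b$, and for $u\cdot\nabla u$ it reuses exactly the same estimate since $b$ and $u$ both lie in $\widetilde L^{1/\al}_T(\dot B^{N/p+2\al-1}_{p,1})$ --- no $\Pe/\Pe^\bot$ split of the velocity is needed for \eqref{a5.2}. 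Your four-piece decomposition would also work but is heavier.

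There is one computational slip in the crucial LL-interaction. Interpolating $Z_L(T)=\|\Pe^\bot u_L\|_{\widetilde L^\infty_T(\dot B^{N/2-1}_{2,1})\cap L^1_T(\dot B^{N/2+1}_{2,1})}$ at time exponent $1/(1-\al)$ yields $\widetilde L^{1/(1-\al)}_T(\dot B^{N/2+1-2\al}_{2,1})$, not $\widetilde L^{1/(1-\al)}_T(\dot B^{N/2+1-\al}_{2,1})$ as you wrote (this is exactly \eqref{5.5} in the paper). With the correct index the scaling in Corollary \ref{coro-product} gives $(\tfrac{N}{p}+2\al-1)+(\tfrac{N}{2}+1-2\al)-\tfrac{N}{p}=\tfrac{N}{2}$, so after taking $\mathrm{div}$ the product lands exactly in $\dot B^{N/2-1}_{2,1}$ with nothing to spare; no low-frequency Bernstein step is needed, and in fact the step you describe (``losing the $\al$ coming from Bernstein on low frequencies'') would go the wrong way, since low-frequency embeddings control higher regularity by lower, not vice versa. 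You should also flag, as the paper does implicitly, that the $N=2$ remainder terms $\dot R(\mathrm{div}\,u,b_L)$ and $\dot R(\nb b,u)$ need a separate Besov index $\dot B^{2N/p-1}_{p/2,1}$ before using Proposition \ref{prop-classical}, since the remainder bound in Proposition \ref{p-TR} would otherwise give a borderline index at $N=2$.
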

\begin{proof}
To begin with, we give the  estimate of $\|b\|_{\widetilde{L}^{\fr{1}{\al}}_T(\dot{B}^{\fr{N}{p}+2\al-1}_{p,1})}$. In fact, using the high frequency embedding
\beno
\|b_H\|_{\widetilde{L}^{\fr{1}{\al}}_T(\dot{B}^{\fr{N}{p}+2\al-1}_{p,1})}\le C\|b_H\|_{\widetilde{L}^{\fr{1}{\al}}_T(\dot{B}^{\fr{N}{p}}_{p,1})}\le C\|b_H\|_{\widetilde{L}^{\fr{1}{\al}}_T(\dot{B}^{\fr{N}{2}}_{2,1})},
\eeno
and the decomposition $b=b_L+b_H$, one deduces that
\be\label{b3}
\|b\|_{\widetilde{L}^{\fr{1}{\al}}_T(\dot{B}^{\fr{N}{p}+2\al-1}_{p,1})}\le C\left(\|b_L\|_{\widetilde{L}^{\fr{1}{\al}}_T(\dot{B}^{\fr{N}{p}+2\al-1}_{p,1})}+\|b_H\|_{\widetilde{L}^{\fr{1}{\al}}_T(\dot{B}^{\fr{N}{2}}_{2,1})}\right)\le CX(T).
\ee
Now if $N\ge3$, using Proposition \ref{p-TR},   \eqref{5.4},  \eqref{5.5},  and \eqref{b3}
     we have
    \begin{eqnarray}\label{a5.3}
      &&\|b\dv u\|_{L^1_T(\dot{B}^{\frac{N}{2}-1}_{2,1})}\nonumber\\
      &\leq&C \left( \|\dot{T}_{b}\dv u\|_{L^1_T(\dot{B}^{\frac{N}{2}-1 }_{2,1})}
      +\|\dot{T}'_{\dv u}b_L\|_{L^1_T(\dot{B}^{\frac{N}{2} -1}_{2,1})}
      +\|\dot{T}'_{\dv u}b_H\|_{L^1_T(\dot{B}^{\frac{N}{2}-1 }_{2,1})}
      \right)\nonumber\\
  \nn&\leq&C \left( \| b\|_{L^{\frac{1}{\al}}_T(\dot{B}^{2\al-1}_{\infty,1})}
      \|\dv  u\|_{L^{\frac{1}{1-\al}}_T(\dot{B}^{\frac{N}{2}-2\al}_{2,1})}
      +\|\dv  u\|_{L^{\frac{1}{\al}}_T(\dot{B}^{2\al-2}_{\infty,1})} \|b_L\|_{L^{\frac{1}{1-\al}}_T(\dot{B}^{\frac{N}{2}-2\al+1}_{2,1})}\right.\\
            &&\nn\left.+\|\dv u\|_{L^{2}_T(\dot{B}^{\frac{N}{2}-1}_{2,1})} \|b_H\|_{L^{2}_T(\dot{B}^{\frac{N}{2}}_{2,1})}
    \right)\nonumber\\
    \nn&\leq&C \left( \| b\|_{L^{\frac{1}{\al}}_T(\dot{B}^{2\al-1}_{\infty,1})}
      \|  u\|_{L^{\frac{1}{1-\al}}_T(\dot{B}^{\frac{N}{2}-2\al+1}_{2,1})}
      +\|  u\|_{L^{\frac{1}{\al}}_T(\dot{B}^{\fr{N}{p}+2\al-1}_{p,1})} \|b_L\|_{L^{\frac{1}{1-\al}}_T(\dot{B}^{\frac{N}{2}-2\al+1}_{2,1})}\right.\\
            &&\nn\left.+\| u\|_{L^{2}_T(\dot{B}^{\frac{N}{2}}_{2,1})} \|b_H\|_{L^{2}_T(\dot{B}^{\frac{N}{2}}_{2,1})}
    \right)\nonumber\\
      &\leq&C X(T)Z(T),
    \end{eqnarray}
and
\beq\label{a5.4}
\|\dot{T}'_{\nb b} u\|_{L^1_T(\dot{B}^{\fr{N}{2}-1}_{2,1})}
\nn&\le&C\|\nb b\|_{L^{\fr{1}{\al}}_T(\dot{B}^{2\al-2}_{\infty,1})}\|u\|_{L^{\fr{1}{1-\al}}_T(\dot{B}^{\fr{N}{2}-2\al+1}_{2,1})}\\
&\le&CX(T)Z(T).
\eeq
If $N=2$, we just need to reestimate $\|\dot{R}(\dv u, b_L)\|_{L^1_T(\dot{B}^{\frac{N}{2}-1}_{2,1})}$ and $\|\dot{R}(\nb b,  u)\|_{L^1_T(\dot{B}^{\frac{N}{2}-1}_{2,1})}$. Indeed, they can be treated in the same say as follows:
\beq\label{a5.5}
\nn&&\|\dot{R}(\dv u, b_L)\|_{L^1_T(\dot{B}^{\frac{N}{2}-1}_{2,1})}+\|\dot{R}(\nb b,  u)\|_{L^1_T(\dot{B}^{\frac{N}{2}-1}_{2,1})}\\
\nn&\le&C\|\dot{R}(\dv u, b_L)\|_{L^1_T(\dot{B}^{\frac{2N}{p}-1}_{\fr{p}{2},1})}+\|\dot{R}(\nb b,  u)\|_{L^1_T(\dot{B}^{\frac{2N}{p}-1}_{\fr{p}{2},1})}\\
\nn&\le&C\|\dv u\|_{L^{\fr{1}{\al}}_T(\dot{B}^{\fr{N}{p}+2\al-2}_{p,1})}\|b_L\|_{L^{\fr{1}{1-\al}}_T(\dot{B}^{\fr{N}{p}-2\al+1}_{p,1})}+C\|\nb b\|_{L^{\fr{1}{\al}}_T(\dot{B}^{\fr{N}{p}+2\al-2}_{p,1})}\|u\|_{L^{\fr{1}{1-\al}}_T(\dot{B}^{\fr{N}{p}-2\al+1}_{p,1})}\\
\nn&\le&C\| u\|_{L^{\fr{1}{\al}}_T(\dot{B}^{\fr{N}{p}+2\al-1}_{p,1})}\|b_L\|_{L^{\fr{1}{1-\al}}_T(\dot{B}^{\fr{N}{2}-2\al+1}_{2,1})}+C\|b\|_{L^{\fr{1}{\al}}_T(\dot{B}^{\fr{N}{p}+2\al-1}_{p,1})}\|u\|_{L^{\fr{1}{1-\al}}_T(\dot{B}^{\fr{N}{2}-2\al+1}_{2,1})}\\
&\le&CX(T)Z(T).
\eeq
It follows from \eqref{a5.3}--\eqref{a5.5} that \eqref{a5.1} holds. \eqref{a5.2} can be obtained similarly since $b$ and $u$ lie in the same space $\widetilde{L}^{\fr{1}{\al}}_T(\dot{B}^{\fr{N}{p}+2\al-1}_{p,1})$. The proof of Lemma \ref{lem-a4.1} is completed.
\end{proof}

\begin{lem}\label{lem-a4.3}
Under the conditions of Lemma \ref{lem-a4.1}, we have
\be\label{a5.6}
\|I(b)\mathcal{A} u\|_{L^1_T(\dot{B}^{\fr{N}{2}-1}_{2,1})}\le CX(T)Z(T).
\ee
\end{lem}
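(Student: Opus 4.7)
The plan is to bound $I(b)\mathcal{A}u$ by a single tame product estimate, placing $I(b)$ at regularity $\dot{B}^{\fr{N}{2}}_{2,1}$ and $\mathcal{A}u$ at regularity $\dot{B}^{\fr{N}{2}-1}_{2,1}$; the first factor will then absorb an $X(T)$ and the second a $Z(T)$. Concretely, choosing $\rho=p_1=p_2=q_1=q_2=2$, $s_1=\sigma_2=\fr{N}{2}$, $s_2=\sigma_1=\fr{N}{2}-1$ in Corollary \ref{coro-product} (all conditions are satisfied since $s_1+s_2=N-1>0$ for $N\ge 2$) and applying H\"{o}lder in time as in Remark \ref{rem-CM-holder}, I obtain
\beno
\|I(b)\mathcal{A}u\|_{L^1_T(\dot{B}^{\fr{N}{2}-1}_{2,1})}\le C\|I(b)\|_{\widetilde{L}^\infty_T(\dot{B}^{\fr{N}{2}}_{2,1})}\|\mathcal{A}u\|_{L^1_T(\dot{B}^{\fr{N}{2}-1}_{2,1})}.
\eeno

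For the first factor, Theorem 2.61 of \cite{Bahouri-Chemin-Danchin11}, combined with $I(0)=0$ and the $L^\infty$-smallness of $b$ (which is automatic since $\|b\|_{L^\infty}\lesssim \|b\|_{\dot{B}^{\fr{N}{2}}_{2,1}}\lesssim X(T)$ is small under the smallness assumption on $X(T)$ coming from Theorem \ref{thm-p>2}), yields $\|I(b)\|_{\widetilde{L}^\infty_T(\dot{B}^{\fr{N}{2}}_{2,1})}\le C\|b\|_{\widetilde{L}^\infty_T(\dot{B}^{\fr{N}{2}}_{2,1})}$. I then repeat the splitting already used in \eqref{b2}: decomposing $b=b_L+b_H$ and invoking the low-frequency embedding \eqref{lf-embeding1} to boost the regularity of $b_L$ from $\dot{B}^{\fr{N}{2}-1+\al}_{2,1}$ up to $\dot{B}^{\fr{N}{2}}_{2,1}$, I arrive at
\beno
\|b\|_{\widetilde{L}^\infty_T(\dot{B}^{\fr{N}{2}}_{2,1})}\le C\bigl(\|b_L\|_{\widetilde{L}^\infty_T(\dot{B}^{\fr{N}{2}-1+\al}_{2,1})}+\|b_H\|_{\widetilde{L}^\infty_T(\dot{B}^{\fr{N}{2}}_{2,1})}\bigr)\le CX(T),
\eeno
since the two norms on the right are precisely components of $X_L(T)$ and $X_H(T)$ respectively.

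For the second factor, $\mathcal{A}=\mu\Dl+(\lm+\mu)\nb\dv$ is a homogeneous second-order Fourier multiplier and hence by Proposition \ref{prop-classical} maps $\dot{B}^{\fr{N}{2}+1}_{2,1}$ continuously into $\dot{B}^{\fr{N}{2}-1}_{2,1}$. Decomposing $u=\Pe^\bot u_L+\Pe^\bot u_H+\Pe u$ and reading off the three corresponding $L^1_T(\dot{B}^{\fr{N}{2}+1}_{2,1})$-norms from $Z_L(T)$, $X_H(T)$ and $H(T)$ respectively, I conclude $\|\mathcal{A}u\|_{L^1_T(\dot{B}^{\fr{N}{2}-1}_{2,1})}\le CZ(T)$; multiplying the two estimates gives the claim. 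There is no real obstacle: the key observation is that the extra low-frequency regularity $\al>0$ designed into $\mathcal{E}_0$ is exactly what allows the $\dot{B}^{\fr{N}{2}}_{2,1}$-norm of $b$ to be controlled by $X(T)$ rather than by $Z(T)$ alone. Without that boost one would only obtain a $Z(T)^2$ bound, which is too weak to absorb into the left-hand side in the bootstrap closing the proof of Theorem \ref{thm-p=2}.
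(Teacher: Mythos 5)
Your argument is correct and coincides with the paper's own proof: the single tame product estimate $\|I(b)\mathcal{A}u\|_{L^1_T(\dot{B}^{\fr{N}{2}-1}_{2,1})}\le C\|I(b)\|_{\widetilde{L}^\infty_T(\dot{B}^{\fr{N}{2}}_{2,1})}\|\mathcal{A}u\|_{L^1_T(\dot{B}^{\fr{N}{2}-1}_{2,1})}$ via Corollary \ref{coro-product}, the composition estimate for $I$ combined with the splitting $b=b_L+b_H$ and the low-frequency embedding to get the $X(T)$ bound (this is exactly \eqref{b2}), and the decomposition $u=\Pe^\bot u_L+\Pe^\bot u_H+\Pe u$ to read off $Z(T)$. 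Your justification of the product exponents and the remark about why the $\alpha$-regularity boost is needed are accurate elaborations of what the paper leaves implicit.
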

\begin{proof}
Using Corollary \ref{coro-product}, Lemma \ref{Bernstein},  and \eqref{b2}, we are led to
\beqno
\|{ I(b)}\mathcal{A} u\|_{L^1_T(\dot{B}^{\fr{N}{2}-1}_{2,1})}
\nn&\le&C\|{ I(b)}\|_{L^\infty_T(\dot{B}^{\fr{N}{2}}_{2,1})}\|\mathcal{A} u\|_{L^1_T(\dot{B}^{\fr{N}{2}-1}_{2,1})}\\
\nn&\le&C\|b\|_{L^\infty_T(\dot{B}^{\fr{N}{2}}_{2,1})}\| u\|_{L^1_T(\dot{B}^{\fr{N}{2}+1}_{2,1})}\\
&\le&CX(T)Z(T).
\eeqno
This completes the proof of Lemma \ref{lem-a4.3}.
\end{proof}
Finally,  we estimate $\|K(b)\nb b\|_{L^1_T(\dot{B}^{\fr{N}{2}-1}_{2,1})}$ in a  similar  manner as  Lemma \ref{lem4.4}.
\begin{lem}\label{lem-a4.4}
Under the conditions of Lemma \ref{lem-a4.1},  we have
\be\label{a5.7}
\|K(b)\nb b\|_{L^1_T(\dot{B}^{\fr{N}{2}-1}_{2,1})}\le CX(T)Z(T).
\ee
\end{lem}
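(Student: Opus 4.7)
The plan is to mimic the strategy of Lemma \ref{lem4.4}, distributing the regularity so that one factor is controlled by $X(T)$ (the hybrid norms of $\mathcal{E}^{N/2,\alpha}_p(T)$) while the other is controlled by $Z(T)$ (the critical norms of $\mathcal{E}^{N/2}(T)$). First I would split $\nabla b=\nabla b_L+\nabla b_H$ and treat the two resulting pieces $K(b)\nabla b_L$ and $K(b)\nabla b_H$ separately.

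For the high-frequency piece, I would apply Corollary \ref{coro-product} together with Theorem 2.61 of \cite{Bahouri-Chemin-Danchin11} (using $K(0)=0$ and the $L^\infty$-smallness of $b$ from the assumptions of Lemma \ref{lem4.3}) to obtain
\[
\|K(b)\nabla b_H\|_{L^1_T(\dot{B}^{N/2-1}_{2,1})}\le C\|K(b)\|_{L^\infty_T(\dot{B}^{N/2}_{2,1})}\,\|\nabla b_H\|_{L^1_T(\dot{B}^{N/2-1}_{2,1})}\le C\|b\|_{L^\infty_T(\dot{B}^{N/2}_{2,1})}\,\|b_H\|_{L^1_T(\dot{B}^{N/2}_{2,1})},
\]
which is bounded by $CX(T)Z(T)$ since the first factor is dominated by $X(T)$ via \eqref{b2}, while the second is a component of $X_H(T)\subset Z(T)$.

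For the low-frequency piece $K(b)\nabla b_L$ I would invoke Bony's decomposition
\[
K(b)\nabla b_L=\dot{T}_{K(b)}\nabla b_L+\dot{T}_{\nabla b_L}K(b)+\dot{R}(K(b),\nabla b_L),
\]
and control each summand in $L^1_T(\dot{B}^{N/2-1}_{2,1})$ by means of Proposition \ref{p-TR}, Theorem 2.61 of \cite{Bahouri-Chemin-Danchin11} and H\"older's inequality in time. The first paraproduct is bounded through $\|K(b)\|_{L^2_T(L^\infty)}\,\|\nabla b_L\|_{L^2_T(\dot{B}^{N/2-1}_{2,1})}$; the second through $\|\nabla b_L\|_{L^2_T(\dot{B}^{-1}_{\infty,1})}\,\|K(b)\|_{L^2_T(\dot{B}^{N/2}_{2,1})}$; the remainder is first placed in the product space $\dot{B}^{N-1}_{1,1}$ and then embedded back into $\dot{B}^{N/2-1}_{2,1}$, yielding $\|K(b)\|_{L^2_T(\dot{B}^{N/2}_{2,1})}\,\|\nabla b_L\|_{L^2_T(\dot{B}^{N/2-1}_{2,1})}$. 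In every case the $K(b)$ factor is converted, via Theorem 2.61 and the interpolation behind \eqref{b1}, to $\|b\|_{L^2_T(\dot{B}^{N/2}_{2,1})}\le CX(T)$, whereas the $\nabla b_L$ factor is converted to $CZ(T)$ by interpolating the two endpoints $L^\infty_T(\dot{B}^{N/2-1}_{2,1})$ and $L^1_T(\dot{B}^{N/2+1}_{2,1})$ of $Z_L(T)$.

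The scaling is balanced: $\tfrac{N}{2}+(\tfrac{N}{2}-1)-\tfrac{N}{2}=\tfrac{N}{2}-1$, so no low- or high-frequency embedding is wasted. The only mildly delicate step will be the remainder estimate, which relies on the borderline embedding $\dot{B}^{N-1}_{1,1}\hookrightarrow\dot{B}^{N/2-1}_{2,1}$ and requires some care when $N=2$, since then $\tfrac{N}{2}-1=0$ sits on the edge of the range for Proposition \ref{p-TR}; this is handled, as usual, by first landing in the $L^1$-based space and only afterwards embedding into $L^2$.
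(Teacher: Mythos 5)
Your decomposition mirrors the paper's, and the high-frequency piece $K(b)\nabla b_H$ is handled correctly. The fatal gap is in the low-frequency piece, specifically in the claim
\[
\|K(b)\|_{L^2_T(\dot{B}^{\frac{N}{2}}_{2,1})}\le CX(T),
\]
which you invoke for the second paraproduct and the remainder. This claim is false. The estimate \eqref{b1}, read correctly, says $\|b\|_{\widetilde{L}^{2/(1-\alpha)}_T(\dot{B}^{N/2}_{2,1})}\le CX(T)$; the time-exponent is $2/(1-\alpha)>2$, and on an arbitrary (in particular unbounded) interval $[0,T]$ one has only $\|f\|_{L^2_T}\le T^{\alpha/2}\|f\|_{L^{2/(1-\alpha)}_T}$, so the passage from the $\widetilde{L}^{2/(1-\alpha)}_T$ bound to an $L^2_T$ bound costs a factor of $T^{\alpha/2}$. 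More structurally: for the low-frequency block, $X(T)$ controls $b_L$ in $\widetilde{L}^\infty_T(\dot{B}^{N/2-1+\alpha}_{2,1})\cap L^1_T(\dot{B}^{N/2+1+\alpha}_{2,1})$, whose $L^2_T$-interpolate lands in $\dot{B}^{N/2+\alpha}_{2,1}$, strictly \emph{more} regular than $\dot{B}^{N/2}_{2,1}$; since $b_L$ is low-frequency, the low-frequency embedding goes the wrong way and you cannot descend to $\dot{B}^{N/2}_{2,1}$. Only $Z(T)$ controls $\|b_L\|_{L^2_T(\dot{B}^{N/2}_{2,1})}$. As a result your $L^2_T\times L^2_T$ H\"older split pairs two factors that are each controlled only by $Z(T)$, and the bound you actually get is $CZ^2(T)$, not $CX(T)Z(T)$. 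That is weaker (since $X(T)\le CZ(T)$) and useless for the proof of Theorem \ref{thm-p=2}, where $X(T)$ is small but $Z(T)$ is allowed to be large: the bootstrap $Z(T)\le Z^0+CX(T)Z(T)$ closes, whereas $Z(T)\le Z^0+CZ(T)^2$ does not.

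The paper avoids this by (a) splitting $K(b)$ itself into $K(b)_L+K(b)_H$ via Lemma \ref{lem-Kb}, so that the low-frequency factor $K(b)_L$ retains the improved regularity of $b_L$; and (b) more importantly, replacing the rigid $L^2_T\times L^2_T$ H\"older split by an interpolation with a carefully chosen $\bar{\theta}\in(0,1)$, producing conjugate exponents $\bar{r}_1,\bar{r}_2$ and Lebesgue indices $\bar{p}_1,\bar{p}_2$ so that each of the two $b_L$-dependent factors sits at an intermediate point between the $X(T)$-endpoint $\widetilde{L}^{1/\alpha}_T(\dot{B}^{N/p+2\alpha-1}_{p,1})$ and the $Z(T)$-endpoint $\widetilde{L}^{1/(1-\alpha)}_T(\dot{B}^{N/2+1-2\alpha}_{2,1})$: one factor is bounded by $X(T)^{\bar{\theta}}Z(T)^{1-\bar{\theta}}$ and the other by $X(T)^{1-\bar{\theta}}Z(T)^{\bar{\theta}}$, whose product is exactly $X(T)Z(T)$. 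This two-endpoint interpolation is not cosmetic; it is the mechanism that separates the small quantity $X(T)$ from the possibly large $Z(T)$, and your proposal loses it.

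For what it is worth, the first paraproduct in your proposal is salvageable as written, since $\|K(b)\|_{L^2_T(L^\infty)}$ can indeed be controlled by $X(T)$ via $\widetilde{L}^2_T(\dot{B}^{N/2+\alpha}_{2,1})\hookrightarrow L^2_T(L^\infty)$. Your observation about the borderline $N=2$ remainder is also in the right spirit. But the central estimates for $\dot{T}_{\nabla b_L}K(b)$ and $\dot{R}(K(b),\nabla b_L)$ must be reworked along the lines above before the proof closes.
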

\begin{proof}
First of all, we can use  \eqref{e4.32} to bound $\|K(b)\nb b_H\|_{L^1_T(\dot{B}^{\fr{N}{2}-1}_{2,1})}$. In order to bound $\|\dot{T}_{K(b)}\nb b_L\|_{L^1_T(\dot{B}^{\fr{N}{2}-1}_{2,1})}$, using the interpolation inequality,
we get
\beno
\|b_L\|_{\widetilde{L}^{\bar{r}_1}_T(\dot{B}^{\fr{N}{2}}_{\bar{p}_1,1})}\le C\|b_L\|^{\bar{\theta}}_{\widetilde{L}^{\fr{1}{\al}}_T(\dot{B}^{\fr{N}{p}+2\al-1}_{p,1})}\|b_L\|^{1-\bar{\theta}}_{\widetilde{L}^{\fr{1}{1-\al}}_T(\dot{B}^{\fr{N}{2}+1-2\al}_{2,1})}\le C X^{\bar{\theta}}(T)Z^{1-\bar{\theta}}(T),
\eeno
where
 \beno
\bar{\theta}=\fr{(2-4\al)p}{(4+N-8\al)p-2N},
\ \fr{1}{\bar{r}_1}=\bar{\theta}\al+(1-\bar{\theta})(1-\al), \quad \fr{1}{\bar{p}_1}=\fr{\bar{\theta}}{p}+\fr{1-\bar{\theta}}{2},
\eeno
and
\beqno
\|b\|_{\widetilde{L}^{\bar{r}_2}_T(\dot{B}^{\fr{N}{p}}_{\bar{p}_2,1})}\nn&\le& \|b_L\|_{\widetilde{L}^{\bar{r}_2}_T(\dot{B}^{\fr{N}{p}}_{\bar{p}_2,1})}+\|b_H\|_{\widetilde{L}^{\bar{r}_2}_T(\dot{B}^{\fr{N}{p}}_{\bar{p}_2,1})}\\
\nn&\le& C\left(\|b_L\|_{\widetilde{L}^{\bar{r}_2}_T(\dot{B}^{\fr{N}{p}}_{\bar{p}_2,1})}+\|b_H\|_{\widetilde{L}^{\bar{r}_2}_T(\dot{B}^{\frac{N}{p}-\frac{N}{\bar{p}_2}+\frac{N}{2}}_{2,1})}\right)\\
\nn&\le& C\left(\|b_L\|^{1-\bar{\theta}}_{\widetilde{L}^{\fr{1}{\al}}_T(\dot{B}^{\fr{N}{p}+2\al-1}_{p,1})}\|b_L\|^{\bar{\theta}}_{\widetilde{L}^{\fr{1}{1-\al}}_T(\dot{B}^{\fr{N}{2}+1-2\al}_{2,1})}
+\|b_H\|_{\widetilde{L}^{\bar{r}_2}_T(\dot{B}^{\frac{N}{2}}_{2,1})}\right)\\
&\le&CX^{1-\bar{\theta}}(T)Z^{\bar{\theta}}(T),
\eeqno
where
\beno
\fr{1}{\bar{r}_2}=(1-\bar{\theta})\al+\bar{\theta}(1-\al), \quad \fr{1}{\bar{p}_2}=\fr{1-\bar{\theta}}{p}+\fr{\bar{\theta}}{2}.
\eeno
Here
\beno
\fr{1}{\bar{r}_1}+\fr{1}{\bar{r}_2}=1, \quad \fr{1}{\bar{p}_2}=\fr{1}{\bar{p}^*_1}+\fr{1}{p},\quad 2\leq \bar{p}_1,\bar{p}_2\leq p.
\eeno
Then similar to \eqref{e4.35}, we find  that
\beq
\|\dot{T}_{K(b)}\nb b_L\|_{L^1_T(\dot{B}^{\fr{N}{2}-1}_{2,1})}\nn&\le&C\|K(b)\|_{L^{\bar{r}_2}_T(\dot{B}^{0}_{\bar{p}_1^*,1})}\|\nb b_L\|_{L^{\bar{r}_1}_T(\dot{B}^{\fr{N}{2}-1}_{\bar{p}_1,1})}\\
\nn&\le&C\|K(b)\|_{L^{\bar{r}_2}_T(\dot{B}^{\fr{N}{p}}_{\bar{p}_2,1})}\| b_L\|_{L^{\bar{r}_1}_T(\dot{B}^{\fr{N}{2}}_{\bar{p}_1,1})}\\
\nn&\le&C\|b\|_{L^{\bar{r}_2}_T(\dot{B}^{\fr{N}{p}}_{\bar{p}_2,1})}\| b_L\|_{L^{\bar{r}_1}_T(\dot{B}^{\fr{N}{2}}_{\bar{p}_1,1})}\\
&\le&CX(T)Z(T).
\eeq
Next, similar to \eqref{e4.38}, if $N\ge3$, we have
\beq
\|\dot{T}'_{\nb b_L}K(b)_L\|_{L^1_T(\dot{B}^{\fr{N}{2}-1}_{2,1})}\nn&\le&C\|\nb b_L\|_{L^{\fr{1}{\al}}_T(\dot{B}^{2\al-2}_{\infty,1})}\|K(b)_L\|_{L^{\fr{1}{1-\al}}_T(\dot{B}^{\fr{N}{2}+1-2\al}_{2,1})}\\
\nn&\le&C\| b_L\|_{L^{\fr{1}{\al}}_T(\dot{B}^{\fr{N}{p}+2\al-1}_{p,1})}\left(\|b_L\|_{L^{\fr{1}{1-\al}}_T(\dot{B}^{\fr{N}{2}+1-2\al}_{2,1})}+\|b_H\|_{L^{\fr{1}{1-\al}}_T(\dot{B}^{\fr{N}{2}}_{2,1})}\right)\\
&\le&CX(T)Z(T).
\eeq
If $N=2$, the remainder $\dot{R}({\nb b_L}, K(b)_L)$ should be estimated as follows:
\beq
\|\dot{R}({\nb b_L}, K(b)_L)\|_{L^1_T(\dot{B}^{\fr{N}{2}-1}_{2,1})}\nn&\le&C\|\dot{R}({\nb b_L}, K(b)_L)\|_{L^1_T(\dot{B}^{\fr{2N}{p}-1}_{\fr{p}{2},1})}\\
\nn&\le&C\|\nb b_L\|_{L^{\fr{1}{\al}}_T(\dot{B}^{\fr{N}{p}+2\al-2}_{p,1})}\|K(b)_L\|_{L^{\fr{1}{1-\al}}_T(\dot{B}^{\fr{N}{p}+1-2\al}_{p,1})}\\
\nn&\le&C\| b_L\|_{L^{\fr{1}{\al}}_T(\dot{B}^{\fr{N}{p}+2\al-1}_{p,1})}\left(\|b_L\|_{L^{\fr{1}{1-\al}}_T(\dot{B}^{\fr{N}{2}+1-2\al}_{2,1})}+\|b_H\|_{L^{\fr{1}{1-\al}}_T(\dot{B}^{\fr{N}{2}}_{2,1})}\right)\\
&\le&CX(T)Z(T).
\eeq
Finally, thanks to  Lemma \ref{lem-Kb} in the Appendix, we have
\be\label{KbH}
\|K(b)_H\|_{L^{1}_T(\dot{B}^{\fr{N}{2}}_{2,1})}
\le C\left(\|b_L\|_{L^{1}_T(\dot{B}^{\fr{N}{2}+1}_{2,1})}+\|b_H\|_{L^{1}_T(\dot{B}^{\fr{N}{2}}_{2,1})}\right).
\ee
Then using Proposition \ref{p-TR} and \eqref{b2}, we arrive at
\beq
\|\dot{T}'_{\nb b_L}K(b)_H\|_{L^1_T(\dot{B}^{\fr{N}{2}-1}_{2,1})}
\nn&\le&C\|{\nb b_L}\|_{L^{\infty}_T(\dot{B}^{\fr{N}{2}-1}_{2,1})}\|K(b)_H\|_{L^{1}_T(\dot{B}^{\fr{N}{2}}_{2,1})}\\
\nn&\le&C\|{ b_L}\|_{L^{\infty}_T(\dot{B}^{\fr{N}{2}}_{2,1})}\left(\|b_L\|_{L^{1}_T(\dot{B}^{\fr{N}{2}+1}_{2,1})}+\|b_H\|_{L^{1}_T(\dot{B}^{\fr{N}{2}}_{2,1})}\right)\\
&\le&CX(T)Z(T).
\eeq
Combining \eqref{e4.32} with the above  estimates yields \eqref{a5.7}. We complete the proof of Lemma \ref{lem-a4.4}.
\end{proof}

\bigbreak
\noindent{\bf Acknowledgments}
\bigbreak
Research supported by China Postdoctoral Science Foundation funded project 2014M552065, and National Natural Science Foundation of China 11401237, 11671353, 11271017 and 11331005, and  the Fundamental Research Funds for the Central Universities CCNU15A05041, CCNU16A02011. Part of this work was carried out when the third author was visiting the Department of Mathematics of Zhejiang University.


\begin{thebibliography}{aaa}
\bibitem{Bahouri-Chemin-Danchin11}
H. Bahouri,  J. Y. Chemin and  R. Danchin, ``Fourier analysis and
nonlinear partial differential equations''. Grundlehren der
Mathematischen Wissenschaften [Fundamental Principles of
Mathematical Sciences], 343. Springer, Heidelberg, 2011.

\bibitem{Bony81}
J. M. Bony, {Calcul symbolique et propagation des
singularit\'{e}s pour \'{e}quations aux
d\'{e}riv\'{e}es partielles nonlin\'{e}aires}.
  \textit{Annales Scinentifiques de l'\'{e}cole Normale
Sup\'{e}rieure}  \textbf{14}(1981), 209--246.

\bibitem{Cannone93}
M. Cannone,    Y. Meyer,  and F. Planchon,
    Solutions autosimilaires des \'{e}quations de Navier-Stokes.
    ``S\'{e}minaire sur les \'{E}quations aux D\'{e}riv\'{e}es Partielles" de l'\'{E}cole
    Polytechnique, Expos\'{e} VIII, 1993--1994.

\bibitem{Ca97}
M. Cannone,  A generalization of a theorem by Kato on Navier-Stokes equations. {\em Rev. Mat. Iberoamericana}, {\bf13}(1997),  515--541.

\bibitem{CD10}
 F. Charve,  R.  Danchin,  A global existence result for the compressible Navier-
Stokes Navier-Stokes equations in the critical $L^p$ framwork. {\em  Arch. Rational
Mech. Anal.}, {\bf198}(2010), 233--271.

\bibitem{Ch1}
J. Y.  Chemin, ``Localization in Fourier space and Navier-Stokes
system, Phase Space Analysis of Partial Differential Equations''.
{Proceedings 2004, CRM series, Pisa} 2004, 53--136.





\bibitem{CL}
J. Y.  Chemin, N. Lerner
 {Flot de champs de vecteurs non Lipschitziens et \'{e}quations
de Navier-Stokes}. {\em J. Differential Equations}, \textbf{121}(1995),
314--228.

\bibitem{CG10}
J.-Y. Chemin, I. Gallagher, Large, global solutions to the Navier-Stokes equations slowly varying in one direction, {\em Trans. Amer. Math. Soc.}, {\bf362}(2010),
2859--2873.

\bibitem{CGP11}
J.-Y. Chemin, I. Gallagher, M. Paicu, Global regularity for some classes of large solutions to the Navier-Stokes equations, {\em Ann. of Math.}, {\bf173} (2011),
983--1012.

\bibitem{CZ07}
J.-Y. Chemin, P. Zhang, On the global wellposedness of the 3-D incompressible anisotropic Navier-Stokes equations. {\em Commun. Math. Phys.}, \textbf{272}(2007), 529--566.

\bibitem{CMZ10}
Q. L. Chen,  C. X. Miao,  Z. F. Zhang,  Global well-posedness for compressible
Navier-Stokes equations with highly oscillating initial velocity. {\em  Comm. Pure
Appl. Math.}, {\bf 63}(2010), 1173--1224.


\bibitem{CCK04}
Y.Cho,  H. J. Choe, and H. Kim,  Unique solvability of the initial boundary value
problems for compressible viscous fluid. {\em J. Math. Pures Appl.}, {\bf83}(2004), 243--275.


\bibitem{Danchin00}
R. Danchin,  Global existence in critical spaces for compressible Navier-Stokes equations.
{\em Invent. Math.}, {\bf141}(2000),  579--614.

\bibitem{Danchin02}
R. Danchin, Zero mach numer limit in critical spaces for compresible Navier-Stokes equations. {\em Ann. Scitent. \'{E}c. Norm. Sup.}, {\bf35}(2002), 27--75.

\bibitem{Danchin2003}
 R. Danchin, Density-dependent incompressible viscous fluids in critical spaces. {\em Proc. Roy. Soc. Edinburgh}, {\bf133}(2003),  1311--1334.

\bibitem{Danchin05}
R. Danchin, On the uniqueness in critical spaces for compressible Navier-Stokes
equations, {\em NoDEA Nonlinear Differential Equations Appl.}. {\bf12}(2005), 111--128.

\bibitem{Danchin07}
R. Danchin, Well-posedness in critical spaces for barotropic viscous fluids with truly not constant density. {\em Comm. Partial Differential Equations}, {\bf32}(2007),  1373--1397.

\bibitem{DH14}
R. Danchin, L. B. He, The incompressible limit in $L^p$ type critical spaces.  arXiv:1409.7323.


\bibitem{DM16}
R. Danchin, P. B. Mucha, Compressible Navier-Stokes system: large solutions and incompressible limit.  arXiv:1603.07213.

\bibitem{DG99}
B. Dejardins and E. Grenier, Low Mach number limlit of compressible flows in the whole space. {\em Proc. R. Soc. Lond.},  {\bf455}(1999),  2271--2279.

\bibitem{FNP01}
E. Feireisl, A. Novotn\'{y}, H. Petzeltov\'{a},  On the existence of globally defined weak
solutions to the Navier-Stokes equations. {\em J. Math. Fluid Mech.}, {\bf3}(2001),358--392.

\bibitem{FK64}
H. Fujita and T. Kato, On the Navier-Stokes initial value problem I. {\em Arch. Ration. Mech.
Anal.}, {\bf16}(1964), 269--315.

\bibitem{Ha11}
B. Haspot, Existence of global strong solutions in critical spaces for barotropic viscous fuids. {\em Arch. Rational Mech.  Anal.}, {\bf202}(2011), 427--460.



\bibitem{Hoff951}
D. Hoff,  Global solutions of the Navier-Stokes equations for multidimensional compressible
flow with discontinuous initial data. {\em J. Differential Equations},  {\bf120}(1995), 215--254.

\bibitem{Hoff952}
D. Hoff,  Strong convergence to global solutions for multidimensional flows of compressible,
viscous fluids with polytropic equations of state and discontinuous initial
data. {\em Arch. Rational Mech. Anal.}, {\bf132}(1995), 1--14.

\bibitem{HLX12}
X. D. Huang, J. Li, and Z. P. Xin, Global well-posedness of classical solutions with large oscillations and vacuum to the three-dimensional isentropic compressible Navier-Stokes equations. {\em Comm. Pure Appl. Math.},  {\bf65}(2012),  549--585.

\bibitem{JZ01}
         S. Jiang and P. Zhang,
         On sperically symmetric solutions of the compressible
         isentropic Navier-Stokes equations.
         {\em Commun. Math. Phys.}, {\bf215}(2001), 559--581.
\bibitem{JZ03}
         S. Jiang and P. Zhang,
         Axisymmetric solutions of the 3-D Navier-Stokes equations for compressible isentropic flows.
         {\em J. Math. Pure Appl.}, {\bf82}(2003), 949--973.

\bibitem{Kato}
 T. Kato, Strong $L^p$ solutions of the Navier-Stokes equations in $\mathbb{R}^m$ with applications to weak solutions. {\em Math. Z.}, {\bf187}(1984),
471--480.

\bibitem{Koch01}
 H.   Koch,   D. Tataru,
       Well-posedness for the Navier-Stokes equations.
       {\em Adv. Math.}, \textbf{157}(2001), 22--35.

\bibitem{Lions98}
P. L. Lions,  ``Mathematical topics in fluid mechanics''. Vol. 2. Compressible models.
Oxford University Press, New York, 1998.






\bibitem{MN80}
A. Matsumura, T.  Nishida,  The initial value problem for the equations of motion
of viscous and heat-conductive gases. {\em J. Math. Kyoto Univ.}, {\bf20}(1980), 67--104.

\bibitem{Nash62}
J. Nash,  Le probl\`{e}me de Cauchy pour les \'{e}quations diff\'{e}rentielles d'un fluide
g\'{e}n\'{e}ral. {\em Bull. Soc. Math. France.}, {\bf90} (1962), 487--497.


\bibitem{SS93}
R. Salvi, I.  Straskraba, Global existence for viscous compressible fluids and their
behavior as $t\rightarrow\infty$. {\em J. Fac. Sci. Univ. Tokyo Sect. IA. Math.}, {\bf40}(1993), 17--51.

\bibitem{Serrin59}
J. Serrin,  On the uniqueness of compressible fluid motion. {\em Arch. Rational. Mech.
Anal.}, {\bf3}(1959), 271--288.

\bibitem{Zhang09}
T. Zhang, Global wellposedness prolem for the 3-D incompressible anisotropic Navier-Stokes equations in an anisotropic space. {\em Comm. Math. Phys.}, {\bf287}(2009), 211--224.
\end{thebibliography}
\end{document}